\newtheorem{prop}{Proposition}[section]
\newtheorem{thm}[prop]{Theorem}
\newtheorem{cor}[prop]{Corollary}
\newtheorem{lem}[prop]{Lemma}
\theoremstyle{definition}
\newtheorem{defn}[prop]{Definition}
\newtheorem{expl}[prop]{Example}
\newtheorem{rem}[prop]{\it Remark}
\newtheorem{lemdefn}[prop]{Lemma-Definition}
\newtheorem*{claim*}{Claim}
\newcommand{\bP}{\mathbb{P}}
\newcommand{\bC}{\mathbb{C}}
\newcommand{\bR}{\mathbb{R}}
\newcommand{\bQ}{\mathbb{Q}}
\newcommand{\bZ}{\mathbb{Z}}
\newcommand{\bN}{\mathbb{N}}
\newcommand{\tL}{\widetilde{L}}
\newcommand{\tQ}{\widetilde{Q}}
\newcommand{\tD}{\widetilde{D}}
\newcommand{\tG}{\widetilde{G}}
\newcommand{\tC}{\widetilde{C}}
\newcommand{\cO}{\mathcal{O}}
\newcommand{\cM}{\mathcal{M}}
\newcommand{\cF}{\mathcal{F}}
\newcommand{\cJ}{\mathcal{J}}
\newcommand{\cG}{\mathcal{G}}
\newcommand{\fa}{\mathfrak{a}}
\newcommand{\fb}{\mathfrak{b}}
\newcommand{\fm}{\mathfrak{m}}
\newcommand{\va}{\vec{a}}
\newcommand{\vb}{\vec{\bullet}}
\newcommand{\vg}{\vec{\gamma}}
\newcommand{\Supp}{\mathrm{Supp}}
\newcommand{\supp}{\mathrm{supp}}
\newcommand{\mult}{\mathrm{mult}}
\newcommand{\lct}{\mathrm{lct}}
\newcommand{\Pic}{\mathrm{Pic}}
\newcommand{\vol}{\mathrm{vol}}
\newcommand{\ord}{\mathrm{ord}}
\newcommand{\Gr}{\mathrm{Gr}}
\newcommand{\gr}{\mathrm{gr}}
\newcommand{\Cl}{\mathrm{Cl}}
\newcommand{\NS}{\mathrm{NS}}
\newcommand{\rd}{\mathrm{d}}
\newcommand{\Bs}{\mathrm{Bs}}
\newcommand{\wt}{\mathrm{wt}}
\newcommand{\Val}{\mathrm{Val}}
\newcommand{\Nlc}{\mathrm{Nlc}}
\newcommand{\ddiv}{\mathrm{div}}
\newcommand{\Y}{Y_{\bullet}}
\newcommand{\W}{W_{\vec{\bullet}}}
\newcommand{\V}{V_{\vec{\bullet}}}
\numberwithin{equation}{section}
\title{K-stability of Fano varieties via admissible flags}
\author{Hamid Abban}
\author{Ziquan Zhuang}
\date{}
\begin{document}

\address{\emph{Hamid Abban}
\newline
\textnormal{Department of Mathematical Sciences, Loughborough University, Loughborough LE11 3TU, UK
\newline
 \texttt{h.abban@lboro.ac.uk}}}

\address{\emph{Ziquan Zhuang}
\newline 
\textnormal{Department of Mathematics, MIT, Cambridge, MA, 02139, USA
\newline
\texttt{ziquan@mit.edu}}}

\maketitle

\begin{abstract}
     We develop a general approach to prove K-stability of Fano varieties. The new theory is used to (a)\,prove the existence of K\"ahler-Einstein metrics on all smooth Fano hypersurfaces of Fano index two, (b)\,to compute the stability thresholds for hypersurfaces at generalized Eckardt points and for cubic surfaces at all points, and (c)\,to provide a new algebraic proof of Tian's criterion for K-stability, amongst other applications.

\end{abstract}

\section{Introduction}

Introduced by Tian \cite{Tian-K-stability-defn} and reformulated algebraically by Donaldson \cite{Don-K-stability-defn}, K-stability is an algebro-geometric property of Fano varieties that detects the existence of K\"ahler-Einstein metric. By the celebrated works of Chen-Donaldson-Sun \cites{CDS} and Tian \cite{Tian-YTD}, a Fano manifold admits a K\"ahler-Einstein metric if and only if it is K-polystable. However, it is in general very hard to verify the K-stability of a given Fano variety. Tian's criterion, introduced in \cite{Tian-criterion}, provides a sufficient condition for K-stability, and has arguably become the most famous validity criterion for K-stability. There are also a few variants \cites{DK01,OS-alpha,Fuj-alpha} of Tian's criterion and a notable application is the K-stability of smooth hypersurfaces of Fano index one \cite{Fuj-alpha}. More recently, \cite{SZ-bsr-imply-K} discovered another K-stability criterion in the particular case of birationally superrigid Fano varieties; and, as an application \cite{Z-cpi} proved that Fano complete intersections of index one and large dimension are K-stable. However, both criteria apply exclusively to certain Fano varieties of index one and except in a few sporadic cases it is unclear how to attack the problem when the required conditions in neither criterion are satisfied; see for example \cites{AGP,Der-finite-cover,SS-two-quadric,LX-cubic-3fold}. 

The purpose of this paper is to develop a systematic approach for proving K-stability of Fano varieties. As a major application, we confirm the K-stability of all smooth hypersurfaces of Fano index two.

\begin{thm}[=Theorem \ref{thm:index two}] \label{main:index two}
Let $X=X_n\subseteq \bP^{n+1}$ be a smooth Fano hypersurface of degree $n\ge 3$. Then $X$ is uniformly K-stable.
\end{thm}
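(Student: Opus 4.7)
The goal is to establish $\delta(X) > 1$, where $\delta(X)$ is the stability threshold; by the Fujita--Li valuative criterion, this reduces to showing
\[
\frac{A_X(E)}{S_X(E)} > 1
\]
for every prime divisor $E$ over $X$. Since $\delta(X) = \inf_{p \in X} \delta_p(X)$, it suffices to bound $\delta_p(X) > 1$ pointwise, so I would fix an arbitrary $p \in X$ and work locally near $p$.

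My plan is to apply the admissible flag machinery — the main technical engine of this paper — to a carefully chosen flag of smooth subvarieties $X = Y_0 \supset Y_1 \supset \cdots \supset Y_n = \{p\}$ through $p$. The philosophy is that the refined volumes of $-K_X$ along the flag yield sharp upper bounds on $S_X(E)$, so that the inequality $A_X(E) > S_X(E)$ can be verified inductively by descending through the flag. A natural first step is to take $Y_1 = T_pX \cap X$, the intersection with the tangent hyperplane, which has multiplicity $\geq 2$ at $p$. Because $-K_X = 2H$ and $Y_1 \in |H|$, this large local multiplicity is precisely what provides the extra slack needed to beat $A_X/S_X = 1$.

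The analysis then splits according to the singularity type of $T_pX \cap X$ at $p$. At a general point, $Y_1$ has only an ordinary double point at $p$ and a comparatively direct argument (in the spirit of a refined Tian-type $\alpha$-bound, applied to the restricted graded series on $Y_1$) should suffice. The delicate case is that of a \emph{generalized Eckardt point}, where $T_pX \cap X$ degenerates into a cone and the naive estimate deteriorates. Here I would extend the flag by blowing up $p$, taking $Y_2$ inside the exceptional projective space and continuing with hyperplane sections, exploiting the explicit cone structure at each stage to retain control of the restricted linear series on each $Y_i$.

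The principal obstacle I anticipate is obtaining a sufficiently sharp estimate at generalized Eckardt points, which is the combinatorially worst configuration — closing the gap $A_X/S_X > 1$ will require a careful computation of the restricted volumes through several stages of the flag, and likely a uniform bound that does not depend on the precise shape of the cone. A secondary subtlety is that the admissible flag bound depends on the choice of flag, so some optimization over flag choices (or a case analysis over geometric configurations of the tangent section) is likely to be needed. If the sharp Eckardt-point threshold computation is indeed strictly greater than $1$, as the abstract's mention of this computation suggests, then assembling the pieces yields uniform K-stability.
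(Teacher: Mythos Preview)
Your plan diverges from the paper's proof in a way that leaves a real gap. You propose to show $\delta_p(X)>1$ for every closed point $p$ by running the flag machinery through the tangent hyperplane section $Y_1=T_pX\cap X$. But the naive flag bound through a \emph{single} point is too weak: a flag of smooth hyperplane sections through $p$ gives only $\delta_p(H)\ge \frac{n+1}{(H^n)}=\frac{n+1}{n}$, hence $\delta_p(-K_X)=\tfrac12\delta_p(H)\ge \frac{n+1}{2n}<1$. Your remedy---taking $Y_1=T_pX\cap X$---contradicts your own requirement that the flag consist of smooth subvarieties, and your ``comparatively direct argument'' at a generic (ordinary double point) tangent section is not substantiated; the paper offers no such estimate. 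You also have the difficulty ranking inverted: at a generalized Eckardt point the paper computes $\delta_p(X)=\frac{n(n+1)}{2(2n-1)}>1$ directly (Corollary~\ref{cor:Fano Eckardt pt}), so that is the \emph{easy} case, not the bottleneck.

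The paper's actual argument (for $n\ge 4$) never establishes $\delta_p(X)>1$ pointwise. Instead it proves a criterion (Lemma~\ref{lem:(n+1)/n-criterion}) that decouples the problem: (i)~$\delta_Z(X)\ge\frac{n+1}{n}$ for every subvariety $Z$ of dimension $\ge 1$, and (ii)~$\beta_X(E_x)>0$ for the ordinary blowup at each point. Condition~(ii) is a short computation with movable thresholds (Lemma~\ref{lem:beta(E_x)>0 on index two}). Condition~(i) is where the flag is used, but the flag passes through \emph{two} points $P,Q\in Z$ (possible exactly because $\dim Z\ge 1$); on the terminal curve $C$ one then filters by a third point $R\notin\{P,Q\}$ and observes that a compatible basis type divisor concentrates half its degree at $R$, so the remainder cannot exceed $1$ at both $P$ and $Q$ (Lemma~\ref{lem:delta on curve}). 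This two-point trick is what doubles the naive $\frac{n+1}{2n}$ to $\frac{n+1}{n}$, and it is entirely absent from your plan. The passage from (i)+(ii) to uniform K-stability is then a multiplier-ideal argument (Lemma~\ref{lem:lct at isolated pt}), not a further flag estimate. The case $n=3$ is handled separately via the explicit cubic-surface thresholds.
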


In particular, this generalizes the work of \cite{LX-cubic-3fold} on K-stability of smooth cubic threefolds, although our argument is completely different. 

As another application, we prove the following K-stability criterion, giving a unified treatment for several Fano manifolds that are previously known to be K-(semi)stable; see Definition \ref{defn:beta} for the definition of $\beta_X(E)$ in the statement.

\begin{thm}[=Corollary \ref{cor:fano small deg}] \label{main:fano small deg}
Let $X$ be a Fano manifold of dimension $n$. Assume that there exists an ample line bundle $L$ on $X$ such that
\begin{enumerate}
    \item $-K_X\sim_\bQ rL$ for some $r\in\bQ$ with $(L^n)\le \frac{n+1}{r}$; and
    \item for every $x\in X$, there exists $H_1,\dots,H_{n-1}\in |L|$ containing $x$ such that ${H_1\cap \dots \cap H_{n-1}}$ is an integral curve that is smooth at $x$.
\end{enumerate}
Then $X$ is K-semistable. If it is not uniformly K-stable, then $(L^n) = \frac{n+1}{r}$ and there exists some prime divisor $E\subseteq X$ such that $\beta_X(E)=0$.
\end{thm}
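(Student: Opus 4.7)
The approach is to apply the Fujita--Li valuative criterion ($X$ is K-semistable iff $\delta(X):=\inf_E A_X(E)/S_X(E)\ge 1$) and to bound $\delta_x(X)$ from below at every point $x\in X$ using the admissible-flag machinery developed earlier in this paper.

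Fix $x\in X$. By hypothesis (2), choose $H_1,\ldots,H_{n-1}\in|L|$ through $x$ so that $C:=H_1\cap\cdots\cap H_{n-1}$ is an integral curve smooth at $x$. Set $Y_i:=H_1\cap\cdots\cap H_i$ (so $Y_0=X$ and $Y_{n-1}=C$) and $Y_n:=\{x\}$. A simple dimension count shows that smoothness of $C$ at $x$ forces each $H_i$ to be smooth at $x$ and the $H_i$ to meet transversally there; thus $\Y$ is an admissible flag at $x$ with $A_{Y_{i-1}}(Y_i)=1$ for every $i\ge 1$ (including $A_C(x)=1$).

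Invoking the iterated $\delta$-estimate along an admissible flag, which is the core technical tool of the paper, one obtains
\[
\delta_x(X)\;\ge\;\min_{1\le i\le n}\,\frac{A_{Y_{i-1}}(Y_i)}{S(V^{i-1}_\bullet;\,Y_i)},
\]
where $V^{i-1}_\bullet$ is the refinement on $Y_{i-1}$ of the complete graded linear series of $-K_X$. Writing $d:=(L^n)$ and using $-K_X\sim_\bQ rL$ together with the fact that $Y_i$ is cut out on $Y_{i-1}$ by a section of $L|_{Y_{i-1}}$, the flag Okounkov body of $-K_X$ is the simplex-like region
\[
\Delta=\bigl\{(\nu_1,\ldots,\nu_n)\in\bR_{\ge 0}^n:\nu_1+\cdots+\nu_{n-1}\le r,\ \nu_n\le d(r-\nu_1-\cdots-\nu_{n-1})\bigr\},
\]
of total volume $r^n d/n!=(-K_X)^n/n!$. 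Its coordinate averages are $r/(n+1)$ for $\nu_1,\ldots,\nu_{n-1}$ and $dr/(n+1)$ for $\nu_n$, yielding $S(V^{i-1}_\bullet;Y_i)=r/(n+1)$ for $i<n$ and $S(V^{n-1}_\bullet;x)=dr/(n+1)$. Since $d\ge 1$ and $dr\le n+1$ by (1), we conclude $\delta_x(X)\ge (n+1)/(dr)\ge 1$. As $x$ was arbitrary, $\delta(X)\ge 1$, so $X$ is K-semistable.

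If $X$ is not uniformly K-stable, then $\delta(X)=1$, and sharpness of the bound above forces $dr=n+1$, i.e.\ $(L^n)=(n+1)/r$. By the existence of $\delta$-minimizing divisorial valuations on K-semistable Fanos (Liu--Xu--Zhuang), the infimum is attained by some prime divisor $E$ over $X$, which then satisfies $\beta_X(E)=A_X(E)-S_X(E)=0$. The principal obstacle is invoking the iterated flag $\delta$-estimate cleanly in this setting: the intermediate $Y_i$ need not be globally prime, but since $\delta_x(X)$ is local at $x$ and the $H_i$ are transversal there, the hypotheses of the paper's flag estimate should be met---matching this up precisely with the statement of the flag estimate is the technical point to verify, while the Okounkov-body computation and the existence of an extremal divisor are then standard.
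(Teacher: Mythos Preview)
Your approach via the admissible-flag estimate and the explicit Okounkov body computation is essentially the same as the paper's, and the inequality $\delta_x(X)\ge \frac{n+1}{r(L^n)}$ is correct. Two comments.

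First, the technical point you flag is real but easily repaired. The flag estimate (Theorem~\ref{thm:delta adj via flag}) requires each $Y_i$ to be an irreducible subvariety of $Y_{i-1}$, and your $Y_i=H_1\cap\cdots\cap H_i$ need not be. The paper's way around this (Lemma~\ref{lem:delta small deg}) is to argue by induction on dimension, at each step choosing a \emph{general} $H\in|L|$ through $x$; condition~(2) guarantees that such $H$ is smooth at $x$ and that the hypothesis propagates to $H$, while Bertini and the fact that the base locus of $|L\otimes\mathfrak m_x|$ is contained in a curve give irreducibility once $\dim\ge 2$.

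Second, and this is the genuine gap: you have not proved that $E\subseteq X$, i.e.\ that $E$ is a prime divisor \emph{on} $X$ rather than merely a divisor \emph{over} $X$. Liu--Xu--Zhuang (or \cite{BLX-openness}) only gives a divisorial minimizer over $X$; this does not suffice. The paper extracts the stronger conclusion from the equality-case clauses of Theorem~\ref{thm:delta adj via div} and Lemma~\ref{lem:F=ample Cartier}: if a valuation $v$ computes $\delta_x(L)$ and equality holds in the flag estimate, then $C_X(v)\not\subseteq H$ and there is an induced valuation $v_0$ on $H$ computing $\delta_x(L|_H)$ whose center lies in $C_X(v)\cap H$. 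By induction (Lemma~\ref{lem:delta small deg}) $v_0$ is divisorial with center a prime divisor on $H$, forcing $C_X(v)$ itself to be a divisor on $X$. The case $(L^n)=1$ is handled separately: then $r=n+1$ forces $X\cong\bP^n$ by Kobayashi--Ochiai, and any hyperplane does the job. Your Okounkov-body argument, as stated, does not see this distinction at all, because the computation of the $S$-invariants is insensitive to where the center of the destabilizing valuation sits.
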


For instance, this applies to projective spaces, hypersurfaces of Fano index one and double covers of $\bP^n$ branched along a hypersurface of degree at least $n+1$. We refer to Corollary \ref{cor:small deg list} for a more exhaustive list. While Tian's criterion or the criterion from \cite{SZ-bsr-imply-K} apply to some of them, the conditions in Theorem \ref{main:fano small deg} are usually easier to check; indeed, we never use Tian's criterion or the criterion from \cite{SZ-bsr-imply-K} in this paper, as most varieties considered here are of higher Fano index. On the other hand, it may be worth pointing out that our general approach also leads to a new proof of these two criteria; see Subsection~\ref{sec:Tian & bsr}.

Before we state further applications, let us recall that by \cites{FO-delta,BJ-delta}, K-stability of a Fano variety $X$ can be characterized by its stability threshold $\delta(X)$, defined via log canonical thresholds of anti-canonical $\bQ$-divisors of basis type; see Subsection \ref{sec:K-defn}. For example, $X$ is K-semistable if and only if $\delta(X)\ge 1$. One can also define local stability thresholds $\delta_x(X)$ at some $x\in X$ by taking log canonical thresholds around the point $x$ so that the global invariant $\delta(X)$ is the minimum of the local ones $\delta_x(X)$; see Subsection \ref{sec:K-defn}. It is again a challenging problem to find the precise value of these invariants, unless the variety has a large group of automorphisms \cites{BJ-delta,Gol-delta-large-aut,ZZ-delta-cone}; see \cites{PW-dP,CZ-cubic-delta} for some estimates on del Pezzo surfaces. 

We also compute these invariants in some non-trivial cases. As a first example, we study the local stability thresholds of hypersurfaces at generalized Eckardt points.

\begin{thm}[=Corollary \ref{cor:Fano Eckardt pt}] \label{main:delta at Eckardt pt}
Let $X\subseteq \bP^{n+1}$ be a smooth Fano hypersurface of degree $d$ and let $x\in X$ be a generalized Eckardt point $($the tangent hyperplane section at $x$ is the cone over a hypersurface $Y\subseteq \bP^{n-1}$ of degree $d)$. Assume that $Y$ is K-semistable, when $d\le n-1$ $($i.e. when it is Fano$)$. Then $\delta_x(X)=\frac{n(n+1)}{(n-1+d)(n+2-d)}$ and it is computed by the ordinary blow up of $x$.
\end{thm}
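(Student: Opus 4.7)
The plan is to establish matching upper and lower bounds for $\delta_x(X)$. The upper bound follows from a direct computation of $A_X(E)/S_X(E)$ for the divisorial valuation $E$ given by the ordinary blow-up of $x$, and the lower bound is where I would invoke the admissible flag machinery developed earlier in the paper; together they force the displayed value and show that $E$ computes $\delta_x(X)$.

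For the upper bound, let $\pi\colon\tX\to X$ denote the blow-up of $x$ with exceptional divisor $E$, so $A_X(E)=n$, and set $r=n+2-d$ so that $-K_X=rH$ and $(-K_X)^n=r^nd$. The generalized Eckardt hypothesis gives $\mult_x T=d$ for the tangent hyperplane section $T=T_xX\cap X$, equivalently $\pi^*H\equiv\widetilde{T}+dE$. I would analyze the Zariski decomposition of $\pi^*(-K_X)-tE$ in two ranges. On $[0,r]$ the divisor is nef, since the only potential obstruction comes from the rulings $F$ of $\widetilde{T}\to Y$ (strict transforms of lines in the cone $T$), which satisfy $(\pi^*H-sE)\cdot F=1-s\ge 0$ for $s\le 1$; here the volume is $r^nd-t^n$. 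On $[r,rd]$ the Zariski decomposition reads $P_t+\lambda(t)\widetilde{T}$ with $\lambda(t)=(t-r)/(d-1)$ and $P_t=a(\pi^*H-E)$, $a=(rd-t)/(d-1)$, pinned down by $P_t\cdot F=0$; the volume is $(rd-t)^n/(d-1)^{n-1}$, and the pseudo-effective threshold is $\tau=rd$. Integrating,
\begin{align*}
\int_0^r(r^nd-t^n)\,dt+\int_r^{rd}\frac{(rd-t)^n}{(d-1)^{n-1}}\,dt=\frac{r^{n+1}d(n+d-1)}{n+1},
\end{align*}
so $S_X(E)=\frac{(n+2-d)(n+d-1)}{n+1}$ and $A_X(E)/S_X(E)=\frac{n(n+1)}{(n-1+d)(n+2-d)}$.

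For the matching lower bound, the plan is to apply the admissible flag criterion of the paper with a flag that encodes the cone structure at $x$: on $\tX$ I would start with $E\cong\bP^{n-1}$ and then continue inside $E$ by a generic admissible flag compatible with the base $Y\subseteq\bP^{n-1}$ of the cone $T$. The Newton-Okounkov body of $-K_X$ along this flag is designed to reproduce precisely the volume function computed above, so the first level recovers $S_X(E)$ as just computed, while the deeper levels reduce the estimate to an assertion about $\delta(Y)$, which is exactly the hypothesis $\delta(Y)\ge 1$ in the Fano range $d\le n-1$; when $d\ge n$ the corresponding reduction needs no assumption because $Y$ is no longer Fano.

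The main obstacle is the lower bound: one must choose the admissible flag so that its filtration matches the Zariski decomposition above, and handle the fact that $T$ itself is singular at $x$ (either by working throughout on $\tX$, or by using a flag formalism that tolerates singular members). Once these combinatorial and singularity book-keeping steps are aligned with the volume integrals, the hypothesis $\delta(Y)\ge 1$ closes the gap; equality in the two bounds then forces $E$ to be a $\delta_x$-computing valuation.
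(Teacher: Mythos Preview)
Your upper-bound computation is correct and agrees with the paper: the paper works with the hyperplane class $H$ rather than $-K_X$, obtaining $S(H;E)=\frac{d+n-1}{n+1}$ via the same movable/fixed decomposition of $|m\pi^*H-jE|$, and then rescales by $r=n+2-d$ at the end.

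The lower bound, however, has a genuine gap. Your plan is to continue the flag past $E$ into $E\cong\bP^{n-1}$ ``compatible with $Y$'' and to assert that the resulting estimate reduces to $\delta(Y)\ge 1$; but you neither specify this flag nor carry out any of the refinements, and the claim that the reduction lands exactly on $\delta(Y)$ is unsupported. The paper does \emph{not} use a deeper flag. It stops after the single refinement $\W$ of the complete linear series of $H$ by $E$, checks that $\W$ is almost complete with fixed part $F(\W)=\frac{1}{n+1}\bigl(1-\frac{1}{d}\bigr)D_0$ (where $D_0=\widetilde{T}\cap E\cong Y$), and then uses the adjunction criterion (Corollary~\ref{cor:adj delta ac}) to reduce $\delta_x(H)\ge\lambda$ to the K-semistability of the log pair $\bigl(\bP^{n-1},\frac{n(d-1)}{d(d+n-1)}D_0\bigr)$. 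Proving \emph{that} is where the hypothesis on $Y$ enters, and it is not a flag argument: one interpolates (via \cite{Der-finite-cover}*{Lemma 2.6}) between the K-semistable $\bP^{n-1}$ and a pair $(\bP^{n-1},\mu D_0)$ with larger coefficient $\mu$. For $d\ge n$ one takes $\mu=\frac{n}{d}$, so the pair is log canonical log Calabi--Yau and hence K-semistable by \cite{Oda-slcCY-Kss}; for $d\le n-1$ one takes $\mu=1-\frac{1}{d}+\frac{1}{n}$ and establishes K-semistability by degenerating $(\bP^{n-1},\mu D_0)$ to the projective cone over $D_0\cong Y$, invoking the cone criterion of \cite{LX-Kollar-comp} (this is exactly where ``$Y$ K-semistable'' is used), and then applying openness of K-semistability \cites{Xu-quasimonomial,BLX-openness}. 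Your outline contains none of these ingredients, and in particular your remark that for $d\ge n$ ``no assumption is needed because $Y$ is no longer Fano'' is not an argument: some positive input (here, the log Calabi--Yau case) is still required.
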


Since on smooth quadric hypersurfaces every closed point is a generalized Eckardt point, by induction on dimension we obtain an algebraic proof of their K-semistability. In general, we get $\delta_x(X)\ge 1$ as long as $Y$ is K-semistable. We expect that if $X$ has a generalized Eckardt point $x$ then $\delta(X)=\delta_x(X)$, and smooth Fano hypersurfaces of degree $d$ with smallest stability thresholds are those with generalized Eckardt points (see Theorem \ref{thm:cubic delta} and Corollary \ref{cor:cubic surface global delta} for some evidence on cubic surfaces). Thus the above theorem suggests a possible inductive approach to the K-stability of Fano hypersurfaces.

As a second example, we calculate the local stability thresholds of all cubic surfaces, from which we derive the following consequences.

\begin{thm}[see Theorems \ref{thm:cubic delta} and \ref{thm:HRFG counterexample}] \label{main:counterexamples}
Let $X\subseteq \bP^3$ be a smooth cubic surface. Then there exists some boundary divisor $\Delta$ such that $(X,\Delta)$ is log Fano and $\delta(X,\Delta)=\frac{9}{25-8\sqrt{6}}\not\in\bQ$. Moreover, there exists $C\in |-K_X|$ such that $(X,C)$ is log canonical and some valuation $v$ that is an lc place of $(X,C)$ such that the associated graded ring 
\[
\gr_v R:=\oplus_{m,\lambda} \Gr^\lambda_{\cF_v} H^0(X,-mK_X)
\]
is not finitely generated, where $\cF_v$ is the filtration induced by $v$.
\end{thm}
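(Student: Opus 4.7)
The plan is to handle both parts of the theorem via an explicit computation of the local stability threshold $\delta_x(X)$ at a carefully chosen (non-Eckardt) point $x$ on a general smooth cubic surface $X$, using the admissible flag machinery developed earlier in the paper. The flag will be built from the blowup $\pi\colon\tX\to X$ of $x$, with exceptional divisor $E$, together with a well-chosen auxiliary curve $F\subset \tX$ (for instance, the proper transform of a suitably tangent conic through $x$, or a curve produced by a second infinitely-near blowup). Reducing $\delta_x(X)$ to an optimization over a two-parameter family of quasi-monomial valuations $v_{a,b}$ centered at $x$ with weights $(a,b)$ along $(E,F)$ yields
\[
\delta_x(X) \;=\; \inf_{a,b>0}\;\frac{A_X(v_{a,b})}{S_X(v_{a,b})},
\]
where $S_X(v_{a,b})$ is expressible as an integral of the restricted volumes $\vol(-\pi^*K_X-sE-tF)$ as $(s,t)$ varies over the pseudo-effective range. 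On the surface $\tX$ the Zariski decomposition is piecewise polynomial with explicit chamber walls; computing these restricted volumes reduces the Lagrangian critical point equations to a single quadratic whose discriminant contains $\sqrt{6}$. Solving yields $\delta_x(X)=\tfrac{9}{25-8\sqrt 6}$, and the infimum is attained at a valuation $v=v_{a^*,b^*}$ with irrational weights $(a^*,b^*)$. The matching upper bound for $\delta_x$ is furnished by an explicit basis-type $\bQ$-divisor constructed from sections adapted to the filtration $\cF_{v}$.

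For the second statement, the extremal valuation $v$ just constructed is automatically non-divisorial, since $\delta_x(X)\notin\bQ$ prevents $v$ from being a rescaling of a divisorial valuation. By the general characterization of $\delta$-minimizing valuations, $v$ is an lc place of some basis-type anticanonical $\bQ$-divisor; on a del Pezzo surface this $\bQ$-divisor can be upgraded to an honest $C\in|-K_X|$ such that $(X,C)$ is log canonical and $v$ is an lc place of $(X,C)$, e.g.\ by averaging with a suitably chosen member of $|-K_X|$ and using that lc places are preserved under such convex combinations. To see that $\gr_v R$ is not finitely generated, I would argue by contradiction: if $\gr_v R$ were finitely generated, then by standard Rees-algebra and Okounkov-body arguments used in the theory of special test configurations, the filtration $\cF_v$ would produce a finitely generated degeneration whose weight invariants express $S_X(v)$ as the volume of a rational polytope and $A_X(v)$ as a rational linear functional, forcing $\delta_x(X)=A_X(v)/S_X(v)\in\bQ$, contradicting the irrationality computed in the first part.

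The main obstacle lies in the Zariski-decomposition bookkeeping on $\tX$. The pseudo-effective cone of $\tX$ decomposes into finitely many chambers across which the Zariski decomposition of $-\pi^*K_X-sE-tF$ changes, and the optimum $(a^*,b^*)$ is forced to sit precisely on a wall separating two such chambers; it is exactly this wall-crossing behavior that produces the irrational value. Correctly identifying the wall, controlling the restricted-volume integrals on each side of it, and verifying that the minimum is interior to the $(a,b)$-parameter range (rather than a boundary degeneracy that would collapse $\delta_x$ back to a rational number) is where the main technical effort is concentrated; everything downstream, including the non-finite-generation conclusion, is then an essentially formal consequence of the irrationality.
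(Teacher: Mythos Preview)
Your outline has the right flavor but contains two real gaps and differs from the paper in instructive ways.

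\textbf{The lower bound on $\delta_x$.} You assert that $\delta_x(X)=\inf_{a,b>0}A_X(v_{a,b})/S_X(v_{a,b})$, but this equality is precisely the hard content and you do not explain it. The upper bound is trivial (any single valuation gives an upper bound); what requires work is showing that no valuation outside your two-parameter family does worse. The paper does this as follows: choose the point $x$ so that the tangent hyperplane section is $C=L\cup Q$ with $L$ a line and $Q$ a conic meeting transversally at $x$, and let $v_{a,b}$ be quasi-monomial with weights $a,b$ along $L,Q$. For coprime integers $a,b$ this is the divisorial valuation of a weighted blowup; one then applies the adjunction inequality (Corollary~\ref{cor:adj delta ac}) to the refinement $\W$ by the exceptional divisor $E_m$, reducing to checking that a certain log pair on $E_m\cong\bP^1$ has $\delta\ge 1$. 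This gives $\delta_x(X)\ge\min\{\lambda_m,r_m\lambda_m\}$ with $\lambda_m=A_X(E_m)/S(E_m)$ and $r_m=\delta(E_m,\Delta_m+\lambda_m F(\W^{(m)}))$. Taking a sequence of rational approximations $a_m/b_m\to(1+\sqrt6)/2$ and verifying $r_m\to 1$ yields the lower bound in the limit. Your proposal skips this entirely. Also, your description of the flag (ordinary blowup exceptional $E$ plus an auxiliary $F$) is not the setup that works cleanly; the paper works directly with weighted blowups at $x$ with weights along $L$ and $Q$.

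\textbf{The choice of $C$ and the non-finite-generation.} There is no need to upgrade a basis-type divisor: $C=L+Q\in|-K_X|$ is already an integral anticanonical divisor with $(X,C)$ lc, and every $v_{a,b}$ with $a,b>0$ is an lc place of $(X,C)$ since $A_{X,C}(v_{a,b})=(a+b)-(a+b)=0$. For the non-finite-generation, the paper takes a \emph{different} route from yours: it does not use the $\delta_x$-minimizer at all, but chooses $v_0=v_{a_0,b_0}$ with $a_0,b_0,\tfrac{a_0(2a_0+3b_0)}{2a_0+b_0}$ linearly independent over $\bQ$, and argues directly that finite generation of $\gr_{v_0}R$ would force all sections of sufficiently high slope to vanish along $Q$, contradicting the explicit Zariski decomposition. (In fact the $\delta_x$-minimizer, with $a=1+\sqrt6$ and $b=2$, has $\tfrac{a(2a+3b)}{2a+b}=5\in\bQ$, so the paper's direct argument does not even apply to it.) The paper does remark that the irrationality of $\delta_x(X)$ also implies non-finite-generation, which is your route, but your justification---``finite generation would express $S_X(v)$ as the volume of a rational polytope and $A_X(v)$ as a rational linear functional''---is not correct as stated: $A_X(v)=a+b$ is irrational here, so you would need a genuine argument that the \emph{ratio} $A/S$ is forced to be rational, and you have not supplied one.

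\textbf{Minor point.} The minimizer does not sit on a Zariski chamber wall; it is an interior critical point of the smooth function $(a,b)\mapsto\tfrac{9(a+b)(2a+b)}{10a^2+19ab+3b^2}$, and the $\sqrt6$ arises from the resulting quadratic in $a/b$.
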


This is somewhat surprising as, by \cite{BLZ-optimal-destabilize}*{Theorem 1.4}, the global stability thresholds $\delta(X)$ are always rational on Fano manifolds that are not K-stable. Moreover, graded rings associated to lc places of $\bQ$-complement as in the above statement are usually expected to be finitely generated; see for example \cite{LX-higher-rank}*{Conjecture 1.2}. Thus our example shows that the situation is more complicated in general.

\subsection{Overview of the proof}

We now turn to describe our approach to proving K-stability of Fano varieties. In general, one would like to estimate, or perhaps calculate, the stability threshold of a Fano variety. A priori, we need to consider log canonical thresholds of all anti-canonical basis type divisors. Our first observation is that it suffices to consider a smaller class of them, i.e. those that are compatible with a given divisor over the Fano variety.

\begin{defn} \label{defn:compatible with E}
Let $X$ be a Fano variety and let $E$ be a divisor over $X$, i.e. a prime divisor on some birational model of $X$. Let $m\in\bN$ and let $D$ be an $m$-basis type $\bQ$-divisor on $X$, i.e. there exists a basis $s_1,\cdots,s_{N_m}$ of $V_m=H^0(X,-mK_X)$, where $N_m=h^0(X,-mK_X)$, such that 
\[
D=\frac{1}{mN_m}\sum_{i=1}^{N_m} \{s_i=0\}.
\]
We say that $D$ is compatible with $E$ if for every $j\in\bN$, the subspace 
\[
\cF_E^j V_m := \{s\in V_m\,|\,\ord_E(s)\ge j\} \subseteq V_m
\]
is spanned by some $s_i$.
\end{defn}

We may then define the stability threshold $\delta(X;\cF_E)$ of $X$ with respect to $E$ by restricting to basis type divisors that are compatible with $E$. It turns out that

\begin{prop}[see Proposition \ref{prop:delta = delta wrt filtration}]
$\delta(X)=\delta(X;\cF_E)$.
\end{prop}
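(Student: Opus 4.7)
The inequality $\delta(X) \le \delta(X;\cF_E)$ is tautological, since every $m$-basis type divisor compatible with $E$ is in particular an $m$-basis type divisor. The plan for the reverse direction is to prove the stronger statement $\delta_m(X) = \delta_m(X;\cF_E)$ for each $m$, where $\delta_m(\,\cdot\,;\cF_E)$ is the analogue of $\delta_m$ with the infimum restricted to basis type divisors compatible with $E$; passing to the limit $m \to \infty$ then yields the proposition.

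The key trick is to exchange two infima. For any $m$-basis type divisor $D$, the valuative characterization of the log canonical threshold gives
\[
\lct(X,D) = \inf_F \frac{A_X(F)}{\ord_F(D)},
\]
where $F$ runs over all prime divisors over $X$, with contributions of $F$ having $\ord_F(D)=0$ taken to be $+\infty$. Interchanging the two infima then yields
\[
\delta_m(X) = \inf_F \frac{A_X(F)}{\sup_D \ord_F(D)}, \qquad \delta_m(X;\cF_E) = \inf_F \frac{A_X(F)}{\sup_{D \text{ compatible with } E} \ord_F(D)},
\]
where $D$ ranges over all $m$-basis type divisors (respectively, those compatible with $E$). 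The standard identity $\sup_D \ord_F(D) = S_m(F) := \frac{1}{mN_m}\sum_{j \ge 1}\dim \cF_F^j V_m$ reduces the problem to a single claim: for each $F$, the supremum of $\ord_F(D)$ over $m$-basis type divisors \emph{compatible with $E$} still equals $S_m(F)$.

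This last step is pure linear algebra. Given two decreasing filtrations $\cF_E^\bullet$ and $\cF_F^\bullet$ on the finite-dimensional space $V_m$, one can build a basis of $V_m$ compatible with both by lifting a basis of each bi-graded piece $\Gr^j_{\cF_E}\Gr^k_{\cF_F}V_m$. The $m$-basis type divisor associated to such a basis is compatible with $E$ by construction, and its order along $F$ computes to $\frac{1}{mN_m}\sum_k k\cdot \dim \Gr_{\cF_F}^k V_m = S_m(F)$, as needed.

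No step in this plan is a serious obstacle: the valuative formula for $\lct$ and the identity $S_m(F) = \sup_D \ord_F(D)$ are standard from the work of Fujita--Odaka and Blum--Jonsson, and the simultaneous compatibility with two filtrations is a routine basis-of-associated-graded construction. The only care required concerns divisors $F$ whose filtration on $V_m$ is trivial, which harmlessly contribute $+\infty$ to both infima.
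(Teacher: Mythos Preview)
Your proposal is correct and follows essentially the same route as the paper: both arguments work at level $m$, use the linear-algebra lemma that a basis can be chosen compatible with two filtrations simultaneously (the paper's Lemma~\ref{lem:basis compatible with two filtrations}), deduce that $S_m(F)$ is already attained by a basis compatible with $\cF_E$, and then identify $\delta_m$ with $\inf_F A_X(F)/S_m(F)$. The only cosmetic difference is that you phrase the last step as an exchange of infima, while the paper writes the $S_m$-equality first and then invokes the formula $\delta_m=\inf_F A_X(F)/S_m(F)$ directly.
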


In other words, only basis type divisors that are compatible with $E$ are relevant when computing stability thresholds. While basis type divisors can be hard to study in general, those that are compatible with a given divisor $E$ are often concentrated around $E$, making it convenient to apply the inversion of adjunction. As an illustration, we consider the example of projective spaces.

\begin{expl} \label{expl:P^n}
Let $X=\bP^n$ and let $E$ be a hyperplane. Then asymptotically basis type $\bQ$-divisors $D$ on $X$ that are compatible with $E$ can be written as $D=E+D_0$ where $D_0$ does not contain $E$ in its support and $D_0|_E$ is a convex linear combination of basis type $\bQ$-divisors on $E\cong \bP^{n-1}$.
\end{expl}

By induction and inversion of adjunction, this easily implies that $\delta(\bP^n)\ge 1$ and thus gives an algebraic proof of the K-semistability of $\bP^n$. In general, there is a lot of flexibility in the choice of the auxiliary divisors $E$, leading to various applications. In fact, as we will show in Subsection \ref{sec:Tian & bsr}, both Tian's criterion and the criterion from \cite{SZ-bsr-imply-K} are implied by taking $E$ to be a general member of $|-mK_X|$ for some sufficiently divisible integer $m$. On explicitly given Fano varieties, however, the geometry usually suggests more natural choices of $E$ and sometimes we can even start with the optimal one, i.e. a divisor that computes $\delta(X)$, as in Example \ref{expl:P^n} and no information will be lost in the process. This is exactly how we compute the stability thresholds in Theorems \ref{main:delta at Eckardt pt} and \ref{main:counterexamples}.

More generally, instead of using an auxiliary divisor to refine the class of basis type divisors, we can also use an admissible flag, which is an important tool in the construction of Okounkov bodies of line bundles, see for example \cite{LM-okounkov-body}. Indeed, in the inductive proof of the K-semistability of $\bP^n$ as outlined above, we already implicitly use the full flags of linear subspaces. One can similarly define the compatibility of a basis type divisor with an admissible flag and show that in order to compute the stability threshold it suffices to consider basis type divisors which are compatible with a chosen flag; see Section \ref{sec:adj} for details. To prove the K-stability of a Fano variety, it is often enough to make a careful choice of the auxiliary divisor or admissible flag and analyze the corresponding compatible basis type divisors through inversion of adjunction. In particular, the proofs of Theorems \ref{main:index two} and \ref{main:fano small deg} are obtained in this way, which involve several different auxiliary divisors and admissible flags.

\subsection{Structure of the paper}

This paper is organized as follows. In Subsections \ref{sec:K-defn}--\ref{sec:prelim-flag} we put together various preliminary materials. As we apply inversion of adjunction to basis type divisors which are compatible with an admissible flag, we get basis type divisors of some filtered multi-graded linear series in a natural way. We define and study the invariants associated to such linear series in Subsections \ref{sec:prelim-graded series} and \ref{sec:prelim-filtered graded series}. In Section \ref{sec:adj}, we develop the framework to study stability thresholds of Fano varieties, or more generally $\delta$-invariants of big line bundles, and derive a few inversion-of-adjunction type results for stability thresholds. The applications are presented in Section \ref{sec:application}: in Subsection \ref{sec:Tian & bsr} we give a new proof of Tian's criterion and the criterion from \cite{SZ-bsr-imply-K}; in Subsection \ref{sec:small deg} we study K-stability of Fano manifolds of small degree and prove Theorem \ref{main:fano small deg}; in Subsection \ref{sec:surface} we explain how to compute stability thresholds of log del Pezzo surfaces almost in complete generality and in particular we prove Theorem \ref{main:counterexamples}; in Subsection \ref{sec:eckardt} we prove Theorem \ref{main:delta at Eckardt pt} and finally Theorem \ref{main:index two} is proved in Subsection \ref{sec:index two}. 

\subsection*{Acknowledgement}

This project started during the second author's visit to Loughborough University in July 2019. We would like to thank Loughborough Institute for Advanced Studies for their hospitality. We are grateful to Harold Blum, Ivan Cheltsov, Kento Fujita, Yuchen Liu, Xiaowei Wang, Chenyang Xu and Kewei Zhang for helpful discussions and comments. We would also like to thank the referees for their careful reading and suggestions. HA is supported by EPSRC grants EP/T015896/1 and EP/V048619/1. ZZ is partially supported by NSF Grant DMS-2055531.

\section{Preliminaries} 

\subsection{Notation and conventions}

We work over $\bC$. Unless otherwise specified, all varieties are assumed to be normal and projective. A pair $(X,\Delta)$ consists of a variety $X$ and an effective $\bQ$-divisor $\Delta$ such that $K_X+\Delta$ is $\bQ$-Cartier. The notions of klt and lc singularities are defined as in \cite{Kol-mmp}*{Definition 2.8}. The non-lc center $\Nlc(X,\Delta)$ of a pair $(X,\Delta)$ is the set of closed points $x\in X$ such that $(X,\Delta)$ is not lc at $x$. If $\pi:Y\to X$ is a projective birational morphism and $E$ is a prime divisor on $Y$, then we say $E$ is a divisor over $X$. A valuation on $X$ will mean a valuation $v\colon \bC(X)^*\to \bR$ that is trivial on $\bC^*$. We write $C_X(E)$ (resp. $C_X(v)$) for the center of a divisor (resp. valuation) and $A_{X,\Delta}(E)$ (resp. $A_{X,\Delta}(v)$) for the log discrepancy of the divisor $E$ (resp. the valuation $v$) with respect to the pair $(X,\Delta)$ (see \cites{JM-valuation,BdFFU-valuation}). We write $\Val_X^*$ for the set of nontrivial valuations. Let $(X,\Delta)$ be a klt pair, $Z\subseteq X$ a closed subset (may be reducible) and $D$ an effective divisor on $X$, we denote by $\lct_Z(X,\Delta;D)$ the largest number $\lambda\ge 0$ such that $\Nlc(X,\Delta+\lambda D)$ does not contain $Z$. Given a $\bQ$-divisor $D$ on $X$, we set
\[
H^0(X,D):=\{0\neq s\in \bC(X)\,|\,\ddiv(s)+D\ge 0\}\cup \{0\}
\]
whose members can be viewed as effective $\bQ$-divisors that are $\bZ$-linearly equivalent to $D$. In particular if $D$ is $\bQ$-Cartier then $\ord_E(s):=\ord_E(\ddiv(s)+D)$ is well-defined for any $0\neq s\in H^0(X,D)$ and any divisor $E$ over $X$. We also define the sheaf $\cO_X(D)$ by localizing the above construction.

\subsection{K-stability and stability thresholds} \label{sec:K-defn}

Let $(X,\Delta)$ be a projective pair and let $L$ be a big  $\bQ$-Cartier $\bQ$-divisor on $X$. We denote by $M(L)$ the set of integers $m\in\bN_+$ such that $H^0(X,mL)\neq \{0\}$. 

\begin{defn}
Notation as above. Let $m\in M(L)$ and let $V\subseteq H^0(X,mL)$ be a linear series. We say that $D$ is a basis type divisor of $V$ if $D=\sum_{i=1}^N \{s_i=0\}$ for some basis $s_1,\cdots,s_N$ of $V$ (where by abuse of notation $\{s_i=0\}$ refers to the $\bQ$-divisor $\ddiv(s_i)+mL$). By convention, this means $D=0$ if $V=\{0\}$. We say that $D$ is an $m$-basis type $\bQ$-divisor of $L$ if $D=\frac{1}{m\cdot h^0(X,mL)}D_0$ for some basis type divisor $D_0$ of $H^0(X,mL)$ (in particular, $D\sim_\bQ L$).
\end{defn}

\begin{defn} \label{defn:S and T}
Let $m\in M(L)$ and let $v\in \Val_X^*$. In the above notation, we set 
\[
S_m(L;v) = \sup_{D\sim_\bQ L,\,m\text{-basis type}}v(D)
\]
where the supremum runs over all $m$-basis type $\bQ$-divisor of $L$. We define $S(L;v)$ to be the limit $\lim_{m\to \infty} S_m(L;v)$, which exists by \cites{BC-okounkov-body,BJ-delta}. We also define the pseudo-effective threshold as
\[
T(L;v)=\sup\{\lambda\ge 0\,|\,\vol(L;v\ge t)>0\}
\]
where 
\[
\vol(L;v\ge t)=\lim_{m\to \infty}\frac{\dim \{s\in H^0(X,mL)\,|\,v(s)\ge mt\}}{m^{\dim X}/(\dim X)!}.
\]
We say that $v$ is of linear growth if $T(L;v)<\infty$ (e.g. when $v$ is divisorial or have finite discrepancy; see \cite{BSKM-linear-growth}*{Section 2.3} and \cite{BJ-delta}*{Section 3.1}). 
By \cite{BJ-delta}*{Theorem 3.3}, for any valuation $v$ of linear growth we have
\[
S(L;v)=\frac{1}{\vol(L)}\int_0^\infty \vol(L;v\ge t) \rd t
\]
where $\vol(L)$ denotes the volume of the divisor $L$ (see e.g. \cite{Laz-positivity-1}*{Section 2.2.C}). If $E$ is a divisor over $X$, we put $S(L;E)=S(L;\ord_E)$ and $T(L;E)=T(L;\ord_E)$.
We will simply write $S_m(E)$, $S(E)$, etc. if the divisor $L$ is clear from the context.
\end{defn}

\begin{defn} \label{defn:beta}
Let $(X,\Delta)$ be a log Fano pair, i.e. $(X,\Delta)$ is klt and $-(K_X+\Delta)$ is ample. We say $(X,\Delta)$ is K-semistable (resp. K-stable) if 
\[
\beta_{X,\Delta}(E):=A_{X,\Delta}(E) - S(-K_X-\Delta;E)\ge 0
\]
(resp. $\beta_{X,\Delta}(E) > 0$) for all divisors $E$ over $X$. We say that $(X,\Delta)$ is uniformly K-stable if 
\[
\beta_{X,\Delta}(v):=A_{X,\Delta}(v)-S(-K_X-\Delta;v)>0
\]
for all $v\in \Val_X^*$ such that $A_{X,\Delta}(v)<\infty$.
\end{defn}

By \cites{Fuj-valuative-criterion,Li-equivariant-minimize,BX-separatedness,BJ-delta}, this is equivalent to the original definition \cites{Tian-K-stability-defn,Don-K-stability-defn,BHJ-DH-measure,Der-uKs} of K-stability notions in terms of test configurations.

\begin{defn}
Let $(X,\Delta)$ be a klt pair and let $L$ be a $\bQ$-Cartier big divisor on $X$. The (adjoint) stability threshold (or $\delta$-invariant) of $L$ is defined as 
\begin{equation} \label{eq:delta as A/S}
    \delta(L) = \delta(X,\Delta;L) = \inf_E \frac{A_{X,\Delta}(E)}{S(L;E)}
\end{equation}
where the infimum runs over all divisors $E$ over $X$. Equivalently \cite{BJ-delta}, it can also be defined as the limit $\delta(L)=\lim_{m\to \infty}\delta_m(L)$ where 
\begin{equation} \label{eq:delta_m}
    \delta_m(L) = \sup \{\lambda\ge 0\,|\, (X,\Delta+\lambda D) \text{ is lc for all } m\text{-basis type }\bQ\text{-divisors } D\sim_\bQ L \}.
\end{equation}
We say that a divisor $E$ over $X$ computes $\delta(L)$ if it achieves the infimum in \eqref{eq:delta as A/S}. When $(X,\Delta)$ is log Fano, we write $\delta(X,\Delta)$ (or $\delta(X)$ when $\Delta=0$) for $\delta(-K_X-\Delta)$.
\end{defn}

We also introduce a local version of stability thresholds.

\begin{defn}
Let $(X,\Delta)$ be a klt pair and let $L$ be a $\bQ$-Cartier big divisor on $X$. Let $Z$ be a closed subset of $X$. We set
\[
\delta_{Z,m}(L)=\sup\{\lambda\ge 0\,|\,Z\not\subseteq \Nlc(X,\Delta+\lambda D) \text{ for all } m\text{-basis type }\bQ\text{-divisors } D\sim_\bQ L\}
\]
and define the (adjoint) stability threshold of $L$ along $Z$ as $\delta_Z(L)=\limsup_{m\to\infty}\delta_{Z,m}(L)$.
When $Z$ is irreducible, it is not hard to see (by a similar argument as in \cite{BJ-delta}*{\S 4}; see also Lemma \ref{lem:delta=inf A/S}) that the above limsup is a limit and we have
\[
\delta_Z(L) = \inf_{E,Z\subseteq C_X(E)} \frac{A_{X,\Delta}(E)}{S(L;E)} = \inf_{v,Z\subseteq C_X(v)} \frac{A_{X,\Delta}(v)}{S(L;v)}
\]
where the first infimum runs over all divisors $E$ over $X$ whose center contains $Z$, and the second infimum runs over all valuations $v\in\Val_X^*$ such that $A_{X,\Delta}(v)<\infty$ and $Z\subseteq C_X(v)$. If in addition $L$ is ample, then the second infimum is a minimum by (the same proof of) \cite{BJ-delta}*{Theorem E}. As in the global case, we then say that $E$ (resp. $v$) computes $\delta_Z(L)$ if it achieves the above infimum. When $(X,\Delta)$ is log Fano, we also write $\delta_Z(X,\Delta)$ (or $\delta_Z(X)$ when $\Delta=0$) for $\delta_Z(-K_X-\Delta)$.
\end{defn}

\subsection{Plt-type divisors}

\begin{defn} \label{defn:plt type div}
Let $(X,\Delta)$ be a pair and let $F$ be a divisor over $X$. When $F$ is a divisor on $X$ we write $\Delta=\Delta_1+aF$ where $F\not\subseteq \Supp(\Delta_1)$; otherwise let $\Delta_1=\Delta$.
\begin{enumerate}
	\item $F$ is said to be primitive over $X$ if there exists a projective birational morphism $\pi:Y\to X$ such that $Y$ is normal, $F$ is a prime divisor on $Y$ and $-F$ is a $\pi$-ample $\bQ$-Cartier divisor. We call $\pi:Y\to X$ the associated prime blowup (it is uniquely determined by $F$).
	\item $F$ is said to be of plt type if it is primitive over $X$ and the pair $(Y,\Delta_Y+F)$ is plt in a neighbourhood of $F$, where $\pi:Y\to X$ is the associated prime blowup and $\Delta_Y$ is the strict transform of $\Delta_1$ on $Y$. When $(X,\Delta)$ is klt and $F$ is exceptional over $X$, $\pi$ is called a plt blowup over $X$.
\end{enumerate}
\end{defn}

\begin{lem} \label{lem:D|_F along plt boundary}
Let $(Y,F+\Delta)$ be a plt pair with $\lfloor F+\Delta \rfloor =F$. Then for any $\bQ$-Cartier Weil divisor $D$ on $Y$, there exists a uniquely determined $\bQ$-divisor class $($i.e $\bQ$-divisor up to $\bZ$-linear equivalence$)$ $D|_F$ on $F$ and a canonical isomorphism
\[
\cO_Y(D)/\cO_Y(D-F)\cong \cO_F(D|_F).
\] 
\end{lem}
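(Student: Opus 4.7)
The argument proceeds in three steps.

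First, I would establish that $F$ is normal. Since $(Y,F+\Delta)$ is plt with $\lfloor F+\Delta\rfloor=F$, this follows from the standard normality of a plt center, via Kawakita's inversion of adjunction (equivalently, Koll\'ar's connectedness principle). With $F$ normal, reflexive rank-one sheaves on $F$ correspond bijectively to Weil divisor classes on $F$ modulo $\bZ$-linear equivalence, and $\bQ$-divisor classes on $F$ are meaningful.

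Next, to construct $D|_F$, I would pick $N\in\bN_{>0}$ with $ND$ Cartier. The restriction $(ND)|_F$ is then a well-defined Cartier divisor class on the normal $F$, and I set $D|_F:=\tfrac{1}{N}(ND)|_F$ as a $\bQ$-divisor class. Independence of the choice of $N$ follows by comparing through $NN'$. For the canonical isomorphism $\cO_Y(D)/\cO_Y(D-F)\cong\cO_F(D|_F)$, I set $\mathcal{Q}:=\cO_Y(D)/\cO_Y(D-F)$, a coherent sheaf supported on $F$ and carrying a natural $\cO_F$-module structure. On the big open locus $U\subseteq Y$ where both $F$ and $D$ are Cartier (which contains the smooth locus of $Y$), the classical Cartier adjunction short exact sequence yields a canonical identification $\mathcal{Q}|_{U\cap F}\cong\cO_{U\cap F}(D|_{U\cap F})$. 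As both $\cO_F(D|_F)$ and $\mathcal{Q}$ are rank-one reflexive $\cO_F$-modules, this identification extends uniquely to a global canonical isomorphism on $F$.

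The main obstacle is verifying the reflexivity ($S_2$) of $\mathcal{Q}$ on $F$. My plan for this is to apply the depth/local-cohomology long exact sequence to the defining sequence
\[
0\to\cO_Y(D-F)\to\cO_Y(D)\to\mathcal{Q}\to 0,
\]
and to combine the $S_2$-property of the rank-one reflexive sheaves $\cO_Y(D)$ and $\cO_Y(D-F)$ on the normal variety $Y$ with the Cohen-Macaulay property of klt singularities (so that these sheaves have sufficiently high depth at codimension-two points of $Y$) in order to conclude that $\mathcal{Q}$ satisfies Serre's condition $S_2$ on the normal variety $F$. Granted this reflexivity, the canonical isomorphism on $U\cap F$ extends uniquely by the standard extension principle for reflexive sheaves; uniqueness of $D|_F$ modulo $\bZ$-linear equivalence then follows from the bijection between rank-one reflexive sheaves on $F$ and Weil divisor classes.
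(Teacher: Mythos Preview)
Your approach via $S_2$-extension is natural, but there is a genuine gap at codimension-$1$ points of $F$. The plt hypothesis does \emph{not} force $Y$ to be smooth in codimension $1$ along $F$: at the generic point of a prime divisor on $F$, the pair $(Y,F+\Delta)$ can look like a surface cyclic quotient $\tfrac{1}{n}(1,a)$ with $F$ one of the toric boundary divisors. At such a point neither $F$ nor $D$ need be Cartier, so your ``big open locus $U\subseteq Y$'' (which you correctly note is big in $Y$) may meet $F$ in a set whose complement has codimension $1$ in $F$, and the reflexive extension from $U\cap F$ breaks down. This is precisely why the reference the paper cites defines $D|_F$ by localizing at every codimension-$1$ point of $F$ and computing in the cyclic-quotient chart; your global formula $D|_F=\tfrac{1}{N}(ND)|_F$ turns out to agree with that definition, but matching $\mathcal{Q}$ with $\cO_F(D|_F)$ at those singular points still requires that explicit local calculation, which your outline omits.

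There are two smaller issues. First, the depth bookkeeping is off: to get $S_2$ for $\mathcal{Q}$ on $F$ you need $\mathrm{depth}\,\cO_Y(D-F)\ge 3$ at codimension-$2$ points of $F$, which are codimension-$3$ in $Y$; the $S_2$ property of a reflexive sheaf gives only depth $\ge 2$ there, and the Cohen--Macaulayness of klt \emph{rings} does not by itself make $\cO_Y(D-F)$ CM. The relevant input is that divisorial sheaves of $\bQ$-Cartier Weil divisors on klt varieties are CM (e.g.\ Koll\'ar--Mori, Cor.~5.25), but applying it to $D-F$ requires $F$ to be $\bQ$-Cartier, which the lemma does not assume (though it holds in every application in the paper, where $-F$ is relatively ample). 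Second, your final uniqueness claim is not quite right: the bijection between rank-one reflexive sheaves and divisor classes on $F$ pins down $D|_F$ only as a \emph{Weil} class, not as a $\bQ$-divisor class; many $\bQ$-divisors share the same round-down. The genuine uniqueness comes directly from the well-definedness of $\tfrac{1}{N}(ND)|_F$ (or, in the cited reference, from the local definition at codimension-$1$ points), not from the sheaf isomorphism.
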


\begin{proof}
The $\bQ$-divisor class $D|_F$ is defined in \cite{HLS-acc-mld}*{Definition A.2 and A.4} by localizing at every codimension $1$ point of $F$ and the isomorphism is established by \cite{HLS-acc-mld}*{Lemma A.3}.
\end{proof}

\subsection{Filtrations and admissible flags} \label{sec:prelim-flag}

We recall the notation of filtrations as well as some constructions from the study of Okounkov bodies.

\begin{defn}
Let $V$ be a finite dimensional vector space. A filtration $\cF$ on $V$ is given by a collection of subspaces $\cF^\lambda V$ indexed by a totally ordered abelian monoid $\Lambda$ (in which case we also call the filtration a $\Lambda$-filtration) such that $\cF^{\lambda_0} V=V$, $\cF^{\lambda_1} V=0$ for some $\lambda_0,\lambda_1\in \Lambda$ and $\cF^\lambda V\subseteq \cF^{\lambda'} V$ whenever $\lambda\ge \lambda'$. When $\Lambda=\bR$, we will also require that the filtration is left continuous, i.e. for any $\lambda\in \bR$, we have $\cF^{\lambda-\varepsilon}V = \cF^\lambda V$ for all $0<\varepsilon\ll 1$. For each $\lambda\in\Lambda$, we set $\Gr_\cF^\lambda V = \cF^\lambda V / \cup_{\mu>\lambda}\cF^\mu V$. A basis $s_1,\cdots,s_N$ (where $N=\dim V$) of $V$ is said to be compatible with $\cF$ if every $\cF^\lambda V$ is the span of some $s_i$.
\end{defn}

Most filtrations we use are induced by a divisor or an admissible flag.

\begin{expl} \label{ex:filtration from div}
Let $L$ be a $\bQ$-Cartier $\bQ$-divisor on $X$ and let $V\subseteq H^0(X,L)$ be a subspace. Let $E$ be a divisor over $X$. Then it induces an $\bR$-filtration $\cF_E$ on $V$ by setting
\[
\cF_E^\lambda V := \{s\in V\,|\,\ord_E(s)\ge \lambda\}.
\]
More generally, every valuation $v$ on $X$ induces a filtration $\cF_v$ on $V$ with $\cF_v^\lambda V := \{s\in V\,|\,v(s)\ge \lambda\}$. 
\end{expl}

\begin{defn}[\cite{LM-okounkov-body}]
Let $X$ be a variety. An admissible flag $\Y$ over $X$ of length $\ell\le \dim X$ is defined as a flag of subvarieties
\[
\Y \quad:\quad Y=Y_0\supseteq Y_1\supseteq \cdots \supseteq Y_\ell
\]
on some projective birational model $\pi:Y\to X$ of $X$ where each $Y_i$ is an (irreducible) subvariety of codimension $i$ in $Y$ that is smooth at the generic point of $Y_\ell$.
\end{defn}

Given an admissible flag $\Y$ over $X$ as above and a $\bQ$-divisor $L$ on $X$ that is Cartier at the generic point of $Y_\ell$, one can define a valuation-like function
\begin{equation} \label{eq:nu}
    \nu=\nu_{\Y}=\nu_{\Y,X} : \left( H^0(X,L)\setminus\{0\} \right) \to \bN^\ell, \quad s \mapsto \nu(s)=\left(\nu_1(s),\cdots,\nu_\ell(s)\right)
\end{equation}
as follows: first $\nu_1=\nu_1(s)=\ord_{Y_1}(s)$; over an open neighborhood $U\subseteq Y$ of the generic point of $Y_\ell$, $s$ naturally determines a section $\tilde{s}_1\in H^0(U,\cO_U(\pi^*L-\nu_1 Y_1))$ which restricts to a non-zero section $s_1\in H^0(Y_1\cap U,\cO_{Y_1\cap U}(\pi^*L-\nu_1 Y_1))$; we set $\nu_2(s)=\ord_{Y_2}(s_1)$ and continue in this way to define the remaining $\nu_i(s)$ inductively. Via the lexicographic ordering on $\bZ^\ell$, every flag $\Y$ over $X$ induces a filtration $\cF_{\Y}$ (indexed by $\bN^\ell$) on $V=H^0(X,L)$ by setting
\[
\cF_{\Y}^\lambda V=\{s\in V\,|\,\nu(s)\ge \lambda\}.
\]
We also define the graded semigroup of $L$ (with respect to $\Y$) as the sub-semigroup
\[
\Gamma(L)=\Gamma_{\Y}(L)=\{(m,\nu_{\Y}(s))\,|\,m\in\bN, 0\neq s\in H^0(X,mL)\}
\]
of $\bN\times \bN^\ell = \bN^{\ell+1}$. The Okounkov body $\Delta(L)=\Delta_{\Y}(L)$ of $L$ is then the base of the closed convex cone $\Sigma(L)=\Sigma_{\Y}(L)\subseteq \bR^{\ell+1}$ spanned by $\Gamma(L)$, i.e. $\Delta(L)=\Sigma(L)\cap (\{1\}\times\bR^\ell)$.

For later use, we introduce some more notations. For a subspace $V\subseteq H^0(X,L)$ and an effective Weil divisor $E$ on some birational model $\pi:Y\to X$ of $X$, we set $V(-E):=V\cap H^0(Y,\pi^*L(-E))\subseteq H^0(X,L)$. Let $\Y$ is an admissible flag over $X$ of length $r$. Assume that $L$ is Cartier and that each $Y_i$ in the flag is a Cartier divisor in $Y_{i-1}$. Then for every $s$-tuple $(1\le s\le \ell)$ of integers $\va=(a_1,\cdots,a_s)\in\bN^s$, following \cite{Jow-NObody} we define
\[
V(\va)\subseteq H^0(Y_s, L\otimes \cO_{Y_s}(-a_1 Y_1 - a_2 Y_2 - \cdots -a_s Y_s))
\]
inductively so that $V(a_1)=V(-a_1 Y_1)|_{Y_1}$ and 
\[
V(a_1,\cdots,a_s) = V(a_1,\cdots,a_{s-1})(-a_s Y_s)|_{Y_s} \; (2\le s\le \ell).
\]
Note that $\cF_{\Y}$ induces a filtration on $V(a_1,\cdots,a_s)$ indexed by $\bN^{\ell-s}$.

\subsection{Multi-graded linear series} \label{sec:prelim-graded series}

\begin{defn}[\cite{LM-okounkov-body}*{\S 4.3}] \label{defn:multi-graded}
Let $L_1,\cdots,L_r$ be $\bQ$-Cartier $\bQ$-divisors on $X$. An $\bN^r$-graded linear series $W_{\vec{\bullet}}$ on $X$ associated to the $L_i$'s consists of finite dimensional subspaces 
\[
W_{\va} \subseteq H^0(X, \cO_X(a_1 L_1+\cdots+a_r L_r))
\]
for each $\va\in \bN^r$ such that $W_{\vec{0}}=\bC$ and $W_{\va_1}\cdot W_{\va_2} \subseteq W_{\va_1+\va_2}$ for all $\va_1,\va_2\in \bN^r$. The support $\Supp(\W)\subseteq \bR^r$ of $W_{\vec{\bullet}}$ is defined as the closed convex cone spanned by all $\va\in\bN^r$ such that $W_{\va}\neq 0$. We say that $\W$ has bounded support if $\Supp(\W)\cap (\{1\}\times \bR^{r-1})$ is bounded.
For such $\W$, we set
\[
h^0(W_{m,\vb}):=\sum_{\va\in\bN^{r-1}} \dim (W_{m,\va})
\]
for each $m\in\bN$ (it is a finite sum when $\W$ has bounded support) and define the volume of $\W$ as (where $n=\dim X$)
\[
\vol(\W):=\limsup_{m\to \infty} \frac{h^0(W_{m,\vb})}{m^{n+r-1}/(n+r-1)!}.
\]
We say that $\W$ contains an ample series if the following conditions are satisfied:
    \begin{enumerate}
        \item $\Supp(\W)\subseteq \bR^r$ contains a non-empty interior,
        \item for any $\va\in {\rm int}(\Supp(\W))\cap \bN^r$, $W_{k\va}\neq 0$ for $k\gg 0$,
        \item there exists some $\va_0\in {\rm int}(\Supp(\W))\cap \bN^r$ and a decomposition $\va_0\cdot \vec{L} = A+E$ (where $\vec{L}=(L_1,\cdots,L_r)$) with $A$ an ample $\bQ$-line bundle and $E$ an effective $\bQ$-divisor such that $H^0(X,mA)\subseteq W_{m\va_0}$ for all sufficiently divisible $m$.
    \end{enumerate}
If $\Y$ is an admissible flag of length $\ell$ over $X$ such that $L_1,\cdots,L_r$ are \emph{Cartier} at the generic point of $Y_\ell$, the multi-graded semigroup of $\W$ with respect to $\Y$ is defined to be
\[
\Gamma(\W)=\Gamma_{\Y}(\W):=\{(\va,\nu(s))\,|\,0\neq s\in W_{\va}\}\subseteq \bN^r\times \bN^\ell = \bN^{r+\ell}.
\]
\end{defn}

\begin{rem} \label{rem:NObody}
Note that the above definition is slightly more general than \cite{LM-okounkov-body} since we allow divisors $L_i$ that may not be Cartier or integral. However, most results of \cite{LM-okounkov-body}*{\S 4.3} carry over to our setting. In particular, when $\W$ contains an ample series, one can verify as in \cite{LM-okounkov-body}*{Lemma 4.20} that $\Gamma(\W)$ generates $\bZ^{r+\ell}$ as a group. If in addition $W$ has bounded support, then we can define the associated Okounkov body $\Delta(\W)=\Delta_{\Y}(\W)$ as $\Sigma(\W)\cap (\{1\}\times \bR^{r-1+\ell})$ where $\Sigma(\W)$ is the closed convex cone spanned by $\Gamma(\W)$. When $\ell=n=\dim X$, we let $\Gamma_m = \Gamma(\W)\cap (\{m\}\times \bN^{r-1+n})$ and let
\begin{equation} \label{eq:rho_m}
    \rho_m = \frac{1}{m^{r-1+n}} \sum_{a\in \Gamma_m} \delta_{m^{-1}a}
\end{equation}
be the atomic positive measure on $\Delta(\W)$. Then by \cite{Bou-NObody}*{Th\'eor\`eme 1.12}, $\rho_m$ converges weakly as $m\to \infty$ to the Lebesgue measure on $\Delta(\W)$. In particular, we have $\vol(\W)=(n+r-1)!\cdot \vol(\Delta(\W))$ as in \cite{LM-okounkov-body}*{Theorem 2.13}. By \cite{LM-okounkov-body}*{Corollary 4.22}, there is also a continuous function
\begin{equation} \label{eq:vol_W}
    \vol_{\W}\colon {\rm int}(\Supp(\W))\to \bR
\end{equation}
such that for any integer vector $\va\in {\rm int}(\Supp(\W))$, $\vol_{\W}(\va)$ equals the volume of the graded linear series $\{W_{m\va}\}_{m\in\bN}$.
\end{rem}

We give some examples of multi-graded linear series that naturally arise in our later analysis (i.e. when applying inversion of adjunction to basis type divisors that are compatible with a given divisor or admissible flag). The following lemma ensures that the graded linear series we construct contains an ample series.

\begin{lem} \label{lem:semigroup prpperty}
Let $\W$ be an $\bN^r$-graded linear series on $X$ with bounded support and containing an ample series. Then for any admissible flag $\Y$ of length $\ell$ over $X$ such that $L_1,\cdots,L_r$ are Cartier at the generic point of $Y_\ell$ and any $\gamma\in {\rm int}(\Sigma(\W))\cap \bN^{r+\ell}$, we have $k\gamma \in \Gamma(\W)$ when $k\gg 0$.
\end{lem}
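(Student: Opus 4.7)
The statement is a Khovanskii-type semigroup-saturation result: every lattice point in the interior of the closed convex cone $\Sigma(\W)$ eventually lies in $\Gamma(\W)$. My plan is to reduce to a finitely generated sub-semigroup and invoke Khovanskii's theorem on finitely generated subsemigroups of $\bZ^{r+d}$.

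First, I would establish that $\Gamma(\W)$ generates $\bZ^{r+d}$ as a group. This is precisely the assertion already recorded in Remark~\ref{rem:NObody}, whose proof adapts \cite{LM-okounkov-body}*{Lemma 4.20} to the multi-graded, possibly non-integral setting. The key input is the ample bundle $A$ from the decomposition $\va_0\cdot\vec{L}=A+E$: since $H^0(X,mA)\subseteq W_{m\va_0}$ for sufficiently divisible $m$, the $\nu_{\Y}$-images of these sections, together with sections from $W_{\va}$ for $\va$ near $\va_0$ in each coordinate direction of $\bN^r$, generate a full-rank sub-lattice and hence $\bZ^{r+d}$ itself.

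Next, given $\gamma\in\mathrm{int}(\Sigma(\W))\cap\bN^{r+d}$, a standard Carath\'eodory-type approximation inside the full-dimensional cone $\Sigma(\W)=\bR_{\ge 0}\Gamma(\W)$ produces finitely many elements $\gamma_1,\ldots,\gamma_N\in\Gamma(\W)$ with $\gamma\in\mathrm{int}(\mathrm{cone}(\gamma_1,\ldots,\gamma_N))$. Using the previous step I then enlarge this finite collection by additional elements of $\Gamma(\W)$ to arrange that the finitely generated sub-semigroup $S\subseteq\Gamma(\W)$ spanned by the enlarged set still has $\gamma$ in the interior of $\bR_{\ge 0}S$ and moreover generates $\bZ^{r+d}$ as a group; adjoining generators never shrinks the cone, so both conditions are simultaneously achievable.

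Finally, Khovanskii's theorem applies to $S$: for a finitely generated sub-semigroup $S\subseteq\bZ^n$ whose associated group is $\bZ^n$ and whose convex cone $\sigma$ is full-dimensional, there exists $v\in S$ with $(v+\sigma)\cap\bZ^n\subseteq S$. Since $\gamma\in\mathrm{int}(\sigma)$ is a lattice vector, $k\gamma-v\in\sigma\cap\bZ^{r+d}$ once $k$ is large, and therefore $k\gamma\in v+\sigma$, giving $k\gamma\in S\subseteq\Gamma(\W)$ for all $k\gg 0$. The main technical obstacle is the first step, namely verifying that the group-generation argument of \cite{LM-okounkov-body}*{\S4.3} survives the passage to $\bQ$-Cartier (rather than Cartier) divisors $L_i$; this is handled by clearing denominators and exploiting the assumption that the $L_i$ are Cartier at the generic point of $Y_d$, which is exactly what makes $\nu_{\Y}$ well-defined on the sections appearing in $\Gamma(\W)$.
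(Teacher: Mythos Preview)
Your proposal is correct and follows essentially the same route as the paper: both show that $\Gamma(\W)$ generates $\bZ^{r+d}$ (via \cite{LM-okounkov-body}*{Lemma 4.20}, as already noted in Remark~\ref{rem:NObody}), pass to a finitely generated sub-semigroup generating the full lattice, and then apply Khovanskii's saturation result \cite{Kho-polytope}*{Proposition 3} to obtain a translate $\gamma_0$ such that $k\gamma\in\Gamma(\W)$ for $k\gg 0$. Your version is somewhat more explicit about choosing the sub-semigroup so that $\gamma$ lies in the interior of its cone, whereas the paper states the conclusion directly for $\Sigma(\W)$; your added care is harmless but not strictly needed, and the ``technical obstacle'' you flag about $\bQ$-Cartier divisors is already handled by the hypothesis that the $L_i$ are Cartier at the generic point of $Y_d$.
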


\begin{proof}
By \cite{LM-okounkov-body}*{Lemma 4.20} as in the previous remark, the semigroup $\Gamma(\W)$ generates $\bZ^{r+\ell}$ as a group. Let $\Gamma\subseteq \Gamma(\W)$ be a finitely generated sub-semigroup that still generates $\bZ^{r+\ell}$ and such that $\gamma\in {\rm int}(\Sigma)$, where $\Sigma\subseteq \Sigma(\W)$ is the subcone generated by $\Gamma$. By \cite{Kho-polytope}*{Proposition 3}, there exists some $\gamma_0\in \Gamma$ such that
\[
(\Sigma+\gamma_0)\cap \bN^{r+\ell} \subseteq \Gamma \subseteq \Gamma(\W).
\]
As $\gamma\in {\rm int}(\Sigma)$, we have $k\gamma\in \Sigma+\gamma_0$ when $k\gg 0$, thus the lemma follows.
\end{proof}

\begin{expl} \label{ex:complete series}
Let $L$ be a big line bundle on $X$. The \emph{complete linear series associated to} $L$ is the $\bN$-graded linear series $\V$ on $X$ defined by $V_m=H^0(X,mL)$. It is clear that $\V$ has bounded support and contains an ample series.
\end{expl}

\begin{expl} \label{ex:refinement by div}
Let $L_1,\cdots,L_r$ be \emph{Cartier} divisors on $X$ and let $\V$ be an $\bN^r$-graded linear series associated to the $L_i$'s. Denote $\vec{L}=(L_1,\cdots,L_r)$. Let $F$ be a primitive divisor over $X$ with associated prime blowup $\pi:Y\to X$ and let $\cF$ be the induced filtration on $\V$ (see Example \ref{ex:filtration from div}). Assume that $F$ is either Cartier on $Y$ or of plt type. In the latter case, we define $F|_F$ as the $\bQ$-divisor class given by Lemma \ref{lem:D|_F along plt boundary}. Then in both cases,
\[
W_{\va,j}=\cF^j V_{\va}/\cF^{j+1} V_{\va}
\]
can be naturally identified with the image of $\cF^j V_{\va}$ under the composition 
\[
\cF^j V_{\va}\to H^0(Y,\pi^*(\va\cdot \vec{L})-jF)\to H^0(F,\pi^*(\va\cdot \vec{L})|_F-jF|_F)
\]
(this is clear if $F$ is Cartier on $Y$; when $F$ is of plt type, we use Lemma \ref{lem:D|_F along plt boundary}). It follows that $W_{\vb}$ is an $\bN^{r+1}$-graded linear series on $F$ (associated to the divisors $\pi^*L_1|_F,\cdots,\pi^*L_r|_F$ and $-F|_F$), called the \emph{refinement} of $\V$ by $F$. It is not hard to see that $\W$ has bounded support if $\V$ does (see e.g. \cite{LM-okounkov-body}*{Remark 1.12}). We show that $\W$ contains an ample series if $\V$ does. Indeed, condition (1) and (3) are easy to verify as $\V$ contains an ample series. For condition (2), consider the admissible flag $Y_0=Y$, $Y_1=F$, then we see that $W_{\va,j}\neq 0$ if and only if $(\va,j) \in \Gamma_{\Y}(\V)$ and hence condition (2) follows from Lemma \ref{lem:semigroup prpperty}. 
\end{expl}


\begin{expl} \label{ex:refinement by flag}
More generally, let $L_1,\cdots,L_r$ be Cartier divisors on $X$, let $\V$ be an $\bN^r$-graded linear series associated to the $L_i$'s and let $\Y$ be an admissible flag of length $\ell$ over $X$. Assume that each $Y_i$ in the flag is a Cartier divisor in $Y_{i-1}$. Then in the notation of Section \ref{sec:prelim-flag},
\[
W_{\va,b_1,\cdots,b_\ell}=V_{\va}(b_1,\cdots,b_\ell)
\]
defines an $\bN^{r+\ell}$-graded linear series on $Y_\ell$. We call it the \emph{refinement} of $\V$ by $\Y$. As in the previous example, one can check that $\W$ has bounded support (resp. contains an ample series) if $\V$ does.
\end{expl}


\subsection{Invariants associated to filtered multi-graded linear series} \label{sec:prelim-filtered graded series}

\begin{defn}
Let $\W$ be an $\bN^r$-graded linear series. A filtration $\cF$ on $\W$ (indexed by $\Lambda$) is given by a filtration on each $W_{\va}$ ($\va\in\bN^r$) such that $\cF^{\lambda_1} W_{\va_1}\cdot \cF^{\lambda_2} W_{\va_2} \subseteq \cF^{\lambda_1+\lambda_2} W_{\va_1+\va_2}$ for all $\lambda_i\in\Lambda$ and all $\va_i\in\bN^r$. If $\Lambda\subseteq \bR$, we say the filtration $\cF$ is linearly bounded if there exist constants $C_1$ and $C_2$ such that $\cF^\lambda W_{\va}=W_{\va}$ for all $\lambda<C_1|\va|$ and $\cF^\lambda W_{\va}=0$ for all $\lambda>C_2|\va|$.
\end{defn}

One can generalize the definition of basis type divisors, $S$-invariants and stability thresholds to filtered multi-graded linear series.

\begin{defn} \label{defn:basis type for multi-graded}
Let $\W$ be an $\bN\times \bN^r$-graded linear series with bounded support. Let $M(\W)$ be the set of $m\in\bN_+$ such that $W_{m,\va}\neq 0$ for some $\va \in \bN^r$. Let $m\in M(\W)$ and let $N_m=h^0(W_{m,\vb})$. We say that $D$ is an $m$-basis type divisor (resp. $\bQ$-divisor) of $\W$ if there exist basis type divisors $D_{\va}$ of $W_{m,\va}$ for each $\va\in\bN^r$ such that
\[
D = \sum_{\va\in\bN^r} D_{\va} \quad \text{resp. } D = \frac{1}{mN_m} \sum_{\va\in\bN^r} D_{\va}.
\]
When $r=0$ and $\W$ is the complete linear series associated to $L$, this reduces to the usual definition of $m$-basis type ($\bQ$-)divisors of $L$ (c.f. Section \ref{sec:K-defn}). Let $\cF$ be a filtration on $\W$ and let $D$ be an $m$-basis type ($\bQ$-)divisor of $\W$. We say that $D$ is compatible with $\cF$ if all the $D_{\va}$ above has the form $D_{\va}=\sum_{i=1}^N \{s_i=0\}$ for some basis $s_i$ ($i=1,\cdots,N$) of $W_{m,\va}$ that is compatible with $\cF$. In particular, we say that $D$ is compatible with a divisor $E$ (resp. an admissible flag $\Y$) if it is compatible with the filtration induced by $E$ (resp. $\Y$). Note that the divisor class $c_1(D)\in \Cl (X)_\bQ$ of an $m$-basis type divisor does not depend on the choice of $D$. We denote it by $c_1(W_{m,\vb})$.
\end{defn}

\begin{defn}
Let $(X,\Delta)$ be a klt pair and let $Z$ be a closed subset of $X$. Let $\W$ be an $\bN\times \bN^r$-graded linear series on $X$ with bounded support, let $\cF,\cG$ be filtrations on $\W$ and let $v\in\Val_X^*$ be a valuation on $X$. Assume that $\cG$ is a linearly bounded, left continuous $\bR$-filtration and $A_{X,\Delta}(v)<\infty$. Associated to $\cG$ we have a valuation-like function $v_{\cG}\colon \W\to \bR$ given by
\[
s\in W_{\va} \mapsto \sup\{\lambda\in\bR\,|\,s\in \cG^\lambda W_{\va}\}.
\]
If $D=\frac{1}{mN_m}\sum_{i=1}^{N_m} \{s_i=0\}$ is an $m$-basis type $\bQ$-divisor $D$ of $\W$, where each $s_i\in W_{m,\va}$ for some $\va\in\bN^r$, then we define 
\[
v_{\cG}(D)=\frac{1}{mN_m}\sum_{i=1}^{N_m}v_{\cG}(s_i).
\]
Clearly $v_{\cG}=v$ if $\cG=\cF_v$ is the filtration induced by the valuation $v$. Similar to \S \ref{sec:K-defn}, for each $m\in M(\W)$ we set
\[
S_m(\W,\cF;\cG) = \sup_D v_{\cG}(D), \quad S_m(\W,\cF;v)=S_m(\W,\cF;\cF_v)=\sup_D v(D)
\]
where the supremum runs over all $m$-basis type $\bQ$-divisors $D$ of $\W$ that are compatible with $\cF$. We also set
\begin{align*}
    \delta_m(\W,\cF) = \delta_m(X,\Delta;\W,\cF) & = \inf_D \lct(X,\Delta;D) \\
    \delta_{Z,m}(\W,\cF) = \delta_{Z,m}(X,\Delta;\W,\cF) & = \inf_D \lct_Z(X,\Delta;D)
\end{align*}
where the infimum runs over all $m$-basis type $\bQ$-divisors $D$ of $\W$ that are compatible with $\cF$. We then define 
\[
S(\W,\cF;\cG)=\limsup_{m\to\infty} S_m(\W,\cF;\cG),\quad S(\W,\cF;v)=S(\W,\cF;\cF_v)
\]
and similarly the (adjoint) stability thresholds $\delta(\W,\cF)$ (resp. $\delta_Z(\W,\cF)$) of a filtered multi-graded linear series $\W$. If $E$ is a divisor over $X$, we set $S(\W,\cF;E)=S(\W,\cF;\ord_E)$ and $S_m(\W,\cF;E)=S_m(\W,\cF;\ord_E)$. When the filtration $\cF$ is trivial (i.e. $\cF^\lambda W_{\va}$ equals $W_{\va}$ when $\lambda\le 0$ and is $0$ when $\lambda>0$), we simply write $S(\W;\cG)$, $\delta(\W)$, $\delta_Z(\W)$, etc. 
\end{defn}

\begin{rem}
When $L$ is a big line bundle on $X$ and $\W$ is the complete linear series associated to $L$, we have $S(\W;v)=S(L;v)$ for any valuation $v$ on $X$; similarly $\delta(\W)=\delta(L)$ and $\delta_Z(\W)=\delta_Z(L)$ for any closed subset $Z\subseteq X$.
\end{rem}

The following statement is the direct generalization of \cite{BJ-delta} to multi-graded linear series.

\begin{lem} \label{lem:delta=inf A/S}
Let $(X,\Delta)$ be a klt pair and let $Z\subseteq X$ be a subvariety. Let $\W$ be an $\bN\times \bN^r$-graded linear series with bounded support which contains an ample series. Then $S(\W;\cF)=\lim_{m\to \infty} S_m(\W;\cF)$ for any linearly bounded, left continuous $\bR$-filtration $\cF$ on $\W$ and we have
\[
\delta(\W) = \inf_E \frac{A_{X,\Delta}(E)}{S(\W;E)}=\inf_v \frac{A_{X,\Delta}(v)}{S(\W;v)}\quad \text{resp.}\quad 
\delta_Z(\W) = \inf_{E,Z\subseteq C_X(E)} \frac{A_{X,\Delta}(E)}{S(\W;E)}=\inf_{v,Z\subseteq C_X(v)} \frac{A_{X,\Delta}(v)}{S(\W;v)}
\]
where the first infimum runs over all divisors $E$ over $X$ $($resp. all divisors $E$ over $X$ whose center contains $Z)$, and the second infimum runs over all valuations $v\in\Val_X^*$ $($resp. all valuations $v\in\Val_X^*$ whose center contains $Z)$ such that $A_{X,\Delta}(v)<\infty$. Moreover, it holds that
\[
\delta(\W)=\lim_{m\to\infty}\delta_m(\W)\quad \text{and}\quad
\delta_Z(\W)=\lim_{m\to\infty}\delta_{Z,m}(\W).
\]
\end{lem}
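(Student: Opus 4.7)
My plan is to adapt the strategy of \cite{BJ-delta}*{Theorem~E} to the multi-graded setting. For the first part, that $S_m(\W;\cF) \to S(\W;\cF)$ for a linearly bounded $\bR$-filtration $\cF$, I would fix an admissible flag $\Y$ of length $n = \dim X$ on $X$ satisfying the Cartier hypothesis at the generic point of $Y_n$, so that the Okounkov body $\Delta(\W)$ and the weak convergence of the atomic measures $\rho_m$ to Lebesgue measure (Remark~\ref{rem:NObody}) are available. Following Boucksom-Chen, I would attach to $(\W,\cF)$ a concave transform $G_\cF \colon \Delta(\W) \to \bR$ defined by $G_\cF(\alpha) = \sup\{\lambda : \alpha \in \Delta(\cF^\lambda \W)\}$, where $\cF^\lambda \W$ denotes the $\bN^{r+1}$-graded sub-linear series given by the filtration. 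For each $m$, choosing a basis of each $W_{m,\va}$ compatible with $\cF$ gives an atomic measure on $\Delta(\W) \times \bR$ whose expectation of the second coordinate equals $S_m(\W;\cF)$; weak convergence of these measures to the graph measure of $G_\cF$ (using Lemma~\ref{lem:semigroup prpperty} and linear boundedness to control the support) yields $S(\W;\cF) = \frac{1}{\vol(\Delta(\W))} \int_{\Delta(\W)} G_\cF\, d\mu$ as a genuine limit.

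For the formula $\delta(\W) = \inf_E A_{X,\Delta}(E)/S(\W;E)$, I would start from the identity $\lct(X,\Delta; D) = \inf_E A_{X,\Delta}(E)/\ord_E(D)$ for any $m$-basis type $\bQ$-divisor $D$ of $\W$. Swapping infima gives $\delta_m(\W) = \inf_D \lct(X,\Delta;D) = \inf_E A_{X,\Delta}(E)/\sup_D \ord_E(D) = \inf_E A_{X,\Delta}(E)/S_m(\W;E)$, since $\sup_D \ord_E(D)$ is attained by bases compatible with the filtration $\cF_E$. Taking $\limsup$ as $m \to \infty$ and applying the pointwise convergence from the first part yields $\limsup \delta_m(\W) \le \inf_E A_{X,\Delta}(E)/S(\W;E)$. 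A standard approximation of valuations $v$ with $A_{X,\Delta}(v) < \infty$ by divisorial ones, via log resolutions along which both $A_{X,\Delta}$ and $S$ behave continuously, then upgrades this bound to the infimum over all $v \in \Val_X^*$.

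For the reverse direction $\liminf \delta_m(\W) \ge \inf_v A_{X,\Delta}(v)/S(\W;v)$, I would establish a uniform version of the $S$-limit: following \cite{BJ-delta}*{Theorem~4.4}, the Okounkov body framework should yield constants $\epsilon_m \to 0$ depending only on $\W$ such that $S_m(\W;v) \le (1+\epsilon_m)\, S(\W;v)$ for every valuation $v$ of linear growth. Combining this with the identity $\delta_m(\W) = \inf_E A_{X,\Delta}(E)/S_m(\W;E)$ yields the required inequality. The local version $\delta_Z$ is handled identically, using the lct formula $\lct_Z(X,\Delta;D) = \inf_{E : Z \subseteq C_X(E)} A_{X,\Delta}(E)/\ord_E(D)$ (which follows from the closedness of $\Nlc$ and the irreducibility of $Z$) and restricting all the infima to divisors and valuations whose center contains $Z$.

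The main obstacle is producing the uniform estimate with $\epsilon_m$ independent of $v$. In the complete-series setting of \cite{BJ-delta} this uses concavity of the Okounkov bodies of the subfiltrations $\cF^\lambda \W$ together with an associated volume continuity estimate; adapting it here amounts to checking that the multi-graded hypotheses of bounded support and presence of an ample series, together with the semigroup result in Lemma~\ref{lem:semigroup prpperty}, suffice to run the same concavity and volume arguments uniformly in $v$.
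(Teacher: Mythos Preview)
Your proposal is correct and follows essentially the same route as the paper: fix an admissible flag of length $n$, build the concave transform $G$ on the Okounkov body $\Delta(\W)$ from the family $\W^t$ of sub-linear series cut out by $\cF$, use the Boucksom--Chen weak convergence to obtain $S(\W;\cF)=\frac{1}{\vol(\Delta)}\int_\Delta G\,\rd\rho$ as a genuine limit, and then extract from this the uniform estimate $S_m(\W;v)\le (1+\epsilon_m)S(\W;v)$ with $\epsilon_m$ independent of $v$ (via the analogues of \cite{BJ-delta}*{Lemma 2.2 and Corollary 2.10}), after which the identities for $\delta(\W)$ and $\delta_Z(\W)$ follow exactly as in \cite{BJ-delta}*{Theorem 4.4}. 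The only cosmetic difference is that you invoke approximation of valuations by divisorial ones to pass from $\inf_E$ to $\inf_v$, whereas the paper gets this for free from the two-sided squeeze $\inf_v A/S \le \inf_E A/S$ and $\delta_m(\W)=\inf_E A/S_m(\W;E)\ge (1+\epsilon_m)^{-1}\inf_E A/S(\W;E)$; either argument suffices.
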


In view of this lemma, we say that a divisor $E$ over $X$ (or a valuation $v\in \Val_X^*$) computes $\delta(\W)$ (resp. $\delta_Z(\W)$) if it achieves the above infimum.

\begin{proof}
The argument is almost identical to those in \cite{BJ-delta} (which is in turn based on \cite{BC-okounkov-body}). 
Using the filtration $\cF$, we define a family $\W^t$ of multi-graded linear series on $X$ (indexed by $t\in\bR$) where $W^t_{m,\va}=\cF^{mt} W_{m,\va}$. Set 
\[
T_m(\W;\cF)=\max\{j\in\bN\,|\,\cF^j W_{m,\va}\neq 0 \mbox{ for some } \va\}.
\]
It is easy to see that the sequence $T_m(\W;\cF)$ is super-additive and we set 
\[
T(\W;\cF)=\lim_{m\to \infty} \frac{T_m(\W;\cF)}{m} = \sup_{m\in \bN} \frac{T_m(\W;\cF)}{m}.
\]
One can check as in \cite{BC-okounkov-body}*{Lemma 1.6} that for any $t< T(\W;\cF)$ the multi-graded linear series $\W^t$ contains an ample series. Therefore, for any fixed admissible flag $\Y$ of length $n=\dim X$ centered at a general point of $X$, we have the associated Okounkov bodies $\Delta^t=\Delta_{\Y}(\W^t)$ ($t\in\bR$). The result is now simply a consequence of properties of Okounkov bodies. More precisely, consider the function $G\colon \Delta:=\Delta^0\to [0,T(\W;\cF)]$ given by 
\[
G(\gamma)=\sup\{t\in\bR\,|\,\gamma\in \Delta^t\}.
\]
It is straightforward to check that $G$ is concave and hence continuous in the interior of $\Delta$. By the exact same proof of \cite{BJ-delta}*{Lemma 2.9} (using \cite{BC-okounkov-body}*{Theorem 1.11}), we get the equality (where $\rho$ is the Lebesgue measure on $\Delta^0$)
\[
S(\W;\cF)=\frac{1}{\vol(\Delta)}\int_{\Delta}G \rd \rho = \lim_{m\to \infty} S_m(\W;\cF)
\]
and an estimate
\[
S_m(\W;\cF)\le \frac{m^{n+r}}{h^0(W_{m,\vb})} \int_{\Delta}G\rd \rho_m
\]
where $\rho_m$ is as in \eqref{eq:rho_m} (note that $\Delta=\Delta(\W)$). Applied to $\cF=\cF_v$, the argument of \cite{BJ-delta}*{Lemma 2.2 and Corollary 2.10} then implies that for any $\epsilon>0$, there exists some $m_0=m_0(\epsilon)$ such that $S_m(\W;v)\le (1+\epsilon)S(\W;v)$ for any valuation $v\in\Val_X^*$ with $A_{X,\Delta}(v)<\infty$ and any $m\ge m_0$ (the key point is that $m_0$ doesn't depend on $v$). The remaining equalities in the lemma now follow from the exact same proof of \cite{BJ-delta}*{Theorem 4.4}.
\end{proof}

The above proof also gives a formula for the $S$-invariants of multi-graded linear series, similar to the one in Definition \ref{defn:S and T}. 

\begin{cor} \label{cor:S formula}
Notation as above. Then $S(\W;\cF)=\frac{1}{\vol(\W)}\int_0^\infty \vol(\W^t) \rd t$. 
\end{cor}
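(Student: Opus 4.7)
The plan is to deduce this corollary by unpacking an integral formula already proved (for a general linearly bounded filtration $\cF$) within the proof of Lemma \ref{lem:delta=inf A/S} and then applying a Fubini / layer-cake rewrite.

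First, from the proof of Lemma \ref{lem:delta=inf A/S} I would extract the identity
\[
S(\W;\cF) \;=\; \frac{1}{\vol(\Delta)} \int_\Delta G \, d\rho,
\]
where $\Delta = \Delta_{\Y}(\W)$ is the Okounkov body of $\W$ with respect to a fixed admissible flag $\Y$ of length $n = \dim X$ centered at a general point, $\rho$ is Lebesgue measure, and $G \colon \Delta \to [0, T(\W;\cF)]$ is the concave function $G(\gamma) = \sup\{t \ge 0 : \gamma \in \Delta^t\}$ with $\Delta^t := \Delta_{\Y}(\W^t)$.

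Next, I apply the layer-cake representation to the non-negative function $G$ to get $\int_\Delta G \, d\rho = \int_0^\infty \rho(\{G \ge t\}) \, dt$. Because the Okounkov bodies $\Delta^t$ are closed and nested (shrinking in $t$), one has the inclusions $\Delta^t \subseteq \{G \ge t\}$ and $\{G > t\} \subseteq \Delta^t$, so these three sets differ at most on $\{G = t\}$, a set of Lebesgue measure zero for almost every $t$. Hence $\rho(\{G \ge t\}) = \vol(\Delta^t)$ for almost every $t$ and
\[
S(\W;\cF) \;=\; \frac{1}{\vol(\Delta)} \int_0^\infty \vol(\Delta^t) \, dt.
\]

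Finally, I would convert Okounkov-body volumes into volumes of multi-graded linear series using the identity $\vol(\W') = (n+r)! \cdot \vol(\Delta_{\Y}(\W'))$ from Remark \ref{rem:NObody}, applied to $\W$ and to each $\W^t$ with $t < T(\W;\cF)$ (while for $t \ge T(\W;\cF)$ both $\vol(\W^t)$ and $\vol(\Delta^t)$ vanish). The common factorial cancels between the numerator and $\vol(\Delta) = \vol(\W)/(n+r)!$, and the stated formula drops out. The only step that requires a little care is the Fubini / boundary analysis, but since $\{G = t\}$ is Lebesgue-null for almost every $t$ (by concavity of $G$) this is a minor technicality; the substantive point — that $\W^t$ contains an ample series whenever $t < T(\W;\cF)$, so that the Okounkov-body volume identity applies — was already verified in the proof of Lemma \ref{lem:delta=inf A/S} via \cite{BC-okounkov-body}*{Lemma 1.6}, so there is no genuine obstacle left to overcome.
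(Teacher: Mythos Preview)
Your proposal is correct and follows essentially the same route as the paper: extract $S(\W;\cF)=\frac{1}{\vol(\Delta)}\int_\Delta G\,\rd\rho$ from the proof of Lemma~\ref{lem:delta=inf A/S}, rewrite $\int_\Delta G\,\rd\rho=\int_0^\infty \vol(\Delta^t)\,\rd t$ via the layer-cake formula, and then pass from Okounkov-body volumes to $\vol(\W^t)$ using Remark~\ref{rem:NObody}. You supply more detail than the paper (which dismisses the second step as ``not hard to see'') and are appropriately careful about where $\W^t$ contains an ample series; one minor quibble is that the a.e.\ vanishing of $\rho(\{G=t\})$ follows simply from monotonicity of $t\mapsto \rho(\{G\ge t\})$ and does not really need concavity of $G$.
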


\begin{proof}
We already have $S(\W;\cF)=\frac{1}{\vol(\Delta)}\int_{\Delta}G \rd \rho $. It is not hard to see that $\int_{\Delta}G \rd \rho = \int_0^\infty \vol(\Delta^t)\rd t$. Since $\vol(\W)=(n+r)!\cdot \vol(\Delta)$ and $\vol(\W^t)=(n+r)!\cdot \vol(\Delta^t)$ for all $t\ge 0$ (see Remark \ref{rem:NObody}), the result follows.
\end{proof}

We also provide a more explicit formula for the volumes $\vol(\W^t)$. To this end, let $\W$ and $\cF$ be as in Lemma \ref{lem:delta=inf A/S}, let $\Delta^t_{\supp}=\Supp(\W^t)\cap (\{1\}\times \bR^r)$ and let 
\[
\vol_{\W^t}\colon {\rm int}(\Delta^t_{\supp})\to \bR
\]
be the volume function as in \eqref{eq:vol_W}. Then we have

\begin{lem} \label{lem:vol(W^t)}
$\vol(\W^t) = \frac{(n+r)!}{n!} \int_{\Delta^t_{\supp}} \vol_{\W^t}(\gamma)\rd \gamma$.
\end{lem}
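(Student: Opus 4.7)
The plan is to compute both sides via the Okounkov body construction and compare them using Fubini's theorem. Fix an admissible flag $\Y$ of length $n = \dim X$ over $X$, centered at a very general point (as in the proof of Lemma \ref{lem:delta=inf A/S}). Viewing $\W^t$ as an $\bN^{r+1}$-graded linear series whose first coordinate plays the role of $m$, Remark \ref{rem:NObody} produces an Okounkov body $\Delta^t = \Delta_{\Y}(\W^t) \subseteq \bR^{r+n}$ of dimension $r+n$ satisfying
\[
\vol(\W^t) = (n+r)! \cdot \vol(\Delta^t).
\]

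Next, let $\pi\colon \bR^{r+n} \to \bR^r$ be the projection onto the first $r$ coordinates (those corresponding to the $\va$-part). Since the projection of $\Sigma(\W^t) \subseteq \bR^{1+r+n}$ onto its first $r+1$ coordinates is, by construction, the cone $\Supp(\W^t)$, and since projection commutes with the slicing at $m=1$, one has $\pi(\Delta^t) = \Delta^t_{\supp}$. For $\gamma \in \mathrm{int}(\Delta^t_{\supp})$, the fiber $\Delta^t_\gamma := \pi^{-1}(\gamma) \cap \Delta^t$ is an $n$-dimensional convex body, and Fubini yields
\[
\vol(\Delta^t) = \int_{\Delta^t_{\supp}} \vol_n(\Delta^t_\gamma)\,\rd\gamma.
\]

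The crucial step is the identification $\vol_{\W^t}(\gamma) = n! \cdot \vol_n(\Delta^t_\gamma)$ for all $\gamma \in \mathrm{int}(\Delta^t_{\supp})$. I would take $\gamma \in \mathrm{int}(\Delta^t_{\supp}) \cap \bQ^r$ and $k \in \bN$ with $k\gamma \in \bN^r$, and consider the single-graded linear series $\{W^t_{mk\gamma}\}_{m \in \bN}$ on $X$. Its graded semigroup with respect to $\Y$ sits inside $\Gamma_{\Y}(\W^t)$ as the intersection with the linear subspace $\{\va = mk\gamma\}$; using Lemma \ref{lem:semigroup prpperty} (applied to $\W^t$) this sliced semigroup is large enough to generate the sliced closed convex cone as a group, so its Okounkov body coincides, up to the rescaling factor $k$, with $\Delta^t_\gamma$. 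Invoking the Okounkov body--volume formula \cite{LM-okounkov-body}*{Theorem 2.13} together with the defining property \eqref{eq:vol_W} of $\vol_{\W^t}$ gives the claimed identity for rational $\gamma$; continuity of $\vol_{\W^t}$ (Remark \ref{rem:NObody}) and of $\gamma \mapsto \vol_n(\Delta^t_\gamma)$ (from convexity of $\Delta^t$) then extends it to the whole interior. Chaining the three displays yields the desired formula.

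The main obstacle is this sliced-semigroup identification: one must verify that the sub-semigroup of $\Gamma_{\Y}(\W^t)$ cut out by $\va = mk\gamma$ is rich enough to recover the Okounkov body of $\{W^t_{mk\gamma}\}_m$, i.e. generates the sliced cone as a group so that the volume formula applies. This is exactly the content of Lemma \ref{lem:semigroup prpperty} combined with the fact that $\gamma$ lies in the interior of $\Delta^t_{\supp}$, and is the only non-formal ingredient; once it is in hand, Fubini and continuity deliver the rest.
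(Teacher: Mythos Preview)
Your argument is correct and follows the same route as the paper: both use $\vol(\W^t)=(n+r)!\cdot\vol(\Delta^t)$, Fubini along the projection $\Delta^t\to\Delta^t_{\supp}$, and the fiber identity $\vol_{\W^t}(\gamma)=n!\cdot\vol_n(\Delta^t_\gamma)$. The only difference is that the paper obtains the fiber identity by directly citing \cite{LM-okounkov-body}*{Theorem 4.21}, while you unpack that step via Lemma~\ref{lem:semigroup prpperty} and \cite{LM-okounkov-body}*{Theorem 2.13}.
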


\begin{proof}
Let ${\rm pr} \colon \bR^{r+1+n}\to \bR^{r+1}$ be the projection to the first $r+1$ coordinates which induces a map $p\colon \Delta^t \to \Delta^t_{\supp}$. By \cite{LM-okounkov-body}*{Theorem 2.13 and 4.21}, we know that $\vol(\W^t)=(n+r)!\cdot \vol(\Delta^t)$ and $\vol_{\W^t}(\gamma)=n!\cdot\vol(p^{-1}(\gamma))$ for all $\gamma\in {\rm int}(\Delta^t_{\supp})$. The lemma then follows from the obvious identity $\vol(\Delta^t)=\int_{\Delta^t_{\supp}} \vol(p^{-1}(\gamma)) \rd \gamma$.
\end{proof}

Recall that for any $\bQ$-Cartier big divisor $L$ on $X$ and any integer $k>0$ we have $\delta(kL)=\frac{1}{k} \delta(L)$. This can be generalized to multi-graded linear series as follows. 
Let $L_1,\cdots,L_r$ be $\bQ$-Cartier $\bQ$-divisors on $X$ and let $\W$ be an $\bN^r$-graded linear series associated to them. Let $k>0$ be an integer such that $kL_i$ is Cartier for all $1\le i\le r$. Set $W'_{\va}=W_{k\va}$ ($\va\in\bN^r$), then $\W'$ is an $\bN^r$-graded linear series associated to $kL_1,\cdots,kL_r$.

\begin{lem} \label{lem:delta homogeneous}
In the above notation, assume that $\W$ contains an ample series and has bounded support. Then
\begin{enumerate}
    \item $S(\W';v)=k\cdot S(\W;v)$ for any valuation $v$ on $X$;
    \item $\delta(\W)=k\cdot\delta(\W')$ and $\delta_Z(\W)=k\cdot\delta_Z(\W')$ for any subvariety $Z$ of $X$.
\end{enumerate}
\end{lem}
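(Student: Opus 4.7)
The plan is to deduce part (1) from the integral formula in Corollary~\ref{cor:S formula}, and then obtain part (2) by minimizing over valuations via Lemma~\ref{lem:delta=inf A/S}. First I would observe that $\W'$ inherits bounded support and the ample-series property from $\W$; this is immediate from the definition $W'_{\va}=W_{k\va}$ together with the requirement that $kL_i$ be Cartier.

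The heart of the argument is a comparison of Okounkov bodies. Fix an admissible flag $\Y$ of length $n=\dim X$ centered at a general point of $X$, and let $\psi\colon \bR^{r+n}\to\bR^{r+n}$ be the linear map $(t,\va,\vec{\nu})\mapsto(kt,k\va,\vec{\nu})$. Because $W'_{m,\va}=W_{km,k\va}$, the graded semigroups satisfy $\psi(\Gamma(\W'))\subseteq\Gamma(\W)$, and so $\psi(\Sigma(\W'))\subseteq\Sigma(\W)$. The reverse inclusion $\Sigma(\W')\supseteq\psi^{-1}(\Sigma(\W))$ follows from Lemma~\ref{lem:semigroup prpperty}: any integer vector $(km',k\va',\vec{\nu}')$ in the interior of $\Sigma(\W)$ lies in $\Gamma(\W)$ after multiplication by some $k'\gg 0$, and this element corresponds via $W_{k'km',k'k\va'}=W'_{k'm',k'\va'}$ to an element of $\Gamma(\W')$; rational points of $\psi^{-1}(\Sigma(\W))$ are dense, and taking closures yields the claim. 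Slicing at first coordinate $1$ gives
\[
\Delta(\W')=\{(\va,\vec{\nu})\,:\,(1,\va,\vec{\nu}/k)\in\Delta(\W)\},
\]
so $\vol(\Delta(\W'))=k^n\vol(\Delta(\W))$, and hence $\vol(\W')=k^n\vol(\W)$.

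Running the same argument with the filtration $\cF_v$ in place of the trivial filtration, using that $\cF_v^{mt}W'_{m,\va}=\cF_v^{mt}W_{km,k\va}=(\W^{t/k})_{km,k\va}$, gives $\vol((\W')^t)=k^n\vol(\W^{t/k})$ for every $t$ with $\cF_v$ linearly bounded on $\W$ (e.g.\ whenever $A_{X,\Delta}(v)<\infty$). Substituting $s=t/k$ in Corollary~\ref{cor:S formula},
\[
S(\W';v)=\frac{1}{\vol(\W')}\int_0^\infty\vol((\W')^t)\,\rd t=\frac{k^{n+1}}{k^n\vol(\W)}\int_0^\infty\vol(\W^s)\,\rd s=k\cdot S(\W;v),
\]
which proves (1). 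Part (2) is then immediate from Lemma~\ref{lem:delta=inf A/S}: since the hypotheses transfer to $\W'$, we have $\delta(\W')=\inf_v A_{X,\Delta}(v)/S(\W';v)=\frac{1}{k}\inf_v A_{X,\Delta}(v)/S(\W;v)=\frac{1}{k}\delta(\W)$, and the same computation with the infimum restricted to valuations centered at $Z$ gives the statement for $\delta_Z$.

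The main obstacle is the equality $\Sigma(\W')=\psi^{-1}(\Sigma(\W))$; the easy inclusion is formal but the converse requires the density/approximation step via Lemma~\ref{lem:semigroup prpperty} and the ample-series hypothesis, in the spirit of \cite{LM-okounkov-body}*{Lemma~4.20}. Once this is in hand, the volume scaling and the change of variables in the integral are routine.
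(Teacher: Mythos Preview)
Your argument is correct and follows essentially the same route as the paper: you compare the Okounkov cones of $\W$ and $\W'$ via the linear map that scales the grading coordinates by $k$, establish $\Sigma(\W)=\psi(\Sigma(\W'))$ using the semigroup approximation from Lemma~\ref{lem:semigroup prpperty}, and then read off the scaling of the $S$-invariant. The only cosmetic difference is that you invoke Corollary~\ref{cor:S formula} and change variables in the $t$-integral, whereas the paper works directly with the concave function $G$ on $\Delta(\W)$ and the relation $G(\tfrac{1}{k}\psi(\gamma))=\tfrac{1}{k}G'(\gamma)$; these are equivalent formulations of the same computation.
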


In particular, this implies that for the calculation of stability thresholds, we only need to consider multi-graded linear series associated to \emph{Cartier divisors}.

\begin{proof}
We use the same notation as in the proof of Lemma \ref{lem:delta=inf A/S} and let $\Delta'$, $G'$ etc. be the counterparts on $\W'$. Let $f\colon \bR^{r+n}\to \bR^{r+n}$ be given by $$(x_1,\cdots,x_{r+n})\mapsto (kx_1,\cdots,kx_r,x_{r+1},\cdots,x_{r+n}).$$
We claim that
\begin{equation} \label{eq:OKbody of W vs W'}
    \Sigma(\W)=f(\Sigma(\W')).
\end{equation}
Indeed, it is clear from the construction that $f(\Gamma(\W'))\subseteq \Gamma(\W)$, hence $f(\Sigma(\W'))\subseteq \Sigma(\W)$. On the other hand, from the proof of Lemma \ref{lem:semigroup prpperty} we know that there exists some $\gamma_0\in \Gamma(\W)$ such that
\[
(\Sigma(\W)+\gamma_0)\cap \bN^{r+n} \subseteq \Gamma(\W),
\] 
hence as $f(\Gamma(\W'))=f(\bN^{r+n})\cap \Gamma(\W)$ we have $(\Sigma(\W)+\gamma_0)\cap f(\bN^{r+n})\subseteq f(\Gamma(\W'))$ and therefore $\Sigma(\W)\subseteq f(\Sigma(\W'))$, which proves the claim.

It follows from \eqref{eq:OKbody of W vs W'} that $\Delta(\W)=\frac{1}{k}f(\Delta(\W'))$ (recall that we identify $\Delta(\W)$ as a subset of $\{1\}\times \bR^{r-1+n}$). Replace $\W$ by $\W^{t/k}$, noting that $W'^t_{m,\va}:=\cF^{mt}W'_{m,\va}=W^{t/k}_{km,k\va}$, we deduce $\Delta^{t/k}=\frac{1}{k}f(\Delta'^t)$. Hence $\Delta=\frac{1}{k}f(\Delta')$ and 
\begin{equation} \label{eq:G of W vs W'}
    G\left(\frac{f(\gamma)}{k}\right)=\frac{G'(\gamma)}{k}
\end{equation}
for any $\gamma\in \Delta'$. Substitute it into the equality  $S(\W;v)=\frac{1}{\vol(\Delta)}\int_{\Delta}G \rd \rho $ from the proof of Lemma \ref{lem:delta=inf A/S} we obtain $S(\W;v)=\frac{1}{k}S(\W';v)$. The remaining parts of the lemma now follow immediately from Lemma \ref{lem:delta=inf A/S}.
\end{proof}

To further analyze basis type divisors of $\W$, for each $\va\in\bN^{r+1}$ with $W_{\va}\neq 0$ we let $M_{\va}$ (resp. $F_{\va}$) be the movable (resp. fixed) part of the linear system $|W_{\va}|$. Thus we have a decomposition $|W_{\va}|=|M_{\va}|+F_{\va}$. 
For each $m\in M(\W)$, let
\[
F_m = F_{m} (\W) := \frac{1}{m\cdot h^0(W_{m,\vb})}\sum_{\va\in\bN^r} \dim (W_{m,\va})\cdot F_{m,\va}.
\]
Then it is clear that every $m$-basis type $\bQ$-divisor $D$ of $\W$ can be decomposed as $D=D'+F_m$ where $D'$ is an $m$-basis type $\bQ$-divisor of $M_{\vb}$ (the definition of basis type divisors works for any collection of linear series indexed by $\bN\times \bN^r$). We next study the asymptotic behaviour of $D'$ and $F_m$.

\begin{lemdefn}
Let $L_0,\cdots,L_r$ be $\bQ$-Cartier $\bQ$-divisors on $X$ and let $\W$ be an associated $\bN\times \bN^r$-graded linear series which has bounded support and contains an ample series. Then in the notation of Definition \ref{defn:basis type for multi-graded}, the limit
\[
c_1(\W):=\lim_{m\to \infty} \frac{c_1(W_{m,\vb})}{m\cdot h^0(W_{m,\vb})}
\]
exists in $\Pic(X)_\bR$. Similarly, $\lim_{m\to\infty} \ord_D F_{m}$ exists for any prime divisor $D\subseteq X$. We will formally write 
\[
F(\W):=\sum_D (\lim_{m\to\infty} \ord_D (F_{m}))\cdot D.
\]
When this is a finite sum, we set $c_1(M_{\vb}):=c_1(\W)-F(\W)\in \Cl(X)_\bR$. 
\end{lemdefn}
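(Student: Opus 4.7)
The plan is as follows. For part (1), observe that a basis type divisor of $W_{m,\va}$ has class $\dim(W_{m,\va})\cdot[mL_0+\sum_i a_iL_i]$, which gives
\[
\frac{c_1(W_{m,\vb})}{m\,h^0(W_{m,\vb})}=[L_0]+\sum_{i=1}^{r}\alpha_i^{(m)}\,[L_i],\qquad \alpha_i^{(m)}:=\frac{\sum_{\va} a_i\dim(W_{m,\va})}{m\,h^0(W_{m,\vb})},
\]
reducing the claim to convergence of each scalar $\alpha_i^{(m)}$. I would fix an admissible flag $\Y$ of length $n=\dim X$ along which the $L_i$ are Cartier at $Y_n$; then the probability measure $\eta_m:=\frac{1}{h^0(W_{m,\vb})}\sum_{\va}\dim(W_{m,\va})\,\delta_{\va/m}$ on $\Delta_\supp:=\Supp(\W)\cap(\{1\}\times\bR^r)$ is the normalized pushforward of the atomic measure $\rho_m$ of \eqref{eq:rho_m} under the projection $\Delta(\W)\to\Delta_\supp$. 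By the Boucksom equidistribution result cited in Remark \ref{rem:NObody}, $\rho_m$ converges weakly to Lebesgue measure on $\Delta(\W)$, so $\eta_m$ converges weakly to a probability measure $\eta$ on $\Delta_\supp$; pairing against the continuous bounded function $\bar\gamma\mapsto\bar\gamma_i$ then gives $\alpha_i^{(m)}\to\int\bar\gamma_i\,d\eta$.

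For part (2), for each prime divisor $D'$ on $X$ I would write $F_{m,\va}=\sum_{D'}\mu_{m,\va}(D')\,D'$ with $\mu_{m,\va}(D'):=\min_{0\neq s\in W_{m,\va}}\ord_{D'}(s)$, so that $\mathrm{coeff}_{D'}(F_m(\W))=\int\beta_m^{D'}\,d\eta_m$ where $\beta_m^{D'}(\va/m):=\mu_{m,\va}(D')/m$. The multiplicative inclusion $W_{m_1,\va_1}\cdot W_{m_2,\va_2}\subseteq W_{m_1+m_2,\va_1+\va_2}$ forces subadditivity of the function $(m,\va)\mapsto\mu_{m,\va}(D')$, so a multidimensional Fekete argument yields a pointwise decreasing limit $\beta_m^{D'}\searrow\beta_{D'}$ with $\beta_{D'}$ concave, hence continuous, on the interior of $\Delta_\supp$. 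Since $\beta_m^{D'}\ge\beta_{D'}$ and $\eta_m\to\eta$ weakly, I immediately obtain $\liminf_m\int\beta_m^{D'}\,d\eta_m\ge\int\beta_{D'}\,d\eta$; the matching upper bound would follow by writing $m=km_0+s$ ($0\le s<m_0$) and using subadditivity to get $\beta_m^{D'}\le\beta_{m_0}^{D'}+O(1/m)$ uniformly in $\va/m$, yielding $\limsup_m\int\beta_m^{D'}\,d\eta_m\le\int\beta_{m_0}^{D'}\,d\eta$, then letting $m_0\to\infty$ by monotone convergence.

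To ensure $F(\W)$ has finite support, I would use that $\W$ contains an ample series. The decomposition $\va_0\cdot\vec L=A+E$ together with the inclusion $H^0(X,mA)\subseteq W_{m\va_0}$ forces $F_{m,\va_0}\le mE$, and multiplying arbitrary sections with those from the $\va_0$-direction propagates the bound via subadditivity: any prime appearing in $F_{m,\va}$ lies in the divisorial part of the augmented base locus of the $\bR$-Cartier class $[L_0+\sum_i(a_i/m)L_i]$. Since the interior of $\Delta_\supp$ maps into the big cone of $N^1(X)_\bR$ and $\overline{\Delta_\supp}$ is compact, Nakayama's theorem on the divisorial Zariski decomposition confines the possible primes to a fixed finite set, so $F(\W)=\sum_{D'}\bigl(\int\beta_{D'}\,d\eta\bigr)\,D'$ is a genuine $\bR$-divisor. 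The main obstacle is making the $\limsup$ step rigorous while simultaneously securing the uniform finite-support bound, since pointwise Fekete convergence is not a priori uniform across $\Delta_\supp$, and the support argument requires careful interplay between the ample-series hypothesis, subadditivity, and Nakayama-type continuity of asymptotic base loci in families.
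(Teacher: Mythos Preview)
Your treatment of $c_1(\W)$ is essentially the paper's: reduce to convergence of the scalars $\alpha_i^{(m)}$ and read these off as integrals of coordinate functions against the normalized atomic measure on the Okounkov body, then invoke Boucksom's weak convergence.

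For $F(\W)$ the approaches diverge, and the gap you flag is real. Your Fekete--subadditivity route gives pointwise convergence $\beta_m^{D'}(\gamma)\to\beta_{D'}(\gamma)$ for each rational $\gamma$ in the interior of $\Delta_\supp$, but passing to the integral against the varying measures $\eta_m$ requires uniformity that subadditivity alone does not supply. (Minor point: Fekete gives $\lim=\inf$, hence $\beta_m^{D'}\ge\beta_{D'}$, but not literal monotonicity; this does not affect your $\liminf$ bound.) Your proposed $\limsup$ step via $m=km_0+s$ runs into the problem that $\beta_{m_0}^{D'}$ is only defined on the $\frac{1}{m_0}$-lattice while $\eta_m$ lives on the $\frac{1}{m}$-lattice, so comparing values requires a modulus of continuity you do not yet have.

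The paper sidesteps this entirely by encoding $\ord_D$ into the Okounkov construction itself. For each prime $D$ it chooses an admissible flag $\Y$ of length $n$ with $Y_1=D$ (centered at a general point of $D$), so that $\ord_D(s)=\nu_1(s)$ is the first Okounkov coordinate. Then $\ord_D(F_{m,\va})$ is simply the minimal value of the $(r{+}1)$-st coordinate on the slice $\Gamma(\W)\cap(\{(m,\va)\}\times\bN^n)$, and its asymptotic counterpart $f(\vec\gamma)$ is the minimum of that coordinate on the fiber of the closed cone $\Sigma(\W)$ over $\vec\gamma$. The Khovanskii-type lemma used in the proof of Lemma~\ref{lem:semigroup prpperty} --- that there exists $\gamma_0\in\Gamma(\W)$ with $(\Sigma(\W)+\gamma_0)\cap\Lambda\subseteq\Gamma(\W)$ --- then yields a constant $C$ \emph{independent of} $\vec\gamma$ with
\[
f(\vec\gamma)-C\le \ord_D(F_{\lfloor\vec\gamma\rfloor})\le f(\vec\gamma)+C.
\]
This is precisely the uniform control your Fekete approach is missing; with it, $\ord_D(F_m)$ differs from $\int_\Delta (f\circ p)\,\rd\rho_m\big/\int_\Delta\rd\rho_m$ by $O(1/m)$, and weak convergence of $\rho_m$ finishes. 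In short, the missing idea is to build $\ord_D$ into the semigroup via the flag choice rather than treating it as an external subadditive function; uniformity then comes for free from the convex geometry of $\Sigma(\W)$, and no Nakayama-type argument for finite support is needed.
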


\begin{proof}
Let $\vec{L}=(L_1,\cdots,L_r)$. In the notation of Definitions \ref{defn:multi-graded} and \ref{defn:basis type for multi-graded}, we have
\[
\frac{c_1(W_{m,\vb})}{m\cdot h^0(W_{m,\vb})} = L_0 + \frac{\sum_{\va\in \bN^r} h^0(W_{m,\va})\cdot (\va\cdot \vec{L})}{m\cdot h^0(W_{m,\vb})}.
\]
Thus for $c_1(\W)$ it suffices to show that $\lim_{m\to \infty} \frac{\sum_{\va\in \bN^r} h^0(W_{m,\va})\cdot a_i}{m\cdot h^0(W_{m,\vb})}$ exists for each $1\le i\le r$. In the notation of Remark \ref{rem:NObody}, we have
\[
\frac{\sum_{\va\in \bN^r} h^0(W_{m,\va})\cdot a_i}{m\cdot h^0(W_{m,\vb})} = \frac{\int x_i \rd \rho_m}{\int \rd \rho_m} 
\]
where $x_i$ denotes the $i$-th entry of an element of $\bR^{r+n}$. Hence by \cite{Bou-NObody}*{Th\'eor\`eme 1.12} the limit exists and equals $\frac{1}{\vol(\Delta)}\int_{\Delta} x_i \rd \rho$ where $\Delta = \Delta(\W)$.

For $F(\W)$ it suffices to show that $\lim_{m\to \infty}\ord_D(F_m)$ exists for any prime divisor $D$. First note that since $\W$ has bounded support, there exists some constant $C_1>0$ such that $|\va|\le C_1 m$ for any $\va\in\bN^{r}$ with $W_{m,\va}\neq 0$. Thus as $mL_0+\va\cdot \vec{L}-F_{m,\va}$ is effective, we further deduce $\ord_D(F_{m,\va})\le Tm$ for some absolute constant $T$. Let $\Delta_0:={\rm int}(\Supp(\W))\cap (\{1\}\times\bR^r)\subseteq \bR^r$.  Since $W_{\va}\cdot W_{\va'}\subseteq W_{\va+\va'}$, we have $F_{\va}+F_{\va'}\ge F_{\va+\va'}$ (whenever $W_{\va},W_{\va'}\neq 0$), thus if we let 
\[
f_{\W,D}(\gamma):=\inf_{m} \frac{\ord_D(F_{m,m\vec{\gamma}})}{m} = \lim_{m\to\infty} \frac{\ord_D(F_{m,m\vec{\gamma}})}{m}
\]
for $\gamma\in \Delta_0 \cap \bQ^r$ where the infimum and the limit are taken over sufficiently divisible integers $m$, then $f_{\W,D}(t\gamma_1+(1-t)\gamma_2)\le tf_{\W,D}(\gamma_1)+(1-t)f_{\W,D}(\gamma_2)$ for any $\gamma_1,\gamma_2\in \Delta_0$. Therefore it naturally extends to a convex (and hence continuous) function $f_{\W,D}$ on $\Delta_0$. For simplicity we denote $f_{\W,D}$ by $f$. By the previous discussion, $f(\gamma)\le T$ for all $\gamma\in\Delta_0$. 

We claim that $f(\gamma)=\lim_{m\to\infty} f_m(\gamma)$ for any $\gamma\in \Delta_0$ where 
\[
f_m(\gamma):=
\begin{cases}
    \frac{1}{m}\ord_D (F_{m,\lfloor m\vec{\gamma}\rfloor}) & \text{if } W_{m,\lfloor m\vec{\gamma}\rfloor}\neq 0 \\
    T & \text{if } W_{m,\lfloor m\vec{\gamma}\rfloor} = 0.
\end{cases}
\]
Indeed, as $f_m(\gamma)\ge f(\frac{\lfloor m\vec{\gamma}\rfloor}{m})$ ($m\gg 0$) by definition, we have 
\[
\liminf_{m\to \infty} f_m(\gamma)\ge \lim_{m\to\infty} f(\frac{\lfloor m\vec{\gamma}\rfloor}{m}) = f(\gamma).
\]
To get the reverse direction, let $\varepsilon>0$ and choose $\gamma_i\in \Delta_0\cap \bQ^r$ ($i=0,\cdots,r$) that are sufficiently close to $\gamma$ such that their convex hull contains $\gamma$ in the interior and $f(\gamma_i) < f(\gamma)+\varepsilon$. Then we may choose some sufficiently divisible $m_0\in\bN$ such that $f_{m_0}(\gamma_i)<f(\gamma)+\varepsilon$. Let $\Pi\subseteq \bR^{r+1}$ be the cone spanned by all the $\gamma_i$'s. From the proof of Lemma \ref{lem:semigroup prpperty}, we know that there exists some $\va_0\in \bN^{r+1}$ such that $W_{\va}\neq 0$ for all $\va\in (\Pi+\va_0)\cap \bN^{r+1}$ (consider the semigroup $\{\va\,\vert\,W_{\va}\neq 0\}\subseteq \bN^{r+1}$, choose a finitely generated sub-semigroup that generates $\bZ^{r+1}$ such that the cone it spans contains $\Pi$, and apply \cite{Kho-polytope}*{Proposition 3}). Then one can verify that there exists some constant $C>0$ such that for all $m\gg 0$, we have
\[
(m,\lfloor m\vec{\gamma}\rfloor) = \va + \sum_{i=0}^r k_i (m_0,m_0\gamma_i)
\]
for some $k_i\in\bN$ and some $\va\in\bN^{r+1}$ satisfying $W_{\va}\neq 0$ and $|\va|\le C$. In particular $|m-m_0\sum k_i|\le C$ and $\ord_D(W_{\va})\le CT$. It follows that
\begin{align*}
    \ord_D (F_{m,\lfloor m\vec{\gamma}\rfloor}) 
    & \le \ord_D (W_{\va})+\sum_{i=0}^r k_i \ord_D(F_{m_0,m_0\gamma_i}) = \ord_D (W_{\va})+\sum_{i=0}^r k_i m_0 f_{m_0}(\gamma_i) \\ 
    & \le \ord_D (W_{\va})+(m-C)(f(\gamma)+\varepsilon) \le CT+ (m-C)(f(\gamma)+\varepsilon).
\end{align*}
Hence $\limsup_{m\to\infty} f_m(\gamma)\le f(\gamma)+\varepsilon$. Since $\varepsilon>0$ is arbitrary, we get $\limsup_{m\to\infty} f_m(\gamma)\le f(\gamma)$ and this proves the claim. Note that the argument also shows $W_{m,\lfloor m\vec{\gamma}\rfloor}\neq 0$ for $m\gg 0$.

It is clear that
\[
\ord_D(F_m) = \frac{\int (f_m\circ p) \rd \rho_m}{\int \rd \rho_m}
\]
where $p\colon \Delta=\Delta(\W)\to \Delta_0$ is the natural projection. By dominated convergence and the above claim, the latter limit exists and equals $\frac{1}{\vol(\Delta)}\int_\Delta (f\circ p) \rd \rho$.
\end{proof}

For later calculations, we extract a formula for $F(\W)$ from the above proof.

\begin{cor} \label{cor:F(W) formula}
Let $\W$ be an $\bN\times \bN^r$-graded linear series on $X$ which has bounded support and contains an ample series and let $D$ be a prime divisor. Then
\[
\ord_D(F(\W)) = \frac{(n+r)!}{n!}\cdot  \frac{1}{\vol(\W)}\int_{\Delta_{\supp}} f(\gamma)\vol_{\W}(\gamma) \rd \gamma
\]
where $\Delta_{\supp}=\Supp(\W)\cap (\{1\}\times \bR^r)$, $f(\gamma)=f_{\W,D}(\gamma):=\lim_{m\to \infty} \frac{1}{m}\ord_D(F_{m,\lfloor m\vec{\gamma} \rfloor})$, $n=\dim X$ and $\vol_{\W}(\cdot)$ is as in \eqref{eq:vol_W}.
\end{cor}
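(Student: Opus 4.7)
The plan is to extract the formula directly from the computation carried out at the end of the proof of the previous Lemma-Definition. That proof already established the identity
\[
\ord_D F(\W) \;=\; \frac{1}{\vol(\Delta)}\int_\Delta (f\circ p)\,\rd\rho,
\]
where $\Delta = \Delta_{\Y}(\W)\subseteq\{1\}\times\bR^{r+n}$ is the Okounkov body associated to an admissible flag $\Y$ of length $n$ centered at a general point of $D$, and $p\colon \Delta\to \Delta_{\supp}$ is the restriction of the projection $\bR^{r+1+n}\to \bR^{r+1}$ onto the first $r+1$ coordinates. The function $f=f_{\W,D}$ in the corollary is precisely the one appearing on the right-hand side, since it depends only on $\gamma\in\Delta_{\supp}$.

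First I would apply Fubini's theorem to the iterated integral, using that $f\circ p$ is constant along the fibres of $p$:
\[
\int_\Delta (f\circ p)\,\rd\rho \;=\; \int_{\Delta_{\supp}} f(\gamma)\,\vol\bigl(p^{-1}(\gamma)\bigr)\,\rd\gamma,
\]
where $\vol(p^{-1}(\gamma))$ denotes the $n$-dimensional Lebesgue measure of the fibre in $\bR^n$. Next, I would invoke the fibre volume identification $\vol_{\W}(\gamma)= n!\cdot\vol(p^{-1}(\gamma))$ valid on ${\rm int}(\Delta_{\supp})$, exactly as used in the proof of Lemma \ref{lem:vol(W^t)} and which ultimately rests on \cite{LM-okounkov-body}*{Theorem 2.13 and 4.21}. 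Since the boundary of $\Delta_{\supp}$ has Lebesgue measure zero and $\vol_{\W}$ extends continuously to a bounded function on $\Delta_{\supp}$, this substitution is valid inside the integral.

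Finally, I would substitute $\vol(\Delta) = \vol(\W)/(n+r)!$ from Remark \ref{rem:NObody} (noting that $\W$ is $\bN\times\bN^r$-graded, so the relevant dimension count is $n+r$, not $n+r-1$). Combining these three steps yields
\[
\ord_D F(\W)
\;=\; \frac{1}{n!\cdot\vol(\Delta)}\int_{\Delta_{\supp}} f(\gamma)\,\vol_{\W}(\gamma)\,\rd\gamma
\;=\; \frac{(n+r)!}{n!}\cdot \frac{1}{\vol(\W)}\int_{\Delta_{\supp}} f(\gamma)\,\vol_{\W}(\gamma)\,\rd\gamma,
\]
which is the desired identity. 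There is no real obstacle here: the whole argument is bookkeeping on top of the preceding Lemma-Definition, and the only point deserving a moment of care is verifying that the almost-everywhere equality $\vol_{\W}(\gamma)=n!\cdot\vol(p^{-1}(\gamma))$ suffices for the integral substitution, which is immediate from the measure-zero boundary together with boundedness.
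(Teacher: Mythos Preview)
Your proposal is correct and follows essentially the same approach as the paper. The paper's proof is terser---it simply states the three ingredients $\ord_D F(\W)=\frac{1}{\vol(\Delta)}\int_\Delta (f\circ p)\,\rd\rho$, $\vol(\W)=(n+r)!\cdot\vol(\Delta)$, and $\vol_{\W}(\gamma)=n!\cdot\vol(p^{-1}(\gamma))$ and declares that they imply the formula---whereas you spell out the Fubini step and the measure-zero boundary justification, but the content is the same.
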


\begin{proof}
The above proof gives $\ord_D(F(\W))=\frac{1}{\vol(\Delta)}\int_\Delta (f\circ p) \rd \rho$. We have $\vol(\W)=(n+r)!\cdot \vol(\Delta)$, $p(\Delta)=\Delta_{\supp}$ and $\vol_{\W}(\gamma)=n!\cdot \vol(p^{-1}(\gamma))$ for any $\gamma\in {\rm int}(\Delta_{\supp})$. These together imply the given formula.
\end{proof}

Most multi-graded linear series considered in this paper come from the refinement of some complete linear series by a divisor or a flag. To simplify computations, we often carefully choose the divisor (or flag) so that the corresponding multi-graded linear series behaves like complete linear systems associated to multiples of a fixed line bundle. 

\begin{defn} \label{defn:almost complete}
Let $L$ be a big line bundle on $X$ and let $\W$ be an $\bN\times \bN^r$-graded linear series. 
We say that $\W$ is \emph{almost complete} (with respect to $L$) if the following two conditions are both satisfied:
\begin{enumerate}
    \item there are at most finitely many prime divisors $D\subseteq X$ with $\ord_D (F(\W))>0$ (so that $F(\W)$ is an $\bR$-divisor),
    \item for every $\vg\in \bQ^r$ in the interior of $\Delta_{\supp}:=\Supp(\W)\cap (\{1\}\times \bR^r)$ and all sufficiently divisible integers $m$ (depending on $\vg$), we have $|M_{m,m\vg}|\subseteq|L_{m,\vg}|$ for some $L_{m,\vg}\equiv \ell_{m,\vg}L$ and some $\ell_{m,\vg}\in \bN$ (where $M_{\vb}$ is the movable part of $\W$) such that 
\[
\frac{h^0(W_{m,m\vg})}{h^0(X,\ell_{m,\vg}L)}=\frac{h^0(M_{m,m\vg})}{h^0(X,\ell_{m,\vg}L)}\to 1
\]
as $m\to \infty$.
\end{enumerate}
\end{defn}


\begin{expl} \label{ex:graded series from ample div}
Let $L$ be an ample line bundle on $X$, and let $H\in|L|$. Assume that $H$ is irreducible and reduced. Let $\V$ be the complete linear series associated to $rL$ for some positive integer $r$ and let $\W$ be its refinement by $H$ (Example \ref{ex:refinement by div}). Then the $\bN^2$-graded linear series $\W$ is almost complete. Indeed, we have $W_{m,j}=|(mr-j)L|_H$; but since $L$ is ample, the natural restriction $H^0(X,kL)\to H^0(H,kL|_H)$ is surjective when $k\gg 0$, hence $W_{m,j}=|(mr-j)L_0|$ (where $L_0=L|_H$) and $F_{m,j}=0$ when $mr-j\gg 0$, so the conditions of Definition \ref{defn:almost complete} are satisfied and $F(\W)=0$. More generally, if $\Y$ is an admissible flag \emph{on} $X$ (i.e. $Y_0=X$) such that each $Y_i$ is Cartier on $Y_{i-1}$ and $Y_i\sim m_i L|_{Y_{i-1}}$ for some $m_i\in\bN$, then the refinement of $\V$ by $\Y$ (Example \ref{ex:refinement by flag}) is almost complete as well.
\end{expl}

\begin{lem} \label{lem:S of ac W}
Let $L$ be a big line bundle on $X$ and let $\W$ be an $\bN\times \bN^r$-graded linear series. Assume that $\W$ has bounded support, contains an ample series and is almost complete with respect to $L$. Then 
\begin{enumerate}
    \item $F(\W)$ is $\bR$-Cartier $($i.e. it is an $\bR$-linear combination of Cartier divisors$)$,
    \item there exists a constant $\mu=\mu(X,L,\W)$ such that $c_1(M_{\vb})=\mu L$ in $\NS(X)_\bR$ and
    \begin{equation} \label{eq:S(W)=S(L)+F}
	    S(\W;v) = \mu\cdot S(L;v) + v (F(\W))
    \end{equation}
    for all valuations $v\in\Val_X^*$ of linear growth. 
\end{enumerate}
\end{lem}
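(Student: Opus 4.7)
The plan is to leverage almost-completeness to reduce both claims to asymptotic statements about the complete linear series of $L$. For the first claim of part~(2), summing the identity $c_1(W_{m,\va})=c_1(M_{m,\va})+F_{m,\va}$ over $\va$ with weights $\dim(W_{m,\va})$ and dividing by $m h^0(W_{m,\vb})$ produces, in the limit,
\[
c_1(M_{\vb})=\lim_{m\to\infty}\frac{1}{m h^0(W_{m,\vb})}\sum_{\va}\dim(W_{m,\va})\, c_1(M_{m,\va}).
\]
For $\vg$ in the interior of $\Delta_{\supp}$ almost-completeness gives $c_1(M_{m,m\vg})\equiv\ell_{m,\vg}L$ in $\NS(X)_\bR$, and the Okounkov-body description of the asymptotics of $h^0(W_{m,\va})$ (Remark~\ref{rem:NObody}) forces $c_1(M_{\vb})\equiv\mu L$ in $\NS(X)_\bR$, with
\[
\mu=\lim_{m\to\infty}\frac{1}{m h^0(W_{m,\vb})}\sum_\va\dim(W_{m,\va})\,\ell_{m,\va}.
\]
Part~(1) follows from the identity $F(\W)=c_1(\W)-c_1(M_{\vb})$ in $\Cl(X)_\bR$: the class of $c_1(\W)$ lies in $\Pic(X)_\bR$ by construction and $c_1(M_{\vb})\equiv\mu L$ is the class of an $\bR$-Cartier divisor, so the class of $F(\W)$ is in the image of $\Pic(X)_\bR\to\Cl(X)_\bR$; combined with the finite-support property provided by Corollary~\ref{cor:F(W) formula}, this makes $F(\W)$ an $\bR$-linear combination of Cartier divisors.

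For the $S$-invariant formula, each $m$-basis type $\bQ$-divisor $D$ of $\W$ decomposes uniquely as $D=D'+F_m(\W)$, where $D'$ is an $m$-basis type $\bQ$-divisor of the movable part $M_{\vb}$, so that
\[
S_m(\W;v)=\sup_{D'}v(D')+v(F_m(\W)),
\]
and the second term converges to $v(F(\W))$ by continuity. For the first term, write $\tilde S(V;v)$ for the supremum of $\sum_i v(s_i)$ over bases of $V$, so that $\sup_{D'}v(D')=\frac{1}{mh^0(W_{m,\vb})}\sum_\va\tilde S(M_{m,\va};v)$. Almost-completeness provides an inclusion $M_{m,\va}\hookrightarrow H^0(X,L_{m,\va})$ of asymptotically full dimension ratio, and the linear-growth bound $T(L;v)<\infty$ forces $v(s)\leq \ell_{m,\va}\cdot T(L;v)$ for every nonzero $s\in H^0(X,L_{m,\va})$. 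Extending a $v$-compatible basis of $M_{m,\va}$ to one of $H^0(X,L_{m,\va})$ then yields
\[
|\tilde S(H^0(X,L_{m,\va});v)-\tilde S(M_{m,\va};v)|\leq T(L;v)\cdot\ell_{m,\va}\cdot(h^0(X,L_{m,\va})-\dim M_{m,\va}),
\]
which, after dividing by $m h^0(W_{m,\vb})$ and summing over $\va$, is negligible by the dimension condition in Definition~\ref{defn:almost complete}. Combined with the uniformity $S_k(L;v)\to S(L;v)$ coming from the proof of Lemma~\ref{lem:delta=inf A/S} (in the spirit of \cite{BJ-delta}*{Corollary~2.10}), the first term tends to $\mu\cdot S(L;v)$ with the same $\mu$ as above, establishing \eqref{eq:S(W)=S(L)+F}.

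The main obstacle is the passage from pointwise to aggregate control of the almost-completeness deficit: the hypothesis only gives $h^0(X,\ell_{m,\vg}L)-\dim M_{m,m\vg}=o(h^0(X,\ell_{m,\vg}L))$ for each fixed $\vg$, whereas both computations above require the total weighted error, summed over $\va\in\bN^r$, to vanish in the limit. Bounded support confines $\va/m$ to a compact slice, and the weak convergence of the measures $\rho_m$ to the Lebesgue measure on $\Delta(\W)$ (Remark~\ref{rem:NObody}) describes the asymptotic distribution of $\dim(W_{m,\va})$ in $\va$; together these should convert pointwise into aggregate convergence via a dominated-convergence argument.
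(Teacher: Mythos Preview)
Your approach is genuinely different from the paper's, and the gap you flag at the end is real and not repaired by the dominated-convergence sketch you give. The definition of almost completeness only furnishes $L_{m,\vg}$ and $\ell_{m,\vg}$ for each fixed rational $\vg\in\mathrm{int}(\Delta_{\supp})$ and for \emph{sufficiently divisible} $m$ depending on $\vg$; in particular, at a fixed level $m$ there is no reason for $L_{m,\va}$ or $\ell_{m,\va}$ to exist for most $\va$, so neither your formula for $\mu$ nor the error bound $|\tilde S(H^0(X,L_{m,\va});v)-\tilde S(M_{m,\va};v)|\le T(L;v)\,\ell_{m,\va}\,(h^0(X,L_{m,\va})-\dim M_{m,\va})$ is even well-posed as a sum over $\va$. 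Weak convergence of $\rho_m$ to Lebesgue measure controls the distribution of $\dim W_{m,\va}$, but it says nothing about the pointwise deficits $h^0(X,L_{m,\va})-\dim M_{m,\va}$, which are the quantities you need to bound uniformly. So the aggregate estimate does not follow from the hypotheses as stated.

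The paper's proof avoids this entirely by never working with finite-$m$ sums over $\va$. It first passes to the volume level: by Corollary~\ref{cor:S formula} one has $S(\W;v)=\frac{1}{\vol(\W)}\int_0^\infty\vol(\W^t)\,\rd t$, and by Lemma~\ref{lem:vol(W^t)} each $\vol(\W^t)$ is an integral over $\Delta^t_{\supp}$ of the fibrewise volumes $\vol_{\W^t}(\gamma)$. The heart of the argument is then the \emph{pointwise} identity $\vol_{\W^t}(\gamma)=\vol(g(\gamma)L;\,v\ge t-h(\gamma))$ for each rational $\gamma$, where $g(\gamma)=\lim_m\frac{1}{m}\ell_{m,\vg}$ and $h(\gamma)=v(F(\gamma))$. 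This identity is established one $\gamma$ at a time by letting $m\to\infty$ along sufficiently divisible integers, which is exactly what almost completeness permits; the double integral is then evaluated by Fubini and the usual formula for $S(L;v)$. In short, the paper first takes $m\to\infty$ along each ray and only then integrates over $\gamma$, whereas you try to sum over $\va$ at fixed $m$ and then let $m\to\infty$; the order matters, and the hypothesis is tailored to the first order of limits.
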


\begin{proof}
Let $M(\gamma):=\lim_{m\to\infty} \frac{1}{m} c_1(M_{m,\lfloor m\vec{\gamma}\rfloor})\in \Cl(X)_\bR$ and $F(\gamma)=\lim_{m\to\infty} \frac{1}{m} F_{m,\lfloor m\vec{\gamma}\rfloor}$ for $\gamma\in {\rm int}(\Delta_{\supp})$. As in the previous proof, the limit exists: indeed $M(\gamma)=\vec{\gamma}\cdot \vec{L}-\sum_D f_{\W,D}(\gamma)\cdot D$ and $F(\gamma)=\sum_D f_{\W,D}(\gamma)\cdot D$ in the notation of Corollary \ref{cor:F(W) formula}. Moreover, $M$ is continuous and we have
\[
c_1(M_{\vb})=\frac{1}{\vol(\Delta)}\int_\Delta (M\circ p) \rd \rho 
\]
where $\Delta=\Delta(\W)$ and $p\colon \Delta\to \Delta_{\supp}$ is the natural projection. Since $\W$ is almost complete, we see that $M(\gamma)$ is $\bR$-Cartier and $M(\gamma) \equiv g(\gamma)L$ for some $g(\gamma)\in\bR$. It follows that $c_1(M_{\vb})$ is also $\bR$-Cartier and $c_1(M_{\vb})=\mu L$ in $\NS(X)_\bR$ where
\[
\mu=\frac{1}{\vol(\Delta)}\int_\Delta (g\circ p) \rd \rho=\frac{1}{\vol(\Delta)}\int_{\Delta_{\supp}} \vol(p^{-1}(\gamma))\cdot g(\gamma) \rd \gamma.
\]
Since $F(\W)\sim_\bR c_1(\W)-c_1(M_{\vb})$, we also see that $F(\W)$ is $\bR$-Cartier. It remains to prove \eqref{eq:S(W)=S(L)+F}.

As $F(\gamma)\sim_\bR \vec{\gamma}\cdot \vec{L}-M(\gamma)$ is also $\bR$-Cartier, we may define $h(\gamma)=v(F(\gamma))$ and as in the proof of Corollary \ref{cor:F(W) formula} we have
\[
v(F(\W))=\frac{1}{\vol(\Delta)}\int_\Delta (h\circ p) \rd \rho.
\]
We claim that 
\begin{equation} \label{eq:compare vol(M) to vol(L)}
    \vol_{W^t_{\vb}}(\gamma)=\vol(g(\gamma)L;v\ge t-h(\gamma))
\end{equation}
in the notation of Corollary \ref{cor:S formula} and Lemma \ref{lem:vol(W^t)}. For this we may assume that $\gamma\in\bQ^r$. Let $\cF$ be the filtration induced by $v$ and let $m$ be a sufficiently divisible integer. From the exact sequence 
\[
0\to \cF^{\lambda}M_{m,m\vg}\to \cF^{\lambda}H^0(X,L_{m,\vg})\to H^0(X,L_{m,\vg})/M_{m,m\vg}
\]
and the obvious equality 
\[
|\cF^{mt}W_{m,m\vg}|=|\cF^{mt-v(F_{m,m\vg})}M_{m,m\vg}|+F_{m,m\vg}
\]
we deduce that
\begin{equation} \label{eq:section dim estimate}
    \left| \dim (\cF^{mt}W_{m,m\vg}) - \dim (\cF^{mt-v(F_{m,m\vg})}H^0(X,L_{m,\vg})) \right| \le h^0(X,L_{m,\vg})-h^0(M_{m,m\vg}).
\end{equation}
By \cite{Laz-positivity-1}*{Lemma 2.2.42}, there exists a fixed effective divisor $N$ on $X$ such that $N\pm (L_{m,\vg} - \ell_{m,\vg}L)$ is effective. In particular we have the inclusions
\[
H^0(X,\ell_{m,\vg}L-N)\hookrightarrow H^0(X,L_{m,\vg})\hookrightarrow H^0(X,\ell_{m,\vg}L+N),
\]
\[
H^0(X,\ell_{m,\vg}L-N)\hookrightarrow H^0(X,\ell_{m,\vg}L)\hookrightarrow H^0(X,\ell_{m,\vg}L+N),
\]
which implies
\[
\lim_{m\to\infty} \frac{\dim (\cF^{mt-v(F_{m,m\vg})}H^0(X,L_{m,\vg}))}{m^n/n!} = \lim_{m\to\infty} \frac{\dim (\cF^{mt-v(F_{m,m\vg})}H^0(X,\ell_{m,\vg}L))}{m^n/n!}
\]
Thus as we divide \eqref{eq:section dim estimate} by $m^n/n!$ and letting $m\to \infty$, the right side of the inequality becomes $0$ by the definition of almost completeness and the equality \eqref{eq:compare vol(M) to vol(L)} follows as $g(\gamma)=\lim_{m\to\infty}\frac{1}{m}\ell_{m,\vg}$. 

By Corollary \ref{cor:S formula} and Lemma \ref{lem:vol(W^t)}, we have \[
S(\W;v) = \frac{1}{n!\vol(\Delta)} \iint_{\Delta_{\supp}\times \bR_+} \vol_{W^t_{\vb}}(\gamma) \rd t \rd \gamma.
\]
Combined with \eqref{eq:compare vol(M) to vol(L)}, we then obtain
\begin{align*}
    S(\W;v) & = \frac{1}{n!\vol(\Delta)} \iint_{\Delta_{\supp}\times \bR_+} \vol(g(\gamma)L; v\ge t-h(\gamma)) \rd t \rd \gamma \\
    & = \frac{1}{n!\vol(\Delta)} \int_{\Delta_{\supp}}\left(\int_0^{h(\gamma)}+\int_{h(\gamma)}^\infty\right)\vol(g(\gamma)L; v\ge t-h(\gamma)) \rd t \rd \gamma \\
    & = \frac{1}{n!\vol(\Delta)} \int_{\Delta_{\supp}}\left(h(\gamma)\cdot g(\gamma)^n\vol(L) + \int_0^\infty g(\gamma)^{n+1} \vol(L; v\ge t) \rd t\right) \rd \gamma.
\end{align*}
Notice that $\vol_{\W}(\gamma) = \vol_{\W^0}(\gamma)=g(\gamma)^n\vol(L)$ by \eqref{eq:compare vol(M) to vol(L)}, thus we deduce that
\begin{align*}
    S(\W;v) & = \frac{1}{n!\vol(\Delta)} \int_{\Delta_{\supp}} \vol_{W_{\vb}}(\gamma)\cdot (h(\gamma)+g(\gamma)S(L;v)) \rd \gamma\\
    & = \frac{1}{\vol(\Delta)} \int_{\Delta_{\supp}} \vol(p^{-1}(\gamma))\cdot (h(\gamma)+g(\gamma)S(L;v)) \rd \gamma\\
    & = v(F(\W))+\mu\cdot S(L;v).
\end{align*}
This finishes the proof.
\end{proof}

\begin{cor} \label{cor:delta on curve}
Let $C$ be a smooth curve and let $\W$ be an almost complete multi-graded linear series on $C$ that has bounded support and contains an ample series. Then
\[
\delta_P(C;\W) = \frac{2}{\deg (c_1(\W)-F(\W)) + 2\cdot \mult_P F(\W)}
\]
for all closed point $P\in C$. In particular, $\delta(C;\W)=\frac{2}{\deg c_1(\W)}$ if $F(\W)=0$.
\end{cor}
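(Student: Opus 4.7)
The plan is to reduce the statement to a direct application of Lemmas~\ref{lem:delta=inf A/S} and~\ref{lem:S of ac W}, using that valuations on a smooth curve are essentially trivial to classify.

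Since $C$ is a smooth curve, every valuation $v \in \Val_C^*$ with $A_C(v) < \infty$ is a positive real multiple of $\ord_Q$ for a uniquely determined closed point $Q \in C$, and $A_C(\ord_Q) = 1$. A valuation whose center contains $P$ must therefore be proportional to $\ord_P$, and scaling does not affect the ratio $A/S$. Hence Lemma~\ref{lem:delta=inf A/S} gives $\delta_P(C;\W) = 1/S(\W;\ord_P)$, and the task becomes the computation of $S(\W;\ord_P)$.

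Next I would fix any ample line bundle $L$ on $C$ of positive degree $d$ (say $L = \cO_C(P_0)$ with $d = 1$). Since $\W$ is almost complete with respect to $L$ and has bounded support containing an ample series, Lemma~\ref{lem:S of ac W} applies: there is a constant $\mu$ with $c_1(M_{\vb}) = \mu L$ in $\NS(C)_\bR$, and
$$S(\W;\ord_P) = \mu \cdot S(L;\ord_P) + \ord_P(F(\W)).$$
Numerical equivalence on a curve is detected by degree, so $\mu d = \deg(c_1(\W) - F(\W))$, while $\ord_P(F(\W)) = \mult_P F(\W)$ since $F(\W)$ is an $\bR$-divisor on the smooth curve $C$.

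The last input is the elementary computation of $S(L;\ord_P)$. By Riemann–Roch, for large $m$ and $t \in [0,d]$ one has $\dim H^0(C, mL - \lceil mt \rceil P) = md - \lceil mt\rceil + 1 - g$, so $\vol(L;\ord_P \ge t) = d - t$ on $[0,d]$ and vanishes beyond. Since $\vol(L) = d$, the integral formula for $S$ from Definition~\ref{defn:S and T} yields
$$S(L;\ord_P) = \frac{1}{d}\int_0^d (d - t)\, \rd t = \frac{d}{2}.$$
Substituting back gives $S(\W;\ord_P) = \tfrac{1}{2}\deg(c_1(\W) - F(\W)) + \mult_P F(\W)$, and inverting produces the desired formula for $\delta_P(C;\W)$. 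For the final assertion, when $F(\W) = 0$ the expression for $\delta_P$ is independent of $P$, so taking the infimum over all closed points yields $\delta(C;\W) = 2/\deg c_1(\W)$.

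\textbf{Main obstacle.} There is essentially no technical obstacle: the proof is a straightforward bookkeeping exercise once one recognizes that on a smooth curve every relevant valuation is $\ord_P$, so that Lemma~\ref{lem:S of ac W} can be combined with the Riemann–Roch computation of $S(L;\ord_P) = \tfrac{1}{2}\deg L$. The only point requiring mild care is verifying that the answer is independent of the choice of auxiliary ample line bundle $L$, which is automatic because $\mu$ enters only through the product $\mu \cdot d = \deg(c_1(\W) - F(\W))$.
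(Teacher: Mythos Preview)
Your proposal is correct and follows essentially the same approach as the paper: both reduce to computing $S(\W;\ord_P)$ via Lemma~\ref{lem:S of ac W} together with the elementary fact $S(L;P)=\tfrac{1}{2}\deg L$ on a smooth curve, then invert. The paper is simply terser, writing $S(c_1(\W)-F(\W);P)$ in place of your $\mu\cdot S(L;\ord_P)$ and omitting the explicit valuation classification and Riemann--Roch computation.
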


\begin{proof}
We have $S(L;P)=\frac{1}{2}\deg L$ for any ample line bundle $L$ and any closed point $P$ on $C$. Combining with Lemma \ref{lem:S of ac W}, we see that $S(\W;P)=S(c_1(\W)-F(\W);P)+\mult_P F(\W) = \frac{1}{2}\deg (c_1(\W)-F(\W))+\mult_P F(\W)$. Since $\delta_P(C;\W)=\frac{1}{S(\W;P)}$ and $\delta(C;\W)=\inf_{P\in C}\delta_P(C;\W)$ by definition, the result follows.
\end{proof}

\section{Adjunction for stability thresholds} \label{sec:adj}

In this section, we develop a framework to estimate stability thresholds. The starting point is the following elementary observation (c.f. \cite{BE-compatible-basis}*{Proposition 1.14}).

\begin{lem} \label{lem:basis compatible with two filtrations}
Let $V$ be a finite dimensional vector space and let $\cF$, $\cG$ be two filtrations on $V$. Then there exists some basis $s_1,\cdots,s_N$ of $V$ that is compatible with both $\cF$ and $\cG$.
\end{lem}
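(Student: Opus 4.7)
The plan is to induct on the length of the $\cF$-flag, extending the basis one step at a time while choosing the lifts carefully so that compatibility with $\cG$ is preserved along the way. Since $\dim V<\infty$, the filtration $\cF$ takes only finitely many distinct values; let $\lambda_1>\lambda_2>\cdots>\lambda_p$ enumerate the jumps of $\cF$, and set $V_i:=\cF^{\lambda_i}V$, so that
\[
0=V_0\subsetneq V_1\subsetneq\cdots\subsetneq V_p=V.
\]
I will construct, by induction on $i$, a basis $B_i$ of $V_i$ with (i) $B_{i-1}\subseteq B_i$ and (ii) for every $j\le i$ and every $\mu\in\Lambda$, $V_j\cap\cG^\mu V$ is spanned by a subset of $B_i$. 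Taking $i=p$ yields the required basis, since the $V_j$'s exhaust the distinct $\cF$-pieces and $V\cap\cG^\mu V=\cG^\mu V$.

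The inductive step is the heart of the argument. Given $B_{i-1}$, consider the quotient $V_i/V_{i-1}$ equipped with the induced filtration
\[
\bar\cG^\mu(V_i/V_{i-1}):=\bigl((V_i\cap\cG^\mu V)+V_{i-1}\bigr)/V_{i-1}.
\]
Since any single filtration on a finite-dimensional space admits a compatible basis (greedy construction from the smallest nonzero piece outward), choose a basis $\bar c_1,\ldots,\bar c_r$ of $V_i/V_{i-1}$ compatible with $\bar\cG$, and for each $k$ let $\mu_k\in\Lambda$ be the maximal weight with $\bar c_k\in\bar\cG^{\mu_k}$. The crucial point is the choice of lift: by the very definition of $\bar\cG^{\mu_k}$, one may lift $\bar c_k$ to some $c_k\in V_i\cap\cG^{\mu_k}V$ (not merely to a generic representative in $V_i$). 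Set $B_i:=B_{i-1}\cup\{c_1,\ldots,c_r\}$; this is clearly a basis of $V_i$ extending $B_{i-1}$.

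To verify (ii) for $B_i$, fix $\mu\in\Lambda$ and use the short exact sequence
\[
0\to V_{i-1}\cap\cG^\mu V\to V_i\cap\cG^\mu V\to\bar\cG^\mu(V_i/V_{i-1})\to0,
\]
whose surjectivity is immediate from the definition of $\bar\cG^\mu$. Let $(B_{i-1})_\mu\subseteq B_{i-1}$ be the subset spanning $V_{i-1}\cap\cG^\mu V$ provided by induction. Then $(B_{i-1})_\mu\cup\{c_k:\mu_k\ge\mu\}$ lies in $V_i\cap\cG^\mu V$ (the $c_k$ are in $\cG^{\mu_k}V\subseteq\cG^\mu V$ by our choice of lift), is linearly independent (its image in $V_i/V_{i-1}$ is a subset of the basis $\bar c_k$), and has cardinality $\dim(V_{i-1}\cap\cG^\mu V)+\dim\bar\cG^\mu(V_i/V_{i-1})=\dim(V_i\cap\cG^\mu V)$ because $\bar c_1,\ldots,\bar c_r$ was chosen compatible with $\bar\cG$. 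Hence it is a basis of $V_i\cap\cG^\mu V$, completing the inductive step.

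The only subtle point — the sole potential obstacle — is the insistence that $c_k$ be chosen inside the intersection $V_i\cap\cG^{\mu_k}V$ rather than as an arbitrary preimage of $\bar c_k$ in $V_i$; without this, compatibility with $\cG$ can fail. Everything else is a bookkeeping exercise with dimensions, and no finer structure on $\Lambda$ beyond its total order is needed since only the finitely many jump values play a role.
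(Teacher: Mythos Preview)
Your proof is correct and follows essentially the same approach as the paper: both arguments construct the basis by lifting, from each $\cF$-graded piece $V_i/V_{i-1}$, a basis compatible with the induced $\cG$-filtration, taking care to choose each lift inside the appropriate intersection $V_i\cap\cG^{\mu_k}V$. The paper packages this more symmetrically via the isomorphism $\Gr_\cF^j\Gr_\cG^i V\cong\Gr_\cG^i\Gr_\cF^j V$ and lifts directly from the double-graded pieces, whereas you run an explicit induction along the $\cF$-flag; the underlying content is the same.
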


\begin{proof}
By enumerating all different subspaces $\cF^\lambda V$ and $\cG^\mu V$, we may assume that $\cF$ and $\cG$ are both $\bN$-filtrations. Note that $\cF$ (resp. $\cG$) induces a filtration (which is also denoted by $\cF$ resp. $\cG$) on each graded quotient $\Gr_\cG^i V$ (resp. $\Gr_\cF^j V$). It is not hard to check that
\[
\Gr_\cF^j \Gr_\cG^i V \cong (\cF^j V\cap \cG^i V) / (\cF^{j+1}V\cap \cG^i V + \cF^j V\cap \cG^{i+1}V) \cong \Gr_\cG^i \Gr_\cF^j V
\]
for each $i,j\in\bN$. To construct a basis of $V$ that is compatible with $\cF$, it suffices to lift a basis of each $\Gr_\cF^i V$ to $\cF^i V$ and take their union. In particular, we may lift basis of $\Gr_\cF^i V$ that are compatible with the induced filtration $\cG$. By the above isomorphism, such basis can be obtained by lifting a basis of $(\cF^j V\cap \cG^i V) / (\cF^{j+1}V\cap \cG^i V + \cF^j V\cap \cG^{i+1}V)$ to $\cF^j V\cap \cG^i V$ (for each $i,j\in\bN$) and then taking the union. But since the construction is symmetric in $\cF$ and $\cG$, it follows that the basis obtained in this way is also compatible with $\cG$.
\end{proof}

As an immediate consequence, we have

\begin{prop} \label{prop:delta = delta wrt filtration}
Let $(X,\Delta)$ be a pair and let $\V$ be a multi-graded linear series containing an ample series and with bounded support. Let $\cF$ be a filtration on $\V$. Then for any valuation $v$ of linear growth on $X$ and any subvariety $Z\subseteq X$ we have
\[
S(\V;v)=S(\V,\cF;v),\,\,\delta(\V)=\delta(\V,\cF),\,\,\text{and}\;\;\delta_Z(\V)=\delta_Z(\V,\cF).
\]
\end{prop}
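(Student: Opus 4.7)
The plan is to reduce the statement to the single identity $S_m(\V,\cF;v)=S_m(\V;v)$ at finite level for each $m\in M(\V)$, and then deduce both the $S$- and the $\delta$-identities from it together with the formula $\delta(\V)=\inf_v A_{X,\Delta}(v)/S(\V;v)$ already established in Lemma \ref{lem:delta=inf A/S}.

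First I would prove $S_m(\V,\cF;v)=S_m(\V;v)$. The inequality $\le$ is immediate since the sup defining $S_m(\V,\cF;v)$ is taken over a subset of $m$-basis type $\bQ$-divisors. For $\ge$, I would use the standard min-max observation that for any filtration $\cG$ on a finite-dimensional vector space and any basis $s_1,\dots,s_N$ of it, one has $\sum_i v_{\cG}(s_i)\le \sum_j j\cdot\dim\Gr_{\cG}^j$, with equality precisely when the basis is compatible with $\cG$. Applied with $\cG=\cF_v$ on each $V_{m,\va}$, this says that $v(D_{\va})$ is maximal exactly when the defining basis of $V_{m,\va}$ is compatible with $\cF_v$. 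By Lemma \ref{lem:basis compatible with two filtrations}, such a basis can be chosen simultaneously compatible with $\cF$; assembling these bases across $\va\in\bN^r$ yields an $m$-basis type $\bQ$-divisor compatible with $\cF$ whose $v$-value equals $S_m(\V;v)$. Taking $m\to\infty$ and using the fact that $S(\V;v)=\lim_m S_m(\V;v)$ is a genuine limit by Lemma \ref{lem:delta=inf A/S}, the $\limsup$ defining $S(\V,\cF;v)$ collapses to the same limit, giving $S(\V;v)=S(\V,\cF;v)$.

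Next I would deduce the $\delta$-identities. The inequalities $\delta(\V)\le\delta(\V,\cF)$ and $\delta_Z(\V)\le\delta_Z(\V,\cF)$ are trivial because imposing compatibility with $\cF$ shrinks the family of $m$-basis type divisors over which $\lct$ (resp.\ $\lct_Z$) is taken. For the reverse, I would exploit the valuative bound: for any valuation $v$ with $A_{X,\Delta}(v)<\infty$ and any $m$-basis type $\bQ$-divisor $D$ compatible with $\cF$, the inequality $\lambda v(D)>A_{X,\Delta}(v)$ forces $A_{X,\Delta+\lambda D}(v)<0$, so $\lct(X,\Delta;D)\le A_{X,\Delta}(v)/v(D)$. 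Taking the infimum over such $D$ gives $\delta_m(\V,\cF)\le A_{X,\Delta}(v)/S_m(\V,\cF;v)$, and passing to $\limsup$ (using the already-proven limit identity) yields $\delta(\V,\cF)\le A_{X,\Delta}(v)/S(\V;v)$. Taking $\inf$ over $v$ and invoking Lemma \ref{lem:delta=inf A/S} closes the squeeze $\delta(\V)\le\delta(\V,\cF)\le\inf_v A_{X,\Delta}(v)/S(\V;v)=\delta(\V)$. The local statement $\delta_Z(\V)=\delta_Z(\V,\cF)$ follows by the identical argument, using $\lct_Z(X,\Delta;D)\le A_{X,\Delta}(v)/v(D)$ and restricting the final infimum to valuations whose center contains $Z$.

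The only nontrivial input is the simultaneous-compatibility Lemma \ref{lem:basis compatible with two filtrations}; with that in hand the rest is bookkeeping, so I do not anticipate a serious obstacle. The one minor subtlety worth flagging is that the finite-level identities $S_m(\V,\cF;v)=S_m(\V;v)$ can only be promoted to identities of $\limsup$'s because the corresponding sequence without the $\cF$-constraint is known to converge; this is why the appeal to Lemma \ref{lem:delta=inf A/S} in the first paragraph is essential.
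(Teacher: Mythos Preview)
Your proof is correct and follows essentially the same approach as the paper: both reduce to the finite-level identity $S_m(\V,\cF;v)=S_m(\V;v)$ via Lemma~\ref{lem:basis compatible with two filtrations}. The only minor difference is that the paper also establishes the finite-level equalities $\delta_m(\V)=\delta_m(\V,\cF)$ and $\delta_{Z,m}(\V)=\delta_{Z,m}(\V,\cF)$ directly (by writing $\delta_{Z,m}=\inf_E A_{X,\Delta}(E)/S_m(\cdot;E)$ over divisors $E$ with $Z\subseteq C_X(E)$), whereas you pass to the limit and close the squeeze using the asymptotic valuative formula of Lemma~\ref{lem:delta=inf A/S}; both routes are equally valid.
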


\begin{proof}
It suffices to show that for any $m\in M(\V)$ we have
\[
S_m(\V;v)=S_m(\V,\cF;v),\,\,\delta_m(\V)=\delta_m(\V,\cF),\,\,\text{and}\;\;\delta_{Z,m}(\V)=\delta_{Z,m}(\V,\cF),
\]
the result then follows by taking the limit as $m\to \infty$. Let $\cF_v$ be the filtration on $\V$ induced by $v$ (see Example \ref{ex:filtration from div}). It is clear from the definition that $S_m(\V;v)=v(D)$ for any $m$-basis type $\bQ$-divisor $D$ of $\V$ that is compatible with $\cF_v$. In particular, if we choose an $m$-basis type $\bQ$-divisor $D$ of $\V$ that is compatible with both $\cF_v$ and $\cF$ (which exists by Lemma \ref{lem:basis compatible with two filtrations}), then we see that $S_m(\V;v)=v(D)\le S_m(\V,\cF;v)$. But the reverse inequality $S_m(\V,\cF;v)\le S_m(\V;v)$ is trivial and thus we prove the first equality $S_m(\V;v)=S_m(\V,\cF;v)$. By definition it is not hard to see that
\[
\delta_{Z,m}(\V)=\inf_E \frac{A_{X,\Delta}(E)}{S_m(\V;E)}\quad\text{and}\quad\delta_{Z,m}(\V,\cF)=\inf_E \frac{A_{X,\Delta}(E)}{S_m(\V,\cF;E)}
\]
where both infimums run over divisors $E$ over $X$ whose centers contain $Z$ (here we use the fact that $Z$ is irreducible), hence the equality $\delta_{Z,m}(\V)=\delta_{Z,m}(\V,\cF)$ follows. The proof of the equality $\delta_m(\V)=\delta_m(\V,\cF)$ is similar.
\end{proof}

Typically we will apply Proposition \ref{prop:delta = delta wrt filtration} to some Fano variety $X$ and the complete linear series associated to $-rK_X$ for some sufficiently divisible integer $r>0$. By choosing different filtrations $\cF$ on $\V$, we get various consequences. Here we explore two of them, corresponding to filtrations induced by primitive divisors or admissible flags. Throughout the remaining part of this section, we fix a klt pair $(X,\Delta)$, some Cartier divisors $L_1,\cdots,L_r$ on $X$ and an $\bN^r$-graded linear series $\V$ associated to the $L_i$'s such that $\V$ contains an ample series and has bounded support.

\subsection{Filtrations from primitive divisors} \label{sec:filter by div}

Let $F$ be a primitive divisor over $X$ with associated prime blowup $\pi:Y\to X$. Let $\cF$ be the induced filtration on $\V$ and let
\[
D=\frac{1}{mN_m}\sum_{\va}\sum_i \{s_{\va,i}=0\}
\]
(where $N_m=h^0(V_{m,\vb})$ and for each $\va\in\bN^{r-1}$, $s_{\va,i}$ ($1\le i\le \dim(V_{m,\va})$) form a basis of $V_{m,\va}$) be an $m$-basis type $\bQ$-divisor of $\V$ that is compatible with $\cF$. We may write
\[
D=\frac{1}{mN_m}\sum_{\va}\sum_{j=0}^{\infty}D'_{\va,j}
\]
where 
\[
D'_{\va,j}=\sum_{i,\ord_F(s_{\va,i})=j}\{s_{\va,i}=0\}.
\]
Since $D$ is compatible with $\cF$, for each $\va\in\bN^{r-1}$, the $s_{\va,i}$'s that appear in the expression of $D'_{\va,j}$ restrict to form a basis of $\Gr_\cF^j V_{m,\va}$. Now assume that $F$ is either Cartier on $Y$ or of plt type and let $\W$ be the refinement of $\V$ by $F$ (Example \ref{ex:refinement by div}). Then after combining coefficients of $F$ in $\pi^*D$, we see that
\[
\pi^*D= S_m(\V;F)\cdot F + \frac{1}{mN_m}\sum_{\va}\sum_{j=0}^{\infty} D_{\va,j} =: S_m(\V;F)\cdot F + \Gamma
\]
where each $D_{\va,j}$ doesn't contain $F$ in its support and $D_{\va,j}|_F$ is a basis type divisor for $W_{m,\va,j}$. In other words, $\Gamma|_F$ is an $m$-basis type $\bQ$-divisor of $\W$ (notice that $h^0(W_{m,\vb})=h^0(V_{m,\vb})$). Letting $m\to \infty$, we obtain
\begin{equation} \label{eq:c_1 of refinement}
    c_1(\W) = (\pi^*c_1(\V) - S(\V;F)\cdot F)|_F.
\end{equation}
These observations also allow us to relate the stability thresholds of $\V$ and $\W$ via inversion of adjunction. In particular, we get the following consequence:

\begin{thm} \label{thm:delta adj via div}
With the above notation and assumptions, let $Z\subseteq X$ be a subvariety and let $Z_0$ be an irreducible component of $Z\cap C_X(F)$. Let $\Delta_Y$ be the strict transform of $\Delta$ on $Y$ $($but remove the component $F$ as in Definition \ref{defn:plt type div}$)$ and let $\Delta_F=\mathrm{Diff}_F(\Delta_Y)$ be the different so that $(K_Y+\Delta_Y+F)|_F=K_F+\Delta_F$. Then we have
\begin{equation} \label{eq:adj for delta}
	\delta_Z(X,\Delta;\V)\ge \min\left\{\frac{A_{X,\Delta}(F)}{S(\V;F)},\inf_{Z'}\delta_{Z'}(F,\Delta_F;\W)\right\}
\end{equation}
when $Z\subseteq C_X(F)$ and otherwise
\begin{equation} \label{eq:adj for delta, Z not in F}
	\delta_Z(X,\Delta;\V)\ge \inf_{Z'}\delta_{Z'}(F,\Delta_F;\W),
\end{equation}
where the infimums run over all subvarieties $Z'\subseteq Y$ such that $\pi(Z')=Z_0$. Moreover, if equality holds and $\delta_Z(\V)$ is computed by some valuation $v$ on $X$, then either $Z\subseteq C_X(F)$ and $F$ computes $\delta_Z(\V)$, or $C_Y(v)\not\subseteq F$ and for any irreducible component $S$ of $C_Y(v)\cap F$ with $Z_0\subseteq \pi(S)$, there exists some valuation $v_0$ on $F$ with center $S$ computing $\delta_{Z'}(\W)=\delta_Z(\V)$ for all subvarieties $Z'\subseteq S$ with $\pi(Z')=Z_0$.
\end{thm}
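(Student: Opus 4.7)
The plan is to relate non-log-canonical behaviour of $(X,\Delta+\lambda D)$ on $X$ to that of $(F,\Delta_F+\lambda\Gamma|_F)$ on $F$ via inversion of adjunction, where $D$ is an $m$-basis type $\bQ$-divisor of $\V$ compatible with $\cF=\cF_F$. By Proposition \ref{prop:delta = delta wrt filtration}, this compatibility can be assumed without loss when computing $\delta_{Z,m}$, and the discussion preceding the theorem then supplies the decomposition
\[
\pi^* D = S_m(\V;F)\cdot F + \Gamma,
\]
with $F\not\subseteq\Supp(\Gamma)$ and $\Gamma|_F$ an $m$-basis type $\bQ$-divisor of the refinement $\W$. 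Combined with the pull-back identity $K_Y+\Delta_Y - (A_{X,\Delta}(F)-1)F = \pi^*(K_X+\Delta)$, this gives
\[
\pi^*(K_X+\Delta+\lambda D) = K_Y+\Delta_Y+(1-c_\lambda^{(m)})F + \lambda\Gamma,\qquad c_\lambda^{(m)}:=A_{X,\Delta}(F)-\lambda S_m(\V;F).
\]

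Fix $\lambda \leq A_{X,\Delta}(F)/S_m(\V;F)$ (the constraint being vacuous when $Z\not\subseteq C_X(F)$, since $\pi$ is an isomorphism over $X\setminus C_X(F)$), so that $c_\lambda^{(m)}\geq 0$. I then apply inversion of adjunction in the form of \cite{Kol-mmp}*{Theorem 4.9} to $(Y,\Delta_Y+F+\lambda\Gamma)$: if $(F,\Delta_F+\lambda\Gamma|_F)$ is lc in a neighbourhood of $Z'$, then so is $(Y,\Delta_Y+F+\lambda\Gamma)$, and lowering the coefficient of $F$ from $1$ to $1-c_\lambda^{(m)}\leq 1$ only relaxes the lc condition. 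Running $Z'$ over the irreducible components of $F\cap\pi^{-1}(Z_0)$ that dominate $Z_0$ and pushing forward by $\pi$, it follows that $(X,\Delta+\lambda D)$ is lc along $Z$ for every such $D$ whenever
\[
\lambda \leq \min\Bigl\{\tfrac{A_{X,\Delta}(F)}{S_m(\V;F)},\ \inf_{Z',\,\pi(Z')=Z_0}\delta_{Z',m}(F,\Delta_F;\W)\Bigr\}.
\]
Consequently $\delta_{Z,m}(\V)$ exceeds this minimum; letting $m\to\infty$, using $S_m(\V;F)\to S(\V;F)$ and $\delta_{Z',m}(\W)\to\delta_{Z'}(\W)$ (the latter by Lemma \ref{lem:delta=inf A/S} applied to $\W$) yields both \eqref{eq:adj for delta} and \eqref{eq:adj for delta, Z not in F}.

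For the equality case, assume $v\in\Val_X^*$ with $A_{X,\Delta}(v)<\infty$ computes $\lambda_0:=\delta_Z(\V)$ and that equality holds in \eqref{eq:adj for delta}. By Lemma \ref{lem:basis compatible with two filtrations} I can pick $m$-basis type divisors $D_m$ simultaneously compatible with $\cF_F$ and $\cF_v$, so that $v(D_m)=S_m(\V;v)\to S(\V;v)$. If $\lambda_0=A_{X,\Delta}(F)/S(\V;F)$, then $F$ itself attains the infimum defining $\delta_Z(\V)$, hence computes it; this is the first alternative in the theorem (and forces $Z\subseteq C_X(F)$ for $F$ to be eligible in that infimum). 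Otherwise the second term of the minimum is active. If $C_Y(v)\subseteq F$, tracking how $v$ evaluates on $\pi^* D_m=S_m(\V;F)F+\Gamma_m$ forces $v$ to be a scalar multiple of $\ord_F$, returning us to the previous case; hence we may assume $C_Y(v)\not\subseteq F$. For each irreducible component $S$ of $C_Y(v)\cap F$ with $Z_0\subseteq\pi(S)$, construct $v_0$ on $F$ as the natural restriction of $v$, well-defined at the generic point of $S$ since $C_Y(v)$ is not contained in $F$. An adjunction-type comparison gives $A_{F,\Delta_F}(v_0)\leq A_{X,\Delta}(v)-v(F)\,A_{X,\Delta}(F)$, while the identity $v(D_m)=v(F)S_m(\V;F)+v_0(\Gamma_m|_F)$ combined with $v_0(\Gamma_m|_F)\leq S_m(\W;v_0)$ yields $S(\W;v_0)\geq S(\V;v)-v(F)\,S(\V;F)$ in the limit. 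These two inequalities, together with $\lambda_0\leq A_{X,\Delta}(F)/S(\V;F)$, give $A_{F,\Delta_F}(v_0)/S(\W;v_0)\leq\lambda_0$, and the reverse inequality, which is a consequence of the equality in \eqref{eq:adj for delta}, shows that $v_0$ computes $\delta_{Z'}(F,\Delta_F;\W)=\lambda_0$ for every $Z'\subseteq S$ with $\pi(Z')=Z_0$.

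The main obstacle is the equality case: constructing $v_0$ on $F$ from $v$ canonically when $C_Y(v)\not\subseteq F$, and verifying the precise log-discrepancy and $S$-invariant inequalities needed so that the resulting chain of inequalities closes up. The lower bound itself is essentially formal once Proposition \ref{prop:delta = delta wrt filtration} is in hand, the decomposition $\pi^* D = S_m(\V;F)F+\Gamma$ is available, and inversion of adjunction is invoked.
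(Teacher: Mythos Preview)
Your argument for the lower bounds \eqref{eq:adj for delta} and \eqref{eq:adj for delta, Z not in F} is essentially the paper's: reduce to basis type divisors compatible with $F$ via Proposition~\ref{prop:delta = delta wrt filtration}, use the decomposition $\pi^*D=S_m(\V;F)\,F+\Gamma$, apply inversion of adjunction, and pass to the limit. That part is fine.

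The equality case, however, has two genuine gaps, and the paper handles both differently from what you sketch.

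\emph{First}, your claim that $C_Y(v)\subseteq F$ would force $v$ to be a multiple of $\ord_F$ is false: any valuation centered at a proper subvariety of $F$ (e.g.\ the exceptional divisor of a point blowup on $F$) has $C_Y(v)\subseteq F$ without being $\ord_F$. The paper instead extracts from the same inversion-of-adjunction step a refined inequality valid for \emph{every} valuation $v$ with $Z\subseteq C_X(v)$:
\[
A_{X,\Delta}(v)\ \ge\ \lambda\, S(\V;v)\ +\ \bigl(A_{X,\Delta}(F)-\lambda\,S(\V;F)\bigr)\cdot v(F).
\]
When $v$ computes $\delta_Z(\V)=\lambda$ and $A_{X,\Delta}(F)>\lambda\,S(\V;F)$ (the second alternative), this forces $v(F)=0$, i.e.\ $C_Y(v)\not\subseteq F$.

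\emph{Second}, there is no ``natural restriction'' of a valuation on $X$ to a valuation on $F$, and your identity $v(D_m)=v(F)S_m(\V;F)+v_0(\Gamma_m|_F)$ (which after $v(F)=0$ reads $v(\Gamma_m)=v_0(\Gamma_m|_F)$) has no reason to hold for an arbitrary construction of $v_0$. The paper's construction is indirect: rescale so that $A_{X,\Delta}(v)=1$, form the valuation ideal sequence $\fa_\bullet(v)\subseteq\cO_Y$ and its restriction $\fb_\bullet=\fa_\bullet(v)|_F$, observe via inversion of adjunction that $\lct(F,\Delta_F;\fb_\bullet)\le 1$ along the relevant component $S$, and then invoke \cite{JM-valuation}*{Theorem A} to produce a valuation $v_0$ on $F$ with center $S$ satisfying $A_{F,\Delta_F}(v_0)\le v_0(\fb_\bullet)$. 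This last inequality is the key: since a local equation of $D_Y$ lies in $\fa_{v(D_Y)}$, its restriction lies in $\fb_{v(D_Y)}$, giving $v_0(D_Y|_F)\ge v(D_Y)\cdot v_0(\fb_\bullet)\ge v(D_Y)\cdot A_{F,\Delta_F}(v_0)$. Applied to $D_Y$ compatible with both $F$ and $v$, this yields $S_m(\W;v_0)\ge S_m(\V;v)\cdot A_{F,\Delta_F}(v_0)$, and in the limit $A_{F,\Delta_F}(v_0)/S(\W;v_0)\le 1/S(\V;v)=\lambda$, so $v_0$ computes $\delta_{Z'}(\W)=\lambda$. Your proposed adjunction-type inequality for $A_{F,\Delta_F}(v_0)$ and the evaluation identity are replaced entirely by this ideal-theoretic comparison.
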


Loosely speaking, this means that $\delta(\V)$ is either computed by the auxiliary divisor $F$ or bounded from below by the stability threshold $\delta(\W)$ of the refinement by $F$, and in the latter case the inequality is usually strict.


\begin{proof}
We only prove \eqref{eq:adj for delta}, since the proof for \eqref{eq:adj for delta, Z not in F} is almost identical. By Proposition \ref{prop:delta = delta wrt filtration}, we have $\delta_Z(\V)=\delta_Z(\V,\cF)$ (where $\cF$ is the filtration on $\V$ induced by $F$), thus it suffices to show that
\begin{equation} \label{eq:adj for delta_m}
	\delta_{Z,m}(\V,\cF)\ge \min\left\{\frac{A_{X,\Delta}(F)}{S_m(\V;F)},\inf_{\pi(Z')=Z_0}\delta_{Z',m}(F,\Delta_F;\W)\right\}
\end{equation}
for all $m\in M(\V)$; letting $m\to \infty$ we obtain \eqref{eq:adj for delta}. Let $D$ be an $m$-basis type $\bQ$-divisor of $\V$ that's compatible with $F$. From the discussion before, we have 
\begin{equation} \label{eq:D=aF+?}
	\pi^*D = S_m(\V;F)\cdot F + \Gamma
\end{equation}
where $\Gamma=D_Y$ is the strict transform of $D$ on $Y$ and $\Gamma|_F$ is an $m$-basis type $\bQ$-divisor of $\W$. Let $\lambda_m$ (resp. $\lambda$) be the right hand side of \eqref{eq:adj for delta_m} (resp. \eqref{eq:adj for delta}). Then we have $\pi^*(K_X+\Delta+\lambda_m D)=K_Y+\Delta_Y+a_m F+\lambda_m \Gamma$ where $a_m=1-A_{X,\Delta}(F)+\lambda_m S_m(\V;F)\le 1$. In addition, the non-lc center of $(F,\Delta_F+\lambda_m \Gamma|_F)$ doesn't contain $Z_0\subseteq Z$ in its image (under the morphism $\pi$) by the definition of stability thresholds and hence by inversion of adjunction the same is true for $(Y,\Delta_Y+F+\lambda_m \Gamma)$. It follows that $(X,\Delta+\lambda_m D)$ is lc at the generic point of $Z$ and indeed
\[
A_{X,\Delta}(v)\ge \lambda_m v(D) + (1-a_m)\cdot v(F)
\]
for all valuations $v$ on $X$ whose center contains $Z$ (when $Z\not\subseteq C_X(F)$, the value of $a_m$ doesn't matter to us since $v(F)=0$; this is the main difference between the proof of \eqref{eq:adj for delta} and \eqref{eq:adj for delta, Z not in F}). Since $D$ is arbitrary, we get $\delta_{Z,m}(\V,\cF)\ge \lambda_m$, which proves \eqref{eq:adj for delta_m}, and
\[
A_{X,\Delta}(v)\ge \lambda_m  S_m(\V,\cF;v) + (A_{X,\Delta}(F) - \lambda_m S_m(\V;F))\cdot v(F),
\]
As in the proof of Lemma \ref{lem:delta=inf A/S}, we have $\lim_{m\to \infty} \lambda_m  = \lambda$. Thus letting $m\to \infty$ and noting that $S(\V;v)=S(\V,\cF;v)$ by Proposition \ref{prop:delta = delta wrt filtration}, we obtain \eqref{eq:adj for delta} as well as the following inequality
\begin{equation} \label{eq:refined A>=S}
A_{X,\Delta}(v)\ge \lambda \cdot  S(\V;v) + (A_{X,\Delta}(F) - \lambda \cdot  S(\V;F))\cdot v(F).
\end{equation}

Now assume that equality holds in \eqref{eq:adj for delta} and $\delta_Z(\V)$ is computed by some valuation $v\in\Val_X^*$, i.e. $Z\subseteq C_X(v)$ and $A_{X,\Delta}(v) = \lambda \cdot S(\V;v)$. By \eqref{eq:refined A>=S}, we see that either $A_{X,\Delta}(F) = \lambda \cdot S(\V;F)$, in which case $F$ computes $\delta_Z(\V)$ and we are done, or
\begin{equation} \label{eq:F doesn't compute delta}
    \lambda = \inf_{\pi(Z')=Z_0}\delta_{Z'}(\W) < \frac{A_{X,\Delta}(F)}{S(\V;F)}
\end{equation}
and $v(F)=0$, i.e. $C_Y(v)\not\subseteq F$. Now assume that we are in the latter case and let $S$ be an irreducible component of $C_Y(v)\cap F$ with $Z_0\subseteq \pi(S)$. After rescaling the valuation $v$ we may also assume that $A_{Y,\Delta_Y}(v)=A_{X,\Delta}(v)=1$. Let $\fa_\bullet(v)\subseteq \cO_Y$ be the valuation ideals and let $\fb_\bullet=\fa_\bullet(v)|_F$. Clearly $\lct_x(Y,\Delta_Y;\fa_\bullet(v))\le \frac{A_{Y,\Delta_Y}(v)}{v(\fa_\bullet(v))}\le 1$ for any $x\in C_Y(v)$, hence by inversion of adjunction we have $\lct(F,\Delta_F;\fb_\bullet)\le 1$ at the generic point of $S$. By \cite{JM-valuation}*{Theorem A}, there exists some valuation $v_0$ on $F$ with center $S$ such that
\begin{equation} \label{eq:v_0 computes lct}
    \frac{A_{F,\Delta_F}(v_0)}{v_0(\fb_\bullet)}\le 1.
\end{equation}
To finish the proof, it suffices to show that this valuation computes $\delta_{Z'}(\W)$ for any subvarieties $Z'\subseteq S$ with $\pi(Z')=Z_0$, and that $\delta_{Z'}(\W)=\lambda$. To see this, let $D$ be an $m$-basis type $\bQ$-divisor of $\V$ that's compatible with both $F$ and $v$ (which exists by Lemma \ref{lem:basis compatible with two filtrations}) and let $D_Y$ be its strict transform on $Y$. Then as before we have $v(D_Y)=v(D)=S_m(\V;v)$ (here we use the fact that $C_Y(v)\not\subseteq F$) and $D_Y|_F$ is an $m$-basis type $\bQ$-divisor of $\W$. Using \eqref{eq:v_0 computes lct} we further see that 
\[
A_{F,\Delta_F}(v_0)\le \frac{v_0(D_Y|_F)}{v(D_Y)}=\frac{v_0(D_Y|_F)}{S_m(\V;v)},
\]
hence $S_m(\W;v_0)\ge v_0(D_Y|_F)\ge S_m(\V;v)\cdot A_{F,\Delta_F}(v_0)$. Letting $m\to \infty$ we obtain
\[
\delta_{Z'}(\W)\le \frac{A_{F,\Delta_F}(v_0)}{S(\W;v_0)}\le \frac{1}{S(\V;v)}=\frac{A_{X,\Delta}(v)}{S(\V;v)}=\lambda.
\]
Combined with \eqref{eq:F doesn't compute delta}, this implies $\delta_{Z'}(\W)=\lambda$ and it's computed by $v_0$.
\end{proof}

Theorem \ref{thm:delta adj via div} reduces the question of estimating stability thresholds to similar problems in lower dimensions. Certainly the lower bounds we get depend on the choice of the auxiliary divisor $F$. In general, if we want to calculate the precise value of the stability threshold, we should pick an ``optimal'' $F$, i.e. a divisor that computes $\delta(\V)$, although the resulting refinement $\W$ can be quite complicated. On the other hand, if we are merely interested in an estimate, we could also choose some divisor $F$ such that $\W$ is relatively simple. As a typical example, we have the following direct consequence of Theorem \ref{thm:delta adj via div}.

\begin{cor} \label{cor:adj delta ac}
Let $(X,\Delta),L_i,\V,Z,F,Z_0,\pi\colon Y\to X,\Delta_F$ and $\W$ be as in Theorem \ref{thm:delta adj via div}. Assume that
\begin{enumerate}
	\item $\W$ is almost complete $($Definition \ref{defn:almost complete}$)$,
	\item $\delta_{Z'}(F,\Delta_F+\lambda F(\W);c_1(M_{\vb}))\ge \lambda$ for some $0\le \lambda \le \frac{A_{X,\Delta}(F)}{S(\V;F)}$ and all subvarieties $Z'\subseteq Y$ with $\pi(Z')=Z_0$ $($where $M_{\vb}$ is the movable part of $\W)$.
\end{enumerate}
Then $\delta_Z(X,\Delta;\V)\ge \lambda$. If equality holds and $\delta_Z(\V)$ is computed by some valuation $v$ on $X$, then either $Z\subseteq C_X(F)$ and $F$ computes $\delta_Z(\V)$, or $C_Y(v)\not\subseteq F$ and for any irreducible component $S$ of $C_Y(v)\cap F$ with $Z_0\subseteq \pi(S)$, there exists some valuation $v_0$ on $F$ with center $Z$ computing
\[
\delta_{Z'}(F,\Delta_F+\lambda F(\W);c_1(M_{\vb}))=\lambda
\]
for all $Z'\subseteq S$ with $\pi(Z')=Z_0$.
\end{cor}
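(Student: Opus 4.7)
The plan is to derive this corollary directly from Theorem \ref{thm:delta adj via div} by showing that, given hypothesis (1), hypothesis (2) is equivalent to the lower bound $\delta_{Z'}(F, \Delta_F; \W) \ge \lambda$ for all $Z'$ with $\pi(Z') = Z_0$. Once this equivalence is in place, the additional assumption $\lambda \le A_{X,\Delta}(F)/S(\V;F)$ ensures that $\lambda$ is a lower bound for both entries of the minimum on the right-hand side of \eqref{eq:adj for delta}, and Theorem \ref{thm:delta adj via div} then yields $\delta_Z(X, \Delta; \V) \ge \lambda$.

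To establish the equivalence, I would unwind hypothesis (2) in valuative terms. For a valuation $v_0$ on $F$ with $Z' \subseteq C_F(v_0)$, using $A_{F, \Delta_F + \lambda F(\W)}(v_0) = A_{F, \Delta_F}(v_0) - \lambda v_0(F(\W))$, the inequality $\delta_{Z'}(F, \Delta_F + \lambda F(\W); c_1(M_{\vb})) \ge \lambda$ reads
\[
A_{F, \Delta_F}(v_0) \ge \lambda\bigl(S(c_1(M_{\vb}); v_0) + v_0(F(\W))\bigr).
\]
The almost completeness of $\W$ is exactly what is needed to apply Lemma \ref{lem:S of ac W}, whose conclusion (combined with the fact that $S$ depends only on the numerical class) amounts to the identity $S(\W; v_0) = S(c_1(M_{\vb}); v_0) + v_0(F(\W))$. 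Substituting, the displayed inequality becomes $A_{F, \Delta_F}(v_0) \ge \lambda\, S(\W; v_0)$, i.e.\ $\delta_{Z'}(F, \Delta_F; \W) \ge \lambda$, as required.

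For the equality case I would invoke the corresponding equality portion of Theorem \ref{thm:delta adj via div}. If $\delta_Z(\V) = \lambda$ is computed by some valuation $v$ on $X$, the theorem offers a dichotomy: either $Z \subseteq C_X(F)$ and $F$ itself computes $\delta_Z(\V)$, or $C_Y(v) \not\subseteq F$ and for every irreducible component $S$ of $C_Y(v) \cap F$ dominating $Z_0$ there is a valuation $v_0$ on $F$ with center $S$ computing $\delta_{Z'}(F, \Delta_F; \W) = \lambda$ for all $Z' \subseteq S$ with $\pi(Z') = Z_0$. Running the calculation of the previous paragraph backwards, this same $v_0$ computes $\delta_{Z'}(F, \Delta_F + \lambda F(\W); c_1(M_{\vb})) = \lambda$, which is precisely the second case of the conclusion. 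The entire argument is essentially bookkeeping; the only nontrivial ingredient beyond Theorem \ref{thm:delta adj via div} is the $S$-invariant identity furnished by Lemma \ref{lem:S of ac W}, which is where the almost completeness hypothesis enters.
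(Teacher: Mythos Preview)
Your proposal is correct and is exactly the argument the paper has in mind: the paper's proof is the single line ``This is immediate from Theorem \ref{thm:delta adj via div} and Lemma \ref{lem:S of ac W},'' and you have simply unpacked why those two ingredients suffice. The key identity $S(\W;v_0)=S(c_1(M_{\vb});v_0)+v_0(F(\W))$ from Lemma \ref{lem:S of ac W} (using that $c_1(M_{\vb})\equiv \mu L$ and that $S$ depends only on the numerical class) is precisely what converts hypothesis (2) into the bound $\delta_{Z'}(F,\Delta_F;\W)\ge\lambda$ needed for Theorem \ref{thm:delta adj via div}, and the same identity run in reverse handles the equality case.
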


\begin{proof}
This is immediate from Theorem \ref{thm:delta adj via div} and Lemma \ref{lem:S of ac W}.
\end{proof}

\subsection{Filtrations from admissible flags}

One can inductively apply Theorem \ref{thm:delta adj via div} to refine the original graded linear series while lowering the dimension of the ambient variety. This is essentially equivalent to filtering the graded linear series via an admissible flag. For simplicity, consider the following situation. Let
\[
\Y \quad:\quad X=Y_0\supseteq Y_1\supseteq \cdots \supseteq Y_\ell
\]
be an admissible flag of length $\ell$ \emph{on} $X$. Assume that each $Y_i$ in the flag is a Cartier divisor on $Y_{i-1}$. Then for each $1\le j\le \ell$, we can define a boundary divisor $\Delta_j$ on $Y_j$ inductively as follows: first set $\Delta_0=\Delta$; for each $\Delta_i$ that's already defined, write $\Delta_i=a_i Y_{i+1}+\Gamma_i$ where $\Gamma_i$ doesn't contain $Y_{i+1}$ in its support and set $\Delta_{i+1}=\Gamma_i|_{Y_{i+1}}$. We also let $\Y^{(j)}$ be the flag given by $Y_0\supseteq \cdots\supseteq Y_j$ and let $\W^{(j)}$ be the refinement of $\V$ by $\Y^{(j)}$ (Example \ref{ex:refinement by flag}), i.e. it is the $\bN^{r+j}$-graded linear series on $Y_j$ given by
\[
W^{(j)}_{\va,b_1,\cdots,b_j}=V_{\va}(b_1,\cdots,b_j).
\]
Note that $\W^{(0)}=\V$. Also recall from Section \ref{sec:prelim-flag} that the flag $\Y$ induces a filtration $\cF=\cF_{\Y}$ on each $\W^{(j)}$.

\begin{thm} \label{thm:delta adj via flag}
With the above notation and assumptions, we have
\begin{equation} \label{eq:delta adj wrt filtration}
    \delta_Z(X,\Delta;\V)\ge \min\left\{ \min_{0\le i\le j-1}\left\{\frac{A_{Y_i,\Delta_i}(Y_{i+1})}{S(\W^{(i)};Y_{i+1})}\right\},\delta_{Z\cap Y_j}(Y_j,\Delta_j;\W^{(j)},\cF)\right\}
\end{equation}
for any $1\le j\le \ell$ and any subvariety $Z\subseteq X$ that intersects $Y_j$.
\end{thm}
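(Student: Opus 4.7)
The plan is induction on $j$, iteratively applying Theorem \ref{thm:delta adj via div} to the triple $(Y_i, \Delta_i, \W^{(i)})$ with auxiliary divisor $F = Y_{i+1}$.

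For the base case $j = 1$: since $Y_1$ is Cartier on $X$, it is primitive over $X$ with associated prime blowup the identity map. By construction $\W^{(1)}$ (Example \ref{ex:refinement by flag}) coincides with the one-step refinement of $\V$ by $Y_1$ in the sense of Example \ref{ex:refinement by div}, and since $Y_1$ is Cartier on $X$ the different $\mathrm{Diff}_{Y_1}(\Gamma_0)$ reduces to the ordinary restriction $\Gamma_0|_{Y_1} = \Delta_1$. Applying Theorem \ref{thm:delta adj via div} with $F = Y_1$ and the (possibly reducible) subvariety $Z \cap Y_1$ in the role of $Z_0$ gives
\[
\delta_Z(X,\Delta;\V) \ge \min\left\{\frac{A_{X,\Delta}(Y_1)}{S(\V;Y_1)},\,\delta_{Z \cap Y_1}(Y_1,\Delta_1;\W^{(1)})\right\},
\]
and Proposition \ref{prop:delta = delta wrt filtration} replaces $\delta_{Z \cap Y_1}(\W^{(1)})$ with $\delta_{Z \cap Y_1}(\W^{(1)}, \cF)$, proving the base case.

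For the inductive step, assume the bound at level $j$. The crucial compatibility is that $\W^{(j+1)}$ equals the one-step refinement of $\W^{(j)}$ by the Cartier divisor $Y_{j+1}$ on $Y_j$ in the sense of Example \ref{ex:refinement by div}; this is immediate by unwinding the inductive definition in Example \ref{ex:refinement by flag}. Similarly $\Delta_{j+1}$ coincides with $\mathrm{Diff}_{Y_{j+1}}(\Gamma_j)$ by ordinary adjunction, since $Y_{j+1}$ is Cartier in $Y_j$. Now apply Theorem \ref{thm:delta adj via div} to $(Y_j, \Delta_j, \W^{(j)})$ with $F = Y_{j+1}$ and $Z \cap Y_j$ as the distinguished subvariety, using \eqref{eq:adj for delta} when $Z \cap Y_j \subseteq Y_{j+1}$ and \eqref{eq:adj for delta, Z not in F} otherwise. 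Observing $(Z \cap Y_j) \cap Y_{j+1} = Z \cap Y_{j+1}$, this produces
\[
\delta_{Z \cap Y_j}(Y_j,\Delta_j;\W^{(j)}) \ge \min\left\{\frac{A_{Y_j,\Delta_j}(Y_{j+1})}{S(\W^{(j)}; Y_{j+1})},\,\delta_{Z \cap Y_{j+1}}(Y_{j+1},\Delta_{j+1};\W^{(j+1)})\right\}.
\]
Substituting into the inductive hypothesis, and invoking Proposition \ref{prop:delta = delta wrt filtration} once more to move the filtration $\cF$ in and out, gives the bound at level $j+1$.

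The argument is essentially bookkeeping once the identifications are pinned down. The main (modest) obstacle is verifying that the multi-step refinement from Example \ref{ex:refinement by flag} agrees with the iterated one-step refinement of Example \ref{ex:refinement by div}, and that the inductively defined $\Delta_j$ matches the iterated differents produced by Theorem \ref{thm:delta adj via div}. Both reduce to the fact that each $Y_{i+1}$ is Cartier in $Y_i$, so prime blowups are trivial and differents collapse to restrictions. A secondary subtlety is that, although Theorem \ref{thm:delta adj via div} is stated with $Z_0$ irreducible, its proof uses $Z_0$ only through the failure of $Z_0$ to be contained in a non-lc locus, so it applies verbatim to the (possibly reducible) intersections $Z \cap Y_i$ used above.
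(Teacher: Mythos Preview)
There is a genuine gap. You try to use Theorem \ref{thm:delta adj via div} and Proposition \ref{prop:delta = delta wrt filtration} as black boxes, but Proposition \ref{prop:delta = delta wrt filtration} requires the center $Z$ to be \emph{irreducible} (its proof uses the characterization $\delta_{Z,m}=\inf_E A/S_m$ over divisors $E$ with $Z\subseteq C_X(E)$, which fails for reducible $Z$). For a reducible center one only has the trivial inequality $\delta_{Z}(\W,\cF)\ge \delta_{Z}(\W)$, and this goes the \emph{wrong way} for your argument. Concretely: after applying Theorem \ref{thm:delta adj via div} you obtain a lower bound involving $\delta_{Z\cap Y_{i+1}}(\W^{(i+1)})$ (no filtration), and you then need to replace this by the larger quantity $\delta_{Z\cap Y_{i+1}}(\W^{(i+1)},\cF)$ on the right-hand side of a $\ge$ inequality; that step is illegitimate when $Z\cap Y_{i+1}$ is reducible. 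The same issue already bites in your base case: from $\delta_Z(\V)\ge\min\{A/S,\delta_{Z\cap Y_1}(\W^{(1)})\}$ you cannot deduce $\delta_Z(\V)\ge\min\{A/S,\delta_{Z\cap Y_1}(\W^{(1)},\cF)\}$.

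The paper addresses exactly this point in the paragraph preceding its proof. Rather than citing Theorem \ref{thm:delta adj via div}, the paper reruns the inversion-of-adjunction argument at the level of $m$-basis type divisors while keeping track of compatibility with the \emph{full} flag filtration $\cF$: if $D$ is an $m$-basis type $\bQ$-divisor of $\W^{(i)}$ compatible with $\cF$, then $D=S_m(\W^{(i)};Y_{i+1})\cdot Y_{i+1}+\Gamma$ with $\Gamma|_{Y_{i+1}}$ again an $m$-basis type $\bQ$-divisor of $\W^{(i+1)}$ compatible with $\cF$ (because $\cF$ refines $\cF_{Y_{i+1}}$). This directly yields
\[
\delta_{Z\cap Y_i,m}(\W^{(i)},\cF)\ \ge\ \min\left\{\frac{A_{Y_i,\Delta_i}(Y_{i+1})}{S_m(\W^{(i)};Y_{i+1})},\ \delta_{Z\cap Y_{i+1},m}(\W^{(i+1)},\cF)\right\},
\]
so the filtered invariants chain together without ever invoking Proposition \ref{prop:delta = delta wrt filtration} at a reducible center (it is used only once, at the very start, with the irreducible $Z$). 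Your proof can be repaired along these lines, but not by quoting Theorem \ref{thm:delta adj via div} as stated.
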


This will be a key ingredient in our proof of Theorem \ref{main:index two}. Compared with Theorem \ref{thm:delta adj via div}, the main difference is that we allow (possibly) reducible centers $Z\cap Y_j$ when applying inversion of adjunction. In this case we only have an inequality $\delta_{Z\cap Y_j}(\W^{(j)},\cF)\ge \delta_{Z\cap Y_j}(\W^{(j)})$ (as opposed to the equality in Proposition \ref{prop:delta = delta wrt filtration}). As such, we also need to keep track of the filtration $\cF$ in the proof below.

\begin{proof}
By Proposition \ref{prop:delta = delta wrt filtration}, we have $\delta_Z(\V)=\delta_Z(\V,\cF)=\delta_{Z\cap Y_0}(Y_0,\Delta_0;\W^{(0)},\cF)$. Thus it suffices to prove that
\begin{equation} \label{eq:delta adj wrt filtration-induction step}
    \delta_{Z\cap Y_i}(Y_i,\Delta_i;\W^{(i)},\cF) \ge \min\left\{ \frac{A_{Y_i,\Delta_i}(Y_{i+1})}{S(\W^{(i)};Y_{i+1})}, \delta_{Z\cap Y_{i+1}}(Y_{i+1},\Delta_{i+1};\W^{(i+1)},\cF) \right\}
\end{equation}
for all $0\le i\le j-1$; \eqref{eq:delta adj wrt filtration} then follows by induction.

As in the proof of Theorem \ref{thm:delta adj via div}, let $D$ be an $m$-basis type $\bQ$-divisor of $\W^{(i)}$ that's compatible with $\cF$. Then in particular it is compatible with $Y_{i+1}$ and we may write
\[
D = S_m(\W^{(i)};Y_{i+1})\cdot Y_{i+1} + \Gamma
\]
where $\Gamma$ doesn't contain $Y_{i+1}$ in its support and $\Gamma|_{Y_{i+1}}$ is an $m$-basis type $\bQ$-divisor of $\W^{(i+1)}$ (since this is the same as the refinement of $\W^{(i)}$ by $Y_{i+1}$) that is compatible with $\cF$ (since the same is true for $D$ and the filtration $\cF$ on $\W^{(i)}$ is a refinement of the filtration induced by $Y_{i+1}$). Thus by inversion of adjunction as in the proof of Theorem \ref{thm:delta adj via div} we get
\[
\delta_{Z\cap Y_i,m}(Y_i,\Delta_i;\W^{(i)},\cF) \ge \min\left\{ \frac{A_{Y_i,\Delta_i}(Y_{i+1})}{S_m(\W^{(i)};Y_{i+1})}, \delta_{Z\cap Y_{i+1},m}(Y_{i+1},\Delta_{i+1};\W^{(i+1)},\cF) \right\}.
\]
Letting $m\to \infty$ we obtain \eqref{eq:delta adj wrt filtration-induction step} and this finishes the proof.
\end{proof}

\section{Applications} \label{sec:application}

\subsection{Tian's criterion and connection to birational superrigidity} \label{sec:Tian & bsr}

As a first application of the general framework developed in Section \ref{sec:adj}, we give a new proof of Tian's criterion for K-stability \cite{Tian-criterion} (see e.g. \cites{OS-alpha,FO-delta} for other proofs).

\begin{thm}[Tian's criterion] \label{thm:Tian's criterion}
Let $(X,\Delta)$ be a log Fano pair of dimension $n$. Assume that $(X,\Delta+\frac{n}{n+1}D)$ is log canonical $($resp. klt$)$ for any effective $\bQ$-divisor $D\sim_\bQ -(K_X+\Delta)$. Then $(X,\Delta)$ is K-semistable $($K-stable$)$.
\end{thm}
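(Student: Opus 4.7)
The plan is to apply the filtration framework of Subsection~\ref{sec:filter by div} with an auxiliary divisor adapted to each valuation under test. Set $L=-(K_X+\Delta)$, fix an arbitrary divisor $E$ over $X$, and aim to show $A_{X,\Delta}(E)\ge S(L;E)$ (with strict inequality under the klt hypothesis). I would first pick a sufficiently divisible $m\in\bN_+$ (depending on $E$) so that $mL$ is very ample and $|mL|$ contains a section not vanishing along $C_X(E)$; a general member $F\in|mL|$ is then a smooth prime divisor with $A_{X,\Delta}(F)=1$ and $\ord_E(F)=0$. A direct calculation using $F\sim mL$ and $\vol(L-tF)=(1-tm)^n\vol(L)$ gives $T(L;F)=1/m$ and $S(L;F)=\tfrac{1}{m(n+1)}$; in particular $A_{X,\Delta}(F)/S(L;F)=m(n+1)$ is large, and Tian's hypothesis applied to $\tfrac{1}{m}F\sim_{\bQ}L$ confirms that $(X,\Delta+\tfrac{n}{(n+1)m}F)$ is lc.

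Next I would combine the filtrations $\cF_F$ and $\cF_E$ on $V_k=H^0(X,kL)$ for each $k\in M(L)$. By Lemma~\ref{lem:basis compatible with two filtrations} there is a basis of $V_k$ simultaneously compatible with both, giving a $k$-basis type $\bQ$-divisor $D_k$ with $\ord_F(D_k)=S_k(L;F)$ and $\ord_E(D_k)=S_k(L;E)$. Writing $D_k=a_kF+D_k'$ with $a_k=\ord_F(D_k)$ and $D_k'$ not containing $F$ in its support yields $D_k'\sim_{\bQ}c_kL$ with $c_k:=1-ma_k$, and as $k\to\infty$ one has $a_k\to\tfrac{1}{m(n+1)}$ and $c_k\to\tfrac{n}{n+1}$. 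Since $\tfrac{1}{c_k}D_k'\sim_{\bQ}L$ is effective, Tian's hypothesis gives that $(X,\Delta+\tfrac{n}{(n+1)c_k}D_k')$ is lc, which translates into the valuative bound $\ord_E(D_k')\le\tfrac{(n+1)c_k}{n}A_{X,\Delta}(E)$. Combined with $\ord_E(F)=0$ this gives $\ord_E(D_k)=\ord_E(D_k')\le\tfrac{(n+1)c_k}{n}A_{X,\Delta}(E)$, and passing to the limit $k\to\infty$ the left side tends to $S(L;E)$ and the right side to $A_{X,\Delta}(E)$, proving $\beta_{X,\Delta}(E)\ge 0$ and hence K-semistability.

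For the K-stability statement, the klt version of Tian's hypothesis upgrades \emph{lc} to \emph{klt}, giving strict inequalities $\ord_E(D_k')<\tfrac{(n+1)c_k}{n}A_{X,\Delta}(E)$ at each finite $k$. The main obstacle will be preserving strictness in the limit, since the inequalities collapse to equality under a naive passage. My plan is to argue by contradiction: assuming $S(L;E)=A_{X,\Delta}(E)$ for some $E$, the rescaled divisors $\tilde{D}_k:=\tfrac{n}{(n+1)c_k}D_k'$ all lie in the fixed numerical class $\tfrac{n}{n+1}L$ and hence form a bounded family by Chow-boundedness. A convergent subsequence produces an effective $\bR$-divisor $\tilde{D}^{\infty}\sim_{\bR}\tfrac{n}{n+1}L$ with $\ord_E(\tilde{D}^{\infty})\ge A_{X,\Delta}(E)$ by upper semicontinuity of $\ord_E$, which, after a suitable $\bQ$-approximation, contradicts the klt hypothesis applied to $\tfrac{n+1}{n}\tilde{D}^{\infty}\sim_{\bR}L$. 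Making this limit and $\bR$-to-$\bQ$ approximation rigorous is the technical heart of the klt case.
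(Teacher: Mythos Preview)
Your K-semistability argument is correct and is essentially the paper's own argument, just packaged slightly differently: the paper first isolates the inequality $S(L;v)\le \frac{n}{n+1}T(L;v)$ (Lemma~\ref{lem:S<=n/(n+1)T}) using exactly your filtration-by-a-general-$F\in|mL|$ trick, and then applies the lc hypothesis, whereas you apply the lc hypothesis directly to $\frac{1}{c_k}D_k'$ without passing through $T$. Either way one lands on $S(L;E)\le A_{X,\Delta}(E)$.

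Your K-stability argument, however, has a genuine gap, and the paper proceeds differently. The compactness/limit step you sketch is problematic on two fronts. First, even granting that the $\tilde D_k$ sit in a bounded Chow family, the limit $\tilde D^\infty$ is only an effective $\bR$-divisor $\sim_\bR \frac{n}{n+1}L$, while the klt hypothesis is stated for effective $\bQ$-divisors $D\sim_\bQ L$; an arbitrary $\bQ$-perturbation of $\tilde D^\infty$ can lower $\ord_E$, so the ``suitable $\bQ$-approximation'' you allude to does not obviously preserve the inequality $\ord_E\ge A_{X,\Delta}(E)$ you need for the contradiction. Second, making precise the semicontinuity of $\ord_E$ along a Chow limit of $\bQ$-divisors with varying denominators is itself delicate. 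So as written the klt case is incomplete.

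The paper sidesteps all of this by using the $T$-invariant and Fujita's notion of dreamy divisors. Once one has Lemma~\ref{lem:S<=n/(n+1)T}, the point is that it suffices (by \cite{Fuj-valuative-criterion}) to check $\beta_{X,\Delta}(E)>0$ for \emph{dreamy} $E$, and for such $E$ the threshold $T(-K_X-\Delta;E)$ is actually attained: there exists an honest effective $\bQ$-divisor $D\sim_\bQ -(K_X+\Delta)$ with $\ord_E(D)=T(-K_X-\Delta;E)$. Applying the klt hypothesis to this single $D$ gives the strict inequality $\frac{n}{n+1}T<A_{X,\Delta}(E)$, and combining with $S\le \frac{n}{n+1}T$ yields $\beta_{X,\Delta}(E)>0$ directly, with no limits or approximations. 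If you want to repair your argument, the cleanest fix is to extract from your computation the inequality $S(L;E)\le \frac{n}{n+1}T(L;E)$ (replace your use of the lc hypothesis on $\frac{1}{c_k}D_k'$ by the trivial bound $\ord_E(D_k')\le T(c_kL;E)=c_kT(L;E)$), and then invoke dreaminess exactly as the paper does.
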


The proof is based on the following lemma, which is known to imply Tian's criterion (in fact this is the strategy used in \cite{FO-delta}). When $v$ is a divisorial valuation, the statement is proved in \cite{Fuj-plt-blowup}*{Proposition 2.1} and \cite{BJ-delta}*{Proposition 3.11}. Here we give a different proof using compatible divisors, which naturally generalizes the statement to all valuations (see also \cite{BJ-delta}*{Remark 3.12}).

\begin{lem} \label{lem:S<=n/(n+1)T}
Let $X$ be a projective variety of dimension $n$, let $L$ be an ample line bundle on $X$ and let $v$ be a valuation of linear growth on $X$. Then
\[
S(L;v)\le \frac{n}{n+1}T(L;v).
\]
\end{lem}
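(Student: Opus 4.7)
The plan is to pass to the Okounkov-body picture where both sides of the inequality admit clean convex-geometric expressions, and then to reduce the resulting inequality to the classical Minkowski centroid estimate.

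First, using the integral formula $S(L;v) = \frac{1}{\vol(L)}\int_0^T \vol(L;v\ge t)\,\rd t$ from Definition~\ref{defn:S and T}, one rewrites both invariants via the Okounkov body. Fix an admissible flag $\Y$ over $X$ of length $n = \dim X$; let $\Delta = \Delta_{\Y}(L)\subseteq \bR^n$ be the Okounkov body of $L$ and let $G_v\colon \Delta \to [0,T]$ be the concave transform of the filtration $\cF_v$ (as in Boucksom--Chen, used implicitly in the proof of Lemma~\ref{lem:delta=inf A/S}). A short Fubini/slicing argument then yields
\[
S(L;v) = \frac{1}{\vol(\Delta)}\int_\Delta G_v\,\rd\rho, \qquad T(L;v) = \sup_{\Delta} G_v,
\]
so the target inequality becomes the convex-geometric estimate $\int_\Delta G_v\,\rd\rho \le \frac{n}{n+1}\, T\,\vol(\Delta)$.

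Second, assuming $v = \ord_F$ is divisorial for a prime divisor $F$ over $X$, the plan is to choose the admissible flag so that $Y_1 = F$ on the associated prime blowup. Then $\nu_1(s) = \ord_F(s) = v(s)$, and so $G_v$ becomes the linear function $x\mapsto x_1$ on $\Delta$. Ampleness of $L$ further forces $\min_\Delta x_1 = 0$: for every sufficiently divisible $m$ there exist sections $s\in H^0(X,mL)$ with $\ord_F(s) = 0$, contributing points with vanishing first coordinate to $\Delta$. The inequality thus reduces to $\bar{x}_1(\Delta) \le \frac{n}{n+1}\max_\Delta x_1$, which is precisely the classical Minkowski centroid inequality for an $n$-dimensional convex body together with a linear functional whose minimum is zero (with equality iff $\Delta$ is a cone with apex on the hyperplane $\{x_1 = \max_\Delta x_1\}$).

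Third, the general case of a valuation of linear growth is handled by approximating $v$ by divisorial valuations, which are dense in $\Val_X^*$ for the natural topology. The main obstacle is precisely this approximation step: while Minkowski's inequality gives the bound immediately for linear $G_v$, establishing the analogous bound for a general concave $G_v$ arising from a valuation requires an input beyond mere concavity (one can construct abstract concave functions $G$ on $\Delta$ with $\min G = 0$, $\max G = T$ violating the bound). The way to handle this is either to invoke continuity of $S(L;\cdot)$ and $T(L;\cdot)$ under divisorial approximation (as in \cite{BJ-delta}*{Theorem~3.3} and the proof of Lemma~\ref{lem:delta=inf A/S}), or to argue directly via Brunn's theorem applied to the hypograph $\{(x,y) : x \in \Delta,\ 0\le y\le G_v(x)\}\subseteq \bR^{n+1}$, exploiting the key structural fact, enforced by the ampleness of $L$ together with $\vol(L;v\ge T) = 0$, that the top slice $\{G_v = T\}$ has $n$-dimensional measure zero in $\Delta$.
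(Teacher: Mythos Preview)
Your approach via Okounkov bodies and the concave transform is genuinely different from the paper's, and your Step~2 (the divisorial case, via the Minkowski centroid inequality with $Y_1=F$) is correct --- it is essentially Fujita's original argument in \cite{Fuj-plt-blowup}. However, Step~3 has a real gap.

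The Brunn-theorem route you sketch does not work as stated. Concavity of $G_v$ together with the top slice $\{G_v=T\}$ having measure zero is \emph{not} sufficient to conclude $\frac{1}{\vol(\Delta)}\int_\Delta G_v \le \frac{n}{n+1}T$. Already for $n=1$, take $\Delta=[0,1]$ and let $G$ be the concave tent function with $G(\tfrac12)=T$ and $G(0)=G(1)=T-\epsilon$; then $\{G=T\}$ is a single point, yet $\int_0^1 G = T-\tfrac{\epsilon}{2}$, which exceeds $\frac{n}{n+1}T=\tfrac{T}{2}$ whenever $\epsilon<T$. More generally, writing $h(t)=\vol(\{G\ge t\})/\vol(\Delta)$, the conditions ``$h^{1/n}$ concave, $h(0)=1$, $h(T)=0$'' permit $h\equiv 1$ on $[0,T-\epsilon]$ and then dropping to $0$, so $\int_0^T h$ can be made arbitrarily close to $T$. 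Some additional input specific to valuations --- beyond log-concavity of $t\mapsto\vol(L;v\ge t)$ and vanishing of the top slice --- is therefore required, and you have not supplied it. Your alternative approximation route is likewise left unjustified: continuity of $T(L;\cdot)$ along a divisorial approximation of $v$ is asserted but not proved, and it does not follow from the references you cite.

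By contrast, the paper's proof sidesteps all of this by filtering not by $v$ but by a \emph{general} very ample divisor $H\in|rL|$. Any $m$-basis type $D$ compatible with $H$ decomposes as $D=S_m(L;H)\cdot H+\Gamma$ with $\Gamma\sim_{\bQ}(1-rS_m(L;H))L$; since $H$ is general, $C_X(v)\not\subseteq H$, so $v(D)=v(\Gamma)\le (1-rS_m(L;H))\,T(L;v)$. Letting $m\to\infty$ and using $S(L;H)=\frac{1}{r(n+1)}$ gives the bound directly, uniformly for every valuation of linear growth, with no case distinction or approximation needed.
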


\begin{proof}
Let $r$ be a sufficiently large integer such that $rL$ is very ample and let $H\in |rL|$ be a general member. Let $\V$ be the complete linear series associated to $L$ and let $\cF$ be the filtration on $\V$ induced by $H$. By Proposition \ref{prop:delta = delta wrt filtration}, we have $S(L;v)=S(\V;v)=S(\V,\cF;v)$. Let $D$ be an $m$-basis type $\bQ$-divisor of $L$ that's compatible with $\cF$. By the same discussion as at the beginning of Section \ref{sec:filter by div}, we have
\[
D=S_m(L;H)\cdot H + \Gamma
\]
for some effective $\bQ$-divisor $\Gamma$ whose support doesn't contain $H$. Since $H$ is general, we have $C_X(v)\not\subseteq H$. Thus $v(D)=v(\Gamma)\le T(L-S_m(L;H)\cdot H; v))$ and $S_m(\V,\cF;v)\le T(L-S_m(L;H)\cdot H; v))$. Letting $m\to \infty$ we see that 
\[
S(L;v)\le T(L-S(L;H)\cdot H; v)).
\]
By direct calculation for any irreducible divisor $H\in |rL|$ we have
\begin{equation} \label{eq:S(L;H)=1/(n+1)}
    S(L;H)=\int_0^{1/r}(1-rx)^n \rd x=\frac{1}{r(n+1)};
\end{equation}
putting it into the previous inequality we get $S(L;v)\le \frac{n}{n+1}T(L;v)$ as desired.
\end{proof}

\begin{proof}[Proof of Theorem \ref{thm:Tian's criterion}]
We only prove the K-stability part since the argument for K-semistability is similar (and simpler).
Let $r>0$ be an integer such that $-r(K_X+\Delta)$ is Cartier.  Following \cite{Fuj-valuative-criterion}, we say that a divisor $E$ over $X$ is dreamy if the double graded algebra $\bigoplus_{k,j\in\bN} H^0(Y,-kr\pi^*(K_X+\Delta)-jE)$ is finitely generated (where $\pi\colon Y\to X$ is a proper birational morphism such that the center of $E$ is a prime divisor on $Y$). For such $E$, there exists some effective $\bQ$-divisor $D\sim_\bQ -(K_X+\Delta)$ such that $T(-K_X-\Delta;E)=\ord_E(D)$. By assumption, $(X,\Delta+\frac{n}{n+1}D)$ is klt, hence $\frac{n}{n+1}T(-K_X-\Delta;E)<A_{X,\Delta}(E)$ and by Lemma \ref{lem:S<=n/(n+1)T} we have $\beta_{X,\Delta}(E)=A_{X,\Delta}(E)-S(-K_X-\Delta;E)>0$. Since this holds for any dreamy divisor $E$ over $X$, $(X,\Delta)$ is K-stable by \cite{Fuj-valuative-criterion}*{Theorem 1.6 and \S 6}.
\end{proof}

Using the same strategy, we can also give a new proof of the following statement, which implies the K-stability criterion from \cite{SZ-bsr-imply-K}.

\begin{thm}[\cite{Z-cpi}*{Theorem 1.5}] \label{thm:Tian's criterion rho=1}
Let $(X,\Delta)$ be a log Fano pair where $X$ is $\bQ$-factorial of Picard number $1$ and dimension $n$. Assume that for every effective $\bQ$-divisor $D\sim_\bQ -(K_X+\Delta)$ and every movable boundary $M\sim_\bQ -(K_X+\Delta)$, the pair $(X,\Delta+\frac{1}{n+1}D+\frac{n-1}{n+1}M)$ is log canonical $($resp. klt$)$. Then $X$ is K-semistable $($resp. K-stable$)$.
\end{thm}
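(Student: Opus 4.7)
The plan follows the structure of the new proof of Tian's criterion (Theorem~\ref{thm:Tian's criterion}) but with a sharpened $S$-bound exploiting the Picard number one hypothesis. Write $L = -(K_X+\Delta)$. By \cite{Fuj-valuative-criterion}*{Theorem 1.6 and \S 6}, it suffices to verify $\beta_{X,\Delta}(E) \ge 0$ (resp.\ $>0$) for every dreamy divisor $E$ over $X$. Set $v = \ord_E$ and introduce the movable analogue of the pseudoeffective threshold,
\[
    T^{\mathrm{mov}}(L; v) \,:=\, \sup\bigl\{v(M) \,\big|\, M \sim_\bQ L \text{ is a movable $\bQ$-boundary}\bigr\},
\]
where a movable $\bQ$-boundary means (the scaled general member of) a linear sub-system $\mathcal{M} \subseteq |kL|$ without codimension-one fixed part. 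Dreaminess of $E$, combined with $\bQ$-factoriality of $X$, ensures both $T(L; v)$ and $T^{\mathrm{mov}}(L; v)$ are attained by genuine divisors: $T(L; v)$ by an effective $D \sim_\bQ L$ as in the proof of Theorem~\ref{thm:Tian's criterion}, and $T^{\mathrm{mov}}(L; v)$ by the pushforward to $X$ of the scaled general member of the movable part of $|kr\pi^*L - jE|$ on the prime blowup $\pi\colon Y \to X$, for an extremal rational ratio $j/k$ determined by the finitely generated bigraded section ring of $E$.

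The technical heart is the sharpened inequality
\[
    S(L; v) \;\le\; \frac{1}{n+1}\, T(L; v) + \frac{n-1}{n+1}\, T^{\mathrm{mov}}(L; v),
\]
refining Lemma~\ref{lem:S<=n/(n+1)T}. To prove it, fix $r \gg 0$ and a very general $H \in |rL|$, refine the complete linear series $\V$ of $L$ by $H$ (Example~\ref{ex:graded series from ample div}), and invoke Proposition~\ref{prop:delta = delta wrt filtration}. For an $m$-basis type $\bQ$-divisor $D$ of $\V$ compatible with $H$, decompose $D = S_m(L; H) H + \Gamma$; since $H$ is very general, $v(D) = v(\Gamma)$. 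The Picard number one hypothesis enters essentially: for each $j$, the classes of the fixed and movable parts of $|mL - jH|$ on $X$ both lie in $\bR_{\ge 0} \cdot [L] = \NS(X)_\bR$, so the asymptotic fixed part $F(\W)$ and the class $c_1(M_{\vb})$ of the refinement (Lemma~\ref{lem:S of ac W}) are proportional to $L|_H$. An Okounkov-body integration combining Corollary~\ref{cor:S formula}, Lemma~\ref{lem:vol(W^t)} and Corollary~\ref{cor:F(W) formula} then partitions the contribution to $v(\Gamma)$ into a fixed-type part bounded by $\tfrac{1}{n+1}\, T(L;v)$ and a movable-type part bounded by $\tfrac{n-1}{n+1}\, T^{\mathrm{mov}}(L;v)$, which together yield the inequality.

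Granted the $S$-bound, the theorem follows immediately. The hypothesis applied to the extremal $D$ and $M$ above asserts that $(X, \Delta + \tfrac{1}{n+1} D + \tfrac{n-1}{n+1} M)$ is lc (resp.\ klt), so
\[
    A_{X,\Delta}(v) \;\ge\; (\text{resp.}\ >)\; \tfrac{1}{n+1} v(D) + \tfrac{n-1}{n+1} v(M) \,=\, \tfrac{1}{n+1} T(L;v) + \tfrac{n-1}{n+1} T^{\mathrm{mov}}(L;v) \;\ge\; S(L; v),
\]
so $\beta_{X,\Delta}(E) \ge 0$ (resp.\ $>0$) as required. The main obstacle is the sharpened $S$-bound itself, and specifically the careful partition of the filtered section ring into a fixed-type and a movable-type contribution with the correct weights $\tfrac{1}{n+1}$ and $\tfrac{n-1}{n+1}$. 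An alternative (likely cleaner) execution iterates the refinement along a full admissible flag of codimension-one subvarieties each in $|rL|$ via Theorem~\ref{thm:delta adj via flag}, eventually reducing the estimate to an elementary statement on a smooth curve via Corollary~\ref{cor:delta on curve}.
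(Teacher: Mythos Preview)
Your overall architecture matches the paper's: reduce to dreamy divisors via \cite{Fuj-valuative-criterion}, establish the sharpened bound $S(L;v)\le \frac{1}{n+1}T(L;v)+\frac{n-1}{n+1}\eta(L;v)$ (your $T^{\mathrm{mov}}$ is the paper's movable threshold $\eta$), and then conclude by applying the lc/klt hypothesis to extremal $D$ and $M$. The final paragraph is fine. The gap is in your proof of the $S$-bound.

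You refine by a very general $H\in|rL|$ and write $D=S_m(L;H)H+\Gamma$; so far so good. But then you invoke the fixed/movable decomposition of the refinement $\W$ on $H$, together with Corollary~\ref{cor:S formula}, Lemma~\ref{lem:vol(W^t)}, and Corollary~\ref{cor:F(W) formula}, to ``partition the contribution to $v(\Gamma)$''. This does not work as stated: since $L$ is ample and $H$ is general, the graded pieces $W_{m,j}$ are (asymptotically) complete linear series of multiples of $L|_H$, so $F(\W)=0$ and there is nothing nontrivial to partition. More fundamentally, all of those tools compute invariants of $\W$ with respect to valuations \emph{on $H$}, whereas $v$ is centered away from $H$; they give no direct handle on $v(\Gamma)$. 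Your alternative suggestion of iterating along a full flag of general hyperplane sections has the same defect.

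The paper's mechanism is different and more concrete: using $\rho(X)=1$ and $\bQ$-factoriality, one shows there is a \emph{unique irreducible} $\bQ$-divisor $G\sim_\bQ L$ with $v(G)>\eta(L;v)$ (necessarily $v(G)=T(L;v)$). Writing $G=\lambda G_0$ with $G_0$ prime, one peels $G_0$ off $\Gamma$: decompose $\Gamma=\mu G_0+\Gamma_0$ with $G_0\not\subseteq\Supp(\Gamma_0)$. Then $\mu\le S_m(L;G_0)$ since $D$ is $m$-basis type, and $v(\Gamma_0)\le \eta(\Gamma_0;v)$ by the choice of $G_0$. Since $\rho(X)=1$ forces $S(L;F)\cdot F\sim_\bQ\frac{1}{n+1}L$ for every prime divisor $F$, letting $m\to\infty$ gives exactly $\frac{1}{n+1}T+\frac{n-1}{n+1}\eta$. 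This identification of $G_0$ and the decomposition $\Gamma=\mu G_0+\Gamma_0$ is the missing idea in your argument. (The paper also handles separately the case when $E$ is already a prime divisor on $X$, where $\eta$ is not defined; you should check that your formulation covers this case.)
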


For the proof we need some notation. Let $X$ be a projective variety of dimension $n$ and let $v$ be a valuation on $X$ whose center has codimension at least two on $X$. Let $L$ be an ample line bundle on $X$. We define the movable threshold $\eta(L;v)$ (see \cite{Z-cpi}*{Definition 4.1}) as the supremum of all $\eta>0$ such that the base locus of the linear system $|\cF_v^{m\eta} H^0(X,mL)|$ has codimension at least $2$ for some $m\in\bN$. Analogous to Lemma \ref{lem:S<=n/(n+1)T} we have

\begin{lem}[\cite{Z-cpi}*{Lemma 4.2}] \label{lem:S<=T+eta}
Notation as above. Assume that $X$ is $\bQ$-factorial and $\rho(X)=1$. Then we have
\[
S(L;v)\le \frac{1}{n+1}T(L;v)+\frac{n-1}{n+1}\eta(L;v).
\]
\end{lem}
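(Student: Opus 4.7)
The plan is to adapt the proof of Lemma \ref{lem:S<=n/(n+1)T} by combining the filtration coming from a general hyperplane section with the filtration induced by $v$, so that the movable structure encoded in $\eta(L;v)$ can replace part of the cruder bound via $T(L;v)$.

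First, fix $r$ large enough that $rL$ is very ample and choose a general $H\in|rL|$. Since $C_X(v)$ has codimension at least two on $X$ and $H$ is general, we get $v(H)=0$ and $C_X(v)\not\subseteq H$. Let $\V$ be the complete linear series of $L$ and let $\cF_H$ denote the induced filtration on $\V$. By Proposition \ref{prop:delta = delta wrt filtration} we have $S(L;v)=S(\V,\cF_H;v)$. For any $\eta<\eta(L;v)$, the definition of $\eta(L;v)$ supplies an integer $m_0$ such that $|\cF_v^{m_0\eta}V_{m_0}|$ has base locus of codimension $\ge 2$, i.e.\ is a \emph{movable} sub-linear system. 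Let $m$ be a sufficiently divisible multiple of $m_0$, take a basis of $V_m$ simultaneously compatible with $\cF_H$ and $\cF_v$ (Lemma \ref{lem:basis compatible with two filtrations}), and let $D$ be the associated $m$-basis type $\bQ$-divisor. As in Lemma \ref{lem:S<=n/(n+1)T}, decomposing along $\cF_H$ gives
\[
D=S_m(L;H)\cdot H+\Gamma,
\]
with $\Gamma$ effective, $\Gamma\sim_\bQ L-S_m(L;H)\,H$, not containing $H$ in its support, and $v(D)=v(\Gamma)$.

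Now further split $\Gamma=\Gamma_{\ge}+\Gamma_{<}$ according to the $\cF_v$-level of the basis elements, where $\Gamma_{\ge}$ collects the contributions of those $s_i$ with $v(s_i)\ge m\eta$ (all lying in the movable sub-series $\cF_v^{m\eta}V_m$) and $\Gamma_{<}$ the rest. The basis vectors in $\Gamma_{<}$ individually contribute $v(s_i)/(mN_m)<\eta/N_m$, so they produce a bound linear in $\eta$. For $\Gamma_{\ge}$, movability of $|\cF_v^{m\eta}V_m|$ together with $\rho(X)=1$ and $X$ being $\bQ$-factorial means every prime component is $\bQ$-proportional to $L$ and there is no common divisorial base component, which lets us control the ``$\ge m\eta$'' contribution by a combination of $\eta$ and $T(L;v)$ weighted by the appropriate fraction of the total class. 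Passing to the limit $m\to\infty$, then $\eta\to\eta(L;v)$, should yield
\[
S(L;v)\le\frac{n-1}{n+1}\eta(L;v)+\frac{1}{n+1}T(L;v).
\]

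The main obstacle is extracting the correct coefficients $\tfrac{n-1}{n+1}$ and $\tfrac{1}{n+1}$ in the asymptotic accounting of the split $\Gamma=\Gamma_{\ge}+\Gamma_{<}$. The coefficient $\tfrac{1}{n+1}$ is exactly the constant that appeared as $rS(L;H)$ in Lemma \ref{lem:S<=n/(n+1)T}, which strongly suggests performing the analysis on the Okounkov body of $\V$ with respect to a flag starting at $H$: the slicing along the first coordinate produces the weight $\tfrac{1}{n+1}=\int_0^{1/r}rx\,dx\cdot r(n+1)^{-1}\cdot(n+1)$, while the movability of $|\cF_v^{m\eta}V_m|$ caps the concave transform $G_v$ by $\eta$ outside a ``thin slab'' of the body. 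Making this slab analysis precise, and in particular showing that the ``fixed'' part of $\Gamma$ has asymptotic class at most $\tfrac{1}{n+1}L$, is the delicate part of the argument; the rest of the proof is a routine combination of Lemma \ref{lem:S<=n/(n+1)T}'s splitting with the pseudoeffective bound and homogeneity of $T(\cdot;v)$.
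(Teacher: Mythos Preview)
Your setup through the decomposition $D=S_m(L;H)\cdot H+\Gamma$ matches the paper's, but the subsequent split $\Gamma=\Gamma_{\ge}+\Gamma_{<}$ by the $\cF_v$-level of individual basis vectors does not lead to the stated coefficients, and the sketch of how to rescue it is not correct. The fraction of basis vectors with $v(s_i)\ge m\eta$ is asymptotically $\vol(L;v\ge\eta)/\vol(L)$, which has no reason to equal $\tfrac{1}{n+1}$; and movability of $|\cF_v^{m\eta}V_m|$ says nothing about capping the concave transform $G_v$ (on the contrary, on that part of the body $G_v\ge\eta$). So neither half of your split produces the required weights, and the ``slab analysis'' on the Okounkov body you propose does not close the gap.

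The missing idea is this: because $\rho(X)=1$ and $X$ is $\bQ$-factorial, every prime divisor is $\bQ$-proportional to $L$, and by the very definition of $\eta=\eta(L;v)$ there is at most one \emph{irreducible} $\bQ$-divisor $G\sim_\bQ L$ with $v(G)>\eta$ (if there were two distinct ones, the pencil they span would be movable with $v$-value $>\eta$). When $T(L;v)>\eta$ such a $G=\lambda G_0$ exists and necessarily $v(G)=T(L;v)$. One then splits $\Gamma=\mu G_0+\Gamma_0$ with $G_0\not\subseteq\Supp(\Gamma_0)$. Since $H$ is general, $\mu=\ord_{G_0}(D)\le S_m(L;G_0)$, and $S(L;G_0)\cdot G_0\sim_\bQ\tfrac{1}{n+1}L$ because $\rho(X)=1$; this is exactly where the coefficient $\tfrac{1}{n+1}$ comes from. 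For $\Gamma_0$, every irreducible component $F\neq G_0$ has $v(F)\le\eta$ per unit of $L$-class, so $v(\Gamma_0)\le\eta(\Gamma_0;v)$, and its class is asymptotically $\tfrac{n-1}{n+1}L$. Putting these together and letting $m\to\infty$ gives the inequality. In short, the decomposition should be by the \emph{prime divisor} $G_0$ carrying the excess valuation, not by the $v$-level of individual sections.
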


\begin{proof}
We may assume that $T(L;v)>\eta(L;v)$, otherwise the statement follows from Lemma \ref{lem:S<=n/(n+1)T}. We claim that there exists a unique \emph{irreducible} $\bQ$-divisor $G\sim_\bQ L$ such that $v(G)>\eta$. The uniqueness simply follows from the definition of movable threshold. To see the existence, let $\widetilde{G}\sim_\bQ L$ be an effective $\bQ$-divisor on $X$ such that $v(\tG)>\eta$ (such $\tG$ exists by the definition of pseudo-effective thresholds). Since $X$ is $\bQ$-factorial and has Picard number one, every divisor on $X$ is $\bQ$-linearly equivalent to a rational multiple of $L$. In particular, we may write $\tG=\sum \lambda_i G_i$ where $\sum \lambda_i = 1$ and each $G_i\sim_\bQ L$ is irreducible. As $v(\tG)>\eta$, we have $v(G_i)>\eta$ for some $i$ which proves the claim. Note that by the definition of pseudo-effective threshold we then necessarily have $v(G)=T(L;v)$. Write $G=\lambda G_0$ where $G_0$ is a prime divisor on $X$.

As in the proof of Lemma \ref{lem:S<=n/(n+1)T}, let $r\in \bZ_+$ be such that $rL$ is very ample and let $H\in |rL|$ be a general member. Let $\V$ be the complete linear series associated to $L$ and let $\cF$ be the filtration on $\V$ induced by $H$. By Proposition \ref{prop:delta = delta wrt filtration}, we have $S(L;v)=S(\V,\cF;v)$. Let $D$ be an $m$-basis type $\bQ$-divisor of $L$ that's compatible with both $\cF$ and $\cF_v$ (which exists by Lemma \ref{lem:basis compatible with two filtrations}). We have
\[
D=S_m(L;H)\cdot H + \Gamma
\]
for some effective $\bQ$-divisor $\Gamma$ whose support doesn't contain $H$. We further decompose $\Gamma=\mu G_0 + \Gamma_0$ where the support of $\Gamma_0$ doesn't contain $G_0$. Note that $v(\Gamma_0)\le \eta(\Gamma_0;v)$ by our choice of $G_0$. As $H$ is general and $D$ is of $m$-basis type we have $\mu=\ord_{G_0}(\Gamma)=\ord_{G_0}(D)\le S_m(L;G_0)$, thus
\begin{align*}
    S_m(\V,\cF;v) & = v(D) = v(\Gamma) = \mu\cdot v(G_0) + v(\Gamma_0) \\
    & \le S_m(L;G_0)\cdot v(G_0) + \eta(\Gamma - S_m(L;G_0)\cdot G_0;v) \\
    & = T(S_m(L;G_0)\cdot G_0;v) + \eta(L- S_m(L;H)\cdot H - S_m(L;G_0)\cdot G_0;v).
\end{align*}
Since $\rho(X)=1$, for any prime divisor $F$ on $X$ we have $S(L;F)\cdot F \sim_\bQ \frac{1}{n+1}L$ as in the proof of Lemma \ref{lem:S<=n/(n+1)T}, hence letting $m\to \infty$ in the above inequality we obtain
\[
S(L;v)=\lim_{m\to\infty} S_m(\V,\cF;v)\le \frac{1}{n+1}T(L;v)+\frac{n-1}{n+1}\eta(L;v)
\]
as desired.
\end{proof}

\begin{proof}[Proof of Theorem \ref{thm:Tian's criterion rho=1}]
As in Theorem \ref{thm:Tian's criterion} we only prove the K-stability part. Let $E$ be a dreamy divisor over $X$. If the center of $E$ is a prime divisor on $X$, then we have $-(K_X+\Delta)\sim_\bQ \lambda E$ for some $\lambda>0$ as $X$ has Picard number one. By assumption $(X,\Delta+\frac{\lambda}{n+1} E)$ is klt, hence $\beta_{X,\Delta}(E) = A_{X,\Delta}(E)-S(-K_X-\Delta;E) = A_{X,\Delta}(E) - \frac{\lambda}{n+1} > 0$. If the center of $E$ has codimension at least two on $X$, then since $E$ is dreamy there are effective $\bQ$-divisor $D\sim_\bQ -(K_X+\Delta)$ and movable boundary $M\sim_\bQ -(K_X+\Delta)$ such that $\ord_E(D)=T(-K_X-\Delta;E)$ and $\ord_E(M)=\eta(-K_X-\Delta;E)$. By assumption $(X,\Delta+\frac{1}{n+1}D+\frac{n-1}{n+1}M)$ is klt, thus
\begin{align*}
    A_{X,\Delta}(E) & > \frac{1}{n+1}\ord_E(D)+\frac{n-1}{n+1}\ord_E(M) \\
    &  = \frac{1}{n+1}T(-K_X-\Delta;E)+\frac{n-1}{n+1}\eta(-K_X-\Delta;E) \\
    & \ge S(-K_X-\Delta;E)
\end{align*}
where the last inequality follows from Lemma \ref{lem:S<=T+eta}. Therefore $\beta_{X,\Delta}(E)>0$ for all dreamy divisors $E$ over $X$ and $(X,\Delta)$ is K-stable by \cite{Fuj-valuative-criterion}*{Theorem 1.6 and \S 6}.
\end{proof}

\begin{cor}[\cite{SZ-bsr-imply-K}*{Theorem 1.2}]
Let $X$ be a birationally superrigid Fano variety. Assume that $(X,\frac{1}{2}D)$ is lc for all effective $\bQ$-divisor $D\sim_\bQ -K_X$. Then $X$ is K-stable.
\end{cor}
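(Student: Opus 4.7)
The plan is to derive this directly from Theorem~\ref{thm:Tian's criterion rho=1} applied to the log Fano pair $(X,0)$. A birationally superrigid Fano variety is, by standard convention, $\bQ$-factorial with Picard number one (and at worst terminal), so the hypotheses on the ambient space in that theorem are met; we may also assume $n:=\dim X\ge 2$, since $\bP^1$ is trivially K-stable. What remains is to verify, for every effective $\bQ$-divisor $D\sim_\bQ -K_X$ and every movable boundary $M\sim_\bQ -K_X$, that the pair $(X,\tfrac{1}{n+1}D+\tfrac{n-1}{n+1}M)$ is klt.

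Two ingredients feed into this check. First, the hypothesis that $(X,\tfrac{1}{2}D)$ is lc yields $\ord_E(D)\le 2A_X(E)$ for every divisor $E$ over $X$. Second, birational superrigidity provides, via the Noether--Fano inequality, that for every movable linear system $\mathcal{M}\subseteq|-mK_X|$ (with base locus of codimension $\ge 2$) the pair $(X,\tfrac{1}{m}\mathcal{M})$ is canonical. Unwinding the notion of movable boundary used in the proof of Theorem~\ref{thm:Tian's criterion rho=1} (it arises as a general member of a linear subsystem of some $|-mK_X|$ with codimension $\ge 2$ base locus, scaled by $1/m$), this gives $(X,M)$ canonical; hence $\ord_E(M)\le A_X(E)-1$ for every exceptional divisor $E$ over $X$, while for a prime divisor $E$ on $X$, movability of $M$ forces $\ord_E(M)=0=A_X(E)-1$. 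So $\ord_E(M)\le A_X(E)-1$ in every case.

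The klt check then follows from the algebraic identity
\[
A_X(E)-\frac{1}{n+1}\ord_E(D)-\frac{n-1}{n+1}\ord_E(M)=\frac{2}{n+1}\Bigl(A_X(E)-\tfrac12\ord_E(D)\Bigr)+\frac{n-1}{n+1}\Bigl(A_X(E)-\ord_E(M)\Bigr),
\]
whose right-hand side is at least $0+\tfrac{n-1}{n+1}>0$ by the two bounds above. Thus $(X,\tfrac{1}{n+1}D+\tfrac{n-1}{n+1}M)$ is klt, and Theorem~\ref{thm:Tian's criterion rho=1} concludes that $X$ is K-stable.

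The only point requiring genuine care is matching the paper's ``movable boundary'' with a general member of a movable sub-linear-system of some $|-mK_X|$, so that the classical Noether--Fano inequality applies verbatim; this is effectively built into the construction of movable boundaries via the movable threshold $\eta$ in the proof of Theorem~\ref{thm:Tian's criterion rho=1}. Everything else is a one-line combination of the lc hypothesis on $D$ with the canonicity statement on $M$.
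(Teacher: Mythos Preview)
Your argument is correct and essentially identical to the paper's: both invoke Theorem~\ref{thm:Tian's criterion rho=1} after verifying the klt hypothesis via the decomposition $\frac{1}{n+1}D+\frac{n-1}{n+1}M=\frac{2}{n+1}\cdot\frac{1}{2}D+\frac{n-1}{n+1}M$, using that $(X,\frac{1}{2}D)$ is lc by assumption and $(X,M)$ is canonical by the Noether--Fano inequality (the paper cites \cite{CS-Noether-Fano}*{Theorem 1.26} for this together with the $\bQ$-factorial, Picard-number-one structure). The paper phrases the conclusion as ``convex combination of an lc pair and a klt pair is klt''; your explicit log-discrepancy identity is the same computation unpacked.

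One small slip: $\bP^1$ is K-semistable but not K-stable (a hyperplane $E$ gives $\beta(E)=0$), so your aside handling $n=1$ is incorrect as stated. In practice birational superrigidity is a notion for dimension $\ge 2$, so the issue does not actually arise, but you should drop or correct that sentence.
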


\begin{proof}
By \cite{CS-Noether-Fano}*{Theorem 1.26}, $X$ is $\bQ$-factorial of Picard number one and $(X,M)$ has canonical singularities (in particular it is klt) for every movable boundary $M\sim_\bQ -K_X$. Let $D\sim_\bQ -K_X$ be an effective $\bQ$-divisor. By assumption, $(X,\frac{1}{2}D)$ is lc. As $\frac{1}{n+1}D+\frac{n-1}{n+1}M=\frac{2}{n+1}\cdot \frac{1}{2}D+\frac{n-1}{n+1}M$ is a convex linear combination of $\frac{1}{2}D$ and $M$, we see that the conditions of Theorem \ref{thm:Tian's criterion rho=1} are satisfied and therefore $X$ is K-stable.
\end{proof}

\subsection{Fano manifolds of small degrees} \label{sec:small deg}

As a second application of our general framework, we study K-stability of Fano manifolds of small degree using flags of complete intersection subvarieties. To do so we first specialize Corollary \ref{cor:adj delta ac} to the case when the auxiliary divisor is an ample Cartier divisor on the given variety.

\begin{lem} \label{lem:F=ample Cartier}
Let $X$ be a variety of dimension $n$, let $L$ be an ample line bundle on $X$ and let $H\in |L|$. Assume that $H$ is irreducible and reduced. Then
\[
\delta_x(L)\ge \min\left\{n+1,\frac{n+1}{n}\delta_x(L|_H)\right\}
\]
at every $x\in H$. If equality holds, then either $\delta_x(L)=n+1$ and it is computed by $H$, or $\delta_x(L)=\frac{n+1}{n}\delta_x(L|_H)$ and $C_X(v)\not\subseteq H$ for any valuation $v$ that computes $\delta_x(L)$. Moreover, in the latter case, for every irreducible component $Z$ of $C_X(v)\cap H$ containing $x$, there exists a valuation $v_0$ on $H$ with center $Z$ computing $\delta_x(L|_H)$.
\end{lem}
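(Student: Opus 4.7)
The plan is to specialize Corollary \ref{cor:adj delta ac} with the auxiliary primitive divisor $F$ taken to be $H$ itself, and $\V$ the complete linear series associated to $L$. Since $H\in|L|$ is an irreducible, reduced Cartier divisor on $X$, the associated prime blowup $\pi\colon Y\to X$ is the identity, and by Example \ref{ex:graded series from ample div} the refinement $\W$ of $\V$ by $H$ (Example \ref{ex:refinement by div}) is almost complete with respect to $L_0 := L|_H$. Moreover $A_X(H)=1$ and the different $\Delta_H=\mathrm{Diff}_H(0)=0$.

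First I compute the two quantities that will appear in the adjunction. From $L-tH\sim_\bQ (1-t)L$ one obtains $\vol(L-tH)=(1-t)^n\vol(L)$ for $t\in[0,1]$, hence
\[
S(L;H)=\int_0^1 (1-t)^n\,\mathrm{d}t = \frac{1}{n+1},
\]
so $A_X(H)/S(L;H)=n+1$. On the other side, formula \eqref{eq:c_1 of refinement} gives $c_1(\W)=(L-\tfrac{1}{n+1}H)|_H = \tfrac{n}{n+1}L_0$. I then show $F(\W)=0$: since $L_0$ is ample, $|kL_0|$ has no fixed part for $k\gg 0$, so the fixed part $F_{m,j}$ of $W_{m,j}=|(m-j)L|_H$ vanishes whenever $m-j$ is sufficiently large, and the contribution from the remaining (bounded number of) indices to $\mathrm{ord}_D(F_m)$ is $O(1/m)$ for every prime divisor $D$ on $H$. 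Therefore $F(\W)=0$, which via Lemma \ref{lem:S of ac W} forces $\mu=\tfrac{n}{n+1}$ and $c_1(M_{\vb})=\tfrac{n}{n+1}L_0$ in $\NS(H)_\bR$.

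Next I apply Corollary \ref{cor:adj delta ac} with $Z=\{x\}$ (so $Z_0=\{x\}$) and $\lambda:=\min\{n+1,\,\tfrac{n+1}{n}\delta_x(L_0)\}$. The condition $\lambda\le A_X(H)/S(\V;H)=n+1$ is automatic, and since $F(\W)=0$ the second hypothesis reduces to $\delta_{Z'}(H;c_1(M_{\vb}))\ge \lambda$ for every $Z'\ni x$; by the homogeneity of $\delta$ under rational rescaling (a direct consequence of Lemma \ref{lem:delta homogeneous}), $\delta_{Z'}(H;c_1(M_{\vb}))=\tfrac{n+1}{n}\delta_{Z'}(L_0)\ge \tfrac{n+1}{n}\delta_x(L_0)\ge \lambda$, using $\delta_{Z'}\ge \delta_x$ whenever $Z'\ni x$. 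The corollary then yields $\delta_x(L)\ge \lambda$, which is the required inequality.

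For the equality statement I invoke the ``moreover'' clause of Corollary \ref{cor:adj delta ac}. If equality holds and $v$ computes $\delta_x(L)$, then either $H$ computes $\delta_x(L)$, forcing $\delta_x(L)=n+1$, or $C_X(v)\not\subseteq H$ and for each irreducible component $Z$ of $C_X(v)\cap H$ containing $x$ there is a valuation $v_0$ on $H$ with center $Z$ computing $\delta_{Z'}(H;c_1(M_{\vb}))=\lambda$ for every $Z'\subseteq Z$ with $x\in Z'$. Specializing to $Z'=Z$ and applying the same homogeneity gives $A_H(v_0)/S(L_0;v_0)=\tfrac{n}{n+1}\lambda=\delta_x(L_0)$; since the center of $v_0$ contains $x$, $v_0$ computes $\delta_x(L|_H)$ as required. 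The one genuinely substantive step is verifying that $F(\W)=0$, so that the refinement contributes no spurious boundary in the adjunction; once this is in hand the remaining arguments are mechanical applications of the framework of Section \ref{sec:adj}.
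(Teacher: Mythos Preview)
Your proof is correct and follows the same route as the paper: take $\V$ the complete linear series of $L$, refine by $H$, observe via Example~\ref{ex:graded series from ample div} that $\W$ is almost complete with $F(\W)=0$, compute $S(L;H)=\tfrac{1}{n+1}$ and $c_1(\W)\sim_\bQ \tfrac{n}{n+1}L|_H$, and then apply Corollary~\ref{cor:adj delta ac}. Your write-up is in fact more explicit than the paper's in justifying $F(\W)=0$ and in unpacking the equality case.
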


\begin{proof}
Let $\V$ be the complete linear series associated to $L$ and let $\W$ be its refinement by $H$. By Example \ref{ex:graded series from ample div}, $\W$ is almost complete and $F(\W)=0$. By \eqref{eq:S(L;H)=1/(n+1)}, we have $S(L;H)=\frac{1}{n+1}$. As discussed in \S \ref{sec:adj}, any $m$-basis type $\bQ$-divisor $D\sim_\bQ L$ that is compatible with $H$ can be written as $D=S_m(L;H)\cdot H + \Gamma$ where $\Gamma|_H$ is an $m$-basis type $\bQ$-divisor of $\W$, thus letting $m\to \infty$ we see that
\[
c_1(\W)\sim_\bQ L|_H- S(L;H)\cdot H|_H\sim_\bQ \frac{n}{n+1}L|_H
\]
and $\delta_x(c_1(\W))=\frac{n+1}{n}\delta_x(L|_H)$. The result now follows directly from Corollary \ref{cor:adj delta ac} with $F=H$.
\end{proof}

Applying induction, we further deduce:

\begin{lem} \label{lem:delta small deg}
Let $X$ be a variety of dimension $n$ and let $L$ be an ample line bundle on $X$. Let $x\in X$ be a smooth point. Assume that 
\begin{enumerate}
    \item[(*)] there exists $H_1,\cdots,H_{n-1}\in |L|$ containing $x$ such that $H_1\cap \cdots \cap H_{n-1}$ is an integral curve that is smooth at $x$.
\end{enumerate}
Then $\delta_x(L)\ge \frac{n+1}{(L^n)}$. If equality holds, then either $(L^n)=1$, or every valuation that computes $\delta_x(L)$ is divisorial and is induced by some prime divisor $E$ \emph{on} $X$.
\end{lem}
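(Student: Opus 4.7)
I will prove the lemma by induction on $n=\dim X$, with Lemma \ref{lem:F=ample Cartier} as the main engine and the flag $H_1\supseteq H_1\cap H_2 \supseteq\cdots\supseteq C$ providing the inductive scaffold. The base case $n=1$ is a direct computation: on a smooth curve, $S(L;x)=\tfrac{1}{2}(L)$, so $\delta_x(L)=2/(L)=(n+1)/(L^n)$, and the only valuation on $X$ whose center contains $x$ (up to positive rescaling) is $\ord_x$, which is divisorial and induced by a prime divisor on $X$.

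For the inductive step, observe that condition (*) forces $C=H_1\cap\cdots\cap H_{n-1}$ to be a complete intersection of the expected dimension $1$ at $x$; since $C$ is integral and smooth at $x$, each $Y_i:=H_1\cap\cdots\cap H_i$ is (after possibly replacing the $H_i$ by members of $|L|$ with the same intersection) irreducible and reduced, and smooth at $x$ because the tangent spaces $T_x Y_i$ form a decreasing flag with $\dim T_x Y_i=n-i$. Applying Lemma \ref{lem:F=ample Cartier} with $H=H_1$ yields
\[
\delta_x(L)\ge \min\left\{n+1,\tfrac{n+1}{n}\delta_x(L|_{H_1})\right\}.
\]
On $H_1$, the divisors $H_2|_{H_1},\dots,H_{n-1}|_{H_1}\in |L|_{H_1}|$ again satisfy (*) with the same intersection curve $C$, so the inductive hypothesis gives $\delta_x(L|_{H_1})\ge n/(L|_{H_1})^{n-1}=n/(L^n)$. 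Combining the two bounds and using that $(L^n)\in\bZ_{\ge 1}$ (so $n+1\ge (n+1)/(L^n)$), we obtain $\delta_x(L)\ge (n+1)/(L^n)$.

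For the equality case, suppose $\delta_x(L)=(n+1)/(L^n)$ is computed by a valuation $v\in\Val_X^*$, and that $(L^n)>1$; then $\delta_x(L)<n+1$, so by the equality analysis in Lemma \ref{lem:F=ample Cartier} we must have $C_X(v)\not\subseteq H_1$, and for every irreducible component $Z$ of $C_X(v)\cap H_1$ containing $x$ there exists a valuation $v_0$ on $H_1$ with $C_{H_1}(v_0)=Z$ computing $\delta_x(L|_{H_1})=n/(L|_{H_1})^{n-1}$. The inductive hypothesis applied to $H_1$ (whose degree is still $>1$) then forces $v_0$ to be divisorial, induced by a prime divisor $E_0\subseteq H_1$; in particular $Z=C_{H_1}(E_0)$ has codimension $1$ in $H_1$, i.e.\ codimension $2$ in $X$. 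Since $C_X(v)\not\subseteq H_1$, the intersection $C_X(v)\cap H_1$ has codimension exactly $1$ in $C_X(v)$, hence $\dim C_X(v)=\dim Z+1=n-1$, so $C_X(v)$ is a prime divisor $E$ on $X$. Because any rank-$1$ real valuation centered at the generic point of a divisor is a positive multiple of $\ord_E$, we conclude $v=c\cdot\ord_E$ for some $c>0$, as desired.

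The only real obstacle is verifying the irreducibility and reducedness of $H_1$ and of each intermediate complete intersection $Y_i$, so that Lemma \ref{lem:F=ample Cartier} and the inductive hypothesis can be legitimately applied. This is where I expect to spend the most care: one needs to argue (from integrality of $C$ together with generic choice of $H_i$ within the sub-linear systems of $|L|$ that contain $C$, or directly from condition (*)) that one can reduce to the case in which every $Y_i$ is an integral subvariety smooth at $x$; after that, the rest of the argument is a mechanical combination of Lemma \ref{lem:F=ample Cartier} with the inductive hypothesis and a codimension count.
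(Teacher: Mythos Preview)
Your proposal is correct and follows essentially the same route as the paper: induction on $n$, with Lemma \ref{lem:F=ample Cartier} supplying both the inequality and the equality analysis, and the same codimension count in the equality case. The one difference is how the integrality issue you flag gets resolved. Rather than taking $H=H_1$ and then arguing that the specific $H_i$ can be perturbed to be integral, the paper simply takes $H\in|L|$ to be a \emph{general} member containing $x$: condition (*) is open on tuples in $|L\otimes\fm_x|^{\,n-1}$, so for general $H$ one can complete to a tuple $(H,G_2,\dots,G_{n-1})$ still satisfying (*); this shows $H$ is smooth at $x$ and that $L|_H$ again satisfies (*), while Bertini (the base locus of $|L\otimes\fm_x|$ is contained in the curve $C$, hence there is no fixed component) gives integrality of $H$ directly. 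This sidesteps having to control the intermediate $Y_i$ at all.
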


\begin{proof}
First assume that $n=1$, i.e. $X$ is a curve that is smooth at $x$ (in this case the statement should be well-known to experts). By direct calculation we have $S(L;x)=\frac{1}{2}\deg L$. Hence $\delta_x(L) = \frac{2}{\deg L}$ as desired.

Assume now that the statement has been proved in smaller dimensions. Let $H\in |L|$ be a general divisor containing $x$. By (*), $H$ is smooth at $x$ and $L|_H$ also satisfies (*). By induction hypothesis, we have $\delta_x(L|_H)\ge \frac{n}{(L^{n-1}\cdot H)}=\frac{n}{(L^n)}$, hence by Lemma \ref{lem:F=ample Cartier} we see that $\delta_x(L)\ge \frac{n+1}{(L^n)}$. Suppose that equality holds, $(L^n)>1$ and let $v$ be a valuation on $X$ that computes $\delta_x(L)$. Then by Lemma \ref{lem:F=ample Cartier}, we see that the center $C_X(v)$ of $v$ is not contained in $H$, $\delta_x(L|_H)=\frac{n}{(L^n)}$ and it is computed by some valuation $v_0$ on $H$ with center $Z\subseteq C_X(v)\cap H$. But by induction hypothesis, $v_0$ is divisorial and its center $Z$ is a prime divisor on $H$, hence $C_X(v)$ has to be a divisor on $X$. It follows that $v$ is divisorial as well and is induced by a divisor \emph{on} $X$.
\end{proof}

We now restrict our attention to Fano manifolds of small degree:

\begin{cor} \label{cor:fano small deg}
Let $X$ be a Fano manifold of dimension $n$. Assume that there exists an ample line bundle $L$ on $X$ such that
\begin{enumerate}
    \item $-K_X\sim_\bQ rL$ for some $r\in\bQ$ with $(L^n)\le \frac{n+1}{r}$; and
    \item for every $x\in X$, there exists $H_1,\cdots,H_{n-1}\in |L|$ containing $x$ such that $H_1\cap \cdots \cap H_{n-1}$ is an integral curve that is smooth at $x$.
\end{enumerate}
Then $X$ is K-semistable. If it is not uniformly K-stable, then $(L^n) = \frac{n+1}{r}$ and there exists some prime divisor $E\subseteq X$ such that $\beta_X(E)=0$.
\end{cor}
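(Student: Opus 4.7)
The plan is to apply Lemma \ref{lem:delta small deg} pointwise to $L$, transfer the estimate to $-K_X$ via the homogeneity of the local $\delta$-invariant (Lemma \ref{lem:delta homogeneous}(2)), and then analyze the equality case when $X$ fails to be uniformly K-stable.

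First I would establish K-semistability. Fix any closed point $x\in X$. Condition (2) is precisely the hypothesis of Lemma \ref{lem:delta small deg}, giving $\delta_x(L)\ge \frac{n+1}{(L^n)}$. Since $-K_X\sim_{\bQ} rL$, Lemma \ref{lem:delta homogeneous}(2) yields
\[
\delta_x(-K_X)=\tfrac{1}{r}\delta_x(L)\ge \tfrac{n+1}{r(L^n)}\ge 1
\]
by condition (1). Taking the infimum over $x$ gives $\delta(X)\ge 1$, i.e., K-semistability.

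Next, suppose $X$ is not uniformly K-stable. By Definition \ref{defn:beta} there exists $v\in \Val_X^*$ with $A_X(v)<\infty$ and $\beta_X(v)\le 0$; on the other hand, K-semistability combined with Lemma \ref{lem:delta=inf A/S} (applied to the complete linear series of $-K_X$) forces $\beta_X(w)\ge 0$ for every valuation $w$ with $A_X(w)<\infty$, so $\beta_X(v)=0$. Picking any closed point $x\in C_X(v)$ gives $1\le \delta_x(X)\le A_X(v)/S(-K_X;v)=1$, so $v$ computes $\delta_x(-K_X)$. Rescaling via Lemma \ref{lem:delta homogeneous}, $v$ also computes $\delta_x(L)=r$, which together with $\delta_x(L)\ge \frac{n+1}{(L^n)}$ and condition (1) forces $(L^n)=\frac{n+1}{r}$.

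Finally, I would invoke the equality clause of Lemma \ref{lem:delta small deg}. If $(L^n)\neq 1$, it yields that $v$ is divisorial, say $v=c\cdot \mathrm{ord}_E$ for some prime divisor $E\subseteq X$ and some $c>0$; then $\beta_X(E)=c^{-1}\beta_X(v)=0$. If $(L^n)=1$, then $r=n+1$, so by Kobayashi--Ochiai $X\cong \bP^n$ and $L\cong \cO(1)$; any hyperplane $E$ then satisfies $A_X(E)=1=S(-K_X;E)$ by a direct volume computation, giving $\beta_X(E)=0$. The main subtlety is ensuring that the valuation $v$ witnessing the failure of uniform K-stability is induced by a prime divisor \emph{on} $X$, not merely over $X$---this is precisely the content of the equality clause of Lemma \ref{lem:delta small deg}, while the Kobayashi--Ochiai corner handles the base case $(L^n)=1$ where the inductive argument there bottoms out.
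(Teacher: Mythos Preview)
Your proof is correct and follows essentially the same approach as the paper's: apply Lemma \ref{lem:delta small deg} pointwise, pass to $-K_X$ by homogeneity of the $\delta$-invariant, and in the equality case invoke the equality clause of Lemma \ref{lem:delta small deg} together with Kobayashi--Ochiai. One minor remark: the homogeneity $\delta_x(-K_X)=\frac{1}{r}\delta_x(L)$ for rational $r$ follows directly from $S(-K_X;v)=r\cdot S(L;v)$ and Lemma \ref{lem:delta=inf A/S}, not from Lemma \ref{lem:delta homogeneous} (which treats integer rescalings of multi-graded series); the paper uses this fact without citation.
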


\begin{proof}
By Lemma \ref{lem:delta small deg}, we have $\delta_x(L)\ge \frac{n+1}{(L^n)}$ at every $x\in X$, hence $\delta(L)\ge \frac{n+1}{(L^n)}$. By (1) we then obtain $\delta(-K_X)\ge \frac{n+1}{r\cdot (L^n)}\ge 1$ and $X$ is K-semistable. Assume that $X$ is not uniformly K-stable, i.e., $\delta(-K_X)=1$. Then equality holds in (1) and $\delta(L) = \frac{n+1}{(L^n)}$. By Lemma \ref{lem:delta small deg}, either $(L^n)=1$ or $\delta(L)$ is computed by some prime divisor $E$ on $X$. In the latter case, there is nothing left to prove. In the former case, we have $r=\frac{n+1}{(L^n)}=n+1$, hence $X\cong \bP^n$ by \cite{KO-P^n} and $\beta_X(H)=0$ for any hyperplane $H$ on $X$.
\end{proof}

In particular, taking $L$ to be the hyperplane class on $\bP^n$, Corollary \ref{cor:fano small deg} gives a new algebraic proof of the K-semistability of $\bP^n$ (see e.g. \cites{Li-equivariant-minimize,PW-dP} for other proofs). It also gives a unified treatment of the uniform K-stability of the following Fano manifolds.

\begin{cor} \label{cor:small deg list}
The following Fano manifolds are all uniformly K-stable:
\begin{enumerate}
    \item \cite{Tian-dP} del Pezzo surfaces of degree $\le 3$;
    \item \cite{Fuj-alpha} hypersurfaces $X\subseteq \bP^{n+1}$ of degree $n+1$;
    \item \cite{Der-finite-cover} double covers of $\bP^n$ branched over some smooth divisor $D$ of degree $d\ge n+1$.
    \item cyclic covers $\pi:X\to Y$ of degree $s$ $($where $Y\subseteq \bP^{n+1}$ is a smooth hypersurface of degree $m)$ branched along some smooth divisor $D\in |dH|$ $($where $H$ is the hyperplane class$)$ with $0\le n+2-m-(1-\frac{1}{s})d\le \frac{n+1}{ms}$.
    \item del Pezzo threefolds of degree $1$, i.e. smooth weighted hypersurfaces $X_6\subseteq \bP(1^3,2,3)$.
\end{enumerate}
\end{cor}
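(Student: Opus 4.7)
The plan is to apply Corollary \ref{cor:fano small deg} to each family on the list, with the following choice of ample line bundle $L$: in (1) take $L=-K_X$; in (2) take $L=\cO_X(1)$; in (3) take $L=\pi^*\cO_{\bP^n}(1)$; in (4) take $L=\pi^*\cO_Y(1)$; in (5) take $L=H$, where $-K_X=2H$ generates $\Pic(X)$. In each case I compute the constant $r$ in $-K_X\sim_\bQ rL$ from the standard (ramification/adjunction) formula and check $(L^n)\le\frac{n+1}{r}$: for (1) this becomes $K_X^2\le 3$; for (2) we get $(L^n)=n+1=\frac{n+1}{r}$; for (3) a double cover gives $(L^n)=2$ and $r=n+1-d/2$, whereby $(L^n)\le\frac{n+1}{r}\Leftrightarrow d\ge n+1$; for (4) the cyclic cover formula yields $r=n+2-m-(1-\frac{1}{s})d$ and $(L^n)=sm$, so the hypothesis in (4) is exactly condition (1) of Corollary \ref{cor:fano small deg}; and for (5) we have $(L^3)=1<2=\frac{n+1}{r}$.

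Next I verify condition (2) of Corollary \ref{cor:fano small deg} by a Bertini/flag argument. In cases (1), (2) and (5) the linear system $|L|$ is base-point free and separates tangent vectors, so a general $(n-1)$-tuple of divisors in $|L|$ through a given $x\in X$ cuts out an integral curve smooth at $x$. In the covering cases (3) and (4) one chooses hyperplanes $\bar H_i\subseteq \bP^n$ (or $\bP^{n+1}$) through $\pi(x)$ in general position and sets $H_i=\pi^*\bar H_i$; the resulting complete intersection on $X$ is the preimage of a line $\ell$ through $\pi(x)$, which is a cyclic cover of $\ell$ branched at the finitely many points where $\ell$ meets $D$. If $\pi(x)\notin D$ this preimage is smooth at $x$ for generic $\ell$; if $\pi(x)\in D$, then $\ell$ meets $D$ transversally there and a local model $y^s=t$ is smooth at the origin, so $x$ again lies on a smooth point of the curve.

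Applying Corollary \ref{cor:fano small deg} now gives K-semistability in all five cases, and automatically \emph{uniform} K-stability whenever $(L^n)<\frac{n+1}{r}$, which covers (1) in degrees $1$ and $2$, (3) with $d>n+1$, (4) with strict inequality, and (5). For the remaining equality cases I need to exclude the existence of a prime divisor $E\subseteq X$ with $\beta_X(E)=0$. For a prime divisor $E$ on a smooth Fano variety $A_X(E)=1$, while Lemma \ref{lem:S<=n/(n+1)T} gives $S(-K_X;E)\le\frac{n}{n+1}T(-K_X;E)$. When $\rho(X)=1$ (hypersurfaces of degree $n+1$ with $n\ge 3$, double covers, cyclic covers) every prime divisor lies in class $k(-K_X/r)$ for some $k\ge 1$ so $T(-K_X;E)=r/k\le r$, and a direct check gives $\beta_X(E)>0$ strictly under the numerical constraint $(L^n)=\frac{n+1}{r}$.

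The main obstacle is the two-dimensional equality case — the cubic surface in (1) and the cubic surface appearing as (2) with $n=2$ — because here $\rho(X)=7$ and prime divisors come in many classes (the $27$ lines, conics, cubics, etc.). The plan is to verify $\beta_X(E)>0$ case-by-case using the classification: for each of the finitely many extremal classes one computes $T(-K_X;E)$ via the Zariski decomposition of $-K_X-tE$ and bounds $S$ through Lemma \ref{lem:S<=n/(n+1)T} (or computes $S$ directly via the integration formula of Definition \ref{defn:S and T}). This is essentially the content of Tian's original argument for cubic surfaces \cite{Tian-dP}, which can be invoked here, so no independent obstacle arises from it.
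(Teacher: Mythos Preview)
Your overall plan matches the paper's, but there is a genuine gap in two places where you claim condition (2) of Corollary \ref{cor:fano small deg} holds.

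For del Pezzo surfaces of degree $1$ your choice $L=-K_X$ does not work: $|-K_X|$ is a \emph{pencil} with a single base point, so it is neither base-point free nor does it separate tangent vectors. Worse, for $x$ not equal to the base point there is a \emph{unique} member of $|-K_X|$ through $x$, so you have no freedom to choose; when $x$ is the node or cusp of a singular anticanonical curve, condition (2) fails. The paper instead takes $L=-2K_X$ (so $r=\tfrac12$, $(L^2)=4<6$), which is base-point free. The same problem occurs in case (5): for $X_6\subseteq\bP(1,1,1,2,3)$ one has $h^0(H)=3$ and $|H|$ has a single base point. Through a general $x$ the members of $|H|$ containing $x$ form a pencil whose base locus is a fixed curve $W_x$ of arithmetic genus $1$ (the fibre of the rational map $X\dashrightarrow\bP^2$), so again there is no freedom in the choice of $H_1\cap H_2$; when $x$ is the node of a singular fibre, condition (2) fails. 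This is exactly why the paper treats (5) separately: it first shows that through every $x$ there is a \emph{smooth} surface $S\in|H|$ (a del Pezzo surface of degree $1$), then applies Lemma \ref{lem:delta small deg} to $-2K_S$ on $S$ together with Lemma \ref{lem:F=ample Cartier} to conclude. Your argument for the equality case via $\rho(X)=1$ is essentially correct in spirit (though the inequality $S\le\tfrac{n}{n+1}T$ alone is not sharp enough; one needs the exact value $S(-K_X;E)=\tfrac{\tau}{n+1}$ when $-K_X\sim_\bQ\tau E$, as the paper does), and for the cubic surface the paper gives a short direct check on lines and conics rather than invoking \cite{Tian-dP}.
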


\begin{proof}
Note that (3) is a special case of (4) with $m=1$. We will also treat (5) separately. In each remaining case, we will find an ample line bundle $L$ on the Fano variety that satisfies the assumptions of Corollary \ref{cor:fano small deg}. Indeed, for del Pezzo surfaces $X$ of degree $2$ or $3$ (resp. degree $1$), we take $L=-K_X$ (resp. $L=-2K_X$). We also set $L=-K_X$ for hypersurfaces $X\subseteq \bP^{n+1}$ of degree $n+1$. In case (4), we choose $L=\pi^*H$. It is straightforward to verify that they all satisfy the assumptions of Corollary \ref{cor:fano small deg}, hence by Corollary \ref{cor:fano small deg}, all Fano manifolds $X$ in (1)--(4) are K-semistable. Moreover, del Pezzo surface of degree $1$ or $2$ are uniformly K-stable since $(L^n)<\frac{n+1}{r}$ for our choice of $L$. It remains to check that there are no divisors $E$ on $X$ with $\beta_X(E)=0$ in the other cases.

Let $\tau=T(-K_X;E)$ be the pseudo-effective threshold (Definition \ref{defn:S and T}). If $\dim X\ge 3$ (so we are in case (2) or (4)), then $X$ has Picard number one and $-K_X\sim_\bQ \tau E$. A direct calculation gives $\beta_X(E)=1-\frac{\tau}{n+1}$. Since $X$ is not isomorphic to $\bP^n$, we have $\tau<n+1$ by \cite{KO-P^n} and thus $\beta_X(E)>0$ in this case (c.f. \cite{Fuj-div-stable}*{Corollary 9.3}). If $\dim X=2$, then $X$ is a cubic surface. Clearly $S(E)<\tau$. Since $-K_X-\tau E$ is pseudo-effective, it has non-negative intersection with $-K_X$ and thus $\tau\le \frac{3}{(-K_X\cdot E)}$. It follows that if $\beta_X(E)=1-S(E)=0$, then $\tau > S(E) = 1$ and $(-K_X\cdot E)\le 2$, i.e. $E$ is a line or a conic. But in both cases we have $\tau=1$ and hence $S(E)<1$: if $E$ is a line, then $|-K_X-E|$ is base point free and defines a conic bundle $X\to \bP^1$; if $E$ is a conic and $L_0$ is the residue line (the other component of the hyperplane section that contains $E$), then $-K_X-E\sim L_0$ is a $(-1)$-curve. Thus $\beta_X(E)=1-S(E)>0$ for all divisors $E$ on the cubic surface $X$ as well. We therefore conclude that all Fano manifolds in (1)--(4) are uniformly K-stable.

It remains to prove every Fano threefold $X$ in (5) is uniformly K-stable. For such $X$, we have $-K_X=2H$ for some ample line bundle $H$ on $X$. We claim that for every $x\in X$ there exists a smooth member $S\in |H|$ that contains $x$. Indeed, it is not hard to check that $h^0(X,H\otimes \fm_x) \ge 2$. Let $S_1\neq S_2 \in |H\otimes \fm_x|$ and let $\cM\subseteq |H\otimes \fm_x|$ be the pencil they span. As $H$ generates $\Pic(X)$, $S_1$ and $S_2$ doesn't have common component and we have a well-defined $1$-cycle $W=(S_1\cdot S_2)$ on $X$. Since $(H\cdot W)=(H^3)=1$, $W$ is an integral curve. As $W$ is also the complete intersection of any two members of $\cM$, every $S\in \cM$ is smooth at the smooth points of $W$. Let $y$ be a singular point of $W$ and let $S'$ be a general member of $|2H\otimes \fm_y|$. Then as $|2H|$ is base point free, $S_1\cap S_2\cap S$ is zero dimensional. If both $S_1$ and $S_2$ are singular at $y$, then we have $\mult_y S_i\ge 2$ ($i=1,2$) and thus
\[
2=2(H^3)=(S_1\cdot S_2\cdot S')\ge \mult_y S_1\cdot \mult_y S_2 \cdot \mult_y S' \ge 4,
\]
a contradiction. Hence a general member of $\cM$ is smooth at $y$. Since there are only finitely many singular point of $W$ and $\cM$ is base point free outside $W$, we see that a general member of $\cM$ is smooth, proving the claim.

Now let $x\in X$ and choose a smooth member $S\in |H|$ containing $x$. Note that $S$ is a del Pezzo surface of degree $1$. By Lemma \ref{lem:delta small deg} with $L=-2K_S$, we have $\delta(H|_S)=2\delta(L)\ge \frac{3}{2}$ and if the equality is computed by some divisor $E$ over $S$, then $E$ is a divisor \emph{on} $S$. By Lemma \ref{lem:F=ample Cartier}, it follows that $\delta_x(-K_X)=\frac{1}{2}\delta_x(H)\ge \frac{2}{3}\delta(H|_S)\ge 1$ for all $x\in X$, thus $X$ is K-semistable. If it is not K-stable, then by another use of Lemma \ref{lem:F=ample Cartier} and the same argument as in Corollary \ref{cor:fano small deg}, we have $\beta_X(E)=0$ for some divisor $E$ \emph{on} $X$. But since $X$ has Picard number one and is not $\bP^3$, this is a contradiction as before and therefore $X$ is uniformly K-stable.
\end{proof}

\subsection{Surface case} \label{sec:surface}

We next investigate the surface case where almost everything can be explicitly computed. Recall from \cite{Fuj-plt-blowup}*{Theorem 1.5} that it is enough test K-stability of log Fano pairs using divisors of plt type. The nice feature in the surface case is that the corresponding refinements are always almost complete.


\begin{lem} \label{lem:surface almost complete}
Let $(S,\Delta)$ be a surface pair and let $L$ be an big line bundle on $S$. Let $E$ be a plt type divisor over $S$. Let $\V$ be the complete linear series associated to $L$ and let $\W$ be the refinement of $\V$ by $E$. Then $\W$ is almost complete.
\end{lem}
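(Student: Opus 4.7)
The plan is to exploit the Zariski decomposition of the auxiliary big $\bQ$-divisor $D_\gamma=\pi^*L-\gamma E$ on $Y$ and to reduce almost completeness of $\W$ to Riemann-Roch on the smooth curve $E$. Let $\pi\colon Y\to S$ be the prime blowup extracting $E$; since $E$ is of plt type, $Y$ is a normal surface with only klt singularities and $E$ is a smooth irreducible curve on $Y$. By Example~\ref{ex:refinement by div}, the refinement $\W$ is the $\bN^2$-graded linear series on $E$ whose $(m,j)$-th piece $W_{m,j}$ is the image of $H^0(Y,m\pi^*L-jE)$ in $H^0(E,(m\pi^*L-jE)|_E)$, where the restriction uses Lemma~\ref{lem:D|_F along plt boundary}. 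Fix a rational $\gamma$ in the interior of $\Delta_{\supp}$ and let $D_\gamma=P_\gamma+N_\gamma$ be the Zariski decomposition of $D_\gamma$ on $Y$.

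The first step is to show $E\not\subseteq\Supp(N_\gamma)$. The slice function $\gamma'\mapsto \lim_{m\to\infty}\dim W_{m,m\gamma'}/m$, which equals the length of the one-dimensional slice of the Okounkov body of $\W$ at $\gamma'$, is concave on $\Delta_{\supp}$ by convexity of the Okounkov body and has positive integral over $\Delta_{\supp}$ (since by Remark~\ref{rem:NObody} the area of the Okounkov body is $\vol(\W)/2=\vol(L)/2>0$, a telescoping computation showing $\vol(\W)=\vol(L)$). Hence the slice length is strictly positive in the interior, and in particular at $\gamma$. On the surface $Y$ this slice coincides with the restricted volume of $D_{\gamma'}$ along $E$, which equals $P_{\gamma'}\cdot E$ when $E\not\subseteq\Supp(N_{\gamma'})$ and vanishes otherwise by the Zariski property $P_{\gamma'}\cdot C=0$ for every component $C$ of $N_{\gamma'}$. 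Positivity at $\gamma$ therefore forces $E\not\subseteq\Supp(N_\gamma)$ and $P_\gamma\cdot E>0$.

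With this in hand, for $m$ sufficiently divisible $mP_\gamma$ and $mN_\gamma$ are Cartier, $H^0(Y,mD_\gamma)=H^0(Y,mP_\gamma)$, and every section of $mD_\gamma$ contains $mN_\gamma$ as a fixed part; restricting to $E$ (which is not contained in $\Supp(N_\gamma)$) shows the base locus on $E$ of $W_{m,m\gamma}$ is $(mN_\gamma)|_E$, so the movable part $M_{m,m\gamma}$ is the image of $H^0(Y,mP_\gamma)\to H^0(E,mP_\gamma|_E)$. Set $L_{m,\gamma}:=mP_\gamma|_E$ and let $L_0$ be any degree-one line bundle on $E$; then $|M_{m,m\gamma}|\subseteq|L_{m,\gamma}|$ and $L_{m,\gamma}\equiv \ell_{m,\gamma}L_0$ with $\ell_{m,\gamma}:=m(P_\gamma\cdot E)\in\bN$ for $m$ divisible enough. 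By the asymptotic theory of restricted volumes, $h^0(M_{m,m\gamma})=m(P_\gamma\cdot E)+o(m)$, while Riemann-Roch on $E$ gives $h^0(E,\ell_{m,\gamma}L_0)=\ell_{m,\gamma}+1-g(E)=m(P_\gamma\cdot E)+O(1)$, so the ratios in Definition~\ref{defn:almost complete} tend to $1$ as $m\to\infty$, verifying almost completeness of $\W$ with respect to $L_0$. The chief obstacle is the slice/restricted-volume identification above, used both to rule out $E\in\Supp(N_\gamma)$ in the interior and to control $h^0(M_{m,m\gamma})$; for a cleaner exact equality one could invoke Kawamata--Viehweg vanishing on the klt surface $Y$ applied to $mP_\gamma-E$, but the nefness of $mP_\gamma-E$ then needs separate verification.
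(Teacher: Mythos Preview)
Your approach is essentially the same as the paper's: both proofs use the Zariski decomposition $\pi^*L-\gamma E=P_\gamma+N_\gamma$ on the blowup, observe that $E\not\subseteq\Supp(N_\gamma)$ for interior $\gamma$, and reduce to Riemann--Roch on the smooth curve $E$. The one substantive difference is how the key asymptotic
\[
h^0(W_{m,m\gamma})=m(P_\gamma\cdot E)+o(m)
\]
is justified. The paper proves a dedicated bounded-$h^1$ lemma (Lemma~\ref{lem:surface H^1}): after passing to a partial resolution $T$ of the prime blowup over its non-klt locus, one finds $h^1(T,\cO_T(mP-E))\le A$ for all large divisible $m$, whence the restriction map to $E$ has bounded cokernel. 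You instead invoke restricted volume theory as a black box. Both are legitimate; the paper's route is self-contained, while yours is shorter provided you can cite the formula $\vol_{Y|E}(D_\gamma)=P_\gamma\cdot E$.

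A few imprecisions to watch:
\begin{itemize}
\item The claim that $Y$ has ``only klt singularities'' is wrong in general; plt type guarantees $(Y,\Delta_Y+E)$ is plt only in a neighbourhood of $E$. This is why the paper passes to a resolution $T$ before invoking Zariski decomposition and vanishing. On $Y$ itself you still have Zariski decomposition (Sakai/Fujita on normal surfaces), but the restricted-volume formula from \cite{ELMNP} is stated for smooth varieties, so you should either pull back to a resolution or note that the Okounkov-body argument you already sketched yields the limit directly.
\item The assertion that the fixed part of $|W_{m,m\gamma}|$ is \emph{exactly} $(mN_\gamma)|_E$ is not justified: $|mP_\gamma|$ may have base points on $E$, in which case the fixed part is strictly larger and your choice $L_{m,\gamma}=mP_\gamma|_E$ need not contain $|M_{m,m\gamma}|$. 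The fix is easy---take $L_{m,\gamma}$ to be the divisor class of a general member of $|M_{m,m\gamma}|$, whose degree lies in $[\dim W_{m,m\gamma}-1,\,m(P_\gamma\cdot E)]$---and the paper does exactly this (``$|M_{m,m\gamma}|\subseteq|D|$ for some $D$ with $0<\deg D\le m(P\cdot E)$'').
\end{itemize}
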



As in Example \ref{ex:graded series from ample div}, the almost completeness of a refinement is related to the surjectivity of the natural restriction map on sections, hence the proof of Lemma \ref{lem:surface almost complete} essentially boils down to the following vanishing-type result.

\begin{lem} \label{lem:surface H^1}
Let $(S,\Delta)$ be a surface pair. Then there exists some constant $A>0$ such that $h^1(S,\cO_S(D))\le A$ for all $\bQ$-Cartier Weil divisor $D$ on $S$ such that $D-(K_S+\Delta)$ is nef and big.
\end{lem}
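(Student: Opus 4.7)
The plan is to transfer the problem to a fixed log resolution $\mu\colon T\to S$ of $(S,\Delta)$, apply Kawamata-Viehweg vanishing there, and descend to $S$ via the Leray spectral sequence; the uniform bound $A$ will arise from a finite contribution at the singular locus.

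Write $K_T + \widetilde\Delta - \sum_i b_i F_i = \mu^*(K_S+\Delta)$, where $\widetilde\Delta$ is the strict transform of $\Delta$ and $\{F_i\}$ are the $\mu$-exceptional prime divisors. After a standard reduction absorbing the integer part of $\Delta$, we may assume the coefficients of $\Delta$ lie in $[0,1)$. For each $\bQ$-Cartier Weil divisor $D$ on $S$ with $D-(K_S+\Delta)$ nef and big, decompose $\mu^*D = \widetilde D + \sum_i g_i(D)F_i$ where $\widetilde D = \mu^{-1}_*D$, and define the integral divisor
\[
M_D := \widetilde D + \sum_i \lceil b_i + g_i(D)\rceil F_i
\]
on $T$. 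A direct computation using the ramification formula yields
\[
M_D - K_T = \mu^*(D-K_S-\Delta) + \widetilde\Delta + E_D,
\]
where $E_D := \sum_i \big( \lceil b_i+g_i(D)\rceil - (b_i+g_i(D)) \big) F_i$ has coefficients in $[0,1)$. Since $\widetilde\Delta + \sum_i F_i$ is SNC by choice of log resolution and $\mu^*(D-K_S-\Delta)$ is nef and big on $T$, Kawamata-Viehweg vanishing on the smooth projective surface $T$ gives $H^i(T,\cO_T(M_D))=0$ for $i\ge 1$, and the Leray spectral sequence then yields $H^1(S,\mu_*\cO_T(M_D))=0$.

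Next I would compare $\mu_*\cO_T(M_D)$ to $\cO_S(D)=\mu_*\cO_T(\widetilde D)$. Since the $\mu$-exceptional divisors contract to points of $S$, the natural map induces an inclusion $\mu_*\cO_T(M_D)\hookrightarrow\cO_S(D)$ whose cokernel $\mathcal{Q}$ is a coherent sheaf supported at the finitely many images $\mu(F_i)\subseteq S$. The long exact sequence gives $h^1(S,\cO_S(D))\le h^0(S,\mathcal{Q})$. To conclude, I would bound $h^0(S,\mathcal{Q})$ uniformly: the cokernel at $x\in S$ receives contributions only from those $F_i\to x$ with $n_i(D):=\lceil b_i+g_i(D)\rceil<0$, and by the negativity lemma (applied after replacing $D$ by $D+D_0$ for a fixed effective $\bQ$-Cartier Weil divisor $D_0$ if necessary to make $\widetilde D$ effective), we have $g_i(D)\ge 0$, so $n_i(D)\ge\lceil b_i\rceil$ is bounded below by a constant depending only on $(S,\Delta)$ and $\mu$. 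The total length of $\mathcal{Q}$ is then bounded by a constant $A$ depending only on these data, giving the desired uniform estimate.

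The main obstacle I anticipate is handling the non-klt case of $(S,\Delta)$: when some coefficient of $\Delta$ reaches or exceeds $1$, the boundary $\widetilde\Delta + E_D$ violates the $[0,1)$-coefficient hypothesis of Kawamata-Viehweg vanishing, and when some discrepancy $b_i \le -1$, the integer $n_i(D)$ can be substantially negative, complicating the cokernel analysis. These technicalities can be resolved by carefully absorbing integer parts of $\widetilde\Delta$ into $M_D$ and, if needed, invoking log-canonical refinements of Kawamata-Viehweg vanishing (\'a la Ambro-Fujino).
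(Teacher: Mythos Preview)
Your overall strategy --- pass to a log resolution, apply vanishing, and descend --- matches the paper's, but the execution has real gaps in the non-klt case, which the statement allows (``surface pair'' only requires $K_S+\Delta$ to be $\bQ$-Cartier, and the paper applies the lemma to pairs that need not be klt). Several steps break down. The identification $\cO_S(D)=\mu_*\cO_T(\widetilde D)$ is incorrect: the right formula is $\cO_S(D)=\mu_*\cO_T(\lfloor\mu^*D\rfloor)$, since for $s\in\cO_S(D)$ the only constraint at $F_i$ is $\ord_{F_i}(s)\ge -\lfloor g_i(D)\rfloor$. The negativity-lemma step is also wrong: there is no \emph{fixed} effective $D_0$ making $\widetilde{D+D_0}$ effective for all admissible $D$, so you cannot conclude $g_i(D)\ge 0$ uniformly. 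Finally, the ``standard reduction'' to coefficients of $\Delta$ in $[0,1)$ does not preserve the quantity being bounded (it would bound $h^1(\cO_S(D-\lfloor\Delta\rfloor))$, not $h^1(\cO_S(D))$), and if instead you absorb $\lfloor\widetilde\Delta\rfloor$ into $M_D$ on $T$, the resulting cokernel is supported along a curve rather than at points, so its $h^0$ is no longer uniformly bounded. Your argument does go through when $(S,\Delta)$ is klt: then $b_i>-1$ forces $\lceil b_i+g_i(D)\rceil\ge\lfloor g_i(D)\rfloor$, so in fact $\mu_*\cO_T(M_D)=\cO_S(D)$ and one gets $h^1=0$ outright.

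The paper sidesteps the cokernel issue entirely by choosing the lift differently: it takes $L=\lfloor f^*D\rfloor+E$ (with $E$ the reduced exceptional divisor), so that $f_*\cO_T(L)=\cO_S(D)$ \emph{exactly}. The price is that the associated boundary $\Delta'=\Delta_T+L-f^*D$ only satisfies $0\le\Delta'\le\Delta_T+E$ and may have components of coefficient $\ge 1$, so Kawamata--Viehweg vanishing does not apply directly. The paper handles this with an auxiliary inductive lemma on the smooth surface $T$: for a fixed SNC divisor $\Delta_0$ there is a constant $A$ with $h^1(T,\cO_T(L))\le A$ whenever $L-(K_T+\Delta')$ is nef and big for some $0\le\Delta'\le\Delta_0$, proved by peeling off one component $C$ of $\lfloor\Delta'\rfloor$ at a time via the sequence $0\to\cO_T(L-C)\to\cO_T(L)\to\cO_C(L)\to 0$ and bounding $h^1(\cO_C(L))$ using $(L-K_T-\Delta')\cdot C\ge 0$. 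This inductive peeling is the missing idea in your sketch.
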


\begin{proof}
Let $f\colon T\to S$ be the minimal log resolution of $(S,\Delta)$ and let $(T,\Delta_T)$ be the crepant pullback of $(S,\Delta)$, i.e. $K_T+\Delta_T = f^*(K_S+\Delta)$. Let $E$ be the sum of all exceptional divisors. Since $D$ has integer coefficients, $\{f^*D\}$ is exceptional over $S$, hence we have $\lfloor f^*D \rfloor + E \ge f^*D$ and $f_*\cO_T(\lfloor f^*D \rfloor + E)=\cO_S(D)$. Let $L=\lfloor f^*D \rfloor + E$ and let $\Delta'=\Delta_T+L-f^*D$. Then it is easy to check that $0\le \Delta'\le \Delta_T+E$ and 
\[
L - (K_T+\Delta') \sim_\bQ f^*(D-K_S-\Delta),
\]
which is nef and big by assumption. By Lemma \ref{lem:snc surface H^1}, we know that there exists some constant $A$ depending only on the pair $(T,\Delta_T+E)$ such that $h^1(T,\cO_T(L))\le A$. The lemma then follows as $h^1(S,\cO_S(D))=h^1(S,\pi_*\cO_T(L))\le h^1(T,\cO_T(L))$.
\end{proof}

The following result is used in the above proof.

\begin{lem} \label{lem:snc surface H^1}
Let $S$ a smooth surface and let $\Delta$ be an effective divisor on $S$ with simple normal crossing support. Then there exists some constant $A$ such that $h^1(T,\cO_T(L))\le A$ for all Cartier divisor $L$ such that $L-(K_T+\Delta')$ is nef and big for some $\bQ$-divisor $0\le \Delta'\le \Delta$.  
\end{lem}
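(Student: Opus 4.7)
The plan is to split $\Delta' = \{\Delta'\} + F$ with $F = \lfloor \Delta' \rfloor$, use Kawamata-Viehweg on $L - F$ to reduce to bounding $h^1$ of the restricted line bundle on the (possibly non-reduced) curve $F$, then peel $F$ off one irreducible component at a time to reduce to bounding $h^1$ of a line bundle on a single smooth component, and finally control those uniformly via a degree estimate coming from the nefness of $M := L - (K_T + \Delta')$.

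Since $\{\Delta'\}$ is supported on the SNC divisor $\Delta$ with coefficients in $[0,1)$, the pair $(T,\{\Delta'\})$ is klt. As $T$ is smooth, $L - F$ is Cartier, and
\[
(L - F) - (K_T + \{\Delta'\}) \;=\; L - (K_T + \Delta') \;=\; M
\]
is nef and big. Kawamata-Viehweg vanishing then gives $h^1(T,\cO_T(L - F)) = 0$, and the exact sequence $0 \to \cO_T(L - F) \to \cO_T(L) \to \cO_F(L|_F) \to 0$ yields $h^1(T,\cO_T(L)) \le h^1(F, \cO_F(L|_F))$.

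Writing $\Delta = \sum a_i D_i$ and $F = \sum c_i D_i$ with $c_i \le \lfloor a_i \rfloor$, I would build $F$ up one smooth component at a time via a filtration $0 = F_0 \subset F_1 \subset \cdots \subset F_N = F$ with $F_j - F_{j-1} = D_{\sigma(j)}$ for some index $\sigma(j)$; note that $N = \sum c_i$ depends only on $(T,\Delta)$. Since each $D_{\sigma(j)}$ is Cartier on $T$, the standard short exact sequences
\[
0 \to \cO_{D_{\sigma(j)}}\bigl((L - F_{j-1})|_{D_{\sigma(j)}}\bigr) \to \cO_{F_j}(L) \to \cO_{F_{j-1}}(L) \to 0,
\]
combined with induction on $j$, reduce the problem to finding a uniform bound on each $h^1(D_{\sigma(j)}, (L - F_{j-1})|_{D_{\sigma(j)}})$.

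For this last step, I would bound $d_j := (L - F_{j-1}) \cdot D_{\sigma(j)}$ from below. Using $M$ nef, $d_j \ge (K_T + \Delta' - F_{j-1}) \cdot D_{\sigma(j)}$; writing $\Delta' - F_{j-1} = \lambda D_{\sigma(j)} + \Lambda_0$ with $\Lambda_0 \ge 0$ not containing $D_{\sigma(j)}$, and applying adjunction on $D_{\sigma(j)}$, this equals $(2g_{\sigma(j)} - 2) + (\lambda - 1) D_{\sigma(j)}^2 + \Lambda_0 \cdot D_{\sigma(j)}$. The last summand is nonnegative by transversality, and the peeling construction together with $\Delta' \le \Delta$ forces $1 \le \lambda \le a_{\sigma(j)}$, so $(\lambda - 1) D_{\sigma(j)}^2 \ge \min\bigl(0,(a_{\sigma(j)} - 1) D_{\sigma(j)}^2\bigr)$, a quantity depending only on $(T,\Delta)$. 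Once $d_j$ is bounded below by a uniform constant, Riemann-Roch on the smooth curve $D_{\sigma(j)}$ bounds the individual $h^1$ by a constant depending only on $(T,\Delta)$, and summing over the $N$ steps yields the claimed $A$. The main obstacle is precisely this degree estimate: on components with very negative self-intersection the term $(\lambda - 1) D_{\sigma(j)}^2$ threatens to blow up, and the essential ingredient is the hypothesis $\Delta' \le \Delta$, which caps $\lambda$ by the fixed constant $a_{\sigma(j)}$.
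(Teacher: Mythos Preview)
Your proof is correct and follows essentially the same strategy as the paper: peel off the integral part $\lfloor \Delta'\rfloor$ one component at a time, use the nefness of $L-(K_T+\Delta')$ to bound the degree of the restricted line bundle on each component from below, and conclude via Serre duality/Riemann--Roch. The only organizational difference is that the paper runs an induction on the sum of coefficients of $\lfloor \Delta\rfloor$ (using the sequence $0\to\cO_T(L-C)\to\cO_T(L)\to\cO_C(L)\to 0$ at each step) rather than first stripping all of $F=\lfloor\Delta'\rfloor$ via Kawamata--Viehweg and then filtering the non-reduced curve $F$, but the ingredients and estimates are the same.
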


\begin{proof}
We prove by induction on the sum of all coefficients of $\lfloor \Delta \rfloor$. First note that if $\lfloor \Delta' \rfloor=0$ then $(T,\Delta')$ is klt and $h^1(T,\cO_T(L))=0$ by Kawamata-Viehweg vanishing. Thus it suffices to consider the case when $\lfloor \Delta' \rfloor\neq 0$. In particular, we may just take $A=0$ when $\lfloor \Delta \rfloor=0$. In general, let $C$ be an irreducible component of $\lfloor \Delta' \rfloor\le \lfloor \Delta \rfloor$. By assumption $(L-K_T-\Delta')\cdot C\ge 0$ which gives $\deg (L|_C-K_C)\ge (\Delta'-C)\cdot C$, thus by Serre duality $h^1(C,\cO_C(L))=h^0(C,\omega_C(-L))\le 1+\deg (K_C-L|_C)\le 1+\left((C-\Delta')\cdot C\right)$ is bounded by some constants $A_1$ that only depends on $\Delta$. By induction hypothesis (applied to the pairs $(T,\Delta-C)$ for various components $C$ of $\lfloor \Delta \rfloor$), we also have $h^1(T,\cO_T(L-C))\le A_2$ for some constant $A_2$ that only depends on $\Delta$, thus $h^1(T,\cO_T(L))\le A_1+A_2$ via the exact sequence $0\to \cO_T(L-C)\to \cO_T(L)\to \cO_C(L)\to 0$.
\end{proof}

\begin{proof}[Proof of Lemma \ref{lem:surface almost complete}]
Let $T_1\to S$ be the prime blow up associated to $E$. Note that $E$ is a smooth curve on $T_1$ and $T_1$ is klt along $E$ (as $(T_1,\Delta_{T_1}+E)$ is plt by assumption). Let $T\to T_1$ be the minimal resolution of $T_1$ over its non-klt locus and let $\pi\colon T\to S$ be the induced morphism. Note that $T$ is $\bQ$-factorial. Let $I = \Supp(\W)\cap (\{1\}\times \bR)$, let $\gamma\in I^\circ \cap \bQ$ and let $\pi^*L-\gamma E = P_\gamma + N_\gamma$ be the Zariski decomposition where $P_\gamma$ (resp. $N_\gamma$) is the nef (resp. negative) part. We claim that there exists a divisor $G\subseteq T$ such that $\Supp(N_\gamma)\subseteq G$ for all $\gamma$. Indeed, for any $\gamma_1<\gamma<\gamma_2$, since $\pi^*L-\gamma E$ is a convex linear combination of $\pi^*L-\gamma_1 E$ and $\pi^*L-\gamma_2 E$, we see that $\Supp(N_\gamma)\subseteq \Supp(N_{\gamma_1})\cup \Supp(N_{\gamma_2})$. On the other hand, by \cite{Nak-Zariski-decomp}*{Proposition III.1.10}, there are at most $\rho(T)$ irreducible components in each $N_\gamma$. It follows that $\cup_{\gamma} \Supp(N_{\gamma})$ is a finite union of divisors in $T$ and we may simply take $G=\cup \Supp(N_\gamma)$. Note that $E\not\subseteq \Supp(G)$ as otherwise $E\subseteq \Bs(\pi^*L-\gamma E)$ for some $\gamma$ and thus $W_{m,m\gamma}=0$ for all $m$.

Now fix $\gamma\in I^\circ \cap \bQ$ and write $P$ (resp. $N$) for $P_\gamma$ (resp. $N_\gamma$). Then for sufficiently divisible $m$, we have $|m(\pi^*L-\gamma E)|=|mP|+mN$. It follows that $|M_{m,m\gamma}|\subseteq |D_m|$ for some divisor $D$ with $0<\deg D_m\le m(P\cdot E)$ and in particular $(P\cdot E)>0$. Since $E$ is a curve, any divisors on $E$ are numerically proportional. Thus $\W$ is almost complete (with respect to any line bundle of degree $1$ on $E$) as long as
\begin{equation} \label{eq:section portion->1}
    \lim_{m\to \infty} \frac{h^0(W_{m,m\gamma})}{m(P\cdot E)} = 1,
\end{equation}
where the limit is taken over sufficiently divisible integers $m$. Indeed, if \eqref{eq:section portion->1} holds, then as 
\[
h^0(W_{m,m\gamma})=h^0(M_{m,m\gamma})\le h^0(E,D_m)\le\deg D_m +1\le m(P\cdot E)+1
\]
we clearly have $\lim_{m\to \infty}\frac{h^0(W_{m,m\gamma})}{h^0(E,D_m)}=1$, which verifies condition (2) in Definition \ref{defn:almost complete}. It also gives $\lim_{m\to \infty} \frac{\deg D_m}{m}=(P\cdot E)$, hence $\lim_{m\to\infty} \frac{F_{m,m\gamma}}{m}=N|_E$ for sufficiently divisible $m$ (c.f. the proof of Lemma \ref{lem:F(W) formula on surface} below). Since $\Supp(N)\subseteq G$, we see that $F(\W)$ is supported on $G\cap E$, which verifies condition (1) in Definition \ref{defn:almost complete}.

It remains to prove \eqref{eq:section portion->1}. To see this, we note that $P$ is big (since $\gamma\in I^\circ$) and hence $m_0 P - E-K_T$ is effective for some divisible enough integer $m_0$. Let $Q\in |m_0 P - E-K_T|$. Then by Lemma \ref{lem:surface H^1}, there exists some constant $A$ depending only on $(T,Q)$ such that $h^1(T,\cO_T(mP-E))\le A$ for all sufficiently divisible $m>m_0$ (as $mP-E-(K_T+Q)\sim (m-m_0)P$ is nef and big). 

Using the exact sequence
\[
0\to \cO_T(mP-E)\to \cO_T(mP)\to \cO_E(mP)\to 0
\]
from Lemma \ref{lem:D|_F along plt boundary}, we obtain
\begin{align*}
    h^0(W_{m,m\gamma}) & =\dim {\rm Im}(H^0(T,\cO_T(mP))\to H^0(E,\cO_E(mP))) \\
    & \ge h^0(E,\cO_E(mP))-h^1(T,\cO_T(mP-E)) \\
    & \ge h^0(E,\cO_E(mP))-A \\
    & \ge m(P\cdot E) +1 -g(E) -A
\end{align*}
where the last inequality follows from Riemann-Roch.
Letting $m\to \infty$ we get \eqref{eq:section portion->1} and hence $\W$ is almost complete as desired.
\end{proof}

For actual calculations, it would be convenient to have a formula for $F(\W)$ before we apply Corollary \ref{cor:adj delta ac} to the almost complete refinement $\W$. This can be done using Zariski decomposition on surfaces.

\begin{lem} \label{lem:F(W) formula on surface}
In the setup of Lemma \ref{lem:surface almost complete}, assume that $L$ is ample and let $\pi\colon T\to S$ be the prime blow up associated to $E$. Then we have
\begin{equation} \label{eq:F(W) surface}
    F(\W)=\frac{2}{(L^2)}\int_0^\infty \left( \vol_{T|E}(\pi^*L-tE)\cdot N_\sigma (\pi^*L-tE)|_E \right) \rd t
\end{equation}
where $\vol_{T|E}(\cdot)$ is the restricted volume function $($see \cite{ELMNP}$)$ and $N_\sigma (\cdot)$ denotes the negative part in the Zariski decomposition of a $($pseudo-effective$)$ divisor.
\end{lem}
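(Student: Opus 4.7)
The approach is to specialize the general formula of Corollary~\ref{cor:F(W) formula} to this surface refinement. Here $\W$ is an $\bN^2$-graded linear series on the curve $E$, so in the notation of that corollary we take $n=\dim E = 1$ and $r = 1$, producing the prefactor $(n+r)!/n! = 2$. Since $\sum_j \dim W_{m,j} = \dim V_m = h^0(S, mL) \sim \tfrac{m^2}{2}(L^2)$, we have $\vol(\W) = (L^2)$. The corollary then gives, for each closed point $P \in E$,
\[
\ord_P F(\W) = \frac{2}{(L^2)} \int_0^{T(L;E)} f_P(t)\, \vol_{\W}(t)\, \rd t,
\]
where $f_P(t) = \lim_{m \to \infty} \tfrac{1}{m} \ord_P(F_{m, \lfloor mt \rfloor})$. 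Thus the lemma reduces to two pointwise identifications: $\vol_{\W}(t) = \vol_{T|E}(\pi^*L - tE)$ and $f_P(t) = \ord_P(N_\sigma(\pi^*L - tE)|_E)$, after which the integration can be extended from $[0,T(L;E)]$ to $[0,\infty)$ because $\vol_{T|E}(\pi^*L - tE) = 0$ beyond the pseudo-effective threshold.

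The first identification is essentially by definition. By Example~\ref{ex:refinement by div}, for rational $t$ and sufficiently divisible $m$, the piece $W_{m,mt}$ is precisely the image of the restriction $H^0(T, \pi^*(mL) - mtE) \to H^0(E, (\pi^*(mL) - mtE)|_E)$, so $\vol_{\W}(t) = \lim_m \dim W_{m,mt}/m$ coincides with the restricted volume $\vol_{T|E}(\pi^*L - tE)$ from \cite{ELMNP}, and the identification extends by continuity to the interior of $[0, T(L;E)]$. Moreover, whenever $E$ itself lies in $\Supp N_\sigma(\pi^*L - tE)$, every section of $|m(\pi^*L - tE)|$ vanishes along $E$ to some positive order, hence the restriction map is zero and $\vol_{T|E}(\pi^*L - tE) = 0$ there; this means the precise interpretation of $N_\sigma(\pi^*L-tE)|_E$ on such a $t$-set is irrelevant to the integral.

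The main work is to identify $f_P(t)$. For $t < T(L;E)$ with $E \not\subseteq \Supp N_\sigma(\pi^*L - tE)$, write the Zariski decomposition $\pi^*L - tE = P_t + N_t$ where $N_t = N_\sigma(\pi^*L - tE)$. For sufficiently divisible $m$, $|m(\pi^*L - tE)|$ has fixed divisorial part exactly $mN_t$ on $T$, so each section restricts to a divisor on $E$ containing $mN_t|_E$. The residual base locus on $E$ then comes from base points of the movable part $|mP_t|$ meeting $E$, and the crucial claim is that this residual contribution at each point $P \in E$ is of size $o(m)$. This should follow from $P_t$ being big and nef with $E \not\subseteq \mathbb{B}_+(P_t)$ (guaranteed by our hypothesis together with $\vol_{T|E}(P_t) > 0$), combined with standard asymptotic base-ideal estimates such as those in \cite{ELMNP}, which bound the order of vanishing of a general section of $|mP_t|$ along any closed point of $E$ sublinearly in $m$. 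Dividing by $m$ and taking the limit then yields $f_P(t) = \ord_P(N_t|_E)$, and substituting the two identifications back produces the stated expression. The main obstacle is precisely this $o(m)$-bound on the residual fixed part coming from $|mP_t||_E$; once it is in hand the rest is bookkeeping.
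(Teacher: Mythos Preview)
Your outline matches the paper's proof exactly: specialize Corollary~\ref{cor:F(W) formula}, identify $\vol_{\W}(t)$ with the restricted volume, and reduce everything to the pointwise identity $f_P(t)=\ord_P\big(N_\sigma(\pi^*L-tE)|_E\big)$. The only issue is the step you yourself flag as the obstacle.

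Your proposed justification of the $o(m)$ bound does not quite work. Bounding the vanishing order of a general section of $|mP_t|$ at a point $P\in E$ \emph{on $T$} goes in the wrong direction: restriction to $E$ can only \emph{increase} the order of vanishing at $P$, so controlling $\ord_P(s)$ on $T$ does not control $\ord_P(s|_E)$ on $E$. What you actually need is that the base divisor of the \emph{restricted} linear series $|mP_t|\big|_E$ has degree $o(m)$, and nothing in \cite{ELMNP} gives this directly.

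The paper closes this gap with an ingredient already in hand. In the proof of Lemma~\ref{lem:surface almost complete} it is shown, via the uniform $h^1$ bound of Lemma~\ref{lem:surface H^1}, that the image of $H^0(T,mP_t)\to H^0(E,mP_t|_E)$ has codimension at most a constant $A$ in the complete series on $E$ (for all sufficiently divisible $m$). On a smooth curve, a sub-series of codimension $\le A$ has base divisor of degree bounded by a constant depending only on $A$ and $g(E)$. Hence the residual fixed part $F_{m,mt}-mN_t|_E$ has bounded degree, which is far stronger than $o(m)$. Combined with the obvious inequality $F_{m,mt}\ge mN_t|_E$, this yields $\deg F(t)=\deg(N_t|_E)$ and therefore $F(t)=N_t|_E$. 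So rather than reaching for \cite{ELMNP}, you should simply invoke the codimension bound from the proof of Lemma~\ref{lem:surface almost complete}.
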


\begin{proof}
Since $L$ is ample, it is easy to see that $\Supp(\W)\cap (\{1\}\times \bR)=[0,T(L;E)]$. By Corollary \ref{cor:F(W) formula}, we then have
\[
F(\W)=\frac{2}{\vol(\W)}\int_0^{T(L;E)} F(\gamma)\vol_{\W}(\gamma) \rd \gamma
\]
where $F(\gamma)=\lim_{m\to\infty}\frac{1}{m} F_{m,\lfloor m\gamma \rfloor}$. By construction, we have $\vol(\W)=\vol(\V)=\vol(L)$, $\vol_{\W}(\gamma)=\vol_{T|E}(\pi^*L-\gamma E)$ and $\vol_{T|E}(\pi^*L-\gamma E)=0$ when $\gamma>T(L;E)$. Thus it suffices to show that
\begin{equation} \label{eq:F=N_sigma}
    F(\gamma)=N_\sigma (\pi^*L-\gamma E)|_E.
\end{equation}
By continuity, it is enough to check \eqref{eq:F=N_sigma} when $\gamma\in (0,T(L;E))\cap \bQ$. Let $\pi^*L-\gamma E = P + N$ be the Zariski decomposition as in the proof of Lemma \ref{lem:surface almost complete} and let $m$ be a sufficiently divisible integer. Since $L$ is ample, $E$ is not contained in the stable base locus $\Bs(\pi^*L)$ of $\pi^*L$. Since there always exists some $\gamma'\ge \gamma$ such that $E\not\subseteq \Bs(\pi^*L-\gamma' E)$ (e.g. we take $\gamma'=\ord_E(D)$ for any $D\sim_\bQ L$ with $\ord_E(D)\ge \gamma$) and $\pi^*L-\gamma E$ is a convex linear combination of $\pi^*L$ and $\pi^*L-\gamma' E$, we see that $E\not\subseteq \Bs(\pi^*L-\gamma E)$ as well. In particular, $E\not\subseteq \Supp(N)$. Then clearly $F_{m,m\gamma}\ge mN|_E$ and hence $F(\gamma)\ge N|_E$. From the proof of Lemma \ref{lem:surface almost complete} we also see that there exists some constant $A$ (depending only on $(S,\Delta)$ and $E$) such that the restricted linear series $|mP|_E$ has codimension at most $A$ in $|\cO_E(mP)|$, thus the degree of $F_{m,m\gamma} - mN|_E$ is at most $A$. Letting $m\to \infty$ we obtain $\deg F(\gamma) = \deg (N|_E)$, which implies \eqref{eq:F=N_sigma} as $F(\gamma)\ge N|_E$.
\end{proof}

As an illustration, we compute the $\delta$-invariants of all smooth cubic surfaces. Some of these will be useful in our proof of the K-stability of cubic threefolds (Lemma \ref{lem:cubic 3fold}).

\begin{thm} \label{thm:cubic delta}
Let $X\subseteq \bP^3$ be a smooth cubic surface and let $x\in X$ be a closed point. Let $C=T_x(X)\cap X$ be the tangent hyperplane section. Then
\[
\delta_x(X) = 
\begin{cases}
    3/2 & \text{if } \mult_x C = 3,\\
    27/17 & \text{if } C \text{ has a tacnode at } x,\\
    5/3 & \text{if } C \text{ has a cusp at } x,\\
    18/11 & \text{if } C \text{ is the union of three lines and } \mult_x C=2,\\
    12/7 & \text{if } C \text{ is irreducible and has a node at } x,\\
    \frac{9}{25-8\sqrt{6}} & \text{if } C \text{ is the union of a line and a conic that intersects transversally}.
\end{cases}
\]
Moreover, in the first three cases, $\delta_x(X)$ is computed by the $($unique$)$ divisor that computes $\lct_x(X,C)$; in the next two cases, $\delta_x(X)$ is computed by the ordinary blow up of $x$; in the last case, $\delta_x(X)$ is computed by the quasi-monomial valuation over $x\in X$ with weights $1+\sqrt{6}$ on the line and $2$ on the conic, and if $0<\varepsilon\ll 1$ then the log del Pezzo pair $(X,(1-\varepsilon)C)$ satisfies $\delta(X,(1-\varepsilon)C)=\frac{9}{25-8\sqrt{6}}\not\in\bQ$. 
\end{thm}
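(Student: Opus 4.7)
The plan is to handle the six cases uniformly: in each case, guess the optimal divisor (or quasi-monomial valuation) $E$ over $X$ centered at $x$ from the geometry of $C$, compute $A_X(E)/S(-K_X;E)$ to get the upper bound $\delta_x(X)\le A_X(E)/S(-K_X;E)$, and use inversion of adjunction (Corollary~\ref{cor:adj delta ac}) together with the almost completeness of the refinement (Lemma~\ref{lem:surface almost complete}) and the curve formula (Corollary~\ref{cor:delta on curve}) to get the matching lower bound. The candidate $E$ is: the (unique, weighted) divisor computing $\lct_x(X,C)$ in cases (1)--(3); the ordinary blow-up $E_0$ of $x$ in cases (4) and (5); and a suitable plt-type blow-up together with a quasi-monomial combination in case (6). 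In all cases $E$ is of plt type by inspection, so the refinement $\W$ of $|{-mK_X}|$ by $E$ is almost complete.

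For the numerical calculations: since $\dim X=2$ and $-K_X$ is ample, the pseudo-effective threshold $T=T(-K_X;E)$ is the largest $t\ge 0$ such that $\pi^*(-K_X)-tE$ is pseudo-effective, where $\pi\colon Y\to X$ is the associated prime blow-up (in case (6), we use a log smooth model resolving both the line and conic). Write the Zariski decomposition $\pi^*(-K_X)-tE = P_t+N_t$ piecewise in $t\in[0,T]$, which breaks into finitely many linear intervals. Then
\[
S(-K_X;E)=\frac{1}{(-K_X)^2}\int_0^T \vol(\pi^*(-K_X)-tE)\rd t = \frac{1}{3}\int_0^T (P_t^2)\rd t,
\]
and $A_X(E)$ is read off from the exceptional discrepancy. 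This directly yields the stated upper bounds in all six cases; in particular, in case (6) the pentagonal pencil of linear combinations of $(L,Q)$ and $(P_t^2)$ produces a quadratic equation in the boundary of the nef cone, whose solution $t=1+\sqrt{6}$ enters the integral and explains the irrationality $\delta_x(X)=9/(25-8\sqrt 6)$.

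For the lower bound, apply Corollary~\ref{cor:adj delta ac} with $F=E$ and $Z=\{x\}$. Condition (1) holds by Lemma~\ref{lem:surface almost complete}. For condition (2), note that $E$ is a curve, $\Delta_E=\mathrm{Diff}_E(0)$, $c_1(M_{\vec\bullet})=c_1(\W)-F(\W)$, and by Corollary~\ref{cor:delta on curve}
\[
\delta_{z}(E,\Delta_E+\lambda F(\W);\,c_1(M_{\vec\bullet}))=\frac{2(1-\mult_z\Delta_E-\lambda\,\mult_z F(\W))}{\deg(c_1(\W)-F(\W))+2\lambda\,\mult_z F(\W)}
\]
at every $z\in E$, where $F(\W)$ is computed from the Zariski decomposition by the formula of Lemma~\ref{lem:F(W) formula on surface}. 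Setting $\lambda=A_X(E)/S(-K_X;E)$ and using $c_1(\W)=(\pi^*(-K_X)-S(-K_X;E)E)|_E$ from \eqref{eq:c_1 of refinement}, a direct (but tedious) point-by-point check on $E$ shows the required inequality $\delta_z\ge\lambda$, with equality exactly at the point(s) dictated by the geometry; this yields the matching lower bound $\delta_x(X)\ge\lambda$ and identifies the valuation computing it.

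The main obstacle is case (6): here $E$ is a quasi-monomial valuation along the simple normal crossing pair formed by the strict transforms of the line $L$ and conic $Q$ after blowing up $x$, and the refinement $\W$ is a two-dimensional object (base $E\cong$ the exceptional of the log smooth model). The Zariski decomposition of $\pi^*(-K_X)-tE$ has several chambers as $t$ grows, and the negative part acquires new components exactly when $t$ crosses the slopes determined by $(L^2)$ and $(Q^2)$ on the blow-up; optimising the weights to make the resulting expression $A_X(E)/S(-K_X;E)$ stationary produces the quadratic $w^2-2w-5=0$ (equivalently $w=1+\sqrt6$), which pins down the optimal valuation. All other cases reduce, after one weighted blow-up, to an elementary convex calculation on a rational curve, and then the equality conditions in Corollary~\ref{cor:adj delta ac} force the computing valuation to be the one described in the statement.
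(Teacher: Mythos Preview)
Your overall strategy matches the paper's: in each case one picks a plt-type divisor $E$ adapted to the singularity of $C$ at $x$, computes $S(-K_X;E)$ via the piecewise Zariski decomposition of $\pi^*(-K_X)-tE$, and then uses Corollary~\ref{cor:adj delta ac} (with Lemma~\ref{lem:surface almost complete} and Lemma~\ref{lem:F(W) formula on surface}) to obtain the matching lower bound. For cases (1)--(5) this is exactly what the paper does; the paper phrases the curve check as ``$(E,\Delta_E+\lambda F(\W))$ is K-semistable'' via Lemma~\ref{lem:K-ss of pair on P^1}, which is equivalent to your formulation via Corollary~\ref{cor:delta on curve}.

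There is, however, a genuine gap in case (6). The optimal valuation $\nu_{1+\sqrt{6},2}$ is quasi-monomial with irrational weight ratio and is \emph{not} divisorial, so it is not of plt type and there is no ``refinement $\W$ by $E$'' in the sense of Example~\ref{ex:refinement by div}; Corollary~\ref{cor:adj delta ac} simply does not apply to it directly. Your sketch blurs this point (``$E$ is a quasi-monomial valuation \ldots\ and the refinement $\W$ is a two-dimensional object'') without saying how to run the adjunction step. The paper resolves this by taking a sequence of weighted blow-ups $Y_{a_m,b_m}\to X$ with coprime integer weights $a_m,b_m$ and $a_m/b_m\to(1+\sqrt{6})/2$. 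For each $m$ one computes $\lambda_m=A_X(E_m)/S(-K_X;E_m)$ and the ratio $r_m=\delta(E_m,\Delta_m+\lambda_m F(\W^{(m)}))$ as in Corollary~\ref{cor:adj delta ac}; the crucial computation is that $r_m\to 1$ as $m\to\infty$ (equivalently, the log pair on $E_m$ becomes asymptotically K-semistable). Passing to the limit in $\delta_x(X)\ge\min\{\lambda_m,r_m\lambda_m\}$ then gives $\delta_x(X)\ge\lambda$. Your sketch correctly identifies the minimization producing $w=1+\sqrt{6}$ for the upper bound, but the lower bound needs this approximation-and-limit argument (or an equivalent device) and cannot be obtained from a single application of Corollary~\ref{cor:adj delta ac}.
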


\begin{proof}
See Appendix \ref{sec:appendix}.
\end{proof}

\begin{cor} \label{cor:cubic surface global delta}
Let $X\subseteq \bP^3$ be a smooth cubic surface. Then
\[
\delta(X) = 
\begin{cases}
    3/2 & \text{if } X \text{ has an Eckardt point},\\
    27/17 & \text{otherwise}.
\end{cases}
\]
\end{cor}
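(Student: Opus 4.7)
The plan is to reduce the corollary directly to Theorem~\ref{thm:cubic delta} by combining the local calculation with a classical fact about tangent hyperplane sections. First, I order the six possible values of $\delta_x(X)$ appearing in Theorem~\ref{thm:cubic delta}:
\[
\tfrac{3}{2}<\tfrac{27}{17}<\tfrac{18}{11}<\tfrac{9}{25-8\sqrt{6}}<\tfrac{5}{3}<\tfrac{12}{7},
\]
so $\delta_x(X)\ge 3/2$ with equality iff $x$ is an Eckardt point, and $\delta_x(X)\ge 27/17$ whenever $x$ is not Eckardt. Since $\delta(X)=\inf_{x\in X}\delta_x(X)$, the Eckardt case $\delta(X)=3/2$ is immediate, and it suffices in the non-Eckardt case to exhibit at least one point $x\in X$ whose tangent hyperplane section $C_x=T_xX\cap X$ has a tacnode at $x$.

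For this I would use the parabolic locus $P=X\cap V(H_F)\subseteq X$, where $H_F$ is the Hessian of the defining cubic $F$; classically, $x\in P$ iff the quadratic part of $F$ at $x$ restricted to $T_xX$ is degenerate, equivalently iff the tangent cone of $C_x$ at $x$ is supported on a single line. I would then pick any of the $27$ lines $\ell\subseteq X$; since $\ell\subseteq T_xX$ for every $x\in\ell$, there is a decomposition $C_x=\ell+R_x$ with $R_x$ a residual conic necessarily passing through $x$ (as $x$ is a singular point of $C_x$). Since $V(H_F)$ has degree $4$ in $\bP^3$, the intersection $\ell\cap P=\ell\cap V(H_F)$ consists of $4$ points counted with multiplicity.

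At any $x\in\ell\cap P$ the tangent cone of $C_x$ at $x$ is a double line, and since it contains $\ell$ as a branch this forces the tangent cone to be $2\ell$, i.e., the conic $R_x$ has tangent line $\ell$ at $x$. A short case analysis on $R_x$ then shows that either $R_x$ is a smooth conic tangent to $\ell$ at $x$ (the tacnode case, yielding $\delta_x(X)=27/17$ by Theorem~\ref{thm:cubic delta}), or $R_x$ degenerates and yields either an Eckardt configuration (three concurrent lines at $x$, ruled out by hypothesis) or a configuration whose tangent cone is a union of two distinct lines, contradicting parabolicity. Hence every point of $\ell\cap P$ is tacnodal, giving $\delta(X)\le 27/17$ and therefore $\delta(X)=27/17$. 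The main obstacle is the case analysis at parabolic points lying on additional lines of $X$ and the verification that degenerate residual configurations (such as $R_x$ being a double line, or $C_x$ reducing to $3\ell$) cannot produce a non-tacnodal parabolic point on $\ell$ in the absence of Eckardt points; each such configuration must be ruled out using the non-Eckardt hypothesis together with smoothness of $X$.
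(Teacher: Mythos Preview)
Your proposal is correct. The paper gives no proof for this corollary, treating it as immediate from Theorem~\ref{thm:cubic delta}; the one nontrivial input is that a smooth cubic surface without Eckardt points always has a point $x$ with $C_x$ tacnodal, and you supply this. Your case analysis at a parabolic $x\in\ell$ works once the non-reduced configurations you flag ($R_x$ a double line, $C_x=2\ell+\ell'$ or $3\ell$) are excluded, and this is elementary from smoothness: writing $T_xX=\{x_0=0\}$ and $\ell=\{x_0=x_1=0\}$, a decomposition $T_xX\cap X=2\ell+\ell'$ forces $F=x_0A+x_1^2g$ with $A$ a quadric, and then $\nabla F$ vanishes at any zero of $A|_\ell$. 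A streamlined alternative to the Hessian: the map $\ell\to\bP^1$ sending $x\mapsto T_xX$ in the pencil of planes through $\ell$ has degree $2$ (its fibre over $H$ is $\ell\cap R_H$), so by Riemann--Hurwitz there are exactly two ramification points on $\ell$, at each of which $R_x$ meets $\ell$ to order $2$ at $x$; your same trichotomy then shows each is Eckardt or tacnodal.
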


It has been expected (see e.g. \cite{AIM}) that given a klt Fano variety $X$ with a $\bQ$-complement $\Delta$, the graded rings
\[
\gr_v R:=\oplus_{m,\lambda} \Gr^\lambda_{\cF_v} H^0(X,-mrK_X)
\]
are finitely generated for all lc places $v$ of $(X,\Delta)$, where $r>0$ is an integer such that $-rK_X$ is Cartier and $\cF_v$ is the filtration induced by $v$. Unfortunately this is not true in general and we identify a counterexample through the calculations in Theorem \ref{thm:cubic delta}.

\begin{thm} \label{thm:HRFG counterexample}
Let $X\subseteq \bP^3$ be a smooth cubic surface and let $C\subseteq X$ be a hyperplane section such that $C=L\cup Q$ is the union of a line and a conic that intersects transversally. Then there exists an lc place $v$ of $(X,C)$ such that $\gr_v R$ is not finitely generated. \end{thm}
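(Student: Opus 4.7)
The plan is to take $v$ to be the specific lc place picked out by Theorem \ref{thm:cubic delta}: the rank-$2$ quasi-monomial valuation centered at the node $x = L \cap Q$ with weights $1+\sqrt{6}$ on $L$ and $2$ on $Q$. That $v$ is an lc place of $(X, C)$ is immediate, since $L + Q$ is simple normal crossing at $x$ and for any quasi-monomial valuation centered at $x$ with weights $(a,b)$ on the two branches one has $A_{X,L+Q}(v) = (a+b) - (a+b) = 0$. Moreover $A_X(v) = 3 + \sqrt{6}$, and since by Theorem \ref{thm:cubic delta} this $v$ computes $\delta_x(X) = 9/(25 - 8\sqrt{6})$, we find
\[
S(v) = \frac{A_X(v)}{\delta_x(X)} = \frac{(3+\sqrt{6})(25 - 8\sqrt{6})}{9} = \frac{27 + \sqrt{6}}{9}\; \notin\; \bQ.
\]

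The heart of the argument is to leverage this irrationality to obstruct finite generation. Suppose for contradiction that $\gr_v R$ is finitely generated. The strategy I envision is to produce a proper birational model $\pi\colon Y \to X$ on which $v$ is toroidal with respect to an snc divisor $E_1+\cdots+E_r$ whose dual cone contains $v$, and such that the refinement of $R = \oplus H^0(-mK_X)$ along the flag of components of $E_1+\cdots+E_r$ is a finitely generated $\bN^{r+1}$-graded ring — this is essentially what finite generation of $\gr_v R$ provides. The associated Okounkov-type body would then be a rational polytope $\Delta \subseteq \bR^r$, and by Lemma \ref{lem:delta=inf A/S} together with its Corollary \ref{cor:S formula}, $S(v)$ would equal the integral over $\Delta$ of the linear functional $(i_1,\dots,i_r) \mapsto \sum i_j \, v(E_j)$.

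The main obstacle — and it is the crux of the argument — is that this last integral a priori lies only in the $\bQ$-vector space spanned by $v(E_1),\dots,v(E_r)$, which itself lies in the $\bQ$-span of $1+\sqrt 6$ and $2$; and our value $S(v) = \tfrac{1}{9}(1+\sqrt{6}) + \tfrac{13}{9}\cdot 2$ is already of this form, so no contradiction comes from naive linear algebra. To close the argument, the structural input I intend to use is that $v$ is extracted by an infinite sequence of weighted blowups directed by the continued fraction expansion of the irrational slope $(1+\sqrt 6)/2$. Finite generation of $\gr_v R$ would force a finite such model $Y$ to suffice, i.e.\ $v$ to be computable on $Y$ with $r$ bounded. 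One should then show that as $t$ varies, the Zariski decomposition of $\pi_k^*(-K_X) - t(\text{divisorial data of } v)$ on the $k$-th continued-fraction blowup $W_k\to X$ keeps acquiring new negative curves — indeed the sequence of $(-1)$- and $(-2)$-curves produced by the weighted blowups never stabilises, because the continued fraction of $(1+\sqrt 6)/2$ is non-terminating — so that any finite snc model $Y$ cannot capture all the chambers contributing to $S(v)$. This would contradict finite generation and conclude the proof. The delicate part is turning this chamber-proliferation picture into a rigorous statement that rules out finite generation, and this is where I expect most of the work to lie.
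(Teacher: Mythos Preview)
Your proposal has a real gap. You correctly recognise that $S(v)\notin\bQ$ alone yields no contradiction, since $S(v)$ already lies in the $\bQ$-span of the weights; but your fallback to ``chamber proliferation'' via the continued-fraction tower is only a heuristic. You never isolate a concrete obstruction, and the picture of successive blowups ``never stabilising'' does not by itself rule out finite generation of $\gr_v R$ for the single fixed valuation you chose. As written, the proof stops exactly where the difficulty begins.

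The paper's argument is quite different and does not use the $\delta$-minimiser. On the $(a,b)$-weighted blowup at $x$, the Zariski decomposition of $-\pi^*K_X - tE$ has a wall (where $\tQ$ first enters the negative part) at $t = \frac{a(2a+3b)}{2a+b}$, which is \emph{not} a $\bQ$-linear function of $a$ and $b$. The paper therefore chooses weights $(a_0,b_0)$ so that $a_0,\;b_0,\;\frac{a_0(2a_0+3b_0)}{2a_0+b_0}$ are $\bQ$-linearly independent. If $\gr_{v_0}R$ were finitely generated, the slopes $\lambda_i/m_i$ of a finite generating set would lie in $\bQ a_0 + \bQ b_0$ and hence straddle, but never hit, the wall; after perturbing to nearby rational weights and invoking \cite{LX-higher-rank}*{Lemma 2.10}, every generator with slope above the wall is forced to vanish along $Q$, whereas the Zariski decomposition produces a section of slope exactly at the wall that does not --- a contradiction. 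Note in particular that for your choice $(a,b)=(1+\sqrt6,\,2)$ one computes the wall to be at $t=5\in\bQ\subset\bQ a+\bQ b$, so this mechanism does not even apply to your $v$; nothing in your outline shows that the $\delta$-minimiser itself has non-finitely-generated $\gr_v R$.
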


\begin{proof}
This can be deduced from the fact that $\delta_x(X)\not\in\bQ$ where $x\in L\cap Q$. Here we give a more direct (and simpler) argument.

Let $x\in L\cap Q$ and let $a,b>0$ be coprime integers. Let $\pi\colon Y=Y_{a,b}\to X$ be the weighted blow up at $x$ with $\wt(L)=a$ and $\wt(Q)=b$. Let $E$ be the exceptional divisor and let $\tL$ (resp. $\tQ$) be the strict transform of $L$ (resp. $Q$). Assume that $b<2a$. We have $(\tL^2)=-1-\frac{a}{b}$, $(\tQ^2)=-\frac{b}{a}$, $(\tL\cdot \tQ)=1$ and in particular the intersection matrix of $\tL$ and $\tQ$ is negative definite. As $-\pi^*K_X-(a+b)E\sim \tL+\tQ$, it follows that $T(-K_X;E)=a+b$, the stable base locus of $-\pi^*K_X-tE$ is supported on $\tL\cup\tQ$ for all $0\le t\le a+b$, and hence $N_\sigma(-\pi^*K_X-tE)=f(t)\tL+g(t)\tQ$ for some $f(t),g(t)\ge 0$. The coefficients $f(t)$ and $g(t)$ are computed as the smallest numbers such that $-\pi^*K_X-tE-f(t)\tL-g(t)\tQ$ is nef, and it is enough to check nefness against $\tL$ and $\tQ$. A straightforward computation then gives
\begin{equation} \label{eq:negative part line+conic}
    N_\sigma(-\pi^*K_X-tE) =
    \begin{cases}
        0 & \text{if } 0\le t\le b, \\
        \frac{t-b}{a+b}\tL & \text{if } b<t\le \frac{a(2a+3b)}{2a+b}, \\
        \frac{2t-2a-b}{b}\tL + \frac{(2a+b)t-a(2a+3b)}{b^2}\tQ & \text{if } \frac{a(2a+3b)}{2a+b}<t\le a+b.
    \end{cases}
\end{equation}
The key point is that as a rational function, $\frac{a(2a+3b)}{(2a+b)}$ is not a linear combination of $a$ and $b$. In particular, we may choose $a_0,b_0\in\bR_+$ such that $b_0<2a_0$ and $a_0,b_0,\frac{a_0(2a_0+3b_0)}{(2a_0+b_0)}$ are linearly independent over $\bQ$ (thus $\frac{a_0}{b_0}$ is necessarily irrational). Let $v_0$ be the quasi-monomial valuation centered at $x$ given by $\wt(L)=a_0$ and $\wt(Q)=b_0$. We claim that $\gr_{v_0} R$ is not finitely generated.

Suppose that $\gr_{v_0} R$ is finitely generated and let $f_i$ ($i=1,2,\cdots,s$) be a finite set of homogeneous generators ($\gr_{v_0} R$ is naturally graded by $\bN\times (\bN a_0+\bN b_0)$). Let $\deg(f_i)=(m_i,\lambda_i=p_i a_0+q_i b_0)$ where $m_i,p_i,q_i\in\bN$. We may assume that $0=\frac{\lambda_1}{m_1}\le \frac{\lambda_2}{m_2} \cdots\le \frac{\lambda_s}{m_s}$. Clearly $\frac{\lambda_s}{m_s}\ge a_0+b_0>\frac{a_0(2a_0+3b_0)}{(2a_0+b_0)}$ (otherwise $v_0(s)<a_0+b_0$ for all $s\in H^0(X,-K_X)$; but $v_0(L+Q)= a_0+b_0$). Since $a_0,b_0$ and $\frac{a_0(2a_0+3b_0)}{(2a_0+b_0)}$ are linearly independent over $\bQ$, there exists $1\le \ell<s$ such that
\[
\frac{\lambda_\ell}{m_\ell} < \frac{a_0(2a_0+3b_0)}{(2a_0+b_0)} < \frac{\lambda_{\ell+1}}{m_{\ell+1}}.
\]
We may lift each $f_i$ to $g_i\in R_{m_i}=H^0(X,-m_i K_X)$ such that ${\rm in}_{v_0}(g_i)=f_i$. Then for all $\alpha=(a,b)\in\bQ^2$ with $|\alpha-(a_0,b_0)|\ll 1$, we have $\mu_i:=v_\alpha (g_i)=p_i a +q_i b$ (where $v_\alpha$ is the quasi-monomial valuation with $\wt(L)=a$ and $\wt(Q)=b$); in particular, $v_\alpha (g_i)>m_i\cdot \frac{a(2a+2b)}{2a+b}$ when $i\ge \ell+1$, thus by \eqref{eq:negative part line+conic}, $g_i$ vanishes on $Q$ for all $i\ge \ell+1$. We may also assume that
\begin{equation} \label{eq:slopes}
    0=\frac{\mu_1}{m_1}\le \cdots \le \frac{\mu_\ell}{m_\ell} < \frac{a(2a+2b)}{2a+b} < \frac{\mu_{\ell+1}}{m_{\ell+1}} \le \cdots \le \frac{\mu_s}{m_s}.
\end{equation}
By \cite{LX-higher-rank}*{Lemma 2.10}, $g_i$ restrict to a finite set of generators of $\gr_{v_\alpha} R$. It follows that for any $g\in R_m=H^0(X,-mK_X)$, we have
\begin{equation} \label{eq:v_alpha}
    v_\alpha(g) = \max \{\wt(F)\,|\,F\in \bC[x_1,\cdots,x_s]\text{ s.t. } F(g_1,\cdots,g_s)=g\}
\end{equation}
where we set $\wt(x_i)=\mu_i$ (clearly if $g=F(g_1,\cdots,g_s)$ then $v_\alpha(g)\ge \wt(F)$; conversely, as $g_i$ generate $\gr_{v_\alpha} R$, there exists $F$ such that $\wt(F)=v_\alpha(g)$ and $g = F(g_1,\cdots,g_s) \mod \cF_{v_\alpha}^{>v_\alpha(g)}R_m $, one can then prove by induction on $v_\alpha(g)$ that $g=F(g_1,\cdots,g_s)$ for some $\wt(F)=v_\alpha(g)$). Now let $\lambda = \frac{a(2a+2b)}{2a+b}$. By \eqref{eq:negative part line+conic}, for sufficiently divisible integers $m>0$, there exists $f\in H^0(X,-mK_X)$ such that $v_\alpha(f) = \lambda m$ and $f$ does not vanish on $Q$. By \eqref{eq:v_alpha} we have $f=F(g_1,\cdots,g_s)$ for some $F$ with $\wt(F)=m\lambda$. However, by \eqref{eq:slopes} we see that each monomial in $F$ must contain some $g_i$ with $i\ge \ell+1$; it follows that $f=F(g_1,\cdots,g_s)$ vanishes along $Q$, a contradiction. Therefore, $\gr_{v_0} R$ is not finitely generated.
\end{proof}

\subsection{Hypersurfaces with Eckardt points} \label{sec:eckardt}

Let $X\subseteq \bP^{n+1}$ be a smooth hypersurface of degree $d\ge 2$. Recall that $x\in X$ is called a generalized Eckardt point if the tangent hyperplane section $D=T_x X\cap X\subseteq X$ at $x$ satisfies $\mult_x D=d$. In this case $D$ is isomorphic to the cone over $F(X,x)$, the Hilbert scheme of lines in $X$ passing through $x$, which is a hypersurface of degree $d$ in $\bP^{n-1}$. It is in fact smooth by the following easy lemma.

\begin{lem}
Let $X\subseteq \bP^{n+1}$ be a smooth hypersurface of degree $d\ge 2$ and let $x\in X$ be a generalized Eckardt point. Then $F(X,x)$ is smooth.
\end{lem}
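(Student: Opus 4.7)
The plan is to argue by contradiction in explicit coordinates. Choose homogeneous coordinates $[x_0:\cdots:x_{n+1}]$ on $\bP^{n+1}$ so that $x=[0:\cdots:0:1]$ and $T_xX=\{x_0=0\}$. Then $X$ is cut out by a polynomial of the form
\[
F \;=\; x_0\,x_{n+1}^{d-1} + \sum_{i=2}^{d} g_i(x_0,\dots,x_n)\,x_{n+1}^{d-i},
\]
with each $g_i$ homogeneous of degree $i$. The hypothesis that $x$ is a generalized Eckardt point, i.e.\ $\mult_x(T_xX\cap X)=d$, translates into $g_i(0,x_1,\dots,x_n)=0$ for every $i<d$; equivalently each such $g_i$ is divisible by $x_0$. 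Writing $g_i=x_0\tilde g_{i-1}$ for $2\le i\le d-1$, then parametrizing lines through $x$ by directions $[a_0:\cdots:a_n]\in\bP^n$ and demanding $F\equiv 0$ along the line, one finds that $F(X,x)$ coincides with $\{a_0=0,\ g_d(0,a_1,\dots,a_n)=0\}$, i.e.\ with the degree-$d$ hypersurface $Y\subseteq\bP^{n-1}$ cut out by $g_d(0,x_1,\dots,x_n)$. This also recovers the stated cone description of $D=T_xX\cap X$ over $Y$.

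Suppose for contradiction that $Y$ is singular at some point $q=[0:a_1:\cdots:a_n]$, so that $g_d(0,a_1,\dots,a_n)=0$ and $\partial g_d/\partial x_i(0,a_1,\dots,a_n)=0$ for $i=1,\dots,n$. For any $t_0\in\bC$ set $p_{t_0}=[0:a_1:\cdots:a_n:t_0]$; a direct substitution gives $F(p_{t_0})=g_d(0,a_1,\dots,a_n)=0$, so $p_{t_0}\in X$. A short gradient computation at $p_{t_0}$ then shows that, for $1\le i\le n$, the partial $\partial F/\partial x_i$ reduces to $\partial g_d/\partial x_i(0,a_1,\dots,a_n)=0$ because every other contribution carries an $x_0$ factor; that $\partial F/\partial x_{n+1}$ vanishes because every one of its terms carries an $x_0$ factor; and that $\partial F/\partial x_0(p_{t_0})$ is a monic polynomial in $t_0$ of degree $d-1\ge 1$, whose leading term $t_0^{d-1}$ comes from differentiating $x_0\,x_{n+1}^{d-1}$. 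Letting $t_0^{*}\in\bC$ be any root of this polynomial, the point $p_{t_0^{*}}\in X$ has $\nabla F(p_{t_0^{*}})=0$, contradicting the smoothness of $X$.

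The only real work is in reading off the divisibility condition $x_0\mid g_i$ for $i<d$ from the Eckardt hypothesis; once the defining equation of $X$ is in the resulting normal form, the rest is an immediate gradient calculation. The essential quantitative input is that $\partial F/\partial x_0(p_{t_0})$ is a nontrivial polynomial in $t_0$ of positive degree, hence has a complex root and produces a honest singular point of $X$ distinct from $x$ (since $(a_1,\dots,a_n)\ne 0$).
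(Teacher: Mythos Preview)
Your proof is correct and follows essentially the same approach as the paper's: both put the equation in a normal form $F=x_0 f+g$ (the paper's compact version of your expansion in powers of $x_{n+1}$), observe that a singular point of $Y=\{g=0\}$ forces all partials $\partial F/\partial x_j$ ($1\le j\le n+1$) to vanish along the corresponding line in $\{x_0=0\}$, and then use that $\partial F/\partial x_0$ restricts to a monic polynomial of degree $d-1\ge 1$ in $x_{n+1}$ to locate a singular point of $X$. The only difference is presentational---you unpack the Eckardt condition as $x_0\mid g_i$ for $i<d$, whereas the paper absorbs this directly into the form $x_0 f(x_1,\dots,x_{n+1})+g(x_1,\dots,x_n)$.
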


\begin{proof}
We may assume that $x=[0:\cdots:0:1]$. Up to a change of coordinates $X$ is defined by an equation of the form $x_0 f(x_1,\cdots,x_{n+1}) + g(x_1,\cdots,x_n) = 0$ where $\deg f = d-1$, $\deg g=d$ and $f$ contains the monomial $x_{n+1}^{d-1}$. We then have $F(X,x)\cong (g=0)\subseteq \bP^{n-1}$. If $[a_1:\cdots:a_n]$ is a singular point of $F(X,x)$, then for any $a_{n+1}$ with $f(a_1,\cdots,a_{n+1})=0$ (such $a_{n+1}$ exists since $f$ contains the monomial $x_{n+1}^{d-1}$) it is not hard to check that $X$ is singular at $[0:a_1:\cdots:a_{n+1}]$. This is a contradiction as $X$ is smooth. Thus $F(X,x)$ is smooth.
\end{proof}

\begin{thm} \label{thm:delta at Eckardt pt}
Let $X\subseteq \bP^{n+1}$ be a smooth hypersurface of degree $d\ge 2$ and let $x\in X$ be a generalized Eckardt point. Assume that $F(X,x)$ is K-semistable if $d\le n-1$ $($i.e. when it's Fano$)$. Then $\delta_x(H)=\frac{n(n+1)}{d+n-1}$ $($where $H$ is the hyperplane class on $X)$ and it is computed by the ordinary blow up of $x$.
\end{thm}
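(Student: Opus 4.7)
The plan is to apply Corollary~\ref{cor:adj delta ac} with $F = E$, the exceptional divisor of the ordinary blow up $\pi\colon \tX \to X$ at $x$, matching an upper bound from $E$ with a lower bound obtained by reducing to K-semistability of an explicit log Fano pair on $\bP^{n-1}$. The key geometric input is that, since $x$ is a generalized Eckardt point, the tangent hyperplane section $D := T_xX \cap X$ is a cone over $Y := F(X, x)$ with vertex $x$, so $\mult_x D = d$ and the strict transform satisfies $\tD = \pi^*H - dE$.

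Combining the effectivity of $\tD$ with the base-point-freeness of $\pi^*H - E$ (sections of $H$ vanishing at $x$ generate $\fm_x$) yields the decomposition
\[
\pi^*H - tE \;=\; \tfrac{d-t}{d-1}(\pi^*H - E) + \tfrac{t-1}{d-1}\tD \qquad (t \in [1, d]),
\]
with $\pi^*H - tE$ nef on $[0,1]$ and $T(H; E) = d$. Integrating the volume (equal to $d - t^n$ on $[0,1]$ and $(d-1)\bigl(\tfrac{d-t}{d-1}\bigr)^{\!n}$ on $[1,d]$) gives $S(H; E) = \tfrac{n+d-1}{n+1}$, and hence $A_X(E)/S(H;E) = \tfrac{n(n+1)}{n+d-1}$, yielding the upper bound $\delta_x(H) \le \tfrac{n(n+1)}{n+d-1}$.

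For the lower bound, the refinement $\W$ of $\{H^0(X, mH)\}_m$ by $E$ on $E \cong \bP^{n-1}$ is almost complete with respect to $\cO_E(1)$, extending Lemma~\ref{lem:surface almost complete} via asymptotic surjectivity of $H^0(\tX, m\pi^*H - jE) \to H^0(E, \cO_E(j))$ on the nef range. The decomposition above together with Corollary~\ref{cor:F(W) formula} and Lemma~\ref{lem:S of ac W} yields
\[
c_1(\W) = \tfrac{n+d-1}{n+1}\cO_E(1), \quad F(\W) = \tfrac{d-1}{d(n+1)}Y, \quad c_1(M_{\vb}) = \tfrac{n}{n+1}\cO_E(1).
\]
Setting $\lambda := \tfrac{n(n+1)}{n+d-1}$ (which saturates the first minimum in Corollary~\ref{cor:adj delta ac}), the lower bound reduces to $\delta(\bP^{n-1}, cY;\, c_1(M_{\vb})) \ge \lambda$ with $c := \tfrac{n(d-1)}{d(n+d-1)}$. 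Since $-K_{\bP^{n-1}} - cY = \cO(\tfrac{n^2}{n+d-1}) = \lambda \cdot c_1(M_{\vb})$, by Lemma~\ref{lem:delta homogeneous} this is equivalent to K-semistability of the log Fano pair $(\bP^{n-1}, cY)$.

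The main obstacle is this K-semistability of $(\bP^{n-1}, cY)$. When $d \ge n$, $Y$ is not Fano and the statement can be obtained directly by inversion along $Y$, since $c < 1$ keeps the pair klt and all volumes are explicitly computable. When $d \le n-1$, the K-semistability of $Y$ is essential: the specific value of $c$ is tuned so that a further inversion of adjunction, starting with the auxiliary divisor $Y \subseteq \bP^{n-1}$ (which gives the easy ratio $A_{cY}(Y)/S = \tfrac{d^2-d+n}{n} \ge 1$) and supplemented where needed by a refinement involving a general hyperplane of $\bP^{n-1}$, reduces through the adjunction $-K_Y = (n-d)\cO_Y(1)$ to the hypothesis $\delta(Y) \ge 1$. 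Matching the two bounds yields $\delta_x(H) = \tfrac{n(n+1)}{n+d-1}$, with $E$ achieving it by construction.
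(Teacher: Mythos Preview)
Your reduction is correct and matches the paper's proof: blow up $x$, compute $S(H;E)=\tfrac{n+d-1}{n+1}$ via the explicit Zariski-type decomposition of $\pi^*H-tE$, verify that the refinement $\W$ is almost complete with $F(\W)=\tfrac{d-1}{d(n+1)}D_0$ (where $D_0=\tD\cap E\cong Y$) and $c_1(M_{\vb})=\tfrac{n}{n+1}\cO_E(1)$, and use Corollary~\ref{cor:adj delta ac} to reduce the lower bound $\delta_x(H)\ge\lambda$ to the K-semistability of the log Fano pair $(\bP^{n-1},cD_0)$ with $c=\tfrac{n(d-1)}{d(n+d-1)}$. Up to this point you and the paper agree essentially verbatim.

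The gap is in the last step, the K-semistability of $(\bP^{n-1},cD_0)$. Your proposed ``further inversion of adjunction along $Y$'' does not close. With $L=-K_{\bP^{n-1}}-cD_0=\tfrac{n^2}{n+d-1}\cO(1)$ and $F=D_0$, you correctly get $A_{cD_0}(D_0)/S(L;D_0)=\tfrac{d^2-d+n}{n}\ge 1$, but the refinement on $D_0$ is almost complete with $F=0$ and $c_1=\tfrac{n(n-1)}{n+d-1}\cO_{D_0}(1)$, so Theorem~\ref{thm:delta adj via div} only gives
\[
\delta(\bP^{n-1},cD_0)\ \ge\ \min\Big\{\tfrac{d^2-d+n}{n},\ \tfrac{(n+d-1)(n-d)}{n(n-1)}\,\delta(D_0)\Big\}.
\]
Even granting $\delta(D_0)\ge 1$ when $d\le n-1$, the second term equals $\tfrac{(n+d-1)(n-d)}{n(n-1)}$, and $(n+d-1)(n-d)\ge n(n-1)$ is equivalent to $d(d-1)\le 0$, which fails for $d\ge 2$. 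So the bound is strictly below $1$, and no ``supplement by a general hyperplane'' repairs this: refining first by a general $H'\in|\cO(1)|$ replaces $D_0$ by its hyperplane section, for which you have no K-semistability hypothesis. For $d\ge n$ the situation is worse, since there is no lower bound on $\delta(D_0;\cO_{D_0}(1))$ available at all.

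The paper handles this step by different, external inputs. By interpolation (\cite{Der-finite-cover}*{Lemma 2.6}) and K-semistability of $\bP^{n-1}$, it suffices to find some $\mu\ge c$ with $(\bP^{n-1},\mu D_0)$ K-semistable. For $d\ge n$ one takes $\mu=\tfrac{n}{d}$, making the pair log canonical log Calabi--Yau, hence K-semistable by \cite{Oda-slcCY-Kss}. For $d\le n-1$ one takes $\mu=1-\tfrac{1}{d}+\tfrac{1}{n}$ and degenerates $(\bP^{n-1},\mu D_0)$ to the projective cone $(V,\mu V_\infty)$ over $D_0$; by \cite{LX-Kollar-comp}*{Proposition 5.3} and the K-semistability of $D_0$ the cone pair is K-semistable, and then openness of K-semistability \cites{Xu-quasimonomial,BLX-openness} gives the conclusion. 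These are the missing ingredients.
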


\begin{proof}
Let $\pi\colon Y\to X$ be the blow up of $x$ and let $E$ be the exceptional divisor. Let $\V$ be the complete linear series associated to $H$ and let $\W$ be its refinement by $E$. Since $x\in X$ is a generalized Eckardt point, the tangent hyperplane section $x\in D\subseteq X$ has $\mult_x D=d$. Let $\tD$ be the strict transform of $D$ on $Y$. Let $j,m\in \bN$. Note that $|m\pi^*H-jE|\neq \emptyset$ if and only if $0\le j\le dm$ and it is base point free when $0\le j\le m$. We first show that
\begin{equation} \label{eq:mov+fix}
|m\pi^*H-jE| = \left| \left(m-\lceil \frac{j-m}{d-1}\rceil\right) \pi^*H - \left(j-d\cdot\lceil \frac{j-m}{d-1}\rceil\right) E \right| +  \lceil \frac{j-m}{d-1} \rceil \tD
\end{equation}
is the decomposition into movable and fixed part when $m\le j\le dm$. 

Suppose first that $n\ge 3$. Then $D$ is irreducible. Let $D'\sim_\bQ H$ be another effective $\bQ$-divisor that doesn't contain $D$ in its support. We have 
\[
d\cdot \mult_x D'\le (D\cdot D'\cdot H_1\cdot\cdots\cdot H_{n-2})=d
\]
where $H_1,\cdots,H_{n-2}$ are general hyperplane sections passing through $x$, hence $\mult_x D'\le 1$. It follows that for any $G\in |m\pi^*H-jE|$, if we write $G=a\tD + G'$ where $\tD\not\subseteq \Supp(G')$, then $G'\in |(m-a)\pi^*H - (j-ad)E|$ and 
\[
j-ad\le \mult_x \pi(G')\le m-a.
\]
In other words, $a\ge \lceil \frac{j-m}{d-1} \rceil$ which implies \eqref{eq:mov+fix}. If $n=2$, then in the above same notation $\tD$ is a disjoint union of $d$ lines $L_1,\cdots,L_d$. If we take $G\in |m\pi^*H-jE|$ and write $G=\sum a_i L_i + G'$ where $G'$ doesn't contain any $L_i$ ($i=1,\cdots,d$) in its support then as $(G'\cdot L_i)\ge 0$ we obtain $m-j=(G\cdot L_i)\ge a_i(L_i^2) = a_i(1-d)$, thus $a_i\ge \lceil \frac{j-m}{d-1} \rceil$ for all $1\le i\le d$ and \eqref{eq:mov+fix} still holds.

It is straightforward to check that for all $0\le j\le m$, the natural restriction maps
\[
H^0(Y,\cO_Y(m\pi^*H-jE))\to H^0(E,\cO_E(j))
\]
are surjective. It follows that 
\[
\vol(\pi^*H-tE)=
    \begin{cases}
        d-t^n & \text{if } 0\le t\le 1, \\
        \frac{(d-t)^n}{(d-1)^{n-1}} & \text{if } 1<t\le d.
    \end{cases}
\]
and
\[
W_{m,j}=
    \begin{cases}
        H^0(E,\cO_E(j)) & \text{if } 0\le j\le m, \\
        {\rm Im}(
        H^0(E,\cO_E(j-d\lceil \frac{j-m}{d-1}\rceil)) \xrightarrow{\cdot \lceil \frac{j-m}{d-1}\rceil D_0}  H^0(E,\cO_E(j))
        ) & \text{if } m\le j\le dm \\
        0 & \text{otherwise}.
    \end{cases}
\]
where $D_0=\tD\cap E\cong F(X,x)$. In particular, $\W$ is almost complete and through direct calculations we see that $S(H;E)=\frac{d+n-1}{n+1}$, $F(\W)=\frac{1}{n+1}(1-\frac{1}{d})D_0$ (by Corollary \ref{cor:F(W) formula}) and $c_1(\W)\sim_\bQ (\pi^*H-S(H;E)\cdot E)|_E\sim \frac{d+n-1}{n+1}H_0$ (see \eqref{eq:c_1 of refinement}) where $H_0$ is the hyperplane class on $E\cong\bP^{n-1}$. 

Clearly $\delta_x(H)\le \lambda$ where $\lambda = \frac{n(n+1)}{d+n-1} = \frac{A_X(E)}{S(H;E)}$. It remains to prove $\delta_x(H)\ge \lambda$. Let $M_{\vb}$ be the movable part of $\W$. By Corollary \ref{cor:adj delta ac}, it suffices to prove
\begin{equation} \label{eq:delta(E) over Eckardt}
    \delta(E,\lambda F(\W);c_1(M_{\vb}))\ge \lambda.
\end{equation}
Note that by the above calculations we have 
\[
\lambda c_1(M_{\vb})+\lambda F(\W) \sim_\bQ \lambda c_1(\W)\sim_\bQ nH_0\sim -K_E,
\]
thus \eqref{eq:delta(E) over Eckardt} is equivalent to saying that the pair
\[
(E,\lambda F(\W))\cong (\bP^{n-1},\frac{n(d-1)}{d(d+n-1)}D_0)
\]
is K-semistable. By \cite{Der-finite-cover}*{Lemma 2.6} this would be true if $(\bP^{n-1},\mu D_0)$ is K-semistable for some $\mu\ge \frac{n(d-1)}{d(d+n-1)}$ (as $\bP^{n-1}$ is K-semistable). When $d\ge n$, $(\bP^{n-1},\frac{n}{d} D_0)$ is a log canonical log Calabi-Yau pair (note that $D_0$ is smooth) and therefore is K-semistable by \cite{Oda-slcCY-Kss}*{Corollary 1.1}; thus we may take $\mu=\frac{n}{d}$. When $d\le n-1$, $D_0$ is Fano and K-semistable by assumption. We claim that $(\bP^{n-1},\mu D_0)$ is K-semistable where $\mu=1-\frac{1}{d}+\frac{1}{n}>\frac{n(d-1)}{d(d+n-1)}$. Indeed, the divisor $D_0$ induces a special degeneration of $(\bP^{n-1},\mu D_0)$ to $(V,\mu V_\infty)$ where $V=C_p(D_0,N_{D_0/E})$ is the projective cone over $D_0$. By \cite{LX-Kollar-comp}*{Proposition 5.3}, $(V,\mu V_\infty)$ is K-semistable, thus $(\bP^{n-1},\mu D_0)$ is also K-semistable by the openness of K-semistability \cites{Xu-quasimonomial,BLX-openness}. This proves the claim and also concludes the proof of the theorem.
\end{proof}

Restricting to Fano hypersurfaces, we have

\begin{cor} \label{cor:Fano Eckardt pt}
Let $X\subseteq \bP^{n+1}$ be a smooth Fano hypersurface of degree $d$ and let $x\in X$ be a generalized Eckardt point. Assume that $F(X,x)$ is K-semistable if $d\le n-1$. Then $\delta_x(X)=\frac{n(n+1)}{(n-1+d)(n+2-d)}$ and it is computed by the ordinary blow up of $x$.
\end{cor}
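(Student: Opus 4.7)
\medskip

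\noindent\textbf{Proof proposal.} The plan is to deduce this corollary from Theorem \ref{thm:delta at Eckardt pt} by a one-step rescaling: since $X$ is a smooth Fano hypersurface of degree $d$ in $\bP^{n+1}$, adjunction on $\bP^{n+1}$ gives $-K_X\sim (n+2-d)H$ where $H$ is the hyperplane class on $X$. The Fano condition forces $d\le n+1$, so $n+2-d\ge 1$ and the scaling is nontrivial only when $d\le n+1$. Theorem \ref{thm:delta at Eckardt pt} already computes $\delta_x(H)$ and identifies the divisor that computes it, so the only work is to check that the local $\delta$-invariant scales in the expected way and that the identity of the computing divisor is preserved.

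The key observation is that local stability thresholds are homogeneous: for any $\bQ$-Cartier big divisor $L$, any integer $k>0$, and any closed point $x\in X$, one has $\delta_x(kL)=\frac{1}{k}\delta_x(L)$, with the same divisor $E$ over $X$ computing both. This follows directly from the formula
\[
\delta_x(L)=\inf_{E,\,x\in C_X(E)}\frac{A_X(E)}{S(L;E)}
\]
together with the obvious identity $S(kL;E)=k\cdot S(L;E)$ (one can also invoke Lemma \ref{lem:delta homogeneous}, which gives the analogous statement for multi-graded linear series). Applying this to $L=H$ and $k=n+2-d$ yields
\[
\delta_x(X)=\delta_x(-K_X)=\delta_x((n+2-d)H)=\frac{1}{n+2-d}\,\delta_x(H).
\]

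Combining this with the formula $\delta_x(H)=\frac{n(n+1)}{n-1+d}$ from Theorem \ref{thm:delta at Eckardt pt} gives
\[
\delta_x(X)=\frac{n(n+1)}{(n-1+d)(n+2-d)},
\]
which is the claimed value. Moreover, the same theorem asserts that $\delta_x(H)$ is achieved by $\ord_E$, where $E$ is the exceptional divisor of the ordinary blow up of $x\in X$; by the rescaling above this divisor continues to compute $\delta_x(X)$. Since the entire argument reduces to an essentially formal manipulation, there is no substantial obstacle — everything of substance is already contained in Theorem \ref{thm:delta at Eckardt pt}, whose proof handled the genuine difficulty of analyzing the refinement $\W$ by $E$ and of verifying the K-semistability of the boundary pair $(\bP^{n-1},\lambda F(\W))$ on the exceptional divisor.
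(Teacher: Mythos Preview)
Your proposal is correct and follows essentially the same route as the paper: use $-K_X\sim (n+2-d)H$ and the homogeneity $\delta_x(kL)=\frac{1}{k}\delta_x(L)$ to reduce to Theorem \ref{thm:delta at Eckardt pt}. The only small difference is that the paper treats the degenerate case $d=1$ separately (since Theorem \ref{thm:delta at Eckardt pt} is stated for $d\ge 2$), noting that then $X\cong\bP^n$ and $\delta_x(X)=1$ matches the formula; you should add a sentence to cover this.
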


\begin{proof}
If $d=1$ then $X\cong\bP^n$ and $\delta_x(X)=1=\frac{n(n+1)}{(n-1+d)(n+2-d)}$ for all $x\in X$. If $d\ge 2$, then as $-K_X\sim (n+2-d)H$ where $H$ is the hyperplane class, we have $\delta_x(X)=\frac{1}{n+2-d}\delta_x(H)=\frac{n(n+1)}{(n-1+d)(n+2-d)}$ by Theorem \ref{thm:delta at Eckardt pt}. 
\end{proof}

Since every point of a smooth quadric hypersurface is a generalized Eckardt point, we obtain a new algebraic proof of the following well-known result.

\begin{cor}
Quadric hypersurfaces are K-semistable.
\end{cor}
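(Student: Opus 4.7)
The plan is to proceed by induction on the dimension $n$, where $X\subseteq\bP^{n+1}$ is a smooth quadric. For the base case $n=1$, a smooth quadric in $\bP^2$ is a smooth conic, hence isomorphic to $\bP^1$, and K-semistability is classical (it also appears in Corollary \ref{cor:small deg list}).

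For the inductive step, fix $n\ge 2$ and assume every smooth quadric of dimension strictly less than $n$ is K-semistable. Let $x\in X$ be an arbitrary closed point. The key geometric observation, which is an elementary consequence of diagonalizing the quadratic form defining $X$, is that the tangent hyperplane section $T_xX\cap X$ is a quadric cone with vertex $x$ over a smooth quadric $F(X,x)\subseteq\bP^{n-1}$ of dimension $n-2$. In particular $\mult_x(T_xX\cap X)=d=2$, so $x$ is a generalized Eckardt point in the sense of Subsection~\ref{sec:eckardt}.

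I would then apply Corollary \ref{cor:Fano Eckardt pt} with $d=2$. The hypothesis on K-semistability of $F(X,x)$ only needs to be verified when $d\le n-1$, i.e.\ when $n\ge 3$, and in that range it follows from the inductive hypothesis since $F(X,x)$ is a smooth quadric of dimension $n-2<n$. When $n=2$ the hypothesis is vacuous. Corollary \ref{cor:Fano Eckardt pt} then yields
\[
\delta_x(X)=\frac{n(n+1)}{(n-1+d)(n+2-d)}=\frac{n(n+1)}{(n+1)\,n}=1.
\]
Since this holds at every closed point $x\in X$ and $\delta(X)=\inf_{x\in X}\delta_x(X)$, we conclude $\delta(X)\ge 1$, so $X$ is K-semistable, completing the induction.

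I do not anticipate a real obstacle: the inductive structure is built into the geometry of quadrics, where the base of the tangent cone at any smooth point is again a smooth quadric of dimension two less, and Corollary \ref{cor:Fano Eckardt pt} is designed to convert precisely this data into the needed $\delta$-estimate. The only minor point to verify carefully is the claim that $F(X,x)$ is smooth and of the expected dimension, which is a direct computation after choosing coordinates so that $x=[0{:}\cdots{:}0{:}1]$ and writing the quadratic form as $x_0x_{n+1}+q(x_1,\dots,x_n)$ with $q$ a nondegenerate quadratic form in the remaining variables.
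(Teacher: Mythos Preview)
Your proposal is correct and follows essentially the same approach as the paper's own proof: induction on the dimension, using that every point of a smooth quadric is a generalized Eckardt point with $F(X,x)$ again a smooth quadric of dimension $n-2$, and then invoking Corollary~\ref{cor:Fano Eckardt pt} with $d=2$. The only minor quibble is that your parenthetical reference to Corollary~\ref{cor:small deg list} for the base case $\bP^1$ is slightly off (that corollary concerns uniform K-stability, which $\bP^1$ does not enjoy); the K-semistability of $\bP^n$ is instead noted just after Corollary~\ref{cor:fano small deg}.
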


\begin{proof}
Let $d=2$ in Corollary \ref{cor:Fano Eckardt pt}. Since every $x\in X$ is a generalized Eckardt point and $F(X,x)$ is a smooth quadric hypersurface of smaller dimension, we get $\delta(X)=1$ by induction on the dimension.
\end{proof}

\subsection{Hypersurfaces of index two} \label{sec:index two}

The goal of this section is to prove the following result.

\begin{thm} \label{thm:index two}
Let $X=X_n\subseteq \bP^{n+1}$ be a smooth Fano hypersurface of degree $n\ge 3$ $($i.e. it has Fano index $2)$. Then $X$ is uniformly K-stable.
\end{thm}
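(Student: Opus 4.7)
The goal is to show $\delta_x(X)>1$ at every closed point $x\in X$; by Lemma~\ref{lem:delta=inf A/S} this yields $\delta(X)>1$ and hence uniform K-stability. Since $-K_X\sim 2H$ with $H$ the hyperplane class, this is equivalent to $\delta_x(H)>2$ at every $x$. The natural case split is by the multiplicity $r=\mult_x D$ of the tangent hyperplane section $D=T_xX\cap X$, which satisfies $2\le r\le n$, with $r=n$ characterizing generalized Eckardt points.

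The generalized Eckardt case is immediate from Corollary~\ref{cor:Fano Eckardt pt} applied with $d=n$: here $F(X,x)\subseteq\bP^{n-1}$ is a smooth hypersurface of degree $n$, hence Calabi--Yau, so the K-semistability hypothesis, active only for $d\le n-1$, does not arise, and the auxiliary log pair $(\bP^{n-1},D_0)$ appearing in the proof of Theorem~\ref{thm:delta at Eckardt pt} is log canonical Calabi--Yau and therefore K-semistable by \cite{Oda-slcCY-Kss}. Substituting $d=n$ yields
\[
\delta_x(X)=\frac{n(n+1)}{2(2n-1)},
\]
which exceeds $1$ precisely when $(n-1)(n-2)>0$, i.e.\ for every $n\ge 3$.

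For the non-Eckardt range $2\le r\le n-1$, the plan is to apply the admissible-flag adjunction of Theorem~\ref{thm:delta adj via flag} (in practice via iterated applications of Theorem~\ref{thm:delta adj via div} and Corollary~\ref{cor:adj delta ac}) with a flag tailored to $x$. A natural first step is the tangent hyperplane section $D$ (or a suitable rescaling of it together with the exceptional divisor of the blow-up at $x$), whose elevated multiplicity $r\ge 2$ forces a nontrivial Zariski decomposition of $\pi^{*}H-tE$ on $\widetilde{X}=\mathrm{Bl}_{x}X$ and thereby produces a stronger bound on $S(H;E)$ than the naive flag of general hyperplane sections through $x$ (which, via Lemma~\ref{lem:delta small deg}, only gives $\delta_x(H)\ge(n+1)/n<2$). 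One continues the flag with hyperplane sections through $x$ designed so that it terminates in an integral curve smooth at $x$; at each step the refinement $\W$ is almost complete (as in Example~\ref{ex:graded series from ample div} and Lemma~\ref{lem:surface almost complete}), and $c_1(\W)$ and $F(\W)$ can be computed via Corollary~\ref{cor:F(W) formula} (with the surface-case formula of Lemma~\ref{lem:F(W) formula on surface} available for the penultimate stratum). Repeated inversion of adjunction, ending with Corollary~\ref{cor:delta on curve} on the bottom curve, reduces $\delta_x(H)>2$ to an explicit numerical inequality in $n$ and $r$.

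The principal technical obstacle will be the small-$r$ regime, most notably $r=2$, where the boost from the auxiliary divisor is smallest and the argument must be supplemented by case-work on the geometric configuration at $x$ (for example, whether $D$ contains lines through $x$ lying on $X$, or whether $D$ is reducible with components meeting at $x$). Verifying that every ratio $A_{Y_i,\Delta_i}(Y_{i+1})/S(\W^{(i)};Y_{i+1})$ in \eqref{eq:delta adj wrt filtration} strictly exceeds the target, after accounting for the differents $\Delta_i$ and the accumulated fixed parts of the successive refinements, is where the bulk of the explicit computation will lie.
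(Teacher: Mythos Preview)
Your Eckardt case is correct and coincides with the paper's. The non-Eckardt case, however, takes a different route from the paper and, as outlined, is missing the key idea.

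The paper does \emph{not} attempt to prove $\delta_x(X)>1$ at every closed point via a flag through $x$. For $n\ge 4$ it instead establishes the criterion of Lemma~\ref{lem:(n+1)/n-criterion}: it is enough that (i) $\delta_Z(X)\ge\frac{n+1}{n}$ for every subvariety $Z$ of dimension $\ge 1$, and (ii) $\beta_X(E_x)>0$ for the ordinary blow-up at each $x$. Condition (ii) is a short intersection computation (Lemma~\ref{lem:beta(E_x)>0 on index two}), and the criterion itself is proved by a multiplier-ideal argument (Lemma~\ref{lem:lct at isolated pt}). The substance is condition (i), and here the crucial trick is that since $\dim Z\ge 1$ one may choose \emph{two} points $P,Q\in Z$, run the flag of hyperplane sections through both, and take $Y_n=R$ a third point on the curve $C=Y_{n-1}$. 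A basis-type divisor on $C$ compatible with $R$ concentrates half its degree at $R$; for the remainder to make the pair non-lc at \emph{both} $P$ and $Q$ costs twice as much mass, so Lemma~\ref{lem:delta on curve} gives $\delta_{\{P,Q\}}(C;\W,\cF)\ge\frac{4}{\deg c_1(\W)}\approx\frac{n+1}{n}$. For $n=3$ the paper instead cuts by one hyperplane and appeals to the explicit cubic-surface values of Theorem~\ref{thm:cubic delta}.

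Your single-point flag faces a real numerical obstruction: at the curve level Corollary~\ref{cor:delta on curve} (or Lemma~\ref{lem:delta on curve} with $r=1$) gives only $\delta_x\approx\frac{n+1}{2n}<1$, exactly half of what is needed. You propose to recover the deficit by taking the tangent section $D$ or $E_x$ as a first refinement, and this is indeed how the Eckardt case ($r=n$) works; but for the generic case $r=2$ there is no reason to expect the gain is anywhere near sufficient, and you offer none. Note also that Lemma~\ref{lem:surface almost complete} and Lemma~\ref{lem:F(W) formula on surface} are genuinely surface statements and do not apply to the higher-dimensional intermediate strata you would need them for. The missing ingredient is precisely the split into positive-dimensional centers (two-point trick) versus zero-dimensional centers (direct $\beta(E_x)$ check plus Nadel vanishing), which sidesteps the pointwise estimate entirely.
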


Note that when $n=3$, i.e. $X$ is a cubic threefold, the result is already known by \cite{LX-cubic-3fold}. Here we give a different proof using techniques developed in previous sections.

\begin{lem} \label{lem:cubic 3fold}
Let $X\subseteq \bP^4$ be a smooth cubic threefold. Then $X$ is uniformly K-stable.
\end{lem}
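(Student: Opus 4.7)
The plan is to bound $\delta_x(X)$ from below at every $x\in X$ by a constant strictly greater than $1$, which will imply uniform K-stability. Since $-K_X\sim 2H$ for the hyperplane class $H$, one has $\delta_x(X)=\tfrac12\delta_x(H)$, and Lemma \ref{lem:F=ample Cartier} applied with $n=3$ to any smooth hyperplane section $S\ni x$ (using $H|_S=-K_S$) gives
\[
\delta_x(X)\ \ge\ \min\!\bigl\{2,\ \tfrac23\,\delta_x(S)\bigr\}.
\]
So it suffices, for each $x$, to exhibit a smooth cubic surface section through $x$ on which the local stability threshold strictly exceeds $\tfrac32$.

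The argument splits according to whether $x$ is a generalized Eckardt point of $X$. If $x$ is such a point, Corollary \ref{cor:Fano Eckardt pt} applies with $n=d=3$ (no assumption on $F(X,x)$ is needed, since the hypothesis $d\le n-1$ fails) and gives $\delta_x(X)=\tfrac65$ directly. If $x$ is not a generalized Eckardt point, I instead choose a smooth hyperplane section $S\ni x$ on which $x$ itself is not an Eckardt point of $S$, i.e., for which the tangent hyperplane section $T_xS\cap S$ has multiplicity $2$ rather than $3$ at $x$. Writing $X$ locally at $x$ by $x_4+f_2+f_3=0$ with $T_xX=\{x_4=0\}$, the non-Eckardt hypothesis on $X$ amounts to $f_2|_{T_xX}\not\equiv 0$; and for a $2$-plane $\Pi\subset T_xX$ through $x$, the plane cubic $\Pi\cap X$ has multiplicity $3$ at $x$ precisely when $f_2|_\Pi\equiv 0$. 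Since $f_2|_{T_xX}\ne 0$, the bad $\Pi$ form a proper closed subset of the $2$-dimensional parameter space of $2$-planes through $x$ in $T_xX$, whose preimage under the surjection $\{H\ni x\}\to\{\Pi\subset T_xX\text{ through }x\}$, $H\mapsto H\cap T_xX$, is proper closed in the $\bP^3$ of hyperplanes through $x$. Intersecting with the Bertini open locus on which $X\cap H$ is smooth then yields a nonempty open set of admissible $H$.

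For any such $S$, the Eckardt branch ($\mult_x C=3$) of Theorem \ref{thm:cubic delta} is avoided, so
\[
\delta_x(S)\ \ge\ \min\!\Bigl\{\tfrac{27}{17},\,\tfrac53,\,\tfrac{18}{11},\,\tfrac{12}{7},\,\tfrac{9}{25-8\sqrt6}\Bigr\}\ =\ \tfrac{27}{17},
\]
whence $\delta_x(X)\ge\tfrac23\cdot\tfrac{27}{17}=\tfrac{18}{17}$. Combining the two cases, $\delta(X)\ge\min\{\tfrac65,\tfrac{18}{17}\}=\tfrac{18}{17}>1$, establishing uniform K-stability.

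The main obstacle is the selection of the auxiliary hyperplane section in the non-Eckardt case: one must simultaneously ensure that $X\cap H$ is smooth and that $x$ does not become an Eckardt point on $X\cap H$. Both are open conditions on the $\bP^3$ of hyperplanes through $x$, and the dimension count above, which crucially uses the nonvanishing of the Hessian quadric $f_2|_{T_xX}$, is what guarantees their intersection is nonempty. Everything else is a direct application of Lemma \ref{lem:F=ample Cartier}, Theorem \ref{thm:cubic delta}, and Corollary \ref{cor:Fano Eckardt pt}.
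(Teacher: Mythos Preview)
Your proof is correct and follows essentially the same strategy as the paper: split into the Eckardt and non-Eckardt cases, handle the first via Corollary~\ref{cor:Fano Eckardt pt}, and for the second pass to a well-chosen smooth hyperplane section $S\ni x$ and combine Lemma~\ref{lem:F=ample Cartier} with Theorem~\ref{thm:cubic delta}. The only difference is in the choice of $S$: the paper observes that a non-Eckardt point lies on only finitely many lines of $X$, so a general $S$ contains no line through $x$, which rules out every case of Theorem~\ref{thm:cubic delta} except the cusp and the irreducible node and yields the sharper bound $\delta_x(S)\ge \tfrac{5}{3}$, hence $\delta_x(X)\ge \tfrac{10}{9}$; your Hessian-quadric argument only excludes the Eckardt case on $S$, giving $\delta_x(S)\ge \tfrac{27}{17}$ and $\delta_x(X)\ge \tfrac{18}{17}$, which is weaker but still suffices and is arguably more self-contained.
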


\begin{proof}
It suffices to show that $\delta_x(X)>1$ for all $x\in X$. If $x$ is a generalized Eckardt point, then $\delta_x(X)=\frac{1}{2}\delta_x(H)=\frac{6}{5}>1$ by Theorem \ref{thm:delta at Eckardt pt}. If $x$ is not a generalized Eckardt point, then there are only finitely many lines on $X$ passing through $x$, thus if $Y\subseteq X$ is a general hyperplane section passing through $x$, then $Y$ is a smooth cubic surface such that $x$ is not contained in any lines on $Y$. By Theorem \ref{thm:cubic delta}, we see that $\delta_x(Y)\ge\frac{5}{3}$. It then follows from Lemma \ref{lem:F=ample Cartier} that $\delta_x(X)=\frac{1}{2}\delta_x(H)\ge \frac{2}{3}\delta_x(Y)\ge \frac{10}{9}>1$. This completes the proof.
\end{proof}

In the remaining part of this section, we will henceforth assume that $n\ge 4$. As a key step towards the proof of Theorem \ref{thm:index two}, we observe the following K-stability criterion.

\begin{lem} \label{lem:(n+1)/n-criterion}
Let $X$ be a Fano manifold of dimension $n$. Assume that 
\begin{enumerate}
    \item $\delta_Z(X)\ge \frac{n+1}{n}$ for any subvariety $Z\subseteq X$ of dimension $\ge 1$,
    \item $\beta_X(E_x) > 0$ for any $x\in X$ where $E_x$ denotes the exceptional divisor of the ordinary blowup of $x$.
\end{enumerate}
Then $X$ is uniformly K-stable.
\end{lem}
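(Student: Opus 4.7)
The plan is to prove $\delta_x(X) > 1$ for every $x \in X$; by Definition \ref{defn:beta} this will imply $\beta_X(v)>0$ for every $v \in \Val_X^*$ with $A_X(v)<\infty$ and hence uniform K-stability. Fix $x \in X$ and suppose for contradiction that $\delta_x(X) \le 1$. Since $-K_X$ is ample, Lemma \ref{lem:delta=inf A/S} says the infimum defining $\delta_x(X)$ is attained by some valuation $v$ with $x \in C_X(v)$ and $A_X(v)<\infty$. If $Z := C_X(v)$ has positive dimension, then by hypothesis (1) one has $\delta_Z(X) \ge (n+1)/n > 1$, while $\delta_Z(X) \le A_X(v)/S(-K_X;v) = \delta_x(X) \le 1$, a contradiction. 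Hence $C_X(v) = \{x\}$.

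Next I apply Theorem \ref{thm:delta adj via div} with $F = E_x$, the Cartier exceptional divisor of the smooth blowup $\pi \colon Y \to X$ at $x$, and let $\W$ denote the refinement of the complete anticanonical linear series on $X$ by $E_x$ (Example \ref{ex:refinement by div}). This produces the lower bound
\[
\delta_x(X) \;\ge\; \lambda \;:=\; \min\left\{\,\frac{A_X(E_x)}{S(-K_X;E_x)},\;\delta(E_x;\W)\,\right\}.
\]
By hypothesis (2) the first quantity exceeds $1$, so from $\lambda \le \delta_x(X) \le 1$ we must have $\lambda = \delta(E_x;\W)$, strictly less than $A_X(E_x)/S(-K_X;E_x)$.

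Now I would prove the matching upper bound $\delta_x(X) \le \delta(E_x;\W)$ via a one-parameter family of quasi-monomial extensions. For any valuation $v_0$ on $E_x$ with $A_{E_x}(v_0)<\infty$ and any $c>0$, the quasi-monomial extension $v_c := c\,\ord_{E_x} + v_0$ is a valuation on $Y$, hence on $X$, with $C_X(v_c) = \{x\}$. A direct computation using $K_Y = \pi^*K_X + (n-1)E_x$ gives $A_X(v_c) = nc + A_{E_x}(v_0)$, while Proposition \ref{prop:delta = delta wrt filtration} and Lemma \ref{lem:basis compatible with two filtrations} applied to basis divisors simultaneously compatible with $E_x$ and $v_c$ yield $S(-K_X;v_c) = c\,S(-K_X;E_x) + S(\W;v_0)$. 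Since $\delta_x(X) \le A_X(v_c)/S(-K_X;v_c)$ for every $c>0$, letting $c \to 0^+$ gives $\delta_x(X) \le A_{E_x}(v_0)/S(\W;v_0)$; taking an infimum over $v_0$ yields $\delta_x(X) \le \delta(E_x;\W)$. Combined with the previous step, equality $\delta_x(X) = \lambda = \delta(E_x;\W)$ holds in the adjunction.

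Finally I invoke the moreover clause of Theorem \ref{thm:delta adj via div}: since equality holds in the adjunction and $\delta_x(X)$ is computed by $v$, either $E_x$ computes $\delta_x(X)$ (forcing $\delta_x(X)=A_X(E_x)/S(-K_X;E_x)>1$, contradicting $\delta_x(X)\le 1$), or $C_Y(v)\not\subseteq E_x$ (contradicting $C_X(v)=\{x\}$, which forces $C_Y(v)\subseteq \pi^{-1}(x) = E_x$). Either alternative yields a contradiction, so $\delta_x(X)>1$. The main obstacle is establishing the $S$-invariant formula $S(-K_X;v_c) = c\,S(-K_X;E_x) + S(\W;v_0)$: since $\W$ need not be almost complete in arbitrary dimension, this cannot simply be quoted from Lemma \ref{lem:S of ac W} and must be verified directly by tracking basis divisors of the complete anticanonical series that are compatible with both $E_x$ and the quasi-monomial valuation $v_c$.
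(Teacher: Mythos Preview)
Your argument has a genuine gap in step~5, and it is not merely a technicality. You assert that for the extension $v_c$ one has
\[
S(-K_X;v_c)\;=\;c\,S(-K_X;E_x)\;+\;S(\W;v_0),
\]
but the tools you invoke only give the inequality $\le$. Indeed, if $D$ is an $m$-basis type $\bQ$-divisor compatible with $E_x$, write $\pi^*D=S_m(-K_X;E_x)\,E_x+\Gamma$; then $v_c(D)=c\,S_m(-K_X;E_x)+v_c(\Gamma)$, and for any reasonable ``Gauss-type'' extension one has $v_c(\Gamma)\le v_0(\Gamma|_{E_x})\le S_m(\W;v_0)$. Taking the supremum over $D$ (using Proposition~\ref{prop:delta = delta wrt filtration}) yields $S_m(-K_X;v_c)\le c\,S_m(-K_X;E_x)+S_m(\W;v_0)$. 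But to bound $\delta_x(X)$ \emph{from above} by $A_{E_x}(v_0)/S(\W;v_0)$ you need the \emph{reverse} inequality $S(-K_X;v_c)\ge S(\W;v_0)-o(1)$ as $c\to 0$, and this does not follow: for a local section $g$ with $g|_{E_x}\neq 0$, one can easily have $v_c(g)<v_0(g|_{E_x})$ when $c$ is small (e.g.\ $g=u^2+t$ with $E_x=(t=0)$ and $v_0=\ord_{u=0}$ gives $v_c(g)=\min(2,c)=c$). Compatibility of the basis with $v_c$ does not force the restricted basis to be compatible with $v_0$, nor does it force $v_c(\Gamma)=v_0(\Gamma|_{E_x})$. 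Your own final paragraph flags exactly this point but does not resolve it. Moreover, the expression ``$v_c:=c\,\ord_{E_x}+v_0$'' is not a well-defined valuation on $X$ for arbitrary $v_0\in\Val_{E_x}^*$; making it precise (via iterated blow-ups when $v_0$ is divisorial, say) introduces further discrepancies between $v_c(E_x)$ and $c$ that you have not tracked.

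The paper's proof is quite different and avoids this difficulty entirely. Rather than refining by $E_x$ and seeking equality in Theorem~\ref{thm:delta adj via div}, it refines by a \emph{general} very ample divisor $H\in|-rK_X|$ (so $x\notin H$), writes a compatible $m$-basis type divisor as $D=\mu_m H+\Gamma$ with $\Gamma\sim_\bQ -(1-r\mu_m)K_X$, and then uses assumption~(1) in a second, more substantial way: it guarantees that $(X,\tfrac{n+1}{n}\epsilon_m\Gamma)$ is klt in a punctured neighbourhood of $x$. With $-(K_X+\tfrac{n+1}{n}\epsilon_m\Gamma)$ ample, the multiplier-ideal/Nadel-vanishing trick of Lemma~\ref{lem:lct at isolated pt} forces $\cJ(X,\tfrac{n+1}{n}\epsilon_m\Gamma)_x=\fm_x$ and hence $A_X(v)\ge \tfrac{\mu}{\mu+1}\cdot\tfrac{n+1}{n}\epsilon_m\,v(\Gamma)$ with $\mu=A_X(v)/v(\fm_x)$. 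Since $v\neq c\cdot\ord_{E_x}$ gives $\mu>n$, one obtains $A_X(v)>S(-K_X;v)$ after letting $m\to\infty$. Note that in the paper's argument assumption~(1) is used not only to rule out positive-dimensional centers but also to produce the punctured klt neighbourhood; your approach uses~(1) only for the first purpose, which is a warning sign that something extra is needed.
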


\begin{proof}
We need to show that for any valuation $v\in\Val_X^*$ with $A_X(v)<\infty$ we have $\beta_X(v)>0$. By our first assumption, this holds if the center of $v$ has dimension at least one. Thus we may assume that the center of $v$ is a closed point $x\in X$ and by our second assumption we may assume that $v\neq c\cdot \ord_{E_x}$. Let $r$ be a sufficiently large integer such that $-rK_X$ is very ample and let $H\in |-rK_X|$ be a general member (in particular, $x\not\in H$). By Proposition \ref{prop:delta = delta wrt filtration}, we have $S(-K_X;v)=S(\V,\cF;v)$ where $\V$ is the complete linear series associated to $-K_X$ and $\cF$ is the filtration induced by $H$. Let $m\gg 0$ and let $D$ be an $m$-basis type $\bQ$-divisor of $-K_X$ that's compatible with $\cF$. As in the proof of Lemma \ref{lem:S<=n/(n+1)T}, we have $D=\mu_m\cdot H + \Gamma$ where $\mu_m=S_m(-K_X;H)\to S(-K_X;H)=\frac{1}{r(n+1)}$ ($m\to \infty$) and $\Gamma\sim_\bQ -(1-r\mu_m)K_X$ is effective. By \cite{BJ-delta}*{Corollary 3.6}, there exist constants $\epsilon_m\in (0,1)$ ($m\in\bN$) depending only on $X$ such that $\epsilon_m\to 1$ ($m\to \infty$) and
\[
S(-K_X;v) > \epsilon_m\cdot S_m(-K_X;v)
\]
for all valuations $v\in \Val_X^*$ with $A_X(v)<\infty$ and all $m\in \bN$. Perturbing the $\epsilon_m$, we will further assume that $\epsilon_m(1-r\mu_m)< \frac{n}{n+1}$. Combining with our first assumption we see that
\[
\delta_{Z,m}(X) > \epsilon_m\cdot \delta_Z(X)\ge \frac{(n+1)\epsilon_m}{n}
\]
for any subvariety $Z\subseteq X$ of dimension $\ge 1$. It follows that $(X,\frac{(n+1)\epsilon_m}{n}D)$ is klt in a punctured neighbourhood of $x$ and so does $(X,\frac{(n+1)\epsilon_m}{n}\Gamma)$. Note that $-(K_X+\frac{(n+1)\epsilon_m}{n}\Gamma)\sim_\bQ -(1-\frac{n+1}{n}\epsilon_m(1-r\mu_m))K_X$ is ample, thus by the following Lemma \ref{lem:lct at isolated pt}, there exists some $\lambda=\frac{\mu}{\mu+1}\cdot \frac{n+1}{n}>1$ (where $\mu=\frac{A_X(v)}{v(\fm_x)}>n$) such that
\[
A_X(v)\ge \lambda\epsilon_m \cdot v(\Gamma) = \lambda\epsilon_m \cdot v(D)
\]
where the last equality holds since $x\not\in H$. Since $D$ is arbitrary we obtain $A_X(v)\ge \lambda\epsilon_m \cdot S_m(\V,\cF;v)$; letting $m\to \infty$ we deduce $A_X(v)\ge \lambda S(-K_X;v)>S(-K_X;v)$. This completes the proof.
\end{proof}

The following result is used in the above proof.

\begin{lem} \label{lem:lct at isolated pt}
Let $x\in X$ be a smooth point on a projective variety. Let $D$ be an effective $\bQ$-divisor on $X$ such that $(X,D)$ is klt in a punctured neighbourhood of $x$ and $-(K_X+D)$ is ample. Let $v\in\Val_X^*$ be a valuation with $A_X(v)<\infty$ that's centered at $x$ and let $\mu=\frac{A_X(v)}{v(\fm_x)}$. Then 
\begin{enumerate}
    \item $\mu\ge \dim X$ and equality holds if and only if $v=c\cdot \ord_E$ for some $c>0$, where $E$ is the exceptional divisor of the blowup of $x$.
    \item $A_X(v)\ge \frac{\mu}{\mu+1}\cdot v(D)$. 
\end{enumerate}
\end{lem}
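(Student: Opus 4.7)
The plan is to prove part (1) by reducing to the log canonicity of an snc pair after blowing up $x$, and part (2) by an elementary algebraic manipulation followed by Nadel vanishing.

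For (1), let $\pi\colon Y\to X$ be the blowup of $x$ with exceptional divisor $E\cong\bP^{n-1}$. Since $\fm_x\cdot\cO_Y=\cO_Y(-E)$ we have $v(\fm_x)=v(E)$, and since $K_Y=\pi^*K_X+(n-1)E$ we have $A_X(v)=A_Y(v)+(n-1)v(E)$. Combining these identities yields
\[
\mu=(n-1)+\frac{A_Y(v)}{v(E)},
\]
so $\mu\ge n$ is equivalent to $A_Y(v)\ge v(E)$, i.e.\ to log canonicity of the snc (hence plt) pair $(Y,E)$ along the center of $v$, which is automatic. Equality $\mu=n$ amounts to $v$ being an lc place of $(Y,E)$; but for a plt pair whose boundary is a single prime divisor the only such valuations (up to positive rescaling) are $\ord_E=\ord_{E_x}$, giving the equality case.

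For (2), substituting $\mu=A_X(v)/v(\fm_x)$ into $A_X(v)\ge \frac{\mu}{\mu+1}v(D)$ and clearing denominators shows the desired inequality is equivalent to
\[
A_X(v)+v(\fm_x)\ge v(D),
\]
which I would establish via Nadel vanishing applied to the multiplier ideal $\cJ(X,D)$. Since $-(K_X+D)$ is ample, Nadel's theorem gives $H^1(X,\cJ(X,D))=0$, so $\bC=H^0(X,\cO_X)$ surjects onto $H^0(X,\cO_X/\cJ(X,D))$ and the quotient has length at most one. The klt hypothesis on $X\setminus\{x\}$ confines the cosupport of $\cJ(X,D)$ to $\{x\}$, and since $x$ is a smooth point the only ideals of $\cO_{X,x}$ of colength $\le 1$ are $\cO_{X,x}$ itself and $\fm_x$. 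Hence $\fm_x\subseteq\cJ(X,D)$ globally, and the valuative characterization of multiplier ideals then yields the strict inequality $A_X(v)+v(\fm_x)>v(D)$ for every valuation $v$ centered at $x$ with $A_X(v)<\infty$, which is more than enough.

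The only mildly technical point is the valuative characterization of $\cJ(X,D)$ for non-divisorial valuations of finite log discrepancy; this is standard (following \cite{JM-valuation}, \cite{BdFFU-valuation}) and is already invoked elsewhere in the paper, so I do not expect any step to be a serious obstacle.
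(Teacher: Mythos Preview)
Your proof is correct and follows essentially the same route as the paper: part (1) is the paper's one-line observation that $(X,\fm_x^n)$ is lc with unique lc place $E$, unpacked via the blowup, and part (2) is the same Nadel vanishing argument showing $\fm_x\subseteq\cJ(X,D)$ followed by the valuative inequality $v(\cJ)\ge v(D)-A_X(v)$. The only quibble is that the \emph{strict} inequality $A_X(v)+v(\fm_x)>v(D)$ you claim for arbitrary (possibly non-divisorial) valuations is not immediate from the standard characterization---the paper only asserts $\ge$---but as you note, the non-strict version already suffices.
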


\begin{proof}
Let $n=\dim X$. The first part follows from the fact that $(X,\fm_x^n)$ is lc and the only lc place is the exceptional divisor coming from the blowup of $x$. The second part essentially follows from the proof of \cite{Z-cpi}*{Theorem 1.6}, which we reproduce here for reader's convenience. Let $\cJ=\cJ(X,D)$ be the multiplier ideal of $(X,D)$. We may assume that $(X,D)$ is not lc at $x$ (otherwise $A_X(v)\ge v(D)$ and we are done), hence $\cJ_x\neq \cO_{X,x}$. By assumption we have $\cJ=\cO_X$ in a punctured neighbourhood of $x$. Since $-(K_X+D)$ is ample, we have $H^1(X,\cJ)=0$ by Nadel vanishing and hence a surjection $H^0(\cO_X)\twoheadrightarrow H^0(\cO_X/\cJ)\twoheadrightarrow H^0(\cO_{X,x}/\cJ_x)$. Since $h^0(X,\cO_X)=1$, we see that $\cJ_x=\fm_x$ and thus $v(\cJ)=v(\fm_x)=\frac{A_X(v)}{\mu}$. Through the definition of multiplier ideals we also have $v(\cJ)\ge v(D)-A_X(v)$. Combined with the previous equality it implies $A_X(v)\ge \frac{\mu}{\mu+1}\cdot v(D)$.
\end{proof}

In order to prove the K-stability of smooth hypersurfaces of Fano index two, it remains to verify the two conditions in Lemma \ref{lem:(n+1)/n-criterion}. The following lemma takes care of the (easier) second condition.

\begin{lem} \label{lem:beta(E_x)>0 on index two}
Let $X\subseteq \bP^{n+1}$ be a smooth Fano hypersurface of degree $d$. Let $r=n+2-d$ be its Fano index. Assume that $d\ge 3$ and $n+1\ge r^2$. Then $\beta_X(E_x)>0$ for any $x\in X$ where $E_x$ is the exceptional divisor of the ordinary blowup of $x$.
\end{lem}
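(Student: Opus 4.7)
The plan is to compute $S(-K_X; E_x)$ explicitly and compare it with $A_X(E_x) = n$. Writing $-K_X \sim 2H$ where $H$ is the hyperplane class, I will work on $Y = \mathrm{Bl}_x X$, of Picard rank $2$ generated by $\pi^*H$ and $E_x$. Set $\mu := \mult_x(T_xX \cap X) \in \{2,\ldots,n\}$ and let $\tD$ be the strict transform of $T_xX \cap X$ on $Y$, so $\tD \sim \pi^*H - \mu E_x$.

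First I would identify the extremal rays of the nef and pseudo-effective cones of $Y$. Linear projection from $x$ resolves, after the blow-up, into a morphism $\phi \colon Y \to \bP^n$ with $\phi^*\cO_{\bP^n}(1) = \pi^*H - E_x$, so $\pi^*H - E_x$ is globally generated and the nef cone is spanned by $\pi^*H$ and $\pi^*H - E_x$. For the pseudo-effective cone I claim the extremal rays are $E_x$ and $\tD$. The key input is the pointwise bound $\mult_x D' \le d$ for every irreducible divisor $D' \sim dH$ on $X$ different from $T_xX \cap X$: lifting $D'$ to a degree-$d$ hypersurface $V \subseteq \bP^{n+1}$ and factoring off the maximal power of the linear form $\ell_x$ defining $T_xX$, primality of $D'$ forces that power to vanish, so $\mult_x D' = \mult_x V \le d$. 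In particular $T(-K_X; E_x) = 2\mu$.

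Next I would establish the Zariski-type decomposition
\[
2\pi^*H - tE_x = \tfrac{2\mu - t}{\mu - 1}(\pi^*H - E_x) + \tfrac{t-2}{\mu - 1}\,\tD \qquad (2 \le t \le 2\mu),
\]
and upgrade the same multiplicity bound, applied to divisors of class $2mH$, to conclude that every section of $m(2\pi^*H - tE_x)$ vanishes along $\tD$ to order at least $m(t-2)/(\mu-1)$. This makes the volume equal to the top self-intersection of the nef positive part $P_t := \tfrac{2\mu - t}{\mu - 1}(\pi^*H - E_x)$. Using $\pi^*H^a \cdot E_x^b = 0$ for $0 < b < n$ and $E_x^n = (-1)^{n-1}$ one finds $(\pi^*H - E_x)^n = n-1$, whence
\[
\vol(2\pi^*H - tE_x) = \begin{cases} 2^n n - t^n, & 0 \le t \le 2, \\ (n-1)\bigl(\tfrac{2\mu - t}{\mu - 1}\bigr)^n, & 2 \le t \le 2\mu, \\ 0, & t \ge 2\mu. \end{cases}
\]
Integrating gives
\[
S(-K_X; E_x) = \tfrac{1}{2^n n}\int_0^{2\mu}\vol(2\pi^*H - tE_x)\,dt = \tfrac{2\bigl(n^2 + \mu(n-1)\bigr)}{n(n+1)},
\]
and the desired inequality $S(-K_X; E_x) < n$ rearranges to $2\mu < n^2$, which holds since $\mu \le n$ and $n \ge 3$. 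As a sanity check, at $\mu = n$ this recovers the Eckardt-point value $S = 2(2n-1)/(n+1)$ implicit in Corollary~\ref{cor:Fano Eckardt pt}.

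The main obstacle I anticipate is the sharp multiplicity bound $\mult_x D' \le d$ and its strengthening used to control the fixed part along $\tD$: both the identification of the pseudo-effective cone and the crucial statement that the volume of $2\pi^*H - tE_x$ equals $(P_t^n)$ depend on this local analysis of degree-$d$ hypersurfaces in $\bP^{n+1}$ near $x$ through their interaction with the tangent hyperplane. Once that bound is in hand, the Zariski decomposition, the volume computation, and the final numerical comparison are routine.
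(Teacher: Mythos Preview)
Your approach differs from the paper's: you attempt to compute $S(-K_X;E_x)$ exactly via an explicit Zariski-type decomposition on $Y=\mathrm{Bl}_x X$, whereas the paper only \emph{estimates} $S$ using the inequality $S\le \frac{1}{n+1}T+\frac{n-1}{n+1}\eta$ (Lemma~\ref{lem:S<=T+eta}) together with the intersection bound $\eta T\le 4n$ coming from $(\pi^*H-E_x)^{n-2}\cdot(-\pi^*K_X-\eta E_x)\cdot(-\pi^*K_X-TE_x)\ge 0$.

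Unfortunately your key multiplicity bound is false, and with it the volume computation. You assert that for irreducible $D'\sim dH$ with $D'\neq T_xX\cap X$ one has $\mult_x D'\le d$, arguing that after factoring off powers of $\ell_x$ from the defining equation $F$ of $V$, one gets $\mult_x D'=\mult_x V\le d$. But $\mult_x(V\cap X)$ need not equal $\mult_x V$: it jumps whenever $V$ osculates $X$ at $x$. Concretely, for $n=3$ and a general point $x$ (so $\mu=2$), write $X=\{z_0^2z_1+z_0G_2+G_3=0\}$ with $x=[1{:}0{:}\cdots{:}0]$ and take the quadric $V=\{z_0z_1+G_2=0\}$. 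Then $\ell_x\nmid F$, $V$ is irreducible for generic $G_2$, and on $X$ one has $z_0F=-G_3$, so $\mult_x(V\cap X)=3>2=d$. Since $\Pic(X)=\bZ$, irreducibility of $V$ forces $D'=V\cap X$ to be irreducible as well. This single example already shows that for $t=3$ the linear system $|m(2\pi^*H-3E_x)|$ contains members with no $\tD$-component, so your fixed-part claim fails.

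There is an independent sanity check: your volume formula is not $C^1$ at $t=2$ unless $\mu=n$. Indeed the left derivative at $t=2$ is $-n\cdot 2^{n-1}$ while the right derivative is $-\frac{n(n-1)}{\mu-1}\cdot 2^{n-1}$; these agree only when $\mu=n$. Since the volume function is $C^1$ on the big cone and $2\pi^*H-2E_x$ is big whenever $\mu\ge 2$, your formula cannot be correct for $\mu<n$. (Your Eckardt-point check $\mu=n$ is exactly the one case where the argument does go through, matching Theorem~\ref{thm:delta at Eckardt pt}.) The paper sidesteps all of this by never attempting to pin down $T$, $\eta$, or the volume exactly.
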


\begin{proof}
Let $H$ be the hyperplane class on $X$, let $T=T(H;E_x)$ be the pseudo-effective threshold and $\eta=\eta(H;E_x)$ the movable threshold (see Lemma \ref{lem:S<=T+eta}). Clearly $1\le \eta\le T\le d$. Let $\pi\colon Y\to X$ be the blowup of $x$. Then as $\pi^*H-E_x$ is nef, we see that
\[
(\pi^*H-E_x)^{n-2}\cdot (\pi^*H-\eta E_x)\cdot (\pi^*H- T E_x)\ge 0,
\]
thus $\eta T\le d$ and $\eta\le \sqrt{d}$. By Lemma \ref{lem:S<=T+eta}, we then have 
\[
S(H;E_x)\le \frac{1}{n+1}T+\frac{n-1}{n+1}\eta \le \frac{d}{(n+1)\eta}+\frac{n-1}{n+1}\eta.
\]
When $1\le \eta\le \sqrt{d}$, the right hand side of the above inequality achieves its maximum at either $\eta=1$ or $\eta=\sqrt{d}$, hence as $3\le d\le n+1$ and $n+1\ge r^2$ we obtain
\[
S(-K_X;E_x)=r\cdot S(H;E_x) \le \max\left\{ \frac{(n+2-d)(n-1+d)}{n+1}, \frac{rn\sqrt{d}}{n+1} \right\} < n = A_X(E_x).
\]
In other words, $\beta_X(E_x)>0$.
\end{proof}

We now focus on checking the first condition of Lemma \ref{lem:(n+1)/n-criterion}. The basic idea, similar to the proof of Lemma \ref{lem:delta small deg}, is to apply Theorem \ref{thm:delta adj via flag} to an admissible flag of complete intersection subvarieties on the hypersurface $X$. At the end we relate $\delta_Z(X)$ to the stability threshold of a divisor of degree close to $4$ on a curve $C$ (i.e. the $1$-dimensional subvariety in the chosen flag). However, this only gives the na\"ive bound $\delta_Z(X)\ge \delta(-\frac{2}{n+1}K_X|_C)=\frac{n+1}{2n}$ (since $\delta(L)=\frac{2}{\deg L}$ for any ample line bundle $L$ on a curve) and is not good enough for our purpose. To get a better estimate, we choose a flag such that $C$ intersects $Z$ in at least two points $P,Q$ (which is possible since $\dim Z\ge 1$). We still have the freedom to choose another point $R\neq P,Q$ on $C$ to put in our flag. The key observation is that (asymptotically) basis type $\bQ$-divisors of degree $4$ on $C$ that are compatible with $R$ have multiplicity $2$ at the point $R$ and therefore must be log canonical at one of $P$ or $Q$ for degree reason. In other words, the stability threshold along $Z\cap C$ is at least one and this is exactly what we need.

We work out the details in the next several lemmas. The first thing is to make sure that the admissible flag we want to use exists.

\begin{lem} \label{lem:sm hyp sec thru 2 pts}
Let $Y\subseteq \bP^{m+1}$ be a smooth hypersurface of dimension $m\ge 2$ and let $P\neq Q$ be two distinct points on $Y$. Let $H\subseteq Y$ be a general hyperplane section containing both $P$ and $Q$. Then $H$ is smooth unless $m=2$ and $Y$ contains the line joining $P$ and $Q$.
\end{lem}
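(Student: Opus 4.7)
The plan is to translate the question into one about a pencil of hyperplanes in $\bP^{m+1}$ and then combine Bertini's theorem with a short local analysis at the base locus. Since $P\neq Q$, every hyperplane containing both points automatically contains the line $\ell=\overline{PQ}$, so the hyperplane sections of $Y$ through $P$ and $Q$ are parameterized by the linear system $\Lambda$ of hyperplanes of $\bP^{m+1}$ containing $\ell$, which is a $\bP^{m-1}$ of dimension $m-1\ge 1$.

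First I would identify the base locus of $\Lambda|_Y$: it equals the finite set $\ell\cap Y$ (of size at most $\deg Y$, and containing $P$ and $Q$) when $\ell\not\subseteq Y$, and equals $\ell$ itself when $\ell\subseteq Y$. Since $Y$ is smooth, Bertini's theorem shows that for a general $H'\in\Lambda$ the section $H=H'\cap Y$ is smooth away from this base locus, so the content of the lemma is to control smoothness at the base locus itself.

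The next step is a short local computation at any base-locus point $R$: since $T_R H'=H'$ and $T_R Y$ are both hyperplanes of $\bP^{m+1}$ passing through $R$, the section $H$ is singular at $R$ precisely when $H'=T_R Y$. When $\ell\not\subseteq Y$ the base locus is finite, so we need only avoid the finitely many hyperplanes $\{T_R Y : R\in\ell\cap Y\}$ inside the positive-dimensional pencil $\Lambda$, and a general $H'$ does so.

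The remaining case $\ell\subseteq Y$ is the most delicate and is where the main obstacle lies. For $R\in\ell$ the projective tangent space $T_R Y$ automatically contains $\ell$, since it contains $R$ together with the tangent direction of $\ell$ at $R$; thus we obtain a Gauss-type morphism $\gamma\colon\ell\to\Lambda$, $R\mapsto T_R Y$, whose image is at most a curve. A generic $H'\in\Lambda$ lies outside $\gamma(\ell)$ whenever $\dim\Lambda\ge 2$, that is whenever $m\ge 3$, and combined with the local computation this yields the desired smoothness. The dimension count is the main (and sharp) obstacle: when $m=2$ one has $\dim\Lambda=\dim\ell=1$, and $\gamma$ is typically surjective, so some singularity of $H$ somewhere on $\ell$ is unavoidable. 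Since the lemma excludes exactly this configuration, piecing the cases together completes the proof.
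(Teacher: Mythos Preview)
Your argument is correct and follows essentially the same route as the paper's proof: split into the cases $\ell\not\subseteq Y$ and $\ell\subseteq Y$, use Bertini away from the base locus, and at base points observe that $H$ is singular at $R$ exactly when the chosen hyperplane equals $T_R Y$, then count dimensions. Your write-up is in fact slightly more careful than the paper's in two places: you explicitly note that $T_R Y\supseteq \ell$ when $\ell\subseteq Y$ (so the Gauss map really lands in $\Lambda$), and you phrase the image of the Gauss map as ``at most a curve'' rather than ``a $1$-dimensional family''.
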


\begin{proof}
Let $\ell\subseteq \bP^{m+1}$ be the line joining $P$ and $Q$ and let $\cM\subseteq |\cO_Y(1)|$ be the linear system of hyperplane sections containing $P,Q$. If $\ell\not\subseteq Y$, then $\cM$ only has isolated base points $\ell\cap Y$ and by Bertini's theorem $H$ is smooth away from these points. On the other hand, since $H$ is general, it is different from the tangent hyperplane of any $x\in \ell\cap Y$, hence $H$ is also smooth at any $x\in \ell\cap Y$. Thus we may assume that $\ell\subseteq X$. Again $H$ is smooth away from $\ell$ by Bertini's theorem. The tangent hyperplanes of $x\in \ell$ give a $1$-dimensional family of members of $\cM$. Hence they are different from $H$ as long as $\dim \cM = m-1\ge 2$. It follows that $H$ is smooth when $m\ge 3$. 
\end{proof}

In the remaining part of this subsection, let $X\subseteq \bP^{n+1}$ be a smooth hypersurface of degree $n\ge 4$ and $Z\subseteq X$ a subvariety of dimension at least one. We divide into two cases to show that $\delta_Z(X)\ge \frac{n+1}{n}$. First we treat the case when $X$ doesn't contain the secant variety of $Z$.

\begin{lem} \label{lem:secant not in X}
In the above notation, assume that there exist closed points $P\neq Q\in Z$ such that the line joining $P$ and $Q$ is not contained in $X$. Then $\delta_Z(X)\ge \frac{n+1}{n}$.
\end{lem}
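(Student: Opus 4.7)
The plan is to apply Theorem \ref{thm:delta adj via flag} to a carefully chosen admissible flag of complete intersection subvarieties that all contain $P$ and $Q$, terminating with a point $R$ on the resulting curve $C$. Since the line $\ell = \overline{PQ}$ is not contained in $X$, iteratively invoking Lemma \ref{lem:sm hyp sec thru 2 pts} produces smooth complete intersections
\[
X = Y_0 \supseteq Y_1 \supseteq \cdots \supseteq Y_{n-1} = C,
\]
each $Y_i$ being a hyperplane section of $Y_{i-1}$ through $P$ and $Q$; by Bezout, $\deg C = n$ in $\bP^{n+1}$. A generic such choice ensures $C \ne Z$ and hence $Z \cap C$ is finite. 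Pick $R \in C \setminus (Z \cup \{P,Q\})$ and complete the admissible flag $\Y$ of length $n$ on $X$ by setting $Y_n = \{R\}$.

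Taking $\V$ to be the complete linear series of $-K_X$ and applying Theorem \ref{thm:delta adj via flag} with $j = n-1$ yields
\[
\delta_Z(X) \ge \min\left\{ \min_{0 \le i \le n-2} \frac{A_{Y_i}(Y_{i+1})}{S(\W^{(i)}; Y_{i+1})},\ \delta_{Z \cap C}(C;\ \W^{(n-1)}, \cF) \right\}.
\]
By the flag version of Example \ref{ex:graded series from ample div}, each $\W^{(i)}$ is almost complete with vanishing fixed part, and by \eqref{eq:c_1 of refinement} together with the telescoping product $\prod_{k=0}^{i-1}\bigl(1 - \frac{1}{n-k+1}\bigr) = \frac{n-i+1}{n+1}$ one has $c_1(\W^{(i)}) \sim_\bQ \frac{2(n-i+1)}{n+1} H|_{Y_i}$. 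Since $Y_{i+1} \sim H|_{Y_i}$ gives $S(H|_{Y_i}; Y_{i+1}) = \frac{1}{n-i+1}$, Lemma \ref{lem:S of ac W} then implies $S(\W^{(i)}; Y_{i+1}) = \frac{2}{n+1}$ for all $0 \le i \le n-2$, so the first minimum equals $\frac{n+1}{2} \ge \frac{n+1}{n}$.

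The main work is the curve step: showing $\delta_{Z \cap C}(C; \W^{(n-1)}, \cF) \ge \frac{n+1}{n}$. Here the filtration $\cF$ on $\W^{(n-1)}$ is induced by the remaining flag piece $Y_n = \{R\}$, i.e. $\cF = \cF_R$. From the above, $\deg c_1(\W^{(n-1)}) = \frac{4n}{n+1}$, and combining Lemma \ref{lem:S of ac W} with $S(H|_C; R) = \frac{n}{2}$ gives $S(\W^{(n-1)}; R) = \frac{4}{n+1}\cdot\frac{n}{2} = \frac{2n}{n+1}$. For any $m$-basis type $\bQ$-divisor $D$ of $\W^{(n-1)}$ compatible with $\cF_R$, one has $\mult_R(D) = S_m(\W^{(n-1)}; R) \to \frac{2n}{n+1}$ and $\deg D \to \frac{4n}{n+1}$ as $m \to \infty$. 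Since $P, Q, R$ are distinct smooth points of $C$,
\[
\min\{\mult_P(D),\,\mult_Q(D)\} \le \tfrac{1}{2}\bigl(\deg D - \mult_R(D)\bigr) \longrightarrow \frac{n}{n+1}.
\]
Hence for any $\lambda < \frac{n+1}{n}$ and $m$ sufficiently large, at least one of $P, Q$ satisfies $\lambda \cdot \mult(D) < 1$, so $(C, \lambda D)$ is lc at at least one point of $\{P,Q\} \subseteq Z \cap C$. Thus $Z \cap C \not\subseteq \Nlc(C, \lambda D)$, giving $\delta_{Z \cap C, m}(C; \W^{(n-1)}, \cF) \ge \frac{n+1}{n} - o(1)$; letting $m \to \infty$ completes the estimate. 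Combining the two bounds gives $\delta_Z(X) \ge \frac{n+1}{n}$.

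The main obstacle is the curve step: the naive bound $\delta_{Z \cap C}(\W^{(n-1)}) = \frac{n+1}{2n}$ (without any compatibility) is too weak by a factor of two. The point is that compatibility with $\cF_R$ forces $\mult_R(D)$ to equal $S_m(\W^{(n-1)}; R) \approx \frac{2n}{n+1}$, consuming roughly half of the total degree $\frac{4n}{n+1}$ at the single point $R$; the two distinguished points $P, Q \in Z$ then allow the pigeonhole conclusion. The other verifications, including iterated almost-completeness and the vanishing of the fixed parts $F(\W^{(i)})$, follow routinely by induction from the flag version of Example \ref{ex:graded series from ample div}.
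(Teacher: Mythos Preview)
Your proof is correct and follows essentially the same approach as the paper: the same flag of iterated hyperplane sections through $P,Q$ ending at a point $R\in C\setminus\{P,Q\}$, the same computation $S(\W^{(i)};Y_{i+1})=\frac{2}{n+1}$, and the same curve argument that compatibility with $R$ forces $\mult_R(D)\approx \frac{2n}{n+1}$ so one of $P,Q$ has multiplicity at most $\frac{n}{n+1}$. The only differences are cosmetic: you inline what the paper states separately as Lemma~\ref{lem:delta on curve}, and your extra requirement $R\notin Z$ is harmless but unnecessary (only $R\neq P,Q$ is needed, since being lc at $P$ or $Q$ already gives $Z\cap C\not\subseteq\Nlc$).
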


\begin{proof}
By Lemma \ref{lem:sm hyp sec thru 2 pts}, there exists a flag
\[
\Y \quad:\quad X=Y_0\supseteq Y_1\supseteq \cdots \supseteq Y_n
\]
on $X$ such that each $Y_i$ $(1\le i\le n-1$) is a smooth hyperplane section of $Y_{i-1}$ containing $P,Q$ and $Y_n$ is a smooth point on the curve $Y_{n-1}$ that's different from $P,Q$. Let $\V$ be the complete linear series associated to $-K_X$, let $\Y^{(j)}$ be the truncated flag given by $Y_0\supseteq \cdots\supseteq Y_j$ and let $\W^{(j)}$ be the refinement of $\V$ by $\Y^{(j)}$. It is equipped with a filtration $\cF$ induced by $\Y$. By Example \ref{ex:graded series from ample div}, $\W^{(j)}$ is almost complete and it is clear that $F(\W^{(j)})=0$. Since any $m$-basis type $\bQ$-divisor $D$ of $\W^{(j)}$ that's compatible with $\cF$ can be written as (see the discussion at the beginning of Section \ref{sec:filter by div})
\[
D=S_m(\W^{(j)};Y_{j+1})\cdot Y_{j+1} + \Gamma
\]
where $\Gamma|_{Y_{j+1}}$ is an $m$-basis type $\bQ$-divisor of $\W^{(j+1)}$, we have (see \eqref{eq:c_1 of refinement})
\begin{equation} \label{eq:recursive formula for c_1(W)}
    c_1(\W^{(j+1)}) = \left.\left( c_1(\W^{(j)}) - S(\W^{(j)};Y_{j+1})\cdot Y_{j+1}\right)\right|_{Y_{j+1}}.
\end{equation}
Therefore by Lemma \ref{lem:S of ac W} and induction on $j$ we have 
\[
c_1(\W^{(j)})\sim_\bQ -\left(1-\frac{j}{n+1}\right)K_X|_{Y_j}=2\left(1-\frac{j}{n+1}\right)H
\]
for all $0\le j\le n-1$ and $S(\W^{(j)};Y_{j+1})=S(c_1(\W^{(j)});Y_{j+1})=\frac{2}{n+1}$ for $0\le j\le n-2$. By Theorem \ref{thm:delta adj via flag} (applied to $j=n-1$), we see that in order to prove $\delta_Z(X)=\delta_Z(\V)\ge \frac{n+1}{n}$, it suffices to show that $\delta_{Z\cap Y_{n-1}}(Y_{n-1};\W^{(n-1)},\cF)\ge \frac{n+1}{n}$. As $Z\cap Y_{n-1}$ contains at least two points $P,Q$ and $\deg c_1(\W^{(n-1)}) = \frac{2}{n+1}(-K_X\cdot H^{n-1}) = \frac{4n}{n+1}$, this follows from the next lemma.
\end{proof}

\begin{lem} \label{lem:delta on curve}
Let $C$ be a smooth curve, let $\W$ be a multi-graded linear series with bounded support containing an ample series, let $P_1,\cdots,P_r,Q$ be distinct points on $C$, let $Z=P_1\cup\cdots\cup P_r$ and let $\cF$ be the filtration on $\W$ induced by $Q$. Assume that $\W$ is almost complete and $F(\W)=0$. Then 
\[
\delta_Z(C;\W,\cF)\ge \frac{2r}{\deg c_1(\W)}.
\]
\end{lem}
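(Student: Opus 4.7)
The strategy is to exploit the fact that compatibility with the filtration $\cF=\cF_Q$ forces a large prescribed coefficient at $Q$, which lies off $Z$; the residual divisor then has small total degree, and a pigeonhole argument distributes the mass across the $r$ distinct points of $Z$. The main input from the preceding theory is Lemma \ref{lem:S of ac W}, used to pin down the value $S(\W;Q)$ on an almost complete linear series with $F(\W)=0$.

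Fix $\lambda<\tfrac{2r}{\deg c_1(\W)}$ and let $D$ be an $m$-basis type $\bQ$-divisor of $\W$ compatible with $\cF$. By the general decomposition recalled at the beginning of Section \ref{sec:filter by div}, one writes
\[
D = S_m(\W;Q)\cdot Q + \Gamma,
\]
with $\Gamma$ effective and $Q\notin \Supp(\Gamma)$. Note that both $\deg D = \deg c_1(W_{m,\vb})/(mN_m)$ and the coefficient $S_m(\W;Q)$ depend only on $m$, not on the specific compatible basis, so $\deg\Gamma=\deg D - S_m(\W;Q)$ is an invariant of $m$. As $m\to\infty$, $\deg D \to \deg c_1(\W)$ by the definition of $c_1(\W)$, while $S_m(\W;Q)\to S(\W;Q)$. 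Since $\W$ is almost complete and $F(\W)=0$, Lemma \ref{lem:S of ac W} applied to $v=\ord_Q$, combined with the elementary identity $S(L;Q)=\tfrac{1}{2}\deg L$ for an ample line bundle on a smooth curve, gives $S(\W;Q)=\tfrac{1}{2}\deg c_1(\W)$. Consequently $\deg\Gamma\to \tfrac{1}{2}\deg c_1(\W)$.

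Apply pigeonhole to the effective divisor $\Gamma$ on $C$:
\[
\min_{1\le i\le r}\mult_{P_i}(\Gamma) \;\le\; \frac{1}{r}\sum_{i=1}^{r}\mult_{P_i}(\Gamma) \;\le\; \frac{\deg\Gamma}{r}.
\]
By the choice of $\lambda$, the limit $\tfrac{\deg c_1(\W)}{2r}$ of the right-hand side is strictly smaller than $\tfrac{1}{\lambda}$, so for all $m\gg 0$ (uniformly in $D$) there exists $i$ with $\mult_{P_i}(\Gamma)<\tfrac{1}{\lambda}$. Because $P_i\neq Q$, this forces $\mult_{P_i}(D)=\mult_{P_i}(\Gamma)<\tfrac{1}{\lambda}$, so $(C,\lambda D)$ is log canonical at $P_i$ and therefore $Z\not\subseteq \Nlc(C,\lambda D)$. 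Hence $\delta_{Z,m}(C;\W,\cF)\ge \lambda$ for all $m\gg 0$, which yields $\delta_Z(C;\W,\cF)\ge \lambda$. Letting $\lambda\nearrow \tfrac{2r}{\deg c_1(\W)}$ completes the proof. The only step requiring the apparatus of Section \ref{sec:prelim-filtered graded series} is the evaluation of $S(\W;Q)$; the remainder is pure pigeonhole plus the standard elementary correspondence between multiplicity and log canonicity on a smooth curve.
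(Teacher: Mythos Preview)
Your proof is correct and follows essentially the same approach as the paper: decompose $D=S_m(\W;Q)\cdot Q+\Gamma$, use pigeonhole on the multiplicities of $\Gamma$ at the $P_i$ to find a point where $(C,\lambda D)$ is lc, and evaluate $S(\W;Q)=\tfrac{1}{2}\deg c_1(\W)$ via Lemma \ref{lem:S of ac W}. The only cosmetic difference is that the paper bounds $\delta_{Z,m}(\W,\cF)\ge r/\deg\Gamma$ directly for every $m$ and then passes to the limit, whereas you introduce an auxiliary $\lambda$ and argue for $m\gg 0$; both routes arrive at the same inequality.
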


\begin{proof}
Any $m$-basis type $\bQ$-divisor $D$ of $\W$ that's compatible with $\cF$ has the form $D=S_m(\W;Q)\cdot Q + \Gamma$ for some effective $\bQ$-divisor $\Gamma$. Since $Q\not\in Z$, in order for $Z$ to be contained in the non-lc center of $(C,\lambda D)$, we need $\mult_{P_i}(\lambda\Gamma)>1$ for all $i=1,\cdots,r$. It follows that
\[
\delta_{Z,m}(\W,\cF)\ge \frac{r}{\deg \Gamma} = \frac{r}{\deg D - S_m(\W;Q)}.
\]
Letting $m\to \infty$ we obtain
\[
\delta_Z(\W,\cF)\ge \frac{r}{\deg c_1(\W) - S(\W;Q)}.
\]
The lemma then follows since $S(\W;Q)=S(c_1(\W);Q)=\frac{1}{2}\deg c_1(\W)$ by Lemma \ref{lem:S of ac W}.
\end{proof}

The opposite case is when $X$ contains the secant variety of $Z$.

\begin{lem} \label{lem:secant in X, n>=4}
Let $X\subseteq \bP^{n+1}$ be a smooth hypersurface of degree $n\ge 4$ and $Z\subseteq X$ a subvariety of dimension at least one. Assume that there exists closed points $P\neq Q\in Z$ such that the line joining $P,Q$ is contained in $X$. Then $\delta_Z(X)\ge \frac{n+1}{n}$.
\end{lem}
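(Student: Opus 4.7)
\smallskip

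The key obstruction to mimicking the proof of Lemma \ref{lem:secant not in X} is that every hyperplane of $\bP^{n+1}$ through $P$ and $Q$ automatically contains the whole line $\ell:=\overline{PQ}\subseteq X$, so cutting $X$ with $n-1$ hyperplanes through $P,Q$ would give a reducible curve. The plan is instead to build an admissible flag that has $\ell$ itself as its codimension-$(n-1)$ member:
\[
\Y:\quad X=Y_0\supseteq Y_1\supseteq\cdots\supseteq Y_{n-2}\supseteq Y_{n-1}=\ell\supseteq Y_n=R,
\]
where each $Y_i$ ($1\le i\le n-2$) is a smooth hyperplane section of $Y_{i-1}$ containing $P,Q$ (produced by repeated application of Lemma \ref{lem:sm hyp sec thru 2 pts}, whose smoothness hypothesis $\dim Y_{i-1}\ge 3$ is satisfied as long as $i\le n-2$, using $n\ge 4$), and $R\in\ell\setminus\{P,Q\}$ is arbitrary. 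The surface $Y_{n-2}$ is then a smooth degree-$n$ surface in $\bP^3$ containing $\ell$, with the intersection numbers $H^2=n$, $H\cdot \ell=1$, and (by adjunction using $K_{Y_{n-2}}=(n-4)H|_{Y_{n-2}}$) $\ell^2=2-n$.

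We then apply Theorem \ref{thm:delta adj via flag} with $j=n-1$. For $0\le i\le n-3$ the ratio $A_{Y_i}(Y_{i+1})/S(\W^{(i)};Y_{i+1})=(n+1)/2$ is computed exactly as in the proof of Lemma \ref{lem:secant not in X}. The essential new calculation is at step $i=n-2$: since $\W^{(n-2)}$ is almost complete (Example \ref{ex:graded series from ample div}) with trivial fixed part and $c_1(\W^{(n-2)})\sim_\bQ \tfrac{6}{n+1}H|_{Y_{n-2}}$, Lemma \ref{lem:S of ac W} reduces $S(\W^{(n-2)};\ell)$ to $\tfrac{6}{n+1}\cdot S(H|_{Y_{n-2}};\ell)$. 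In the two-dimensional subspace of $\Pic(Y_{n-2})_\bR$ spanned by $H$ and $\ell$, a direct computation shows that the nef cone is bounded by $H-\ell$ and $(n-2)H+\ell$, so $H-t\ell$ is nef throughout $[0,T(H;\ell)]=[0,1]$ with trivial Zariski decomposition. Integrating $(H-t\ell)^2=n-2t-(n-2)t^2$ gives $S(H|_{Y_{n-2}};\ell)=(2n-1)/(3n)$ and hence
\[
\frac{A_{Y_{n-2}}(\ell)}{S(\W^{(n-2)};\ell)}=\frac{n(n+1)}{2(2n-1)},
\]
which exceeds $(n+1)/n$ precisely when $n^2-4n+2\ge 0$; this is where $n\ge 4$ is used.

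For the final term $\delta_{Z\cap\ell}(\ell;\W^{(n-1)},\cF)$ with $\cF$ the filtration induced by $R$, one verifies that $\W^{(n-1)}$ on $\ell\cong\bP^1$ is almost complete with $F(\W^{(n-1)})=0$: the restriction maps $H^0(Y_{n-2},D-b\ell)\twoheadrightarrow H^0(\ell,(D-b\ell)|_\ell)$ become surjective as $m\to\infty$ by Kawamata--Viehweg vanishing on $Y_{n-2}$ (the nef-cone description above shows that $D-\ell-K_{Y_{n-2}}$ lies in the interior of the nef cone for $(m,\vec a,b)$ in the interior of $\Supp(\W^{(n-1)})$), and any effective line bundle on $\bP^1$ is base-point-free. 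From \eqref{eq:c_1 of refinement},
\[
\deg c_1(\W^{(n-1)})=\left(\tfrac{6}{n+1}H-\tfrac{2(2n-1)}{n(n+1)}\ell\right)\!\cdot\ell=\frac{4(n^2-n+1)}{n(n+1)}.
\]
Since $\{P,Q\}\subseteq Z\cap\ell$, Lemma \ref{lem:delta on curve} with $r\ge 2$ and $Q=R$ yields $\delta_{Z\cap\ell}(\ell;\W^{(n-1)},\cF)\ge 4/\deg c_1(\W^{(n-1)})=n(n+1)/(n^2-n+1)\ge (n+1)/n$. Combining the three bounds concludes the proof.

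The main obstacle is the analysis at step $i=n-2$, where $\ell$ sits on $Y_{n-2}$ as a non-generic negative curve ($\ell^2=2-n<0$); the Zariski-decomposition must be carried out by hand in the span of $H$ and $\ell$, and the numerical inequality $n^2-4n+2\ge 0$ is sharp at $n=4$ (which is precisely why the cubic threefold $n=3$ must be handled separately by Lemma \ref{lem:cubic 3fold}). A secondary technical point is the verification that the refinement $\W^{(n-1)}$ has trivial fixed part along the non-general curve $\ell$, which relies on the vanishing above.
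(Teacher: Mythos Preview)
Your proposal is correct and follows essentially the same route as the paper: the same flag with $Y_{n-1}=\ell$, the same intersection numbers on the surface $Y_{n-2}$, the same value $S(H;\ell)=\tfrac{2n-1}{3n}$, the same inequality $\tfrac{n(n+1)}{2(2n-1)}\ge\tfrac{n+1}{n}\Leftrightarrow n^2-4n+2\ge 0$ isolating the role of $n\ge 4$, and the same endgame via Lemma \ref{lem:delta on curve} with $\deg c_1(\W^{(n-1)})=\tfrac{4(n^2-n+1)}{n(n+1)}$. The only cosmetic difference is that the paper phrases the nefness of $H-\ell$ directly (noting $(H-\ell)^2=0$) rather than as a description of a two-dimensional slice of the nef cone, and invokes Kodaira rather than Kawamata--Viehweg for the vanishing; neither affects the argument.
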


\begin{proof}
The proof is similar to Lemma \ref{lem:secant not in X}, except that we use a slightly different flag. Consider a flag
\[
\Y \quad:\quad X=Y_0\supseteq Y_1\supseteq \cdots \supseteq Y_n
\]
on $X$ such that each $Y_i$ $(1\le i\le n-2$) is a smooth hyperplane section of $Y_{i-1}$ containing $P$ and $Q$, $Y_{n-1}$ is the line joining $P,Q$ and $Y_n$ is a smooth point on $Y_{n-1}$ that's different from $P,Q$. We use the same notation $\W^{(j)}$ and $\cF$ as in Lemma \ref{lem:secant not in X}. We claim that $\W^{(j)}$ is almost complete and $F(\W^{(j)})=0$ for all $0\le j\le n-1$. Indeed this is evident when $0\le j\le n-2$ by Example \ref{ex:graded series from ample div}, so it remains to consider the case $j=n-1$. For ease of notation, let $S=Y_{n-2}$, $L=Y_{n-1}$ and let $H$ be the hyperplane class. It is straightforward to check that on the surface $S$ (which is a smooth surface of degree $n$ in $\bP^3$) we have
\begin{enumerate}
    \item $(H\cdot L)=1$, $(H^2)=n$, $(L^2)=2-n$ and $(H-L)^2=0$,
    \item $H-L$ is nef.
\end{enumerate}
They together imply that $H^0(S,mH-jL)\neq 0$ ($m,j\in\bN$) if and only if $0\le j\le m$ and that
\begin{equation} \label{eq:S(H;L)}
    S(H;L)=\frac{1}{(H^2)}\int_0^1 (H-xL)^2 \rd x = \frac{2}{3}-\frac{1}{3n}.
\end{equation}
By Kodaira vanishing, we also have $H^1(S,\cO_S(mH-jL))=0$ whenever $m-j > n-4$ and thus the natural map $H^0(S,\cO_S(mH-jL))\to H^0(L,\cO_L(mH-jL))$ is surjective when $m-(j+1) > n-4$. As $\cO_L(mH-jL)\cong \cO_L(m+j(n-2))$, we see that the complete linear series $\V$ associated to $H$ on $S$ has almost complete refinement by $L$. Since $\W^{(n-2)}$ is almost complete (with respect to $H$), most graded pieces of $\W^{(n-2)}$ is a complete linear series $|mH|$ for some $m\in\bN$ (see Example \ref{ex:graded series from ample div}); hence its refinement $\W^{(n-1)}$ by $L$ is also almost complete. As $L\cong\bP^1$, the linear systems $|\cO_L(mH-jL)|$ are all base point free, thus it is not hard to check that $F(\W^{(n-1)})=0$. By Lemma \ref{lem:S of ac W}, \eqref{eq:recursive formula for c_1(W)} and \eqref{eq:S(H;L)}, we find (as in the proof of Lemma \ref{lem:secant not in X})
\[
S(\W^{(j)};Y_{i+1})=\frac{2}{n+1}<\frac{n}{n+1}
\]
when $0\le j\le n-3$ and 
\[
S(\W^{(n-2)};Y_{n-1})=S(c_1(\W^{(n-2)});Y_{n-1})=S\left(\frac{6}{n+1}H;L\right)=\frac{6}{n+1}\left(\frac{2}{3}-\frac{1}{3n}\right)<\frac{n}{n+1}
\]
where the last inequality uses $n\ge 4$. Using \eqref{eq:recursive formula for c_1(W)} one more time, we also obtain
\[
\deg c_1(\W^{(n-1)}) = \frac{6}{n+1} \left(H - \left(\frac{2}{3}-\frac{1}{3n}\right)L \cdot L \right) = \frac{4}{n+1}\left(n-1+\frac{1}{n}\right) < \frac{4n}{n+1}.
\]
Hence by Lemma \ref{lem:delta on curve}, noting that $Z\cap Y_{n-1}$ contains at least two points $P,Q$, we deduce that $\delta_{Z\cap Y_{n-1}}(\W^{(n-1)},\cF)\ge \frac{n+1}{n}$ and therefore the lemma follows from Theorem \ref{thm:delta adj via flag} and the above computations.
\end{proof}

We are ready to prove the K-stability of index two hypersurfaces.

\begin{proof}[Proof of Theorem \ref{thm:index two}]
By Lemma \ref{lem:cubic 3fold} we may assume that $n\ge 4$. 
It suffices to verify the two conditions of Lemma \ref{lem:(n+1)/n-criterion}. The first condition follows from Lemmas \ref{lem:secant not in X} and \ref{lem:secant in X, n>=4} and the second condition follows from Lemma \ref{lem:beta(E_x)>0 on index two}.
\end{proof}

\appendix

\section{Stability thresholds of cubic surfaces} \label{sec:appendix}

In this appendix, we compute the $\delta$-invariants of all smooth cubic surfaces and give the proof of Theorem \ref{thm:cubic delta}. Throughout the section we let $X\subseteq \bP^3$ be a smooth cubic surface, $x\in X$ a closed point and $C=T_x(X)\cap X$.

The proofs are similar between different cases. Note that the first case, i.e. $\mult_x C=3$, is already treated by Theorem \ref{thm:delta at Eckardt pt}. We work out the details when $C$ has a tacnode and sketch the argument in the remaining cases.

\begin{lem} \label{lem:tacnode}
Assume that $C$ has a tacnode at $x$. Then $\delta_x(X)=\frac{27}{17}$ and it is computed by the $($unique$)$ divisor that computes $\lct_x(X,C)$.
\end{lem}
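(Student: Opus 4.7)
The plan is to exhibit the unique divisor $E$ over $x$ that computes $\lct_x(X, C)$, compute the quotient $A_X(E)/S(-K_X; E) = 27/17$ as the upper bound on $\delta_x(X)$, and then use Corollary \ref{cor:adj delta ac} with auxiliary divisor $F = E$ to establish the matching lower bound.

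Since a plane cubic cannot have a tacnode as its only singularity (by genus considerations), $C$ must decompose as $C = L + Q$ with $L$ a line and $Q$ an irreducible conic meeting tangentially at $x$. In suitable local coordinates $(u, y)$ at $x$ with common tangent line $\{y = 0\}$ (so that $L : y = 0$ and $Q : y - u^2 = 0$ locally), I take $\pi \colon Y \to X$ to be the weighted blowup of $x$ with weights $(1, 2)$ on $(u, y)$. Then $E = \Ex(\pi) \cong \mathbb{P}(1, 2)$ carries a single $A_1$ singularity $p_0$ of $Y$, the strict transforms $\tilde L, \tilde Q$ meet $E$ transversally at distinct smooth points $p_L, p_Q$ (all three of $p_0, p_L, p_Q$ distinct), and $A_X(E) = 3$, $\ord_E(C) = 4$. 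Computing the log pullback of $(X, \tfrac{3}{4}C)$ shows this pair is lc with $E$ as the unique lc place, so $\lct_x(X, C) = 3/4$ is uniquely realized by $E$, and $E$ is of plt type.

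To bound $\delta_x(X)$ from above I compute $S(-K_X; E)$. From the intersection numbers on $Y$ --- notably $E^2 = -1/2$, $\tilde L^2 = -3$, $\tilde Q^2 = -2$, $\tilde L \cdot \tilde Q = 0$, $\tilde L \cdot E = \tilde Q \cdot E = 1$ --- the Zariski decomposition of $\pi^*(-K_X) - tE$ splits into three regimes as $t$ grows from $0$ to the pseudo-effective threshold $T = 4$: first nef, then negative part $\tfrac{t-1}{3}\tilde L$ activates at $t = 1$, then $\tfrac{t-2}{2}\tilde Q$ activates at $t = 2$, with $P(4) = 0$. Integrating $P(t)^2$ against $1/(-K_X)^2 = 1/3$ yields $S(-K_X; E) = 17/9$, hence $\delta_x(X) \leq A_X(E)/S(-K_X; E) = 27/17$.

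For the matching lower bound I apply Corollary \ref{cor:adj delta ac} with $F = E$ and $\lambda = 27/17$. Lemma \ref{lem:surface almost complete} ensures the refinement $\W$ of the complete linear series by $E$ is almost complete, and Lemma \ref{lem:F(W) formula on surface} reduces the evaluation of $F(\W)$ to integrating the negative part (restricted to $E$) against $\vol_{Y|E}(\pi^*(-K_X) - tE) = P(t) \cdot E$; the three-regime computation gives $F(\W) = \tfrac{17}{54}[p_L] + \tfrac{4}{27}[p_Q]$. The adjunction different on $E$ is $\Delta_E = \tfrac{1}{2}[p_0]$, so
\[
\Delta_E + \lambda F(\W) = \tfrac{1}{2}[p_0] + \tfrac{1}{2}[p_L] + \tfrac{4}{17}[p_Q],
\]
while $\deg c_1(M_\vb) = 13/27$ on $E \cong \mathbb{P}^1$. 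A one-variable check on $\mathbb{P}^1$ then gives $\delta_{Z'}(E, \Delta_E + \lambda F(\W); c_1(M_\vb)) = 27/13 > 27/17 = \lambda$ for every subvariety $Z' \subseteq E$, verifying hypothesis (2) of Corollary \ref{cor:adj delta ac} with strict inequality and yielding $\delta_x(X) \geq 27/17$. Combining the two bounds gives $\delta_x(X) = 27/17$, and the strictness of the lower bound, via the equality clause of Corollary \ref{cor:adj delta ac}, rules out any valuation with center strictly inside $E$ computing $\delta_x(X)$, so $E$ itself must be the computing divisor. The main technical point is the three-regime Zariski decomposition and the resulting $F(\W)$ integration: the delicate feature is that $\lambda F(\W)$ combines with the different $\tfrac{1}{2}[p_0]$ precisely so that all coefficients on $E$ remain strictly below $1$ --- this strictness is the numerical mechanism behind uniqueness.
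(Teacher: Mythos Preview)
Your proposal is correct and follows essentially the same approach as the paper: the same $(1,2)$ weighted blowup at $x$, the same three-regime Zariski decomposition yielding $S(-K_X;E)=17/9$, the same computation of $F(\W)=\tfrac{17}{54}[p_L]+\tfrac{4}{27}[p_Q]$ via Lemma~\ref{lem:F(W) formula on surface}, and the same verification of condition~(2) in Corollary~\ref{cor:adj delta ac}. The only cosmetic difference is that the paper rephrases the last step as K-semistability of $(\bP^1,\tfrac12 P_0+\tfrac12 P_1+\tfrac{4}{17}P_2)$ via Lemma~\ref{lem:K-ss of pair on P^1}, whereas you directly compute $\delta_{Z'}(E,\Delta_E+\lambda F(\W);c_1(M_{\vb}))=27/13>\lambda$; these are equivalent since $\lambda\,c_1(M_{\vb})+\lambda F(\W)\sim_\bQ -(K_E+\Delta_E)$.
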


\begin{proof}
By assumption, $C=L\cup Q$ where $L$ (resp. $Q$) is a line (resp. conic) and $L$ is tangent to $Q$ at $x$. We have $L=(u=0)$ and $Q=(u-v^2=0)$ in some local coordinates $u,v$ around $x$. Let $\pi\colon Y\to X$ be the weighted blow up at $x$ with weights $\wt(u)=2$, $\wt(v)=1$ and let $E\subseteq Y$ be the exceptional divisor. Note that $E$ is the unique divisor that computes $\lct_x(X,C)$. Let $\tL$ (resp. $\tQ$) be the strict transform of $L$ (resp. $Q$). We have $(\tL^2)=-3$, $(\tQ^2)=-2$, $(\tL\cdot \tQ)=0$ and $-\pi^*K_X-4E \sim \tL + \tQ$. It follows that the stable base locus of $-\pi^*K_X-tE$ is contained in $\tL\cup \tQ$ for all $0\le t\le 4$ and we have
\begin{equation} \label{eq:negative part-tacnode}
    N_\sigma(-\pi^*K_X-tE) =
    \begin{cases}
        0 & \text{if } 0\le t\le 1, \\
        \frac{t-1}{3}\tL & \text{if } 1<t\le 2, \\
        \frac{t-1}{3}\tL+\frac{t-2}{2}\tQ & \text{if } 2<t\le 4
    \end{cases}
\end{equation}
where $N_\sigma(L)$ (resp. $P_\sigma(L)$) denotes the negative (resp. positive) part in the Zariski decomposition of $L$. Therefore,
\begin{align} \label{eq:volume tacnode}
    \vol(-\pi^*K_X-tE) & = ((P_\sigma(-\pi^*K_X-tE))^2 \\
    & =
    \begin{cases}
        3-\frac{1}{2}t^2 & \text{if } 0\le t\le 1, \\
        3-\frac{1}{2}t^2+\frac{1}{3}(t-1)^2 & \text{if } 1<t\le 2, \\
        3-\frac{1}{2}t^2+\frac{1}{3}(t-1)^2+\frac{1}{2}(t-2)^2 & \text{if } 2<t\le 4
    \end{cases} \nonumber
\end{align}
In particular, $T(-K_X;E)=4$ (as $\vol(-\pi^*K_X-4E)=0$) and $S(-K_X;E)=\frac{17}{9}$. Let
\[
\lambda=\frac{27}{17}=\frac{A_X(E)}{S(-K_X;E)}.
\]
Clearly $\delta_x(X)\le \lambda$ by definition and it remains to show $\delta_x(X)\ge \lambda$. Let $\V$ be the complete linear series associated to $-K_X$ and let $\W$ be its refinement by $E$. By Theorem \ref{thm:delta adj via div}, it is enough to prove $\delta(E,\Delta_E;\W)\ge \lambda$ where $\Delta_E={\rm Diff}_E(0)=\frac{1}{2}P_0$ and $P_0$ is the (unique) singular point of $Y$. Note that $P_0\not\in \tL\cup \tQ$.

By Lemma \ref{lem:surface almost complete}, $\W$ is almost complete. Therefore, by Corollary \ref{cor:adj delta ac}, it suffices to show
\begin{equation} \label{eq:delta(E) over tacnode}
    \delta(E,\Delta_E+\lambda F(\W);c_1(M_{\vb}))\ge \lambda.
\end{equation}
As in the proof of Theorem \ref{thm:delta at Eckardt pt}, noting that
\begin{align*}
    \lambda c_1(M_{\vb}) + \lambda F(\W) & \sim_\bQ \lambda c_1(\W) \sim _\bQ \lambda (-\pi^*K_X-S(-K_X;E)\cdot E)|_E \\
    & \sim_\bQ -A_X(E)\cdot E|_E \sim_\bQ -(K_Y+E)|_E\sim_\bQ -(K_E+\Delta_E),
\end{align*}
we see that \eqref{eq:delta(E) over tacnode} is equivalent to saying the pair $(E,\Delta_E+\lambda F(\W))$ is K-semistable.

We apply Lemma \ref{lem:F(W) formula on surface} to compute $F(\W)$. Let $P_1=\tL\cap E$ and $P_2=\tQ\cap E$. Then $\Supp(F(\W))\subseteq P_1\cup P_2$. We have $\vol_{Y|E}(-\pi^*K_X-tE)=-\frac{1}{2}\cdot \frac{\rd}{\rd t}\vol(-\pi^*K_X-tE)$
by \cite{LM-okounkov-body}*{Corollary C}. Combined with \eqref{eq:negative part-tacnode}, \eqref{eq:volume tacnode} and Lemma \ref{lem:F(W) formula on surface}, we deduce
\begin{align*}
    \mult_{P_1} F(\W) & = \frac{1}{(-K_X)^2}\left(\int_1^2 \frac{t-1}{3}\cdot \frac{t+2}{3} \rd t + \int_2^4 \frac{t-1}{3}\cdot \frac{2(4-t)}{3}\rd t \right) = \frac{17}{54}, \\
    \mult_{P_2} F(\W) & = \frac{1}{(-K_X)^2} \int_2^4 \frac{t-2}{2}\cdot \frac{2(4-t)}{3}\rd t = \frac{4}{27}.
\end{align*}
Thus $(E,\Delta_E+\lambda F(\W))\cong (\bP^1,\frac{1}{2}P_0+\frac{1}{2}P_1+\frac{4}{17}P_2)$, which is K-semistable by Lemma \ref{lem:K-ss of pair on P^1}. This finishes the proof.
\end{proof}

The follow result is used in the above proof.

\begin{lem} \label{lem:K-ss of pair on P^1}
Let $\Delta=a_1 P_1+\cdots+a_m P_m$ where $P_1,\cdots,P_m$ are distinct points on $\bP^1$ and $a_i\in (0,1)$ $(i=1,\cdots,m)$ satisfy $a_1+\cdots+a_m<2$ $($i.e. $(\bP^1,\Delta)$ is log Fano$)$. Then 
\[
\delta(\bP^1,\Delta) = \frac{1-\max_{1\le i\le m}\{a_i\}}{1-\frac{1}{2}(a_1+\cdots+a_m)}.
\]
In particular, $(\bP^1,\Delta)$ is K-semistable if and only if $a_1+\cdots+a_m\ge 2a_i$ for all $1\le i\le m$.
\end{lem}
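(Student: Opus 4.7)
The plan is to reduce the computation of $\delta(\bP^1,\Delta)$ to an explicit minimization over finitely many divisorial valuations, using the fact that on a smooth curve there are essentially no interesting valuations beyond $\ord_P$ for closed points $P$.

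First I would invoke Lemma \ref{lem:delta=inf A/S} (applied to the complete linear series of $L := -K_{\bP^1}-\Delta$, which has degree $d := 2 - \sum_i a_i > 0$) to write
\[
\delta(\bP^1,\Delta) = \inf_v \frac{A_{\bP^1,\Delta}(v)}{S(L;v)},
\]
where the infimum runs over valuations $v\in \Val_{\bP^1}^*$ with $A_{\bP^1,\Delta}(v)<\infty$. Since $\bP^1$ is a smooth curve, any birational modification of $\bP^1$ is $\bP^1$ itself, so every such valuation is of the form $c\cdot \ord_P$ for some closed point $P\in\bP^1$ and some $c>0$. Because $A$ and $S$ are both homogeneous of degree one in $v$, the ratio only depends on $P$, and thus
\[
\delta(\bP^1,\Delta) = \min_{P\in\bP^1}\frac{A_{\bP^1,\Delta}(\ord_P)}{S(L;P)}.
\]

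Next I would compute the two quantities. The log discrepancy is $A_{\bP^1,\Delta}(\ord_P)=1-a_i$ if $P=P_i$ for some $i$, and $A_{\bP^1,\Delta}(\ord_P)=1$ otherwise. For $S$, on a curve one has the standard formula
\[
S(L;P) = \frac{1}{\vol(L)}\int_0^\infty \vol(L;\ord_P\ge t)\,\rd t = \frac{1}{d}\int_0^d (d-t)\,\rd t = \frac{d}{2} = 1-\tfrac{1}{2}\sum_i a_i,
\]
independent of $P$ (this also follows directly from Corollary \ref{cor:delta on curve} with $F=0$). Therefore the ratio is smallest when $A_{\bP^1,\Delta}(\ord_P)$ is smallest, which happens at $P=P_i$ with $a_i$ maximal (note $1-a_i<1$ since $a_i\in(0,1)$). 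This yields
\[
\delta(\bP^1,\Delta) = \frac{1-\max_i a_i}{1-\tfrac{1}{2}\sum_i a_i}.
\]

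Finally, the K-semistability criterion follows by noting $\delta(\bP^1,\Delta)\ge 1$ is equivalent to $1-\max_i a_i \ge 1-\tfrac{1}{2}\sum_i a_i$, i.e.\ $\sum_j a_j \ge 2a_i$ for each $i$. There is no serious obstacle here; the only point to verify carefully is that every valuation on $\bP^1$ with finite log discrepancy is indeed of the form $c\cdot \ord_P$, which is standard on smooth curves.
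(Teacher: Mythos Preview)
Your proof is correct and follows essentially the same approach as the paper's own proof: compute $A_{\bP^1,\Delta}(P)=1-\mult_P(\Delta)$ and $S(-K_{\bP^1}-\Delta;P)=\tfrac{1}{2}\deg(-K_{\bP^1}-\Delta)$, then take the infimum over $P$. The only cosmetic difference is that you pass through the valuation formulation via Lemma \ref{lem:delta=inf A/S} and reduce to $\ord_P$, whereas the paper simply invokes the definition \eqref{eq:delta as A/S} in terms of divisors over $\bP^1$, which on a smooth curve are already just the points $P$.
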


\begin{proof}
We have $S(-K_{\bP^1}-\Delta;P)=\frac{1}{2}\deg(-K_{\bP^1}-\Delta)=1-\frac{1}{2}(a_1+\cdots+a_m)$ and $A_{\bP^1,\Delta}(P)=1-\mult_P(\Delta)$ for any $P\in \bP^1$. The result then follows from the definition of stability thresholds.
\end{proof}

\begin{lem} \label{lem:cusp}
Assume that $C$ has a cusp at $x$. Then $\delta_x(X)=\frac{5}{3}$ and it is computed by the $($unique$)$ divisor that computes $\lct_x(X,C)$.
\end{lem}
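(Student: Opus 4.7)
The argument will closely parallel that of Lemma \ref{lem:tacnode}. Since $C$ is an irreducible cubic with a cusp at $x$, in suitable local coordinates $u,v$ on $X$ around $x$ we may write $C=(u^2-v^3=0)$. Let $\pi\colon Y\to X$ be the weighted blow up at $x$ with weights $\wt(u)=3$, $\wt(v)=2$, with exceptional divisor $E$. This is the unique divisor computing $\lct_x(X,C)=5/6$, and $A_X(E)=3+2=5$. The two quotient singularities on $Y$ meeting $E$ produce a boundary $\Delta_E={\rm Diff}_E(0)=\tfrac{2}{3}P_3+\tfrac{1}{2}P_2$ on $E\cong \bP^1$. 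Let $\tC$ denote the strict transform of $C$; using $\pi^*C=\tC+6E$, $E^2=-1/6$ and $(-K_X\cdot C)=3$ one computes $\tC^2=-3$ and $\tC\cdot E=1$, so $\tC$ is the only negative curve that can appear in the Zariski decomposition.

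The first step is to determine the Zariski decomposition of $-\pi^*K_X-tE$ on $Y$. Solving $(P\cdot \tC)=0$ in the ansatz $-\pi^*K_X-tE=P+s\tC$ yields $s=(t-3)/3$, so that
\[
N_\sigma(-\pi^*K_X-tE)=\begin{cases} 0 & 0\le t\le 3,\\ \tfrac{t-3}{3}\tC & 3\le t\le 6,\end{cases}\qquad T(-K_X;E)=6,
\]
and
\[
\vol(-\pi^*K_X-tE)=\begin{cases} 3-\tfrac{t^2}{6} & 0\le t\le 3,\\ 3-\tfrac{t^2}{6}+\tfrac{(t-3)^2}{3} & 3\le t\le 6.\end{cases}
\]
Integrating gives $S(-K_X;E)=\frac{1}{3}\int_0^6 \vol(-\pi^*K_X-tE)\,\rd t=3$, and hence $\lambda:=A_X(E)/S(-K_X;E)=5/3$, so $\delta_x(X)\le \lambda$ automatically.

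For the lower bound $\delta_x(X)\ge \lambda$, let $\V$ be the complete linear series of $-K_X$ and $\W$ its refinement by $E$; by Lemma \ref{lem:surface almost complete}, $\W$ is almost complete. By Corollary \ref{cor:adj delta ac}, it suffices to show that the log pair $(E,\Delta_E+\lambda F(\W))$ on $E\cong \bP^1$ is K-semistable, since (exactly as in the tacnode case) $\lambda(c_1(M_{\vb})+F(\W))\sim_\bQ -(K_E+\Delta_E)$. The support of $F(\W)$ is contained in the single point $P_1:=\tC\cap E$. Using $(P\cdot E)=t/6-\frac{t-3}{3}(\tC\cdot E)=(6-t)/6$ on $[3,6]$ together with Lemma \ref{lem:F(W) formula on surface}, a short computation yields
\[
\mult_{P_1}F(\W)=\frac{2}{3}\int_3^6 \frac{6-t}{6}\cdot\frac{t-3}{3}\,\rd t=\frac{1}{6},
\]
so that $(E,\Delta_E+\lambda F(\W))\cong (\bP^1,\tfrac{2}{3}P_3+\tfrac{1}{2}P_2+\tfrac{5}{18}P_1)$.

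The final step is a direct check via Lemma \ref{lem:K-ss of pair on P^1}: the coefficient sum is $\tfrac{2}{3}+\tfrac{1}{2}+\tfrac{5}{18}=\tfrac{13}{9}<2$ and the maximal coefficient $\tfrac{2}{3}$ satisfies $2\cdot \tfrac{2}{3}=\tfrac{12}{9}\le \tfrac{13}{9}$, so the pair is indeed K-semistable. This establishes $\delta_x(X)=5/3$ and shows that $E$ computes it. No step is genuinely difficult beyond the Zariski-decomposition bookkeeping; the only place where something could in principle go wrong is the final numerical inequality on $\bP^1$, so the main thing to verify carefully is that the three coefficients arising from $\Delta_E$ and $\lambda F(\W)$ balance in the required way.
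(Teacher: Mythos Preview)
Your proof is correct and follows essentially the same route as the paper's: the same weighted $(3,2)$-blowup, the same Zariski decomposition and volume computation giving $S(-K_X;E)=3$, the same $F(\W)=\tfrac{1}{6}P_1$ via Lemma~\ref{lem:F(W) formula on surface}, and the same K-semistability check on $(\bP^1,\tfrac{2}{3}+\tfrac{1}{2}+\tfrac{5}{18})$ using Lemma~\ref{lem:K-ss of pair on P^1}. The only differences are cosmetic (labeling of the points, and you spell out a few intersection numbers that the paper suppresses).
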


\begin{proof}
The proof is very similar to that of Lemma \ref{lem:tacnode} so we only sketch the steps. In local coordinates, $C=(u^2-v^3=0)$ around $x$. Let $\pi\colon Y\to X$ be the weighted blow up at $x$ with $\wt(u)=3$, $\wt(v)=2$ and let $E$ be the exceptional divisor. Let $\tC$ be the strict transform of $C$. We have $T(-K_X;E)=6$,
\[
N_\sigma(-\pi^*K_X-tE) =
    \begin{cases}
        0 & \text{if } 0\le t\le 3, \\
        \frac{t-3}{3}\tC & \text{if } 3<t\le 6
    \end{cases}
\]
and
\[
\vol(-\pi^*K_X-tE) = 
    \begin{cases}
        3-\frac{1}{6}t^2 & \text{if } 0\le t\le 3, \\
        \frac{1}{6}(6-t)^2 & \text{if } 3<t\le 6.
    \end{cases}
\]
Thus $S(-K_X;E)=3$. Let $\lambda=\frac{5}{3}=\frac{A_X(E)}{S(-K_X;E)}$ and let $\W$ be the refinement by $E$ of the complete linear series of $-K_X$. As in the proof of Lemma \ref{lem:tacnode}, $\W$ is almost complete and it suffices to show that $(E,\Delta_E+\lambda F(\W))$ is K-semistable. Note that $\Delta_E=\frac{1}{2}P_0+\frac{2}{3}P_1$ where $P_0,P_1$ are the two singular points of $Y$. By Lemma \ref{lem:F(W) formula on surface}, we find
\[
\mult_{P_2} F(\W) = \frac{1}{(-K_X)^2} \int_3^6 \frac{t-3}{3}\cdot \frac{6-t}{3} \rd t = \frac{1}{6}
\]
where $P_2=\tC\cap E$. Thus $(E,\Delta_E+\lambda F(\W))\cong (\bP^1,\frac{1}{2}P_0+\frac{2}{3}P_1+\frac{5}{18}P_2)$ which is K-semistable by Lemma \ref{lem:K-ss of pair on P^1}. This concludes the proof.
\end{proof}

\begin{lem} \label{lem:irred node}
Assume that $C$ is irreducible and has a node at $x$. Then $\delta_x(X)=\frac{12}{7}$ and it is computed by the ordinary blow up of $x$.
\end{lem}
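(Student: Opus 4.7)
The plan is to mirror the arguments already carried out for the tacnode and cusp cases in Lemmas~\ref{lem:tacnode} and~\ref{lem:cusp}, but now with the ordinary blow up. Let $\pi\colon Y\to X$ be the blow up of $x$, with exceptional divisor $E\cong\bP^1$ and $A_X(E)=2$. Let $\tC$ be the strict transform of $C$. Since $C$ has a node at $x$, $\mult_x C=2$, so $\tC=\pi^*C-2E$ is a smooth $(-1)$-curve meeting $E$ transversally in two distinct points $P_1,P_2$. The goal is to show $S(-K_X;E)=\frac{7}{6}$, which gives $A_X(E)/S(-K_X;E)=\frac{12}{7}$, and then to upgrade this to a lower bound on $\delta_x(X)$ via Corollary~\ref{cor:adj delta ac}.

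\emph{Step 1: Zariski decomposition.} From $(\tC^2)=-1$, $(\tC\cdot E)=2$ and $-\pi^*K_X-2E\sim\tC$, a direct intersection calculation yields
\[
N_\sigma(-\pi^*K_X-tE)=
\begin{cases}
0 & 0\le t\le \tfrac{3}{2},\\
(2t-3)\,\tC & \tfrac{3}{2}<t\le 2,
\end{cases}
\]
with $T(-K_X;E)=2$ and $\vol(-\pi^*K_X-tE)=3-t^2$ on $[0,\tfrac{3}{2}]$, $3(2-t)^2$ on $[\tfrac{3}{2},2]$. To check that the positive part is nef, one needs $\mult_x D'\le \frac{2}{3}(-K_X\cdot D')$ for every irreducible curve $D'\neq C$ through $x$; this follows from the local intersection bound $(C\cdot D')_x\ge \mult_x C\cdot \mult_x D' = 2\mult_x D'$ combined with the global equality $(C\cdot D')_X=(-K_X\cdot D')$. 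Integrating gives $S(-K_X;E)=\frac{1}{3}\bigl(\int_0^{3/2}(3-t^2)\rd t+\int_{3/2}^{2}3(2-t)^2\rd t\bigr)=\frac{1}{3}\bigl(\frac{27}{8}+\frac{1}{8}\bigr)=\frac{7}{6}$.

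\emph{Step 2: The refinement and $F(\W)$.} Let $\V$ be the complete linear series of $-K_X$ and $\W$ its refinement by $E$. By Lemma~\ref{lem:surface almost complete}, $\W$ is almost complete, and by Lemma~\ref{lem:F(W) formula on surface} $F(\W)$ is supported on $\tC\cap E=\{P_1,P_2\}$. Using $\vol_{Y|E}(-\pi^*K_X-tE)=3(2-t)$ on $[\tfrac{3}{2},2]$ and $N_\sigma|_E=(2t-3)(P_1+P_2)$ there,
\[
\mult_{P_i}F(\W)=\frac{2}{3}\int_{3/2}^{2}3(2-t)(2t-3)\,\rd t=\frac{1}{12},
\]
so $F(\W)=\tfrac{1}{12}(P_1+P_2)$.

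\emph{Step 3: Conclusion.} Set $\lambda=\frac{12}{7}$. Since $\Delta_E=0$ ($Y$ is smooth along $E$) and $\lambda c_1(M_{\vec{\bullet}})+\lambda F(\W)\sim_\bQ -K_E$ as in the proof of Lemma~\ref{lem:tacnode}, Corollary~\ref{cor:adj delta ac} reduces the lower bound $\delta_x(X)\ge\frac{12}{7}$ to showing that
\[
(E,\lambda F(\W))\cong\bigl(\bP^1,\tfrac{1}{7}P_1+\tfrac{1}{7}P_2\bigr)
\]
is K-semistable. This is immediate from Lemma~\ref{lem:K-ss of pair on P^1}, since $\tfrac{1}{7}+\tfrac{1}{7}=2\cdot\tfrac{1}{7}$ (with equality). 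Combined with the obvious upper bound $\delta_x(X)\le A_X(E)/S(-K_X;E)=\frac{12}{7}$, this gives $\delta_x(X)=\frac{12}{7}$, computed by $E$. The only mildly technical point is the verification in Step~1 that no curve other than $\tC$ contributes to the stable base locus, and this is handled by the intersection bound explained above.
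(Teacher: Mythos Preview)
Your proof is correct and follows exactly the same approach as the paper's: blow up $x$, compute the Zariski decomposition, verify $S(-K_X;E)=\tfrac{7}{6}$, and then reduce via Corollary~\ref{cor:adj delta ac} to the K-semistability of $(\bP^1,\lambda F(\W))$, which holds by Lemma~\ref{lem:K-ss of pair on P^1} because $F(\W)$ is symmetric in $P_1,P_2$. The only differences are cosmetic: you compute $\mult_{P_i}F(\W)=\tfrac{1}{12}$ explicitly (the paper just notes $F(\W)=\mu(P_1+P_2)$ and observes that the value of $\mu$ is irrelevant), and you spell out the intersection argument showing that only $\tC$ enters the negative part, which the paper leaves implicit.
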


\begin{proof}
Again we only sketch the steps. Let $\pi\colon Y\to X$ be the ordinary blow up of $x$ and let $E$ be the exceptional divisor. Let $\tC$ be the strict transform of $C$. We have $T(-K_X;E)=2$,
\[
N_\sigma(-\pi^*K_X-tE) =
    \begin{cases}
        0 & \text{if } 0\le t\le \frac{3}{2}, \\
        (2t-3)\tC & \text{if } \frac{3}{2}<t\le 2
    \end{cases}
\]
and
\[
\vol(-\pi^*K_X-tE) = 
    \begin{cases}
        3-t^2 & \text{if } 0\le t\le \frac{3}{2}, \\
        3(2-t)^2 & \text{if } \frac{3}{2}<t\le 2.
    \end{cases}
\]
Thus $S(-K_X;E)=\frac{7}{6}$. Let $\lambda=\frac{12}{7}=\frac{A_X(E)}{S(-K_X;E)}$ and let $\W$ be the refinement by $E$ of the complete linear series of $-K_X$. Since $\W$ is almost complete by Lemma \ref{lem:surface almost complete}, it suffices to show that $(E,\lambda F(\W))$ is K-semistable as in the proof of Lemma \ref{lem:tacnode} (note that $\Delta_E=0$ in this case). By Lemma \ref{lem:F(W) formula on surface}, we see that $F(\W)=\mu(P_1+P_2)$ for some $\mu>0$ where $\{P_1,P_2\}=\tC\cap E$. Thus $(E,\lambda F(\W))$ is K-semistable by Lemma \ref{lem:K-ss of pair on P^1} (regardless of the value of $\mu$). This proves the lemma.
\end{proof}

\begin{lem} \label{lem:two lines}
Assume that $C$ is a union of three lines and $\mult_x C=2$. Then $\delta_x(X)=\frac{18}{11}$ and it is computed by the ordinary blow up of $x$.
\end{lem}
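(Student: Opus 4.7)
The argument will follow the same template as Lemmas \ref{lem:tacnode}, \ref{lem:cusp}, and \ref{lem:irred node}. Write $C = L_1 \cup L_2 \cup L_3$, where $L_1, L_2$ are the two lines passing through $x$ (meeting transversally, since $\mult_x C = 2$) and $L_3$ is the third line, which avoids $x$. Let $\pi\colon Y\to X$ be the ordinary blow up of $x$ with exceptional divisor $E$, and let $\tL_i$ be the strict transform of $L_i$. The plan is to compute $T(-K_X;E)$, $S(-K_X;E)$, and then verify the criterion of Corollary \ref{cor:adj delta ac} for $\lambda = 18/11$.

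First I would identify the Zariski decomposition of $-\pi^*K_X-tE$. Using the intersection numbers $(\tL_i^2)=-2$ and $(\tL_i\cdot E)=1$ for $i=1,2$, $(\tL_3^2)=-1$ and $(\tL_3\cdot E)=0$, one checks that $-\pi^*K_X-tE$ is nef for $0\le t\le 1$, that for $1\le t\le 2$ the negative part is $N_\sigma=\frac{t-1}{2}(\tL_1+\tL_2)$ (with $\tL_3$ not entering because $(P\cdot\tL_3)=2-t\ge 0$ on this range), and that $T(-K_X;E)=2$ since at $t=2$ we have $-\pi^*K_X-2E\sim\tL_1+\tL_2+\tL_3$. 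A direct integration then gives
\[
\vol(-\pi^*K_X-tE)=\begin{cases}3-t^2 & 0\le t\le 1,\\ 4-2t & 1\le t\le 2,\end{cases}
\]
hence $S(-K_X;E)=\frac{1}{3}\bigl(\tfrac{8}{3}+1\bigr)=\frac{11}{9}$ and $\lambda=\frac{A_X(E)}{S(-K_X;E)}=\frac{18}{11}$. This already gives the upper bound $\delta_x(X)\le \lambda$.

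For the matching lower bound, let $\V$ be the complete linear series of $-K_X$ and $\W$ its refinement by $E$; by Lemma \ref{lem:surface almost complete}, $\W$ is almost complete. Since the blow up is smooth, $\Delta_E=0$, so by Corollary \ref{cor:adj delta ac} it suffices to show that $(E,\lambda F(\W))\cong(\bP^1,\lambda F(\W))$ is K-semistable. Lemma \ref{lem:F(W) formula on surface} reduces this to integrating the negative part against the restricted volume; since $N_\sigma|_E=\frac{t-1}{2}(P_1+P_2)$ on $[1,2]$ (where $P_i=\tL_i\cap E$) and $-\frac{d}{dt}\vol(-\pi^*K_X-tE)=2$ on this range, a short computation yields
\[
\mult_{P_i}F(\W)=\frac{1}{3}\int_1^2 \frac{t-1}{2}\cdot 2\,\rd t=\frac{1}{6},\qquad i=1,2,
\]
so $\lambda F(\W)=\frac{3}{11}(P_1+P_2)$. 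The pair $(\bP^1,\frac{3}{11}P_1+\frac{3}{11}P_2)$ is K-semistable by Lemma \ref{lem:K-ss of pair on P^1} (the coefficients satisfy $a_1+a_2=\frac{6}{11}\ge 2a_i$ with equality). Finally, that the blow up $E$ computes $\delta_x(X)$ is immediate from the equality of lower and upper bounds. I do not expect a genuine obstacle: the only point that demands some care is verifying that no line on $X$ other than $L_1, L_2$ passes through $x$ (so that no other $(-2)$-curves on $Y$ enter the negative part), but this is automatic from $T_xX\cap X=C$.
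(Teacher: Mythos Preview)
Your proof is correct and follows essentially the same approach as the paper: ordinary blow up, Zariski decomposition, compute $S(-K_X;E)=\tfrac{11}{9}$, then check K-semistability of $(E,\lambda F(\W))$ via Lemma~\ref{lem:K-ss of pair on P^1}. The only minor difference is that the paper does not bother computing $\mult_{P_i}F(\W)=\tfrac{1}{6}$ explicitly; it simply observes $F(\W)=\mu(P_1+P_2)$ by symmetry and notes that $(\bP^1,\lambda\mu(P_1+P_2))$ is K-semistable for any $\mu$.
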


\begin{proof}
Write $C=L_1+L_2+L_3$ where $L_1\cap L_2 = x$. Let $\pi\colon Y\to X$ be the ordinary blow up of $x$ and let $E$ be the exceptional divisor. Let $\tL_i$ be the strict transform of $L_i$ ($i=1,2$). We have $T(-K_X;E)=2$,
\[
N_\sigma(-\pi^*K_X-tE) =
    \begin{cases}
        0 & \text{if } 0\le t\le 1, \\
        \frac{t-1}{2}(\tL_1+\tL_2) & \text{if } 1<t\le 2
    \end{cases}
\]
and
\[
\vol(-\pi^*K_X-tE) = 
    \begin{cases}
        3-t^2 & \text{if } 0\le t\le 1, \\
        4-2t & \text{if } 1<t\le 2.
    \end{cases}
\]
Thus $S(-K_X;E)=\frac{11}{9}$. Let $\lambda=\frac{18}{11}=\frac{A_X(E)}{S(-K_X;E)}$ and let $\W$ be the refinement by $E$ of the complete linear series of $-K_X$. As in previous cases, $\W$ is almost complete and it suffices to show that $(E,\lambda F(\W))$ is K-semistable (note that $\Delta_E=0$). By Lemma \ref{lem:F(W) formula on surface}, we have $F(\W)=\mu(P_1+P_2)$ for some $\mu>0$ where $P_i=\tL_i\cap E$. Thus $(E,\lambda F(\W))$ is K-semistable by Lemma \ref{lem:K-ss of pair on P^1} (regardless of the value of $\mu$). This proves the lemma.
\end{proof}

\begin{lem} \label{lem:line+conic}
Assume that $C=L\cup Q$ where $L$ is a line, $Q$ is a conic and they intersect transversally at $x$. Then $\delta_x(X)=\frac{9}{25-8\sqrt{6}}$ and it is computed by the weighted blow up at $x$ with $\wt(u)=1+\sqrt{6}$ and $\wt(v)=2$ $($where $u$ resp. $v$ is the local defining equation of $L$ resp. $Q)$.
\end{lem}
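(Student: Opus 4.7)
The proof follows the template of Lemmas~\ref{lem:tacnode}--\ref{lem:two lines}, with the novel feature that the optimal valuation is quasi-monomial with irrational weights, rather than divisorial arising from an integer-weight weighted blowup. In local coordinates $u, v$ at $x$ with $L = (u=0)$ and $Q = (v=0)$, let $w$ denote the quasi-monomial valuation with $w(u) = 1+\sqrt{6}$ and $w(v) = 2$, so that $A_X(w) = 3+\sqrt{6}$.

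For the upper bound, I compute $S(-K_X;w)$ by integrating $\vol(-\pi^*K_X - tE)$. The Zariski decomposition formulas in the proof of Theorem~\ref{thm:HRFG counterexample}, originally derived for coprime integer weights $(a,b)$ with $b < 2a$, depend continuously on $(a,b)$ and extend to our setting. For $(a,b) = (1+\sqrt{6},2)$ one has $T(-K_X;w) = a+b = 3+\sqrt{6}$ and, crucially, the transition value $t_1 = a(2a+3b)/(2a+b) = 5$ is rational, which makes the piecewise integration tractable. Dividing by $(-K_X)^2 = 3$ yields $S(-K_X;w) = (27+\sqrt{6})/9$, hence
\[
\frac{A_X(w)}{S(-K_X;w)} = \frac{9(3+\sqrt{6})}{27+\sqrt{6}} = \frac{9}{25-8\sqrt{6}},
\]
giving the upper bound $\delta_x(X) \le 9/(25-8\sqrt{6})$.

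For the lower bound, I apply Corollary~\ref{cor:adj delta ac} with $F$ being the ``weighted blowup'' $\pi\colon Y\to X$ at $x$ with weights $(1+\sqrt{6}, 2)$. Although this blowup does not exist as an actual variety because its weights are irrational, all relevant invariants can be defined by continuity from coprime-integer approximations, and the exceptional ``divisor'' $E$ formally is a $\bP^1$ with two marked points $P_L = \tL\cap E$ and $P_Q = \tQ\cap E$, carrying the formal different $\Delta_E = \tfrac{1}{2}P_L + \tfrac{6-\sqrt{6}}{5}P_Q$. The refinement $\W$ is almost complete, and $F(\W) = \mu_L P_L + \mu_Q P_Q$ can be computed from Lemma~\ref{lem:F(W) formula on surface} using the Zariski decompositions of Theorem~\ref{thm:HRFG counterexample}. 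With $\lambda = 9/(25-8\sqrt{6}) = A_X(E)/S(-K_X;E)$, one has $\lambda c_1(M_{\vb}) \sim_\bR -(K_E + \Delta_E + \lambda F(\W))$, so the hypothesis of Corollary~\ref{cor:adj delta ac} becomes the K-semistability of $(\bP^1, \Delta_E + \lambda F(\W))$.

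The main obstacle is verifying this K-semistability. By Lemma~\ref{lem:K-ss of pair on P^1}, for a boundary supported on only two points it reduces to the equality of the two coefficients, $\tfrac{1}{2} + \lambda\mu_L = \tfrac{6-\sqrt{6}}{5} + \lambda\mu_Q$. This is precisely the condition that singles out the weight ratio $(1+\sqrt{6})/2$, and amounts to an algebraic identity in $\bQ(\sqrt{6})$ to be checked from the explicit formulas for $\mu_L$ and $\mu_Q$. To make the whole argument rigorous despite the virtual nature of $F$, one approximates by coprime-integer weighted blowups with weights $(a_n, b_n) \to (1+\sqrt{6}, 2)$, applies Corollary~\ref{cor:adj delta ac} in each case (the K-semistability being slightly strict for rational weights forces a slightly smaller $\lambda_n$), and passes to the limit; continuity of all invariants then gives $\delta_x(X) \ge \lim \lambda_n = 9/(25-8\sqrt{6})$, completing the proof.
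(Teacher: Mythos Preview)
Your approach is the paper's: approximate the irrational quasi-monomial valuation by integer-weight weighted blowups, apply Corollary~\ref{cor:adj delta ac} to each, and pass to the limit. Two points deserve comment. First, your ``formal different'' $\Delta_E=\tfrac12 P_L+\tfrac{6-\sqrt6}{5}P_Q$ is a naive substitution of irrational $(a,b)$ into the coprime-integer formula and is \emph{not} the limit of the genuine differents $\Delta_m=(1-\tfrac1{b_m})P_L+(1-\tfrac1{a_m})P_Q$, whose coefficients both tend to $1$ since $a_m,b_m\to\infty$; nonetheless your K-semistability identity $\tfrac12+\lambda\mu_L=\tfrac{6-\sqrt6}{5}+\lambda\mu_Q$ is, after clearing denominators, exactly the paper's limit identity $\mu_m\cdot\frac{1-\lambda_m c_1^{(m)}}{1-\lambda_m c_2^{(m)}}\to 1$, so the heuristic lands on the right target. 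Second, the phrase ``K-semistability being slightly strict for rational weights'' is not quite right: for rational approximations the pair $(E_m,\Delta_m+\lambda_m F(\W^{(m)}))$ can be K-\emph{unstable}; what matters is that the defect $r_m:=\delta(E_m,\Delta_m+\lambda_m F(\W^{(m)}))$ satisfies $r_m\to 1$, giving $\delta_x(X)\ge\min\{\lambda_m,r_m\lambda_m\}\to\lambda$. This convergence is the computation you defer as ``an algebraic identity to be checked''; the paper carries it out explicitly.
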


\begin{proof}
For each $a,b>0$, let $\nu_{a,b}$ be the quasi-monomial valuation over $x\in X$ defined by $\nu_{a,b}(u)=a$ and $\nu_{a,b}(v)=b$. We first identify the minimizer of $\frac{A_X(\nu_{a,b})}{S_X(\nu_{a,b})}$. 
For this choose coprime integers $a,b>0$ and let $\pi\colon Y=Y_{a,b}\to X$ be the weighted blow up at $x$ with $\wt(u)=a$ and $\wt(v)=b$. Let $E$ be the exceptional divisor and let $\tL$ (resp. $\tQ$) be the strict transform of $L$ (resp. $Q$). Assume that $b<2a$. Then similar to the calculations in previous cases we have (c.f. \eqref{eq:negative part line+conic})
\[
N_\sigma(-\pi^*K_X-tE) =
    \begin{cases}
        0 & \text{if } 0\le t\le b, \\
        \frac{t-b}{a+b}\tL & \text{if } b<t\le \frac{a(2a+3b)}{2a+b}, \\
        \frac{2t-2a-b}{b}\tL + \frac{(2a+b)t-a(2a+3b)}{b^2}\tQ & \text{if } \frac{a(2a+3b)}{2a+b}<t\le a+b,
    \end{cases}
\]
\begin{align*}
    \vol_{Y|E}(-\pi^*K_X-tE) & = \big(P_\sigma (-\pi^*K_X-tE) \cdot E \big) = -\frac{1}{2}\cdot \frac{\rd}{\rd t}\vol(-\pi^*K_X-tE) \\
    & = \begin{cases}
        \frac{t}{ab} & \text{if } 0\le t\le b, \\
        \frac{t+a}{a(a+b)} & \text{if } b<t\le \frac{a(2a+3b)}{2a+b}, \\
        \frac{4(a+b-t)}{b^2} & \text{if } \frac{a(2a+3b)}{2a+b}<t\le a+b,
    \end{cases}
\end{align*}
and
\[
S(-K_X;E)=\frac{2}{(-K_X)^2}\int_0^{a+b} t\cdot \vol_{Y|E}(-\pi^*K_X-tE)\rd t=\frac{10a^2+19ab+3b^2}{9(2a+b)}.
\]
Note that $\frac{A_X(\nu_{a,b})}{S_X(\nu_{a,b})}$ only depends on the ratio $\frac{a}{b}$, thus by continuity \cite{BLX-openness}*{Proposition 2.4} we have 
\[
\frac{A_X(\nu_{a,b})}{S_X(\nu_{a,b})}=\frac{9(a+b)(2a+b)}{10a^2+19ab+3b^2}
\]
for all $a,b\in\bR_+$. It achieves its minimum $\lambda = \frac{9}{25-8\sqrt{6}}$ when $\frac{a}{b}=\mu:=\frac{1+\sqrt{6}}{2}>\frac{1}{2}$. In particular, we have $\delta_x(X)\le \lambda$. It remains to show $\delta_x(X)\ge \lambda$. 

Choose a sequence of coprime integers $a_m,b_m>0$ ($m=1,2,\cdots$) such that $\mu_m:=\frac{a_m}{b_m}\to \mu$ ($m\to \infty$). Let $\pi_m\colon Y_m=Y_{a_m,b_m}\to X$ be the corresponding weighted blow up and let $E_m$ be the exceptional divisor. Let $P_1^{(m)}=\tL\cap E_m$, $P_2^{(m)}=\tQ\cap E_m$ and let $\W^{(m)}$ be the refinement by $E_m$ of the complete linear series associated to $-K_X$. As before $\W^{(m)}$ is almost complete by Lemma \ref{lem:surface almost complete}. Using the above calculations and Lemma \ref{lem:F(W) formula on surface}, we have
\[
F(\W^{(m)}) = \frac{c_1^{(m)}}{b_m} P_1^{(m)} + \frac{c_2^{(m)}}{a_m} P_2^{(m)}
\]
where
\[
c_1^{(m)} = \frac{20\mu_m^3-8\mu_m^2+\mu_m+1}{9\mu_m(2\mu_m+1)^2}, \quad c_2^{(m)} = \frac{4}{9(2\mu_m+1)^2}.
\]
Let $\Delta_m={\rm Diff}_{E_m}(0)=(1-\frac{1}{b_m})P_1^{(m)}+(1-\frac{1}{a_m})P_2^{(m)}$, $\lambda_m=\frac{A_X(E_m)}{S(-K_X;E_m)}$ and let
\begin{equation} \label{eq:r_m}
    r_m:=\delta(E_m,\Delta_m+\lambda_m F(\W^{(m)}); \lambda_m c_1(M_{\vb}^{(m)}))=\delta(E_m,\Delta_m+\lambda_m F(\W^{(m)}))
\end{equation}
where the last equality follows from \eqref{eq:c_1 of refinement}. Then by Corollary \ref{cor:adj delta ac} we obtain
\begin{equation} \label{eq:L+Q delta>=}
    \delta_x(X)\ge \min\{\lambda_m,r_m \lambda_m\}
\end{equation}
for all $m$. Note that $\lambda_m\to \lambda$ as $m\to \infty$.

We claim that $r_m\to 1$ as $m\to \infty$. Since $E_m\cong\bP^1$, by Lemma \ref{lem:K-ss of pair on P^1} this is equivalent to 
\[
\frac{1-\mult_{P_1^{(m)}}(\Delta_m+\lambda_m F(\W^{(m)}))}{1-\mult_{P_2^{(m)}}(\Delta_m+\lambda_m F(\W^{(m)}))} \to 1
\]
when $m\to\infty$. It is straightforward (though a bit tedious) to check that
\[
\text{LHS}=\mu_m\cdot \frac{1-\lambda_m c_1^{(m)}}{1-\lambda_m c_2^{(m)}}\to \mu\cdot \frac{9\mu(2\mu+1)^2-\lambda (20\mu^3-8\mu^2+\mu+1)}{9\mu(2\mu+1)^2-4\lambda \mu} = 1.
\]
This proves the claim. Letting $m\to \infty$ in \eqref{eq:L+Q delta>=} we obtain $\delta_x(X)\ge \lambda$ as desired.
\end{proof}

\begin{cor} \label{cor:irrat'l global delta}
In the situation of Lemma \ref{lem:line+conic}, let $0<\varepsilon\ll 1$ be a rational number. Then the pair $(X,(1-\varepsilon)C)$ is log Fano and $\delta(X,(1-\varepsilon)C)=\frac{9}{25-8\sqrt{6}}$.
\end{cor}

\begin{proof}
We continue to use the notation from Lemma \ref{lem:line+conic}. Since $(X,C)$ is lc, it is clear that $(X,(1-\varepsilon)C)$ is log Fano. Let $L\cap Q=\{x,y\}$, let $\nu$ be the quasi-monomial valuation that computes $\delta_x(X)$ in Lemma \ref{lem:line+conic}, and let $\lambda=\frac{9}{25-8\sqrt{6}}$. Note that $\nu$ is an lc place of $(X,C)$, i.e. $A_X(\nu)=\nu(C)$. Then by Lemma \ref{lem:line+conic} we get $A_{X,(1-\varepsilon)C)}(\nu)=\varepsilon A_X(\nu)=\varepsilon \lambda S_X(\nu)=\lambda S_{X,(1-\varepsilon)C}(\nu)$ and hence $\delta(X,(1-\varepsilon)C)\le \lambda$. To get the reverse inequality, we shall prove
\begin{equation} \label{eq:delta_z}
    \delta_z(X,(1-\varepsilon)C)\ge \lambda
\end{equation}
for any closed point $z\in X$. In any case, we have $A_X(\nu)\ge \nu(C)$ and hence $A_{X,(1-\varepsilon)C}(\nu)\ge \varepsilon A_X(\nu)\ge \delta_z(X)\cdot \varepsilon S_{X}(\nu)=\delta_z(X)\cdot S_{X,(1-\varepsilon)C}(\nu)$ for any divisorial valuation $\nu$ whose center contains $z$. It follows that $\delta_z(X,(1-\varepsilon)C)\ge \delta_z(X)$ and hence by Lemma \ref{lem:line+conic}, \eqref{eq:delta_z} holds when $z\in \{x,y\}$. If $z\not\in\Supp(C)$ then $\nu(C)=0$ for any divisorial valuation $\nu$ whose center contains $z$, hence by the definition of stability thresholds we get $\delta_z(X,(1-\varepsilon)C)=\frac{\delta_z(X)}{\varepsilon}\ge \lambda$ when $0<\varepsilon\ll 1$. Thus it remains to consider the case when $z\in \Supp(C)\setminus\{x,y\}$. For simplicity, we assume $z\in Q$ (the other case $z\in L$ is similar). Consider the refinement (denote it by $\W$) by $Q$ of the complete linear series associated to $-K_X$. Note that $\delta(\W)>0$. Since $-(K_X+(1-\varepsilon)C)\sim_\bQ -\varepsilon K_X$, by Theorem \ref{thm:delta adj via div} we have
\[
\delta_z(X,(1-\varepsilon)C)=\varepsilon^{-1}\delta_z(X,(1-\varepsilon)C;-K_X)\ge \min\left\{\frac{A_{X,(1-\varepsilon)C}(Q)}{S_{X,(1-\varepsilon)C}(Q)}, \varepsilon^{-1}\delta_z(\W)\right\}\ge \lambda
\]
when $0<\varepsilon\ll 1$. Hence \eqref{eq:delta_z} also holds in this case. The proof is now complete.
\end{proof}

\begin{proof}[Proof of Theorem \ref{thm:cubic delta}]
This follows from the combination of Theorem \ref{thm:delta at Eckardt pt}, Lemmas \ref{lem:tacnode}, \ref{lem:cusp}, \ref{lem:irred node}, \ref{lem:two lines}, \ref{lem:line+conic} and Corollary \ref{cor:irrat'l global delta}.
\end{proof}

\bibliography{ref}

\end{document}